\numberwithin{equation}{section}
\newtheorem{theorem}{Theorem}[section]
\newtheorem{lemma}[theorem]{Lemma}
\newtheorem{prop}[theorem]{Proposition}
\newtheorem{corollary}[theorem]{Corollary}
\newtheorem{definition}[theorem]{Definition}
\def \bpf {\begin{proof}}
\def \epf {\end{proof}}
\def \beq {\begin{equation*}}
\def \eeq {\end{equation*}}
\def \bsp{\begin{split}}
\def \esp{\end{split}}
\def \tgamma {\tilde{\gamma}}
\def \CI {{C^\infty}}
\def \diff {{\operatorname{Diff}}}
\def \ty {{\tilde{y}}}
\def \teta {{\tilde{\eta}}}
\def \tF {{\tilde{F}}}
\def \tG {{\tilde{G}}}
\def \tS {{\tilde{S}}}
\def \tW {{\tilde{W}}}
\def \tH {{\tilde{H}}}
\def \tPhi {{\tilde{\Phi}}}
\def \id {{\operatorname{id}}}
\def \vtheta {\vartheta}
\def \mca {{\mathcal A}}
\def \mcc {{\mathcal C}}
\def \mce {{\mathcal E}}
\def \mcf {{\mathcal F}}
\def \mcg {{\mathcal G}}
\def \mch {{\mathcal H}}
\def \mcl {{\mathcal L}}
\def \mcw {{\mathcal W}}
\def \mco {{\mathcal O}}
\def \mcp {{\mathcal P}}
\def \mcr {{\mathcal R}}
\def \mcs {{\mathcal S}}
\def \mcu {{\mathcal U}}
\def \mcv {{\mathcal V}}
\def \mcz {{\mathcal Z}}
\def \teta {\widetilde{\eta}}
\def \mbc {{\mathbb C}}
\def \mbr {{\mathbb R}}
\def \mr {{\mathbb R}}
\def \mn {{\mathbb N}}
\def\ha {\frac{1}{2}}
\def \oq {\frac{1}{4}}
\def \hb {\hbar}
\def \nsq {\frac{n^2}{4}}
\def \wtf {\widetilde{F}}
\def \wta {{\widetilde{A}}}
\def \wtb {{\widetilde{B}}}
\def \wtc {{\widetilde{C}}}
\def \wtu {{\widetilde{U}}}
\def \ka {\kappa}
\def \intx {\mathring{X}}
\def \intmco {\mathring{\mco}}
\def \ff {{\operatorname{ff}}}
\def \Id {\operatorname{Id}}
\def \div {\operatorname{div}}
\def \im {\operatorname{Im}}
\def \re {\operatorname{Re}}
\def \diag{\operatorname{Diag}}
\def \odiag{\overline{\operatorname{Diag}}}
\def \mcn {\mathcal{N}}
\def \ga {{\gamma}}
\def \eps {\varepsilon}   
\def \vphi {\varphi}   
\def \la {\lambda}   
\def \La {\Lambda}   
\def \lan {\langle}   
\def \ran {\rangle}   
\def \del {\delta}   
\def \lap {\Delta}
\def \p {\partial}
\def \novt {\frac{n}{2}}
\def \xo {{X\times_0 X}}
\def \beqq {\begin{equation}}
\def \eeqq {\end{equation}}
\def \Xh {X^2_{0, \hbar}}
\def \mck {\mathcal{K}}
\def \soh {\frac{\sigma}{h}}
\def \fnt {\frac{n}{2}}
\def \mc {\mbc}
\def \soh {\frac{\sigma}{h}}
\def \ioh {\frac{i}{h}}
\def \wtla  {\widetilde{\Lambda}}
\def \knsq {\frac{\ka_0 n^2}{4}}
\numberwithin{equation}{section}
\begin{document}
\title[High Energy Resolvent Estimates]{High Energy Resolvent Estimates on Conformally Compact Manifolds with Variable Curvature at Infinity}
\author{Ant\^onio S\'a Barreto}
\address{Ant\^onio S\'a Barreto\newline \indent Department of Mathematics, Purdue University \newline
\indent 150 North University Street, West Lafayette Indiana,  47907, USA}
\email{sabarre@purdue.edu}
\author{Yiran Wang}
\address{Yiran Wang \newline \indent Institute for Advanced Study, The Hong Kong University of Science and Technology\newline
\indent Lo Ka Chung Building, Lee Shau Kee Campus, Clear Water Bay, Kowloon, Hong Kong}
\email{yrwang.math@gmail.com}
\keywords{Scattering, asymptotically hyperbolic manifolds,  conformally compact manifolds, AMS mathematics subject classification: 35P25 and 58J50}
\begin{abstract}   We  construct a semiclassical parametrix for the resolvent of the Laplacian acing on functions on non-trapping conformally compact manifolds  with variable sectional curvature at infinity,  we use it  to prove high energy resolvent estimates and to show existence of resonance free strips  of arbitrary height away from the imaginary axis.  We then use the results of Datchev and Vasy on gluing semiclassical resolvent estimates to extend these results  to conformally compact manifolds  with normal hyperbolic trapping. 
\end{abstract}
\maketitle
\tableofcontents

\section{Introduction}\label{introduction}

 Conformally compact manifolds form a special class of complete Riemannian manifolds  with negative sectional curvature near infinity.  We will use 
 $\intx$ to denote the interior of a  $C^\infty$ manifold  $X$ of dimension $n+1$ with boundary $\p X.$   We shall say that  $\rho\in C^\infty (X)$ defines $\p X,$ or $\rho$ is a boundary defining function,  if   $\rho>0$ in  $\intx,$  $\{\rho=0\}= \p X$ and $d\rho\neq 0$ at $\partial X.$ We shall say that $(\intx,g)$ is a conformally compact manifold (CCM) if  $\rho^2g$ is a $C^\infty$  non-degenerate Riemannian metric up to $\p X,$ which will be called a conformal compactification of $g.$    The hyperbolic space serves as the model for this class: $\intx$ is the open ball $\mathbb{B}=\{z\in \mr^{n+1}: |z|<1\},$ the defining function of $\p \mathbb{B}$ is $\rho=1-|z|^2,$ and the metric $g=\frac{4 dz^2}{(1-|z|^2)^2},$ where $dz^2$ is the Euclidean metric.    
 
 For a fixed boundary defining function  $\rho,$ the metric $g$ induces a metric $h_0$ on $\p X$ given by $h_0=\rho^2 g|_{\p X}.$ There are infinitely many defining functions of $\p X$ and any two of these functions $\rho$ and $\tilde \rho$ must satisfy $\rho=e^f \tilde\rho,$ with $f\in C^\infty(X).$ 
 So,  if $h_0=\rho^2 g|_{\p X}$ and $\tilde h_0={\tilde \rho}^2 g|_{\p X},$ then
  $h_0=e^{2f}\tilde h_0.$ Thus there are also infinitely many conformal compactifications of $g,$ and only the conformal class of $\rho^2g|_{\p X}$ is uniquely defined by $(\intx,g).$

 According to \cite{MM,Rafe}  a CCM $(\intx,g)$ is  complete and its sectional curvature approach $\left| d\rho|_{\p X}\right|_{h_0}^2$  as $\rho\downarrow 0$ along any any $C^\infty$ curve $\gamma \rightarrow \p X.$ We shall denote
 \begin{gather}
 \kappa=\left| d\rho|_{\p X}\right|_{h_0}, \;\ h_0=\rho^2 g|_{\p X}, \label{curv0}
 \end{gather}
   Notice that it follows from \eqref{curv0} that  $\ka$ does not  depend on the choice of $\rho.$  We shall say that  CCM for which  $\ka$ is constant are asymptotically hyperbolic manifolds (AHM).   There is a huge literature on conformally compact manifolds related to conformal field theory,  however the CCM studied there are usually Einstein manifolds, and hence they are AHM, see for example the survey by Anderson  \cite{Anderson}.

The scattering theory of hyperbolic manifolds-- where the sectional curvature is constant-- has a very long history beginning with the work of Fadeev, Fadeev \& Pavlov and Lax \& Phillips \cite{Fa,FaPa,LP}.  Scattering  on hyperbolic quotients was studied by Agmon \cite{Agmon1} and Perry \cite{Perry1,Perry2}, and  on general AHM by Mazzeo and Melrose \cite{MM}  and it has since become a very active field, see for example \cite{BP,GZ,GrZw,HoSa,IK,JS,Mar,Vasy} and references cited there.    The literature on scattering on general CCM is much shorter.  Mazzeo \cite{Rafe} studied Hodge theory on CCM and  Borthwick \cite{Borthwick} adapted the construction of Mazzeo and Melrose \cite{MM} to analyze the meromorphic continuation of the resolvent for the Laplacian on CCM with variable curvature at infinity.    

We know from the work of Mazzeo
 \cite{Rafe,Rafe1}, see also Theorem 1.1 of \cite{Borthwick}, that  if $(\intx,g)$ is a CCM, the essential spectrum of the Laplacian $\Delta_g$ consists of $[\frac{\ka_0^2n^2}{4},\infty),$ where
 $\ka_0=\min_{y\in \p X} \ka(y),$ and it is absolutely continuous. 
The resolvent in this case is defined by
\begin{gather}
 R(\la)=\left(\Delta_g-\ka_0^2\nsq -  \la^2\right)^{-1}.\label{resolv}
 \end{gather}
By the spectral theorem  $R(\la)$ is an analytic family of operators bounded on $L^2(X)$ for $|\im \la|>>0.$   
We shall  assume that $R(\la)$ is holomorphic on $\im\la>>0;$  it continues meromorphically to $\im \la>0$ with poles on the imaginary axis given by the square root of the negative eigenvalues of $\Delta_g,$ and we will study its meromorphic continuation to $\im \la<0.$    Mazzeo and Melrose \cite{MM} showed that if $(X,g)$ is an AHM, $R(\la)$ extends meromorphically to $\mc\setminus -\frac{i}{2} \mn$ and  Guillarmou \cite{Guillarmou} showed that the points $-\frac{i}{2}\mn$ can be essential singularities of $R(\la),$ unless the metric is even.   Borthwick  \cite{Borthwick}  extended the results of \cite{MM} to CCM and established  the meromorphic continuation of $R(\la)$ to a region of the complex plane that excludes some intervals contained on the imaginary axis  about the points $-\frac{i}{2}\mn$ and also a region near $\la=0,$ see Fig. \ref{Hol}  However, no estimates were given for $R(\la).$  
  
   In certain applications, for example in the study of long time behavior of the wave  or Schr\"odinger equation or  the analysis of the spectral measure, it becomes  necessary to understand the behavior of the resolvent at high energies.   Cardoso and Vodev  \cite{CaVo}, and more recently Datchev \cite{Datchev}  studied the high energy behavior of the resolvent  on the real axis  for a class of manifolds that include CCM, see \cite{Guillarmou-AB}.  Melrose, S\'a Barreto and Vasy \cite{MSV}  constructed a semiclassical parametrix for the resolvent for small perturbations of the hyperbolic space and used it to prove high energy resolvent estimates and the distribution of resonances.   Wang \cite{Wang}  extended the results of \cite{MSV}  to  non-trapping AHM. Chen and Hassel \cite{ChenHa} also constructed a semiclassical parametrix for the resolvent on  non-trapping AHM and used it to study the spectral measure, restriction theorems, spectral multiplier \cite{ChenHa1} and Strichartz estimates \cite{ChenHa2}.

  We say that a CCM $(\intx,g)$ is non-trapping if any geodesic $\gamma(t) \rightarrow \p X$ as $\pm t\rightarrow \infty$ and  we shall assume  throughout this paper, with the exception of Section \ref{Resest1}, that $(\intx, g)$ is non-trapping.   We prove the following
  
 \begin{theorem}\label{resest}  Let $(\intx,g)$ be a non-trapping CCM of dimension $n+1,$ $n\geq 1,$ and let $\rho\in C^\infty(X)$ be a boundary defining function.   For any $M>0,$ there exist $K>0$  such that $\rho^a R(\la) \rho^b$ continues holomorphically from $\im\la>0$ to the region $\im \la> - M$, $|\re \la|>K$ provided  $a, b >\frac{- \im\la}{\ka_0}$ and $a+ b\geq 0.$ Moreover, there exists $N>0,$ independent of $M,K,a,b,$ and $C>0$ such that 
 \begin{gather}
 \|\rho^a R(\la) \rho^b f\|_{L^2(X)} \leq C |\la|^{N} \|f\|_{L^2(X)}, \label{res-est0}
 \end{gather}
 see Fig.\ref{Hol}.
 \end{theorem}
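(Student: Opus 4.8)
The plan is to build a semiclassical parametrix for $R(\la)$ on a suitable blown-up double space and to use the non-trapping hypothesis to show that, at high energy, the parametrix error has operator norm less than $1$; inverting by a Neumann series then yields simultaneously the holomorphic (not merely meromorphic) continuation to the strip and the polynomial bound. \emph{Set-up.} Put $\hb=|\re\la|^{-1}$ and $z=\hb\la$, so that $z$ lies near $\{|\re z|=1\}$ with $|\im z|\le \hb M\to 0$ as $\hb\downarrow 0$. Since $R(\la)=\hb^2\bigl(P(\hb)-z^2\bigr)^{-1}$ with $P(\hb)=\hb^2\bigl(\Delta_g-\ka_0^2\nsq\bigr)$, it suffices to produce the semiclassical resolvent $\bigl(P(\hb)-z^2\bigr)^{-1}$, holomorphic in $z$ for $\hb$ small, with $\|\rho^a\bigl(P(\hb)-z^2\bigr)^{-1}\rho^b\|_{L^2\to L^2}\lesssim \hb^{-N'}$ for a fixed $N'$; reading this back through the rescaling gives \eqref{res-est0} with an $N$ independent of $M,K,a,b$. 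In a collar $[0,\eps)_x\times\p X$ adapted to $\rho$ one has, to leading order, the normal form $g=\frac{dx^2}{\ka(y)^2 x^2}+\frac{h(x,y,dy)}{x^2}$ (up to smooth conjugation), so that $P(\hb)$ is a semiclassical $0$-differential operator whose indicial roots at $\p X$ are $s_\pm(\la,y)=\frac n2\pm\frac1{\ka(y)}\sqrt{\frac{n^2}{4}(\ka(y)^2-\ka_0^2)-\la^2}$; the appearance of the variable $\ka(y)$, rather than a constant, in these roots and in the boundary model below is exactly the feature distinguishing this situation from \cite{MSV,Wang}.

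\emph{Parametrix.} On the semiclassical $0$-blown-up double space $\Xh$ — the blowup of $X\times X\times[0,\hb_0)$ along the boundary corner at $\hb=0$ and then along the semiclassical face — one constructs the parametrix microlocally in three stages, following the scheme of \cite{MSV} and \cite{Wang}: (i) a semiclassical pseudodifferential parametrix near the lifted diagonal, inverting the symbol $p=|\xi|_g^2-z^2$ off the characteristic set; (ii) propagation of the error along the rescaled Hamilton flow of $p$ inside the characteristic set, where the non-trapping assumption forces every bicharacteristic to leave compact sets and tend to $\p X$, so that the error can be transported to the boundary faces and solved away there; (iii) at the front face, the two side faces, and the face at infinity one matches with explicit model resolvents. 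Carrying out (ii) requires knowing that the Hamilton flow of $p$ in $\octx$ near $\p X$ is a perturbation of the geodesic flow of the warped product $\frac{dx^2}{\ka(y)^2 x^2}+\frac{h(0,y,dy)}{x^2}$, and that the weights $x^{s_\pm}$ are precisely those rendering the transport equations along flow lines reaching $\p X$ solvable.

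\emph{The crux.} The substantive new difficulty is step (iii). For variable $\ka$ the front-face model at $y_0\in\p X$ is not exact hyperbolic space but the Laplacian of the frozen metric $\frac{dx^2}{\ka(y_0)^2 x^2}+\frac{h(0,y_0,dy)}{x^2}$, a space of constant curvature $-\ka(y_0)^2$; its resolvent, and the side-face models, remain explicitly computable via the known hyperbolic resolvent, but one must verify that $P(\hb)-z^2$ differs from the relevant model, on each boundary face, by an operator lower order in the face defining function (equivalently gaining a power of $\hb$), so that it enters the iteration as an improving error. The subtlety is the mismatch between the spectral normalization $\ka_0^2\nsq=\frac{n^2}{4}\min_{\p X}\ka^2$ and the pointwise bottom $\frac{n^2}{4}\ka(y_0)^2$ of the model spectrum: on $\{\ka>\ka_0\}$ the frozen model sees $z^2$ shifted into its resolvent set, and this shift must be tracked uniformly through the transition regime near $\p X$. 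This mismatch is also what produces the hypotheses on the weights: measured against the metric half-density, the model resolvent kernel exceeds the bounded-operator threshold at the side faces by a factor $(\rho\rho')^{-|\im\la|/\ka}$ with $\ka$ evaluated pointwise (hence $\ge\ka_0$), so $a,b>-\im\la/\ka_0$ compensates the worst case while $a+b\ge 0$ guarantees the front-face (near-diagonal) behaviour is not worsened.

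\emph{Assembly.} Patching the three stages yields a global parametrix $\mcg(\hb,z)$ with $\bigl(P(\hb)-z^2\bigr)\mcg(\hb,z)=\Id-\mce(\hb,z)$, where the non-trapping condition makes $\mce$ — rather than merely compact — of operator norm $O(\hb)$ on the relevant weighted $L^2$ spaces, uniformly for $|\im z|\le\hb M$. Hence for $\hb$ small, that is $|\re\la|>K(M)$, $\Id-\mce$ is invertible by Neumann series, $\bigl(P(\hb)-z^2\bigr)^{-1}=\mcg(\Id-\mce)^{-1}$ is holomorphic in $z$ throughout the strip with norm $\lesssim\hb^{-N'}$, and the weighted mapping properties of $\mcg$ give the bound on $\rho^a\bigl(P(\hb)-z^2\bigr)^{-1}\rho^b$; unwinding the rescaling produces the holomorphic continuation of $\rho^a R(\la)\rho^b$ to $\im\la>-M$, $|\re\la|>K$ together with \eqref{res-est0}. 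I expect the principal obstacle to be the one identified in the crux: propagating the parametrix into, and solving the model problems in, the transition region at $\p X$ when $\ka$ is non-constant — in particular controlling the $\ka>\ka_0$ spectral shift and verifying that the freezing error genuinely improves — since, unlike in \cite{MSV,Wang}, there is no single exact hyperbolic model against which to compare.
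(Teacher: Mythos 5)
Your overall architecture is the same as the paper's: a semiclassical parametrix on the $0$-blown-up double space, weights matched to the indicial roots, an error term that is small for $h=1/|\re\la|$ small so that a Neumann series yields holomorphy, and a Schur-type weighted $L^2$ estimate in which the factor $|e^{-i\soh\tgamma}|\le C\rho_R^{\,\im\la/\ka_R}\rho_L^{\,\im\la/\ka_L}$ together with $\rho_\ff^{a+b}$ produces exactly the hypotheses $a,b>-\im\la/\ka_0$, $a+b\ge 0$. That part of your account is faithful to how the theorem is actually deduced from the parametrix.

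However, you have misplaced the crux, and this leaves a genuine gap in your stages (ii)--(iii). The front-face model with frozen $\ka(y_0)$ and the shift coming from normalizing by $\ka_0^2 n^2/4$ is not the hard new point: in the paper that step is the normal operator argument, handled exactly as in Borthwick's fixed-energy analysis (the shift is absorbed by replacing $1/\ka$ with the $h$-dependent function $\mu=\ka^{-1}(1-\tfrac{n^2h^2(\ka^2-\ka_0^2)}{4\sigma^2})^{1/2}$, so that $\novt-i\soh\mu$ is an exact indicial root and the left-face error can be removed). The genuinely new obstruction for variable $\ka$ is that the object you propose to "transport along the rescaled Hamilton flow" is no longer smooth up to the side faces: the flow-out Lagrangian $\La^*$ (equivalently, in the convex case, the lifted distance function, which equals $-\ka_L^{-1}\log\rho_L-\ka_R^{-1}\log\rho_R$ plus a remainder) has only polyhomogeneous regularity with logarithmic terms at $L$ and $R$. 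Consequently the transport equations along flow lines reaching $\p X$ have coefficients with $\log\rho$ singularities, their solutions acquire expansions in $\rho^j(\log\rho)^k$, and one must (a) prove the polyhomogeneous structure of the flow (the log-singular ODE lemma and the control $k\le j$, resp.\ $j+1$, of log powers, without which Borel summation in $h$ and in the expansion fails), (b) set up a class of semiclassical Lagrangian distributions whose phases and symbols are only polyhomogeneous at $L\cup R$, and show the symbol calculus and the transport/subprincipal-symbol formula survive in that class, and (c) conjugate by $e^{-i\soh\tgamma}$ with $\tgamma$ built from $\mu$, not from $1/\ka$, keeping track of the resulting $h^2\beta\log\rho$ corrections as part of the symbol. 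Your proposal asserts that "the weights $x^{s_\pm}$ render the transport equations solvable" as if the smooth machinery of \cite{MSV,Wang} applied verbatim; for non-constant $\ka$ it does not, and supplying this polyhomogeneous Lagrangian calculus is precisely the content needed to make your step (ii) rigorous.
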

 
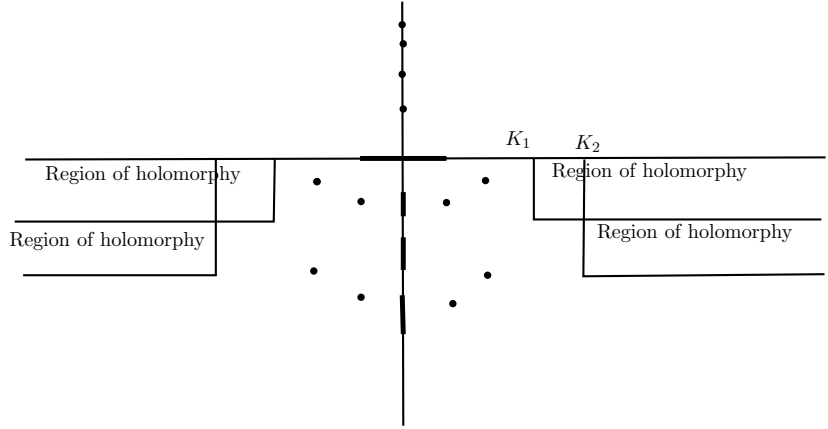
\begin{figure}[h!]
\psscalebox{.7 .7} 
{
\begin{pspicture}(0,-4.0300617)(15.500061,4.0300617)
\psline[linecolor=black, linewidth=0.04](7.46,4.03)(7.48,-4.0299997)
\psline[linecolor=black, linewidth=0.04](0.3,1.0300003)(15.5,1.0700003)
\psdots[linecolor=black, dotsize=0.14](7.48,1.9900002)
\psdots[linecolor=black, dotsize=0.14](7.46,2.6500003)
\psdots[linecolor=black, dotsize=0.14](7.48,3.2300003)
\psdots[linecolor=black, dotsize=0.14](7.46,3.5900004)
\psline[linecolor=black, linewidth=0.094](6.66,1.0500003)(8.3,1.0500003)
\psline[linecolor=black, linewidth=0.094](7.48,0.4100003)(7.48,-0.049999695)
\psline[linecolor=black, linewidth=0.094](7.48,-0.4499997)(7.48,-1.0699997)
\psline[linecolor=black, linewidth=0.094](7.46,-1.5499997)(7.46,-1.5499997)(7.48,-2.2899997)
\psline[linecolor=black, linewidth=0.04](5.04,1.0500003)(5.02,-0.1499997)(0.1,-0.1499997)
\psline[linecolor=black, linewidth=0.04](9.96,1.0500003)(9.96,-0.109999694)(15.42,-0.109999694)
\psline[linecolor=black, linewidth=0.04](3.92,1.0500003)(3.92,-1.1699997)(0.26,-1.1699997)
\rput[bl](8.48,-2.4699998){}
\rput[bl](7.46,-1.1899997){}
\rput[bl](9.42,1.2500004){$K_1$}
\rput[bl](10.74,1.1700003){$K_2$}
\psdots[linecolor=black, dotsize=0.14](5.84,0.6100003)
\psdots[linecolor=black, dotsize=0.14](5.84,0.6100003)
\psdots[linecolor=black, dotsize=0.14](9.04,0.6300003)
\psdots[linecolor=black, dotsize=0.14](6.68,0.2300003)
\psdots[linecolor=black, dotsize=0.14](8.3,0.2100003)
\psdots[linecolor=black, dotsize=0.14](6.68,-1.5899997)
\psdots[linecolor=black, dotsize=0.14](5.78,-1.0899997)
\psdots[linecolor=black, dotsize=0.14](9.08,-1.1699997)
\psline[linecolor=black, linewidth=0.04](10.92,1.0300003)(10.9,-1.1899997)(15.48,-1.1499997)
\psdots[linecolor=black, dotsize=0.14](8.42,-1.7099997)
\rput[bl](10.3,0.6300003){Region of holomorphy}
\rput[bl](11.16,-0.5299997){Region of holomorphy}
\rput[bl](0.68,0.5700003){Region of holomorphy}
\rput[bl](0.0,-0.6899997){Region of holomorphy}
\end{pspicture}
}
\caption{On any CCM, $R(\la)$ is meromorphic on $\mc$ excluding the dark intervals, \cite{Borthwick}. If the CCM is non-trapping, then $R(\la)$ is holomorphic on strips near the real axis.  The dots indicate possible poles,  also called resonances, and very little is known about their distribution. }
\label{Hol}
\end{figure}

These are the first high energy resolvent estimates on CCM with variable curvature at infinity.  The proof of Theorem \ref{resest}  follows in part the strategy of Melrose, S\'a Barreto and Vasy \cite{MSV} and Wang \cite{Wang}, but it has several new features including  the appearance of Lagrangian manifolds with polyhomogeneous singularity, their parametrization and the study of the Lagrangian distributions associated with such manifolds. 

 In the case of AHM with even metric in the sense of Guillarmou \cite{Guillarmou}, Vasy developed a semiclassical machinery that gives sharp semiclassical resolvent estimates without the need to construct a parametrix, but it is not clear whether it works in the case of variable curvature.   However Vasy's techniques apply to more general Lorentzian manifolds \cite{Vasy1,Vasy2}  to which the results proved here do not  apply.
 
Finally we combine Theorem  \ref{resest} and a result of Datchev and Vasy  \cite{DaVa1}  and Wunsch and Zworski \cite{WuZw} to extend the results of  Section \ref{Resest1} to CCM that  may have hyperbolic trapping.  We will assume that  $x \in C^\infty(X)$ is a boundary defining function and as in \cite{DaVa1}, let
\begin{gather*}
X= X_0\cup X_1, \;\ X_0=\{ x< 2\eps \}, \;\ X_1=\{ x> \eps \}.
\end{gather*}

Let $(\intx, g_0)$ be a non-trapping CCM and let $g$ be a $C^\infty$ metric on $\intx$ such that $g=g_0$ in $X_0$ and suppose that in 
$X_1$ the trapped set of $g,$  that is,  the set of maximally extended geodesics of $g$ which 
are precompact, is normally hyperbolic in the sense of Wunsch and Zworski, see section 1.2 of \cite{WuZw}, see also section 5.3 of \cite{DaVa1}. Then
 \begin{theorem}\label{resest-gen}  Let $(\intx, g)$ be a as above. If $\eps$ is small enough, then  for any $M>0,$ there exists $K>0$ such that $\rho^a R(\la) \rho^b$ continues holomorphically  from $\im \la>> 0$ to the region $\im \la>-M$, $|\re \la|>K$ provided 
 $a, b >\frac{- \im\la}{\ka_0}$ and $a+ b\geq 0$. Moreover, there exist $C>0$ and $N >0$ such that
 \begin{gather}
 \|\rho^a R(\la) \rho^b f\|_{L^2(X)} \leq C |\la|^{N} \|f\|_{L^2(X)}.\label{HERE}
 \end{gather}
 \end{theorem}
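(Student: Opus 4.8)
The plan is to deduce the theorem from Theorem~\ref{resest} by the gluing method of Datchev and Vasy \cite{DaVa1}, using as the local ingredient on $X_1$ the Wunsch and Zworski estimate \cite{WuZw} for normally hyperbolic trapped sets. First I would pass to the semiclassical scaling. Since $\Delta_g-\ka_0^2\nsq-\la^2$ depends on $\la$ only through $\la^2$, it suffices to treat $\re\la>K$; set $h=1/\re\la$ and $z=h^2\la^2$, so that $h\downarrow 0$, $\re z\to 1$, $\im z=\mco(h)$, and the region $\im\la>-M$ corresponds to $\im z>-Ch$ with $C=C(M)$. Then
\[
h^2\left(\Delta_g-\ka_0^2\nsq-\la^2\right)=P(h)-z,\qquad P(h)=h^2\Delta_g-\ka_0^2h^2\nsq,
\]
and \eqref{HERE} is equivalent to the semiclassical weighted bound $\|\rho^a(P(h)-z)^{-1}\rho^b\|_{L^2\to L^2}\le Ch^{-N'}$ for $z$ in this range. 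The asserted holomorphic continuation of $\rho^a R(\la)\rho^b$ then follows from this a priori polynomial bound: by Borthwick's theorem $R(\la)$ is meromorphic on $\{|\re\la|>K\}$ and already holomorphic for $\im\la\gg 0$, and a meromorphic family cannot have poles where it is uniformly bounded.

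Next I would introduce two model operators adapted to $X=X_0\cup X_1$. On the full non-trapping CCM $(\intx,g_0)$ put $P_0(h)=h^2\Delta_{g_0}-\ka_0^2h^2\nsq$ (note that $\ka$, hence $\ka_0$, is determined by $g$ near $\p X$, so it is the same for $g$ and $g_0$); since $g=g_0$ on $X_0$ and $g_0$ is non-trapping, the rescaled form of Theorem~\ref{resest} gives $\|\rho^a(P_0(h)-z)^{-1}\rho^b\|\le Ch^{-N}$ uniformly for $z$ in the above range, so $P_0$ controls $P$ on $X_0$ and up to the conformal boundary. For $X_1$, which is compact and contains the trapped set, I would fix a reference operator $P_1(h)$ on a manifold $X_1'$ obtained from $X_1$ by capping it off, outside a small neighborhood $\mcu$ of the trapped set, with a non-trapping model end (a Euclidean end, or equivalently a complex absorbing term supported away from $\mcu$), arranged so that $P_1(h)=h^2\Delta_g$ on $\mcu$. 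Since the trapped set of $g$ in $X_1$ is normally hyperbolic in the sense of Wunsch and Zworski, \cite{WuZw} (see also Section~5.3 of \cite{DaVa1}) yields, for a cutoff $\chi\in\smcomp(\mcu)$ equal to $1$ near the trapped set,
\[
\|\chi(P_1(h)-z)^{-1}\chi\|_{L^2\to L^2}\le Ch^{-1}\log(1/h)
\]
on the relevant range of $z$, while off $\mcu$ the flow of $P_1$ is non-trapping and the added end contributes only outgoing, polynomially bounded terms.

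With these two black boxes I would invoke the Datchev and Vasy gluing theorem. Choosing pseudodifferential cutoffs subordinate to the open cover $\{X_0,X_1\}$, one builds an approximate right inverse of $P(h)-z$ as a finite sum of terms of the form $(\text{cutoff})\,(P_0(h)-z)^{-1}\,(\text{cutoff})$ and $(\text{cutoff})\,(P_1(h)-z)^{-1}\,(\text{cutoff})$; the commutator errors are microlocalized on the set of points carried by the bicharacteristic flow from $X_0$ into $X_1$ (or back), and because this flow is non-trapping outside $\mcu$, finitely many iterations reduce the error to $\mco(h^\infty)$ in the weighted spaces. This produces $\|\rho^a(P(h)-z)^{-1}\rho^b\|\le C\bigl(\|\rho^a(P_0(h)-z)^{-1}\rho^b\|+h^{-1}\|\rho^a(P_0(h)-z)^{-1}\rho^b\|^2\,\|\chi(P_1(h)-z)^{-1}\chi\|\bigr)\le C'h^{-N'}$, which is \eqref{HERE}.

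I expect the main obstacle to be reconciling the two analytic frameworks. Theorem~\ref{resest} is formulated in the conformally compact category, with weights $\rho^a,\rho^b$ on $L^2(X,\dvol_g)$ and $0$-geometry at infinity, whereas the Datchev and Vasy gluing and the Wunsch and Zworski estimate are usually stated on compactly supported or asymptotically Euclidean models. One has to check that the cover can be chosen with the trapping cutoff $\chi$ supported in the compact set $X_1$ --- which is harmless once $\eps$ is small, since then the trapped set is compactly contained in $X_1$ and $X_0$ is a genuine neighborhood of $\p X$ on which $g=g_0$ --- and, more substantially, that the semiclassical propagation-of-singularities estimates underlying the gluing hold uniformly up to the conformal boundary in the $\rho^a L^2$--$\rho^b L^2$ scale; this is precisely where the non-trapping hypothesis near infinity is used, and it is what makes the gluing of the CCM resolvent to the compact trapping model legitimate. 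The exclusion $|\re\la|>K$ is forced both by the semiclassical reduction and by the delicate behavior of $R(\la)$ near $\la=0$ indicated in Fig.~\ref{Hol}.
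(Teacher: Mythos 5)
Your proposal is correct and follows essentially the same route as the paper: decompose $X=X_0\cup X_1$ with the non-trapping model $g_0$ near infinity handled by the rescaled Theorem~\ref{resest}, cap off $X_1$ by a Euclidean/complex-absorbed model operator whose cutoff resolvent is controlled by the Wunsch--Zworski estimate for normally hyperbolic trapping, and glue the two bounds via Theorem~2.1 of Datchev--Vasy. The paper simply cites the gluing theorem rather than sketching its iteration scheme, but the inputs and the conclusion are the same as in your argument.
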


\section{The resolvent at fixed energy}\label{MMB-results}

 We briefly recall  the analysis 
of the  Schwartz kernel of the resolvent done by Mazzeo and Melrose \cite{MM}, and Borthwick \cite{Borthwick}.   In the interior $\intx\times \intx,$ one can use the Hadamard parametrix construction to show that the Schwartz kernel of  the operator $R(\la)$ defined in \eqref{resolv}, which we denote by  $R(\la,z,z'),$  is a distribution in $C^{-\infty}(\intx \times \intx)$ conormal  to the diagonal
\begin{gather*}
\diag=\{(z,z')\in X\times X: z=z'\}.
\end{gather*}
However, this only gives very limited mapping properties of the operator $R(\la).$ The main difficulty is to understand the behavior of $R(\la,z,z')$  near the boundary faces and especially near the intersection  $\diag \cap \, (\p X \times \p X).$ The behavior of $R(\la,z,z')$ near the boundary can be combined with Schur's lemma to prove weighted $L^2(X)$ estimates for $R(\la),$ see for example \cite{MM,Mazzeo-Edge,MSV}.  To analyze  the behavior of $R(\la,z,z'),$  as $z,z'\rightarrow \p X$  in all possible regimens, Mazzeo and Melrose introduced the $0$-stretched product of $X\times X,$ and we recall their construction.   Let 
\beq
\p \odiag = \{(z, z) \in \p X\times \p X\} = \odiag \cap(\p X\times \p X).
\eeq
As a set, the $0$-stretched product space is 
\beq
X\times_0 X = (X\times X)\backslash \p\odiag \sqcup S_{++}(\p \odiag),
\eeq
where $S_{++}(\p\odiag)$ denotes the inward pointing spherical bundle of $T_{\p\odiag}^*(X\times X)$. Let 
\begin{gather}
\beta_0: X\times_0 X \rightarrow X\times X \label{zero-blow-up}
\end{gather}
be the blow-down map. Then $X\times_0 X$ is equipped with a topology and smooth structure  of a manifold with corners for which $\beta_0$ is smooth. The manifold $\xo$  has three boundary hypersurfaces: the left and right faces $L=\overline{\beta_0^{-1}(\p X\times \intx)},$  $R=\overline{\beta_0^{-1}(\intx \times \p X)},$  and the front face $\ff= \overline{\beta_0^{-1}(\p\odiag)}$. The lifted diagonal is denoted by $\diag_0=\overline{\beta_0^{-1}(\diag\setminus \p \odiag)}$.   It has three codimension two corners given by the intersection of  two of these boundary faces, and a codimension three corner given by the intersection of  all the three faces. See Figure \ref{fig1}.  
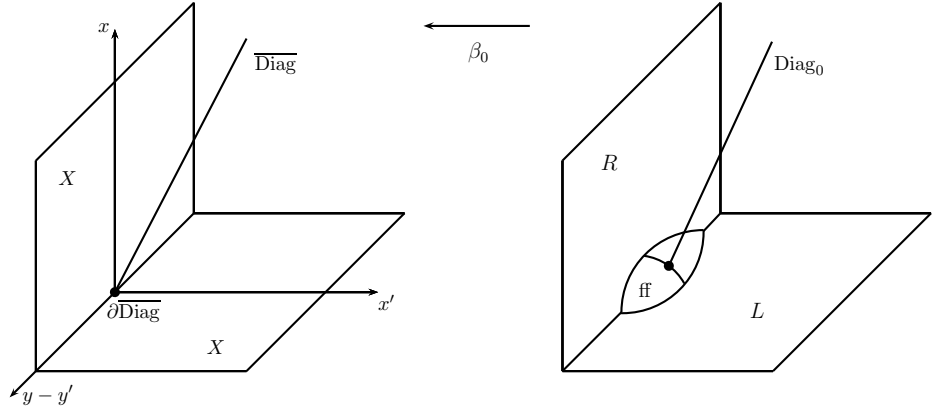
\begin{figure}[htbp]
\centering
\scalebox{0.7}
{
\begin{pspicture}(0,-3.8529167)(17.765833,3.8729167)
\rput(3.7458334,-0.14708334){\psaxes[linewidth=0.04,labels=none,ticks=none,ticksize=0.10583333cm](0,0)(0,0)(4,4)}
\rput(0.74583334,-3.1470833){\psaxes[linewidth=0.04,labels=none,ticks=none,ticksize=0.10583333cm](0,0)(0,0)(4,4)}
\psline[linewidth=0.04cm,arrowsize=0.05291667cm 2.0,arrowlength=1.4,arrowinset=0.4]{->}(3.7458334,-0.14708334)(0.24583334,-3.6470833)
\psline[linewidth=0.04cm,dotsize=0.07055555cm 3.0]{*-}(2.2458334,-1.6470833)(4.7458334,3.1729167)
\psline[linewidth=0.04cm](0.74583334,0.85291666)(3.7458334,3.8529167)
\psline[linewidth=0.04cm](4.7458334,-3.1470833)(7.7458334,-0.14708334)
\rput(2.2458334,-1.6470833){\psaxes[linewidth=0.04,arrowsize=0.05291667cm 2.0,arrowlength=1.4,arrowinset=0.4,labels=none,ticks=none,ticksize=0.10583333cm]{->}(0,0)(0,0)(5,5)}
\usefont{T1}{ptm}{m}{n}
\rput(5.295833,2.6979167){$\odiag$}
\usefont{T1}{ptm}{m}{n}
\rput(2.6,-2){$\p\odiag$}
\usefont{T1}{ptm}{m}{n}
\rput(7.4058332,-1.8420833){$x'$}
\usefont{T1}{ptm}{m}{n}
\rput(2.0358334,3.3779166){$x$}
\usefont{T1}{ptm}{m}{n}
\rput(1.35,0.5179167){$X$}
\usefont{T1}{ptm}{m}{n}
\rput(4.1658335,-2.6820834){$X$}
\usefont{T1}{ptm}{m}{n}
\rput(0.97583336,-3.6020834){$y - y'$}
\rput(13.745833,-0.14708334){\psaxes[linewidth=0.04,labels=none,ticks=none](0,0)(0,0)(4,4)}
\rput(10.745833,-3.1470833){\psaxes[linewidth=0.04,labels=none,ticks=none](0,0)(0,0)(4,4)}
\psline[linewidth=0.04cm](13.745833,3.8529167)(10.745833,0.85291666)
\psline[linewidth=0.04cm](17.745832,-0.14708334)(14.745833,-3.1470833)
\psline[linewidth=0.04cm](11.865833,-2.0270834)(10.745833,-3.1470833)
\psarc[linewidth=0.04](13.425834,-2.0270834){1.56}{90.0}{180.0}
\psarc[linewidth=0.04](11.885834,-0.50708336){1.54}{268.5312}{1.27303}
\psline[linewidth=0.04cm](13.745833,-0.14708334)(13.425834,-0.48708335)
\psarc[linewidth=0.04](12.155833,-1.9770833){1.03}{28.855661}{82.11686}
\psline[linewidth=0.04cm,dotsize=0.07055555cm 3.0]{*-}(12.765833,-1.1470833)(14.725833,3.1129167)
\usefont{T1}{ptm}{m}{n}
\rput(15.245833,2.6579165){$\diag_0$}
\usefont{T1}{ptm}{m}{n}
\rput(14.445833,-1.9970833){\large $L$}
\usefont{T1}{ptm}{m}{n}
\rput(11.635834,0.8229167){\large $R$}
\usefont{T1}{ptm}{m}{n}
\rput(12.325833,-1.6020833){$\ff$}
\psline[linewidth=0.04cm,arrowsize=0.05291667cm 2.26,arrowlength=1.4,arrowinset=0.4]{->}(10.1258335,3.4129167)(8.105833,3.4129167)
\usefont{T1}{ptm}{m}{n}
\rput(9.155833,2.9029167){\large $\beta_0$}
\end{pspicture}
}
\caption{The $0$-stretched product space $X\times_0X$.}
\label{fig1}
\end{figure}

  It is convenient  to find a suitable boundary defining function which can be used to express the metric  in a simple form.   The proof of Lemma 2.1 of \cite{Gr}, which is written in  the case of AHM,  can be easily adapted with almost no changes for CCM to  show that fixed a representative  $H_0\in [\rho^2g|_{\p X}]$ of the equivalence class of $\rho^2g|_{\p X},$  there exists a unique boundary defining function $x$ in a neighborhood of $\p X$ such that
\begin{equation}\label{prod}
x^2 g = \frac{dx^2}{\ka^2(y)} + H(x), \; H(0)=H_0, \; \kappa(y)=\left| d\rho|_{\p X}\right|_{H_0} \text{ on }   [0,\eps) \times \p X,
\end{equation}
where $H(x)$ is a $C^\infty$ family of Riemannian metrics on $\p X$ parametrized by $x.$    One can extend $x$  to $X$ by setting it equal to a constant on a compact subset of $\intx.$

In these coordinates
\begin{gather}
\Delta_g= \ka(y)^2( -(x\p_x)^2 +n x\p_x+ x^2 F(x,y) \p_x) + x^2 \Delta_{H} - x^2 H^{ij}(x,y)(\p_{y_i}\log \ka(y)) \p_{y_j}, \label{formlap}
\end{gather}
where  $\Delta_H$ is the Laplacian with respect to the metric $H(x,y)=\sum_{ij} H_{ij}(x,y) dy_i dy_j,$ and $H^{-1}=(H^{ij}),$ and $F(x,y)=\ha \p_x \log(\operatorname{det} H(x,y))$. Here we used the convention that repeated indices indicate sum over those indices.

  The Lie algebra of vector fields that vanish on $\p X$ is denoted by $\mcv_0(X)$ and as in \cite{MM}, the space of $0$-differential operator of order $m,$ denoted by  $\diff_0^m(X),$ are those of the form
 \begin{gather*}
 P=\sum_{|\alpha|\leq m } a_\alpha V_1^{\alpha_1} V_2^{\alpha_2}\ldots V_m^{\alpha_m}, \;\  V_j \in \mcv_0(X), \; a_\alpha \in C^\infty(X).
 \end{gather*}
 In  local coordinates in which \eqref{prod} holds,  $\p X=\{x=0\},$ $y \in \p X,$ $\mcv_0(X)$ is spanned by
over $C^\infty(X)$ by $\{x\p_x, x\p_y\}$ and in view of \eqref{formlap},  $\Delta_g\in \diff_0^2(X).$

The key point here is that $\diag_0$ meets the boundary of $\xo$  at the front face $\ff,$ and does so transversally. Therefore  one can use the product structure \eqref{prod} to extend $\xo,$ $\diag_0$  and $\beta_0^*\Delta_g$  across the front face.  We also observe that  near $\diag_0,$ the lifted vector fields in $\mcv_0(X)$ are smooth, tangent to the boundary, but the lift of 
 $\Delta_g$ from either the left or right factor is elliptic near $\diag_0$ across $\ff.$  Mazzeo and Melrose defined the class  $\Psi_0^{m}(X)$  of pseudodifferential operators of order 
$m$  acting on half-densities whose Schwartz kernels lift under  $\beta_0$ defined in \eqref{zero-blow-up} to a distribution which is conormal of order $m$ to $\diag_0$ and  vanish to infinite order at all faces, except the front face.  So the Schwartz kernel of $A\in\Psi_0^m(X)$ is of the form $K_A(z,z') |dg(z')|^\ha$, with $K_A$ as described above, so in particular is $C^\infty$ up to the front face. One can use the Hadamard parametrix construction to  find an operator $G_0(\la)\in \Psi_0^{-2}(X)$ such that 
 $(\Delta_g-\nsq-\la^2)G_0(\la)-\Id= E_0(\la)$ where 
 $\beta_0^*E_0\in C^\infty(\xo)$ and is supported in a neighborhood of $\diag_0.$

 Next one needs to remove the error $E_0(\la),$ and to do that  Mazzeo and Melrose  introduced another class of operators whose kernels are singular at the right and left faces.  In the case of AHM, this class will be denoted by $\Psi_0^{m,\alpha,\gamma}(\xo),$ $\alpha, \gamma \in \mc.$  An operator $P\in \Psi_0^{m,\alpha,\gamma}(\xo)$ if it can be written as a sum $P=P_1+P_2,$ where $P_1\in \Psi_0^m(X)$ and the Schwartz kernel $K_{P_2}|dg(z')|^\ha$
of the operator $P_2$ is such that $K_{P_2}$
lifts under $\beta_0$ to a conormal distribution which is smooth up to the front face, and which satisfies the following conormal regularity with respect to the other faces
\begin{equation}
\mcv_b^{k}  \beta_0^* K_{P_2} \in \rho_L^\alpha \rho_R^\gamma L^\infty(\xo),  \forall \;\ k \in \mn, \label{boundary-regularity}
\end{equation}
where $\mcv_b$ denotes the space of $C^\infty$ vector fields on $\xo$ which are tangent to the right and left faces, but can be transversal to the front face.
Mazzeo and Melrose showed that  $R(\la)$ depends meromorphically on $\la\in \mc\setminus -\frac{i}{2}\mn,$ and 
\begin{gather}
R(\la) \in  \Psi_0^{-2,\novt-i\la, \novt-i\la}(X). \label{Mazz-Mel}
\end{gather}

In the case of CCM,  Borthwick \cite{Borthwick}  used the same strategy of Mazzeo and Melrose \cite{MM}  to analyze the kernel of $R(\la),$ and he showed that  in this case it is necessary to work with  functions that have polyhomogeneous conormal singularities of variable order at the left and right faces.  We define
\begin{gather}
\begin{gathered}
\vphi \in \mcp(\xo) \Longleftrightarrow  \vphi \in C^\infty \text{ in the interior of } \xo \text{ and }  \\   \vphi  \text{ has a polyhomogeneous expansion at }  R  \text{ and } L.
\end{gathered}\label{def-variable-polyhomogeneiy}
\end{gather}
In other words,  in local coordinates $x=(x_1,x_2,x''),$  valid up to $\ff,$  where  $L=\{x_1=0\},$ $R=\{x_2=0\}$  
\begin{gather}
\begin{gathered}
\vphi\in \mcp(\xo) \Longleftrightarrow 
\vphi \sim   \sum_{j_1,j_2=0}^\infty \sum_{k_1=0}^{j_1} \sum_{k_2=0}^{j_2} x_1^{j_1}x_{2}^{j_2}  (\log x_1)^{k_1} (\log x_2)^{k_2} F_{j_1,j_2,k_1,k_2}(x''), 
\end{gathered}\label{as-ph-exp}
\end{gather}
where  $F_{j_1,j_2,k_1,k_2}$ in $C^\infty,$  in the sense that if $\vphi_{J_1,J_2}(x)$ is the function given by the sum on the right truncated for $j_1\leq J_1$ and $j_2\leq J_2,$ then for any $J_1,J_2 \in \mn$  and $\del>0$ there exists $C(J_1,J_2,\del)$ such that
\begin{gather*}
|\vphi(x_1,x_2, x'')- \vphi_{J_1,J_2}(x_1,x_2, x'')| \leq C(J_1,J_2,\del) x_1^{J_1-\eps} x_2^{J_2-\eps}.
\end{gather*}
This a very small class of polyhomogeneous distributions, but it is necessary to work in this class-- for example we will take Borel sums of the form $\sum_j h^j a_j,$ where $a_j$ is polyhomogeneous, so it is necessary to control $k_m$ in terms of $j_m,$ $m=1,2,$  and we will also need that products of such distributions remain in the same class.   Borthwick also needed to introduce the spaces 
\begin{gather}
\mck_{ph}^{\alpha,\beta}(\xo)=\{ \vphi: \rho_L^{-\alpha}\rho_R^{-\beta} \vphi \in \mcp(\xo)\}, \;\ \alpha, \beta \in C^\infty(\xo)\},
\end{gather}
 and showed that $\mck_{ph}^{\alpha,\beta}$ is invariantly defined, and  it also only depends on the values of 
 $\beta|_{\{\rho_R=0\}}$ and $\alpha|_{\{\rho_L=0\}}.$  One can then define the corresponding space of  pseudodifferential operators: Given $\alpha, \beta \in C^\infty(\xo)$ one says that 
\begin{gather}
\begin{gathered}
P\in \Psi_{0,ph}^{m,\alpha,\beta}(X)  \text{ if } P=P_1+P_2, \;\ P_1\in \Psi_0^m(X), \text{ and the kernel of } P_2 \text{ satisfies } \\  \beta_0^*K_{P_2}= K_{2}|dg(z')|^\ha, \; K_2 \in \mck_{ph}^{\alpha,\beta}(\xo). \end{gathered}\label{bor-space}
\end{gather}

 In the general case when $\ka$ is not constant, let $\ka_0=\min_{\p X} \ka,$ let
\begin{gather}
\mu(\la,y)= \frac{1}{\ka(y)} \sqrt{ \la^2-\nsq(\ka(y)^2-\ka_0^2)}, \label{def-conjfact}
\end{gather}
and extend $\mu(\la,y)$ to a  $C^\infty$ function on $X.$ One way of doing this would be to define $\mu(\la,x,y)=\mu(\la,y)$ in a tubular neighborhood of $\p X,$  where $x$ is a defining function of $\p X,$ and extend it as a constant further to the interior.  Borthwick \cite{Borthwick} proved that
\begin{gather}
R(\la) \in \Psi_{0,ph}^{-2,\novt-i \mu_L,\novt-i \mu_R}(X), \label{Bor-res}
\end{gather}
where $\mu_\bullet$ is the lift of $\mu(x,y)$ from the $\bullet$ factor, $\bullet=R,L,$ and moreover $R(\la)$ continues meromorphically from $\{\im \la >>0\}$  to 
\begin{gather*}
\mc \setminus \{\la \in \mc: \mu(\la,y)\in -\frac{i}{2} \mn_0 \text{ for some } y \in \p X\},
\end{gather*}
see Fig.\ref{Hol}.

\section{The semiclassical zero stretched product and operator spaces}\label{semiclassical-spaces}

  In this section we define the semiclassical blow-up, which is the same as in \cite{MSV}, define the operator spaces that will be used in the construction of the parametrix, with the exception of the polyhomogeneous semiclassical Lagrangian distributions  with respect to a polyhomogeneous Lagrangian submanifold that will be defined in Section \ref{PHLD}, and which is one of the novelties in this paper.  We will  state the theorem about the structure of the parametrix using notation that will be introduced in Section \ref{PHLD}.  In Section \ref{SPGC}, as an example and in preparation to the proof in the general case, we construct the parametrix in the particular case of geodesically convex CCM. 

We are interested in the uniform  behavior of $R(\la),$ defined in \eqref{resolv}  as $|\re\la|\uparrow \infty$  and $|\im\la|<M,$ and so we turn this into a semiclassical problem by setting  $h = 1/\re\la$ and regard $h$ as a small parameter,  and if we multiply  $\Delta_g-\knsq-\la^2$ by $h^2=(\re \la)^{-2},$ we define

\begin{gather}
\begin{gathered}
 P(h, \sigma,D)\  \dot=  \ h^2( \Delta_g-\knsq-\la^2) \ \dot=  \ h^2(\Delta_g-\knsq) -\sigma^2 \\
  \text{ where } \sigma = 1 + i h\im\la, \;\  \sigma \in \Omega_\hb=(1-ch, 1+ch)\times i (-Ch, Ch). 
\end{gathered}\label{defoph}
\end{gather}

As in \cite{Borthwick,MM}, we need to work on the blown up space $\xo$ to construct the Schwartz kernel of the resolvent, but now we also have the semiclassical parameter $h \in [0,1),$ and so we have to work in $X\times_0 X \times [0,1).$  We shall adapt the spaces defined by Melrose, S\'a Barreto and Vasy \cite{MSV} to inlcude distributions that have polyhomogeneous expansions at the right and left faces.  As in \cite{MSV}, we use $\hb$ to denote the semiclassical nature of a mathematical object,  so as not to confuse it with a family of such objects depending on $h,$ but we use $h$ as the parameter itself.
On $\xo \times [0, 1)$, the submanifolds $\diag_0 \times[0,1)$ and $X\times_0 X\times \{0\}$ intersect transversally. As in section 3 of \cite{MSV}, we blow up this intersection.  This gives the space $X_{0, \hb}^2$, and we shall denote the associated blow-down map by 
\begin{gather*}
\beta_{0, h}: X_{0, \hb}^2 \longrightarrow X\times_0 X\times [0, 1).
\end{gather*}
The composition of the blow down maps $\beta_0$ and $\beta_{0,\hbar}$  will be denoted by
\begin{gather*}
\beta_\hbar = \beta_{0, \hbar}\circ\beta_0 : X_{0, \hbar}^2 \longrightarrow X\times_0 X\times [0, 1).
\end{gather*}

As a compact manifold with corners, $X_{0, \hbar}^2$ has five boundary faces, see Fig.\ref{semiblow}. The left and right faces, denoted by $\mcl, \mcr,$ are the closure of $\beta_{0,\hbar}^{-1} (L \times[0, 1) ), \beta_{0, \hbar}^{-1}(R \times[0, 1))$ respectively.  The front face $\mcf$ is the closure of $\beta_{0, \hbar}^{-1}(\ff\times [0, 1) \backslash (\p \diag_0 \times \{0\})).$ The semiclassical front face $\mcs$ is the closure of $\beta_{0, \hbar}^{-1}(\diag_0\times\{0\})$. Finally, the semiclassical face $\mca$ is the closure of $\beta_{0,\hbar}^{-1}( (X\times_0 X\backslash \diag_0)\times \{0\})$. The lifted diagonal denoted by $\diag_\hbar$ is the closure of  $\beta_{0, \hbar}^{-1}(\diag_0\times(0, 1))$.  
 
\begin{figure}[h]
\centering
\scalebox{0.6} 
{\begin{pspicture}(0,-4.58)(18,5.0)
\psline[linewidth=0.04cm](5.58,0.0)(9.74,0.0)
\psline[linewidth=0.04cm](1.58,-4.0)(5.98,-3.98)
\psarc[linewidth=0.04](4.58,0.0){1.0}{0.0}{90.0}
\psarc[linewidth=0.04](0.58,-4.0){1.0}{0.0}{90.0}
\psline[linewidth=0.04cm,linestyle=dashed,dash=0.16cm 0.16cm, arrowsize=0.05291667cm 2.0,arrowlength=1.4,arrowinset=0.4]{->}(0.78,-3.8)(-0.42, -5)
\psline[linewidth=0.04cm](4.58,1)(4.58,5)
\psline[linewidth=0.04cm](0.58,-3)(0.58,1)
\psline[linewidth=0.04cm](0.58,-3)(4.58,1)
\psline[linewidth=0.04cm](1.58,-4)(5.58,0)
\pspolygon[linewidth=0.04](1.29,-3.29)(5.29,0.71)(6.4,3.8)(2.48,0.0)
\usefont{T1}{ptm}{m}{n}
\rput(7.96,3.505){\large $\text{Diag}_0\times[0, 1)_\hbar$}
\usefont{T1}{ptm}{m}{n}
\rput(7.96,1.505){\large $\{h = 0\}$}
\usefont{T1}{ptm}{m}{n}
\rput(0.7,-4.4){$h$}
\psline[linewidth=0.04cm](15.58,0.0)(19.74,0.0)
\psline[linewidth=0.04cm](11.58,-4.0)(15.98,-3.98)
\psarc[linewidth=0.04](14.58,0.0){1.0}{63.434948}{90.0}
\psarc[linewidth=0.04](10.58,-4.0){1.0}{0.0}{90.0}
\psline[linewidth=0.04cm](10.58,-3)(10.58,1)
\psline[linewidth=0.04cm](14.58,1)(14.58,5)
\psline[linewidth=0.04cm](10.58,-3)(14.58,1)
\psline[linewidth=0.04cm](11.58,-4)(15.58,0)
\pspolygon[linewidth=0.04](11.29,-3.29)(15.1,0.48)(16.17,3.15)(12.48,-0.2)
\usefont{T1}{ptm}{m}{n}
\rput(11.11,3.585){\large $\beta_{0,\hbar}$}
\usefont{T1}{ptm}{m}{n}
\rput(11.8,-0.165){\large $\text{Diag}_\hbar$}
\usefont{T1}{ptm}{m}{n}
\rput(15.99,-2.335){\large $\mcr$}
\usefont{T1}{ptm}{m}{n}
\rput(12.45,1.665){\large $\mcl$}
\usefont{T1}{ptm}{m}{n}
\rput(12.48,-2.755){\large $\mcf$}
\psline[linewidth=0.04cm,linestyle=dashed,dash=0.16cm 0.16cm](15.02,0.88)(16.0,3.58)
\psline[linewidth=0.04cm](15.46,0.5)(16.56,3.28)
\psarc[linewidth=0.04](15.24,0.72){0.3}{146.30994}{324.4623}
\psarc[linewidth=0.04](16.27,3.43){0.31}{145.30484}{347.4712}
\psarc[linewidth=0.04](14.58,0.0){1.0}{0.0}{29.47589}
\psline[linewidth=0.04cm,arrowsize=0.05291667cm 2.26,arrowlength=1.4,arrowinset=0.4]{->}(12.1258335,4)(10.105833,4)
\usefont{T1}{ptm}{m}{n}
\rput(15.72,1.545){\large $\mcs$}
\usefont{T1}{ptm}{m}{n}
\rput(18.25,1.385){\large $\mca$}
\end{pspicture} }
\caption{The semiclassical blown-up space. The figure on the right is $X^2_{0, \hbar}$  and the figure on the left is $X\times_0 X\times [0,1).$}
\label{semiblow}
\end{figure}
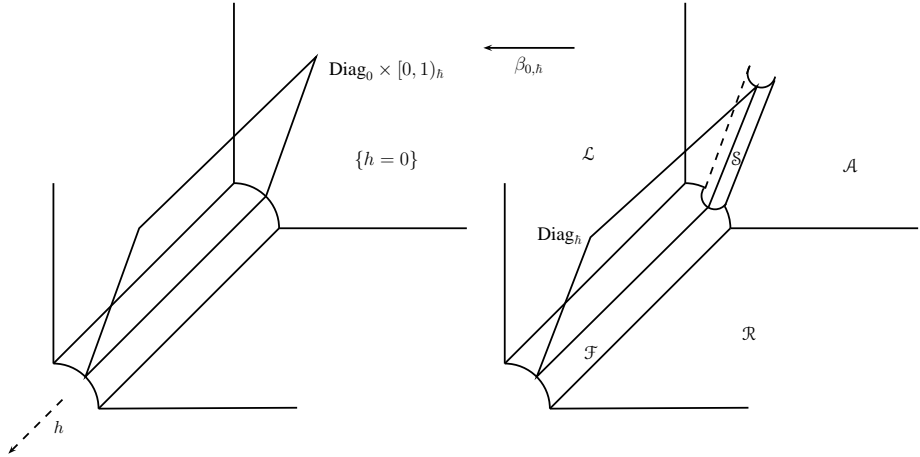

We follow the strategy of  \cite{MSV} and we will find an operator $G(h,\sigma)$ such that 
\begin{gather}
P(h,\sigma,D) G(h,\sigma)=\Id+ E(h,\sigma), \label{param-0}
\end{gather}
 where $\beta_\hb^*K_{E(h,\sigma)}$ (where $K_{\bullet}$  denotes the Schwartz kernel of   the operator $\bullet $) vanishes to infinite order on the left
face $\mcl,$  the zero-front face $\mcf,$  the semiclassical front face $\mcs$ and the semiclassical face $\mca.$  Then we just use Schur's Lemma to prove that
the error term is bounded as an operator acting between weighted $L^2(X)$ spaces and its norm goes to zero as $h\downarrow 0.$ 

 As in the work of Mazzeo and Melrose \cite{MM} and Borthwick \cite{Borthwick}, we will show that behavior $G(h,\sigma)$ at the right and left faces is determined by the indicial roots.   We say that  $\alpha \in \mc$ is an indicial root of $P(h,\sigma,D)$ if for any 
 $v\in C^\infty(X)$ there exists $V\in C^\infty(X)$ such that
\begin{gather*}
P(h,\sigma,D) (x^\alpha v) = x^{\alpha+1}  V
\end{gather*}
In view of \eqref{formlap},  if $v\in C^\infty(X)$ there exists $V \in C^\infty(X)$ such that
\begin{gather*}
P(h,\sigma,D) x^\alpha v=(h^2 (-\ka^2(y)( \alpha^2 -n\alpha)-\knsq)-\sigma^2) x^{\alpha} v + x^{\alpha+1} V,
\end{gather*}
and so $\alpha$ is an indicial root  if $h^2\ka^2(y)(\alpha^2-n\alpha)+\sigma^2+h^2 \knsq=0,$ which gives
\begin{gather}
\alpha(h,\sigma,y)= \novt \pm  i\frac{\sigma}{h} \mu, \;\  \mu=\frac{1}{\ka(y)} \left(1-\frac{n^2h^2(\ka^2(y)-\ka_0^2)}{4\sigma^2}\right)^\ha.\label{sc-ind-roots}
\end{gather}
This is the semiclassical version of the function $\mu(\la,y)$ defined in \eqref{def-conjfact}.  Since we assume the resolvent is holomorphic on $\im\la>>0,$ we will pick the negative sign in \eqref{sc-ind-roots}. When $\ka$ is constant, $\mu= \frac{1}{\ka},$  which is what appears in \eqref{Mazz-Mel}, but in general $\mu\in C^\infty(\p X)$ and it appears in \eqref{Bor-res}.    

The function $\ka\in C^\infty(\p X),$ can be extend to $C^\infty(X)$ by first setting $\ka(x,y)=\ka(y)$ in a tubular neighorhood of $\p X,$ where $(x,y)$ are as in \eqref{prod}, and then extending $\ka(x,y)$ to $X.$   We then define $\ka_R= \beta_0^*\ka(z')$ as the lift of $\ka(z')$ from the right factor  and similarly $\ka_L= \beta_0^*\ka(z)$ as the lift from the left factor.  

 We shall define 
\begin{gather*}
\mu_\bullet(m)=  \frac{1}{\ka_\bullet(m)} \left(1-\frac{n^2h^2(\ka_\bullet^2(m)-\ka_0^2 )}{4\sigma^2}\right)^\ha, \;\ \bullet=R,L, \; m\in \xo.
\end{gather*}
 If $h_0$ is such that
 \begin{gather}
\frac{n^2 h^2}{4|\sigma^2|} (\ka_\bullet^2- \ka_0^2)<\ha, \; \sigma=1+h\sigma', \ h\in [0,h_0], \ \sigma'\in (-c,c)\times (-C,C), \label{defh0}
\end{gather}
then $\mu_\bullet$ is a holomorphic function of  $(\ka_\bullet^2- \ka_0^2)\frac{n^2 h^2}{4\sigma^2},$ and since $\sigma=1+h\sigma',$ 
\begin{gather}
\mu_\bullet-\frac{1}{\ka_\bullet} = h^2 \nu_\bullet(h,\sigma',m)\sim  h^2 \sum_{j=0}^\infty \mu_{j,\bullet}(\sigma', m) h^{2j}, \;\ \bullet=R, L, \;\ \sigma=1+ h\sigma'. \label{real-anal}
\end{gather}

As for the boundary defining functions of $R$ and $L,$  we may choose $\rho_R$ and $\rho_L$ such that $\rho_R=\rho_L=1$ near $\diag_0.$  With these choices of $\ka_R,$ $\ka_L,$ $\rho_R$ and $\rho_L,$ we define
\begin{gather}
\tgamma= \mu_R \log \rho_R+ \mu_L \log \rho_L. \label{def-tgamma}
\end{gather}

Borthwick's cosntruction  suggests that the Schwartz kernel of a semiclassical  parametrix of 
\begin{gather*}
R(h,\sigma)=(h^2(\Delta_g-\knsq)-\sigma^2)^{-1},
\end{gather*}
 should roughly be of the form $G(h,\sigma)= (G_1(h,\sigma)+ G_2(h,\sigma)) |dg(z')|^\ha,$ where
\begin{gather}
\begin{gathered}
 \beta_\hb^*G_1(h,\sigma) \text{ is supported near } \mcs, \beta_\hb^* U_2 \text{ vanishes to infinite order at  } \mcs \text{ and }  \\
 \beta_0^*G_2(h,\sigma)= \rho_R^\novt \rho_L^\novt e^{-i \soh \tgamma}U_2(h,\sigma)
\end{gathered} \label{defn-conj}
\end{gather}
where  $U_2(h,\sigma)$ is a distribution that has polyhomogeneous expansions at the right and left faces.  Notice that 
$\tgamma=0$ near $\diag_0,$ and $\beta_0^*G_1$ is supported near $\diag_0,$ so $\beta_0^*G_1=e^{i\soh \tgamma} \beta_0^*G_1.$ But also notice that  in view of \eqref{real-anal},  
\begin{gather}
\begin{gathered}
\tgamma= \gamma- h^2 \beta, \;\  \gamma= \frac{1}{\ka_R}  \log \rho_R+ \frac{1}{\ka_L} \log \rho_L, \text{ and } \\
 \beta=\nu_L(h,\sigma',m) \log\rho_L+  \nu_R(h,\sigma',m) \log\rho_R,
 \end{gathered} \label{def-gamma}
 \end{gather}
and therefore
\begin{gather}
e^{i\soh \tgamma }= \rho_R^{ih\sigma \nu_R(h,\sigma',m)}\rho_L^{ih\sigma \nu_L(h,\sigma',m)} e^{i\soh \gamma}. \label{def-gamma-osc}
\end{gather}
So the oscillatory part of $e^{i\soh \tgamma}$  is given by  $e^{i\soh \gamma}$ while the remainder of the expansion of $e^{i\soh \tgamma}$ should be viewed as part of a semiclassical symbol.  However, while this symbol is polyhomogeneous, the powers of $\rho_\bullet$ that appear in the expansion will depend on $\nu(h,\sigma',m)$  and is not  an element of  the class of polyhomogeneous distributions defined above.


As in Section \ref{MMB-results} and \cite{MSV}, we define the class of semiclassical pseudodifferential operators in two steps,  first we define the space
$\Psi_{0,\hb}^m(X)$ which consists of operators whose kernel $K_P(z,z',h)\,|dg(z')|^\ha$ is such that  
\begin{gather*}
\beta_\hb^* K_P= \rho_{\mcs}^{-\novt-1} \widetilde K, 
\end{gather*}
where $\widetilde K$  is supported near $\diag_\hb$ and
\begin{gather*}
\mcv_{\diag_\hb}^k \widetilde K\in {}^\infty H_{loc}^{-m-\frac{n+1}{2}}(\Xh), \;\ k \in \mn, \\ \mcv_{\diag_\hb} \text{ consists of } C^\infty  \text{ vector fields  tangent to } \diag_\hb,
\end{gather*}
where the space ${}^\infty H_{loc}^{s}$ is the Besov space defined in the appendix B of Volume 3 of \cite{Hormander}, and  since $\Xh$ has dimension $2n+3,$ and $\diag_\hb$ has dimension $n+2,$ this choice makes $\widetilde K$ a conormal distribution of order $m-\oq$ to $\diag_\hb,$ see Theorem 18.2.8 of \cite{Hormander}.

   The analogue of the space \eqref{boundary-regularity} is given by
\begin{gather}
\mck^{a,b,c}(X_{0,\hb})=\{ K \in L^{\infty}(X_{0,\hb}): \mcv_b^m K \in \rho_{\mcl}^a \rho_{\mca}^b \rho_R^c \rho_{\mcs}^{-\novt-1}L^\infty(X_{0,\hb}), \;\ m\in \mn \}, \label{sc-con}
\end{gather} 
where $\mcv_b$ denotes the Lie algebra of vector fields which are tangent to $\mcl,$ $\mca$ and $\mcr.$  Again, as in \cite{MM}, we define the space $\Psi_{0,\hb}^{m,a,b,c}(X)$ as the operators $P$ which can be expressed in the form $P=P_1+P_2,$ with $P_1\in \Psi_{0,\hb}^{m}(X)$ and  the kernel 
$K_{P_2}\,|dg(z')|^\ha$ of $P_2$ is such $\beta_{\hb}^* K_{P_2} \in \mck^{ a,b,c}(X_{0,\hb}).$

 Once we construct the parametrix near the semiclassical front face, we obtain errors that vanish to infinite order at $\mcs$ and this will allow us to work in the space $\xo\times [0,1).$ As in the case of fixed energy $\la,$  if $\ka$ is not constant, one expects polyhomogeneous expansions  at the right and left faces instead of \eqref{sc-con},  so we define $\mcp(\xo\times [0,1))$  as 
\eqref{def-variable-polyhomogeneiy} and \eqref{as-ph-exp},  in other words

\begin{gather}
\begin{gathered}
\vphi \in \mcp(\xo \times [0,1)) \Longleftrightarrow \vphi \in C^\infty \text{ in the interior of } \xo\times [0,1)  \text{ and up to } \mcf \text{ and } \\ \vphi  \text{ has a polyhomogeneous expansion at } L \times [0,1), \;  R \times [0,1),
\end{gathered}\label{var-phg-xh}
\end{gather}
where the definition of a polyhomogeneous expansion is the one given in \eqref{as-ph-exp}. For $\mu,\zeta\in C^\infty(\xo),$ we define the space
\begin{gather}
\mck_{ph}^{\mu,\zeta}(\xo\times [0,1))= \{\vphi \in C^{-\infty}(\xo\times [0,h)): \rho_L^{-\mu}  \rho_R^{-\zeta}  \vphi \in \mcp(\xo\times [0,h))\}. \label{phg-Xh}
\end{gather}
We remark that it follows from  Lemma 2.3 of \cite{Borthwick} that these spaces do not depend on the choice of the defining functions $\rho_L,\rho_\mca,$  $\rho_\mcr$ or 
$\rho_\mcs,$ and it only depends on $\mu|_{\{\rho_\mcl=0\}}$  and $\zeta|_{\{\rho_\mcr=0\}}.$  

Since the sectional curvature is negative in a collar neighborhood of $\p X,$ it follows  that  a CCM $(\intx,g)$ has a uniform radius of injectivity, in other words  there exists $\del>0$ such that for every $z\in \intx,$ geodesic normal coordinates are valid  in a ball $B(z,\del)=\{z'\in \intx: r(z,z')<\del\},$ where $r$ is the length of the geodesic joining $z$ and $z'.$   This is equivalent to saying that
$r(z,z')$ is well defined in a neighborhood of $\diag,$ and in fact this implies that  $\beta_0^* r$ is well defined in a neighborhood of $\diag_0$ and up to a neighborhood of the front face. 

Next we state the theorem which gives the structure of the parametrix for non-trapping CCM.  We use the spaces of Lagrangian distributions 
$I_{ph}^k(\xo,\La^*,\Omega^\ha)$  in the statement  of the theorem, but we will not define it until Section \ref{PHLD}. For now we just say that this is the space  semiclassical Lagrangian distributions associated with a Lagrangian submanifold  and symbols which have polyhomogeneous singularities at the right and left faces of $\xo.$

\begin{theorem}\label{para-structure}  For  $h\in (0,h_0)$ with $h_0$ as in \eqref{defh0} and $\sigma\in \Omega_\hb=(1-ch,1+ch)\times i (-Ch,Ch),$  there exists  $G(h,\sigma)=G_0(h,\sigma)+ G_1(h,\sigma)+ G_2(h,\sigma)+ G_3(h,\sigma)+G_4(h,\sigma)$   such that
 \begin{gather}
 \begin{gathered}
 G_0\in\Psi_{0,\hb}^{-2}(X),\\
G_1(h,\sigma)= e^{i\soh r} U_1(h,\sigma), \;\ U_1(h,\sigma) \in \Psi_{0,\hb}^{-\infty,\infty, -\novt-1,\infty}(X), \\ \beta_\hb^* U_1(h,\sigma) \text{ supported near } 
 \mcs \text{ and away from } \diag_\hb, \\
\beta_0^*K_{G_2(h,\sigma)} = e^{-i\soh \tgamma} U_2(h,\sigma), \;\ U_2(h,\sigma)  \in \rho_R^{\novt} \rho_L^{\novt} I_{ph}^{\ha}(\xo,\La^*,\Omega^\ha),\\
\beta_0^* K_{G_3(h,\sigma)}=e^{-i\soh \tgamma} U_3(h,\sigma), \; U_3 \in h^\infty \mck_{ph}^{\novt,\novt}(\xo\times [0,h_0)), \\
\beta_0^*K_{G_4(h,\sigma)}= e^{-i\soh \tgamma} U_4(h,\sigma), \; U_4 \in h^\infty  \rho_{\ff}^\infty \mck_{ph}^{\novt, \novt}(\xo\times [0,h_0)), 
\end{gathered}  \label{kernelofg}
 \end{gather}
and such that,
 \begin{gather}
 P(h,\sigma,D) G(h,\sigma)-\Id \in \rho_{\mcf}^\infty \rho_{\mcs}^\infty e^{-i\soh \tgamma} \Psi_{0,\hb}^{-\infty, \infty,\infty,\novt}(X).\label{errorofg}
 \end{gather}
\end{theorem}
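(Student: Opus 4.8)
The proof is a parametrix construction on the semiclassical zero-stretched product $\Xh$, carried out face by face in the spirit of \cite{MSV} and \cite{Wang}, the main new ingredient being the polyhomogeneous Lagrangian calculus of Section \ref{PHLD}. Each of $G_0,\dots,G_4$ will remove the error left by its predecessor at one further boundary face. First, since $\beta_\hbar^* P(h,\sigma,D)$ is semiclassically elliptic of order $2$ near $\diag_\hbar$ and up to the semiclassical front face $\mcs$, the semiclassical symbol calculus produces $G_0\in\Psi_{0,\hbar}^{-2}(X)$ with $P(h,\sigma,D)G_0-\Id$ having kernel that is $C^\infty$, is supported near $\diag_0$, vanishes to infinite order at $\diag_\hbar$, $\mca$, $\mcl$, $\mcr$, and is only smooth up to $\mcf$ and $\mcs$. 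Next, near $\mcs$ away from the diagonal this error is solved by $G_1 = e^{i\soh r} U_1$, with $r$ the lifted Riemannian distance (well defined near $\diag_0$ up to $\mcf$ since a CCM has positive injectivity radius): $\soh r$ solves the eikonal equation for the semiclassical principal symbol, and $U_1\in\Psi_{0,\hbar}^{-\infty,\infty,-\novt-1,\infty}(X)$, supported near $\mcs$ and away from $\diag_\hbar$, is obtained from transport equations along the geodesic flow, which the non-trapping hypothesis allows one to solve globally. One arranges that $P(h,\sigma,D)(G_0+G_1)-\Id$ vanishes to infinite order at $\mcs$, so the rest of the construction takes place on $\xo\times[0,1)$.

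On $\xo\times[0,1)$ the outstanding error sits at the front face $\mcf$ and propagates, under the Hamilton flow of the semiclassical symbol of $P(h,\sigma,D)$, along a Lagrangian submanifold $\La^*$ which equals $N^*\diag_0$ near $\diag_0$ — so that it matches the phase $\soh r$ of $G_1$ there — and is the flow-out of $N^*\diag_0$ globally; because non-trapping geodesics run out to $\p X$, $\La^*$ acquires precisely polyhomogeneous singularities at $L$ and $R$, which is exactly why one must parametrize $\La^*$ and study the associated semiclassical Lagrangian distributions $I_{ph}^{\ha}(\xo,\La^*,\Omega^\ha)$ of Section \ref{PHLD}. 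The weights $\rho_R^{\novt}\rho_L^{\novt}$ and the phase factor $e^{-i\soh\tgamma}$ are forced by the indicial-root computation \eqref{sc-ind-roots}: conjugating $\beta_0^* P(h,\sigma,D)$ by $\rho_R^{\novt}\rho_L^{\novt} e^{-i\soh\tgamma}$ shifts the leading indicial roots to $0$, and, using \eqref{real-anal}--\eqref{def-gamma-osc} to absorb the $h^2$-part of $\tgamma$ into the semiclassical symbol, the eikonal and transport equations can then be solved with symbols in the narrow polyhomogeneous class $\mcp(\xo)$. This produces $G_2 = e^{-i\soh\tgamma} U_2$, $U_2\in\rho_R^{\novt}\rho_L^{\novt} I_{ph}^{\ha}(\xo,\La^*,\Omega^\ha)$, after which $P(h,\sigma,D)(G_0+G_1+G_2)-\Id$ is $O(h)$, still vanishing to infinite order at $\mcs$, smooth up to $\mcf$ and polyhomogeneous conormal at $L$, $R$.

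This remaining error is removed order by order in $h$. Since $P(h,\sigma,D)$ acts in the left variable, one can at each stage invert the normal operator of the conjugated $\beta_0^* P(h,\sigma,D)$ at $\mcf$ — essentially a resolvent on the model fibres, invertible on the relevant weighted polyhomogeneous spaces by the Mazzeo--Melrose and Borthwick theory recalled in Section \ref{MMB-results} — and then improve the behavior at $\mcl$ and $\mca$ by solving the indicial equation, the right-face rate $\rho_R^{\novt}$ simply being carried along. An asymptotic (Borel) sum in $h$, legitimate precisely because the class \eqref{as-ph-exp} was engineered so that the products and sums that occur stay within it, yields $G_3 = e^{-i\soh\tgamma} U_3$ with $U_3\in h^\infty\mck_{ph}^{\novt,\novt}(\xo\times[0,h_0))$, and a final correction for the part supported away from $\ff$ yields $G_4 = e^{-i\soh\tgamma} U_4$ with $U_4\in h^\infty\rho_{\ff}^\infty\mck_{ph}^{\novt,\novt}(\xo\times[0,h_0))$. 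After this the error vanishes to infinite order at $\mcl$, $\mca$, $\mcf$ and $\mcs$, i.e. it lies in the space of \eqref{errorofg}.

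The heart of the matter, and the step I expect to be the main obstacle, is the second one: showing that the flow-out $\La^*$ has exactly polyhomogeneous (not worse) singularities at $L$ and $R$, that it admits a global parametrization compatible with the $0$-structure of $\xo$, and that the eikonal/transport hierarchy closes inside $\rho_R^{\novt}\rho_L^{\novt} I_{ph}^{\ha}$ — in particular that products of the symbols involved and the action of $\beta_0^* P(h,\sigma,D)$ preserve the class $\mcp(\xo)$. This is precisely where variable curvature enters, through the $y$-dependence of the indicial roots $\alpha(h,\sigma,y)$ and of $\tgamma$, and it is the reason the polyhomogeneous Lagrangian machinery of Section \ref{PHLD} has to be developed in the first place.
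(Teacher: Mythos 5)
Your outline follows the paper's own five-step, face-by-face scheme (diagonal singularity, then the error at $\mcs$, then the oscillatory error at $\mca$ via Lagrangian distributions conjugated by $e^{-i\soh\tgamma}$, then the front face via the normal operator, then the left face via indicial roots), so the strategy is the right one; but the step you yourself call ``the heart of the matter'' is exactly what a proof of Theorem \ref{para-structure} must supply, and your proposal only asserts it. Concretely, three things are needed and are not established: (i) that the joint flow-out $\La^*$ of $\wtla_0=\beta_0^*\La_0$ under the $\gamma$-shifted Hamiltonians $H_{p_{L,\gamma}},H_{p_{R,\gamma}}$ extends to a compact submanifold with corners with the precise polyhomogeneous structure \eqref{param-A0}; in the paper this rests on Lemma \ref{vfields} (after dividing by $\rho_L$ the field $H_{\wp_L}$ has $\p_{x_1}$-coefficient $-\ka_L+O(x_1)$ and coefficients with at most $\log x_1$ and $(\log x_1)^2$ singularities), on the polyhomogeneous ODE Lemma \ref{asym-expL}, and on the commuting-flows straightening argument of Theorem \ref{soj}; (ii) that $\La^*$ is parametrized near $L\cup R$ by nondegenerate phase functions with the expansion \eqref{reg-phase-fc} (Proposition \ref{polyphase}), without which the class $I_{ph}^{\ha}(\xo,\La^*,\Omega^\ha)$ and its symbol calculus are not available; and (iii) that the transport hierarchy $(W+\vtheta)V_r=\mcg_r$, obtained after conjugating by $x^{\novt}$ and $e^{i\soh\tgamma}$ and dividing by $\rho_L$, has solutions with expansions $\sum_{j}\sum_{k\le j}x_1^{j}(\log x_1)^{k}$, proved by integrating along the integral curves of $H_{\wp_L}$ using \eqref{param-A0} (Lemmas \ref{aface} and \ref{gen-aface}). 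Without (i)--(iii) the membership $U_2\in\rho_R^{\novt}\rho_L^{\novt}I_{ph}^{\ha}(\xo,\La^*,\Omega^\ha)$, and hence \eqref{kernelofg}, is not proved.

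Two structural points should also be corrected. First, after $G_0+G_1$ the outstanding error $e^{i\soh r}F_1$ is not an error at the front face: it vanishes to infinite order at $\mcs$ and at $\mcl,\mcr$, and its defect is the $\rho_\mca^{-\novt}$ behavior at the semiclassical face $\mca$, which is what $G_2$ removes; likewise $\La^*$ is not the flow-out of $N^*\diag_0$ but of the unit-cosphere set $\wtla_0$, shifted by $d\gamma$, i.e. $\La^*=\beta_0^*\La+d\gamma$ as in \eqref{shift}, and the matching with the phase of $G_1$ goes through $\mcr=\beta_0^*r+\gamma$ (Theorem \ref{main} and Corollary \ref{phase-shift}), not through $r$ itself. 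Second, your plan to remove the post-$G_2$ error ``order by order in $h$'' by inverting the normal operator at $\mcf$ uses the wrong mechanism: the normal operator improves vanishing in $\rho_\ff$, not in $h$, and the $O(h^j)$ errors at $\mca$ are still Lagrangian with respect to $\La^*$, where the operator is characteristic, so they can only be removed by further transport equations along $H_{q_{L,\gamma}}$. In the paper the full $h$-expansion (with Borel summation) is completed inside the $G_2$ step, so the error entering $G_3$ is already $O(h^\infty)$; $G_3$ then removes the Taylor series at $\rho_\ff=0$ using the mapping property \eqref{mapping-prop} of the model resolvent, and $G_4$ removes the left-face expansion using that $\novt-i\soh\mu_L+j$, $j\ge 1$, is not an indicial root. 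As written, your allocation does not yield the stated memberships $U_3\in h^\infty\mck_{ph}^{\novt,\novt}(\xo\times[0,h_0))$ and $U_4\in h^\infty\rho_\ff^\infty\mck_{ph}^{\novt,\novt}(\xo\times[0,h_0))$ without redoing this bookkeeping.
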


\section{The construction of the parametrix near $\diag_\hb$ }\label{Steps12}

 For now  we will carry out the first two steps in the proof  of Theorem \ref{para-structure}, and we will construct $G_0(h,\sigma)$ and  $G_1(h,\sigma).$  This construction  takes place near $\diag_\hb$ uniformly up to $\mcs$ and $\mcf,$ but away from the right and left faces, so the fact that the asymptotic sectional curvature is not  constant does not play a significant role in these steps.   We first remove  the singularity at the diagonal, and then remove the error at the semiclassical front face. 

  We  first  prove the following
 \begin{lemma}\label{step1-const}   There exists $G_0(h,\sigma) \in\Psi_{0,\hb}^{-2}(X)$  holomorphic in $\sigma\in \Omega_\hb,$ $h\in(0,h_0)$  such that 
 \begin{gather}
P(h,\sigma,D)G_0(h,\sigma)-\Id= E_0(h,\sigma) \in \Psi_{0,\hb}^{-\infty}(X) \label{defe0}
\end{gather}
 with $\beta_\hb^* G_0(h,\sigma)$ and $\beta_\hb^* E_0(h,\sigma)$ supported in a neighborhood of $\diag_{\hb}$ in $\Xh$ that only intersects the boundary of $\Xh$ at  the semiclassical front face $\mcs$ and the front face $\mcf.$ 
 \end{lemma}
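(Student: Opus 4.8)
The plan is to carry out a standard semiclassical Hadamard parametrix construction near the lifted diagonal $\diag_\hb$, adapting the argument of \cite{MSV} to the present setting. The essential point — which the statement of the lemma emphasizes — is that this construction is entirely local near $\diag_\hb$, so it stays away from the right and left faces $\mcl,\mcr$, and therefore the fact that $\ka$ is non-constant plays no role whatsoever; one works exactly as for the hyperbolic-type model. First I would recall that near $\diag_0$ the lift $\beta_0^*(\Delta_g - \knsq)$ is an elliptic second-order operator (indeed $\Delta_g \in \diff_0^2(X)$ and its lift is elliptic across $\ff$, as noted after \eqref{formlap}), so that near $\diag_\hb$ the operator $P(h,\sigma,D)$ is a semiclassically elliptic operator with $C^\infty$ coefficients up to $\mcf$ and $\mcs$, with a nondegenerate semiclassical symbol on a neighborhood of $\diag_\hb$ intersecting $\p\Xh$ only at $\mcf \cup \mcs$.

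Next I would construct the symbol. On $\Xh$, choose local coordinates adapted to $\diag_\hb$ in which $\diag_\hb = \{\text{transverse variables} = 0\}$, valid up to $\mcs$ and $\mcf$; writing $\rho_\mcs$ for a defining function of $\mcs$, the natural ansatz is that $\beta_\hb^* K_{G_0}$ is $\rho_\mcs^{-\novt-1}$ times a conormal distribution to $\diag_\hb$ of order $-2-\oq$ (so that $G_0 \in \Psi_{0,\hb}^{-2}(X)$ in the sense of the space defined via \eqref{sc-con} with the $\mcl,\mca,\mcr$ weights infinite, i.e.\ trivial, near the diagonal). One then solves the symbolic transport hierarchy: the principal symbol is inverted using semiclassical ellipticity of $P(h,\sigma,D)$, and the lower-order symbols are obtained recursively by solving the standard transport equations, all of which have smooth solutions up to $\mcf$ and $\mcs$ because the coefficients are smooth there and because $\diag_\hb$ meets $\mcf$ and $\mcs$ transversally. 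Taking an asymptotic (Borel) sum of the symbols and cutting off with a function supported in the chosen neighborhood of $\diag_\hb$ produces $G_0(h,\sigma)$; holomorphy in $\sigma \in \Omega_\hb$ and smoothness in $h \in (0,h_0)$ are automatic since the coefficients of $P(h,\sigma,D)$ and the cutoffs depend holomorphically, resp.\ smoothly, on these parameters (and $h_0$ as in \eqref{defh0} guarantees $\mu_\bullet$ and hence all relevant quantities are well defined). By construction $P(h,\sigma,D) G_0(h,\sigma) - \Id$ has a Schwartz kernel whose lift is smooth up to $\mcs,\mcf$ and vanishes to infinite order at $\diag_\hb$, hence lies in $\Psi_{0,\hb}^{-\infty}(X)$ and is supported in the same neighborhood of $\diag_\hb$; this is $E_0(h,\sigma)$ and gives \eqref{defe0}.

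The only genuinely delicate bookkeeping — and the step I would expect to be the main obstacle — is making sure that all the symbol classes and function spaces are being tracked correctly across the \emph{corners} $\diag_\hb \cap \mcs$ and $\diag_\hb \cap \mcf$, i.e.\ that the conormality to $\diag_\hb$ is uniform up to these boundary faces and that the $\rho_\mcs^{-\novt-1}$ weight is precisely the one making the resulting operator land in $\Psi_{0,\hb}^{-2}(X)$ as defined after \eqref{sc-con}; this is exactly the content of the blow-up $\Xh$, since blowing up $\diag_0 \times \{0\}$ is what resolves the transversal intersection and turns "uniform conormality up to the corner" into "conormality up to a single hypersurface $\mcf$ together with smoothness up to $\mcs$". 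Since this is carried out verbatim as in Section 3 of \cite{MSV}, the argument there applies without change, the point being simply that the variable-curvature features of the problem are invisible near $\diag_\hb$; I would therefore present the proof by reducing to \cite{MSV}, verifying that $P(h,\sigma,D)$ has the same microlocal structure near $\diag_\hb$ as the operator treated there, and noting that the holomorphic dependence on $\sigma$ and the support properties are preserved.
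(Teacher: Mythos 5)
Your construction is essentially the paper's proof: both exploit that $\diag_\hb$ meets $\p \Xh$ transversally and only at $\mcf$ and $\mcs$, that the lifted operator is elliptic there uniformly up to those two faces, and that a Hadamard/symbolic parametrix cut off near $\diag_\hb$ gives $G_0$ with smoothing error $E_0$, exactly as in \cite{MSV}. One correction to your wording: $P(h,\sigma,D)$ is not semiclassically elliptic (its semiclassical symbol $|\zeta|_{g^*}^2-\sigma^2$ vanishes on the unit cosphere bundle over every base point, including points of the diagonal); what actually drives the construction — and what the paper verifies in \eqref{lap-pc-2} using the projective coordinate $R=r/h$ — is that $\beta_\hb^*P(h,\sigma,D)$ is an elliptic \emph{differential} operator transversally to $\diag_\hb$, uniformly up to $\mcf$ and $\mcs$, so the lower-order symbols are obtained by division by this nonvanishing principal symbol (no transport equations occur at this stage), and the resulting error is merely smooth across $\diag_\hb$ (conormal of order $-\infty$), which is all that membership in $\Psi_{0,\hb}^{-\infty}(X)$ requires.
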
 
 \begin{proof}
 This is easily done in the interior, since $\beta_\hb^*P(h,\sigma,D)$ is elliptic there.  In this case,  the construction of $G_0(h,\sigma)$ in the interior  follows for example  from the standard Hadamard parametrix construction.  We will show that this also works uniformly up to $\mcf$ and $\mcs.$   As mentioned before, this is possible because the lifted diagonal intersects the boundary of $\Xh$ transversally only at $\mcf$ and $\mcs,$ see Fig.\ref{fige0}.  The proof is as in \cite{MM} and was used in the semiclassical setting in \cite{MSV}.   We first compute the lift of the operator $\beta_\hb^*P(h,\sigma)$ and show it is uniformly transversally
elliptic at $\diag_{\hb}$ up to the boundary of $\Xh.$    
\begin{figure}
%
\psscalebox{.7 .7} 
{
\begin{pspicture}(0,-4.6078844)(7.5094423,4.6078844)
\psline[linecolor=black, linewidth=0.04](0.7999875,4.5899377)(3.2399874,3.3899376)
\psline[linecolor=black, linewidth=0.04](5.2199874,2.5699377)(7.4199877,1.3899378)
\psbezier[linecolor=black, linewidth=0.04](3.2399874,3.3699377)(2.7599876,2.6099377)(4.5199876,1.8299377)(5.2399874,2.5499377)
\psline[linecolor=black, linewidth=0.04](5.2599874,2.5299377)(5.2599874,-2.4900622)
\psline[linecolor=black, linewidth=0.04](3.2599876,3.3499377)(3.2399874,-1.8500623)
\psbezier[linecolor=black, linewidth=0.04](3.2199874,-1.7900623)(2.7399874,-2.5500622)(4.5399876,-3.1700623)(5.2599874,-2.4500623)
\psline[linecolor=black, linewidth=0.04](5.2999873,-2.4900622)(7.4999876,-3.6700623)
\psline[linecolor=black, linewidth=0.04](3.7999876,2.3699377)(3.7999876,-2.7300622)(0.6199875,-4.3100624)(0.51998746,1.5899377)(3.8399875,2.3899376)
\rput{-271.09122}(3.538724,-3.8668945){\rput[bl](3.7399874,-0.13006225){$\diag_h$}}
\psline[linecolor=black, linewidth=0.04, linestyle=dashed, dash=0.17638889cm 0.10583334cm](3.3999875,2.5899377)(0.019987488,1.7899377)(0.21998748,-3.9700623)(3.2999876,-2.4500623)
\psline[linecolor=black, linewidth=0.04, linestyle=dashed, dash=0.17638889cm 0.10583334cm](4.4599876,2.2499378)(0.9799875,1.2099377)(1.0599875,-4.590062)(4.5999875,-2.7900622)
\rput{-1.180323}(0.030340627,0.08538842){\rput[bl](4.1599874,-1.4300623){$\mcs$}}
\rput[bl](5.8599873,0.22993775){$\mca$}
\rput[bl](4.0399876,-3.9100623){$\mcf$}
\rput{-297.3035}(-2.4163306,-2.3734872){\rput[bl](0.7399875,-3.1700623){The support of $\beta_\hb^* E_0$}}
\psline[linecolor=black, linewidth=0.04, linestyle=dashed, dash=0.17638889cm 0.10583334cm](1.4399875,-0.8100622)(3.2799876,-1.7500622)
\psline[linecolor=black, linewidth=0.04, linestyle=dashed, dash=0.17638889cm 0.10583334cm](4.6199875,-2.7900622)(4.6399875,2.2499378)
\psline[linecolor=black, linewidth=0.04, linestyle=dashed, dash=0.17638889cm 0.10583334cm](3.4599874,2.6099377)(3.3999875,-2.4500623)
\end{pspicture}
}
\caption{The support of $\beta_\hb^*E_0$}
\label{fige0}
\end{figure}
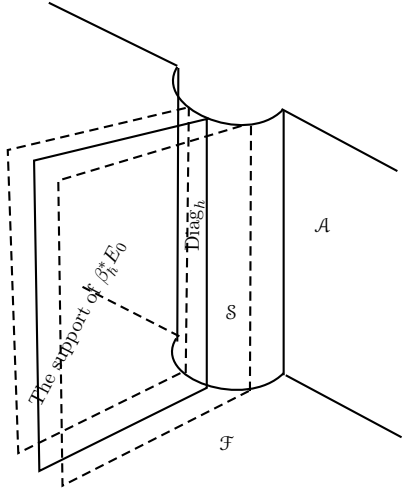

  Since we are interested in a neighborhood  of  the diagonal  we  work on a partition of unity  given by $U_j\times U_j,$ where $U_j$ is a neighborhood  intersecting  $\p X,$ and $(x,y)$ are local coordinates in $U_j$ for which \eqref{prod} works.  So $\Delta_g$ is given by \eqref{formlap} and the operator $P(h,\sigma)$ given by \eqref{defoph}.  Then  $\diag\cap (\p X\times \p X)=\{ x = x' = 0, y = y' \}$.   One can use polar coordinates $\rho_\ff=(x^2+y^2+|y-y'|^2)^\ha,$ $\rho_R=x'/\rho_\ff,$ $\rho_L=x/\rho_\ff$ and $Y=(y-y')/\rho_\ff,$  but since  $\diag_0$ does not intersect either the right or left faces of $\xo,$ $\rho_R>C>0$ near $\diag_0,$  it is easier to  use projective coordinates
\beqq\label{cord1}
X = x/x', \ \ Y = (y - y')/x', \ \ x',\ \ y'.
\eeqq 
Here $X$ is a boundary defining function for $L$ and $x'$ is a boundary defining function for $\ff$. We deduce from \eqref{formlap} and \eqref{defoph} that $
\beta_0^*P(h, \sigma)$ is given by
\begin{gather}
\begin{gathered}
\beta_0^* (P(h, \sigma))  = \\ h^2\kappa^2(y'+x'Y)[-(X\p_X)^2 +n(X\p_X)- F(x'X,y'+x'Y) X^2 x'  \p_X] + h^2X^2\lap_{H(x' X, y' + x' Y)}(D_Y) \\
- h^2X^2\sum_{i}\p_{Y_i}(\log \kappa(y'+x'Y))H^{ij}\p_{Y_j} - \knsq h^2- \sigma^2. 
\end{gathered}\label{exp-lap1}
\end{gather}
Here $\lap_H(D_Y)$ means the derivatives in $\lap_H$ are in $Y$ variable.    Notice that,
\begin{gather}
\begin{gathered}
\beta_0^* (P(h, \sigma,D)) = h^2 (\Delta_{g^0}-\knsq)- \sigma^2, \text{ where } \\
g^0(x',y',X,Y, dX dY) = \frac{dX^2}{\ka^2(y'+x'Y) X^2} + \frac{1}{X^2}\sum_{i,j=1}^n H_{ij}(x'X, y'+ x'Y) dY_i dY_j,
\end{gathered}\label{pbmetric}
\end{gather}
and we interpret $g^0$ is a $C^\infty$ family of metrics parametrized by $(x',y'),$ across the front face where $x'=0.$  We are working in the region near $\diag_0=\{X=1, Y=0\},$ so the metric $g^0$ is $C^\infty,$ and we can introduce geodesic normal coordinates around  $\{X=1, Y=0\},$ which is $C^\infty$ manifold parametrized by $(x',y').$    Fixed $z'\in \intx$, let  $r(z')$ be the geodesic distance from $z'.$ As we discussed above, this is well defined if  $r$ small, and one can interpret $\beta_0^*r$ as the distance along the geodesics of the metic \eqref{pbmetric} for fixed $(x',y').$  Since we are working on a compact region, geodesic normal coordinates  $(r,\theta)$ hold in a neighborhood of $\diag_0,$ across the front face for the metic $g_0,$ and hence
\begin{gather*}
g^0= dr^2 + r^2 \mch(r,\theta, d\theta),
\end{gather*}
where $\mch$ is a smooth 2-tensor, and by abuse of notation we are using $r=\beta_0^*r.$  Therefore, in view of \eqref{defoph} and \eqref{pbmetric}, the operator $\beta_0^*P(h,\sigma,D)$ is given by
\begin{gather}
\beta_0^*(P(h,\sigma,D))= h^2( -\p_r^2  - A(r,\theta) \p_r + \frac{1}{r^2} Q(r,\theta,D_\theta)- \knsq) - \sigma^2, \label{lap-polar}
\end{gather}
which is of course elliptic up to the front face.  Next we blow-up the intersection 
\begin{gather*}
\diag_0 \cap \{h=0\}=\{r=0, h = 0\}.
\end{gather*}
 We will work with projective coordinates:
 \begin{gather}
 \begin{gathered}
 h, \; R=r/h, \; x', y', \text{ valid in the region } |r/h| \text{ bounded, } \\
 r, \; H= h/r, \; x', y',  \text{ valid in the region } |h/r| \text{ bounded. }
 \end{gathered} \label{cords}
 \end{gather}

  At first we are interested in the region where $\diag_\hb$ meets $\mcf$ and $\mcs,$ where we may use the first set of projective coordinates in \eqref{cords}, and one has
\begin{gather}
\beta_\hb^*(P(h,\sigma,D))= -\p_R^2- hA(hR,\theta) \p_R+ \frac{1}{R^2}Q(hR,\theta, D_\theta)- h^2\knsq- \sigma^2, \label{lap-pc-2}
\end{gather}
Now it is evident that $\beta_\hbar^*P(h,\sigma,D)$ is an elliptic operator uniformly up to the front face $\mcf=\{x'=0\}$ and the semiclassical front face $\mcs=\{h=0\}.$    One can use these coordinates to extend the manifold 
$\Xh,$  lifted diagonal $\diag_\hb$,  and the operator $\beta_\hb^*(P(h,\sigma,D))$ across the faces $\mcf$ and $\mcs,$ and by doing so we get an elliptic differential operator of order $2$. One can then apply the Hadamard parametrix construction to show that there exist $G_0(\sigma)\in\Psi_{0, \hbar}^{-2}(X)$ and  $E_0(\sigma)\in \Psi_{0, \hbar}^{-\infty}(X)$ holomorphic in $\sigma$ such that
$P(h, \sigma)G_0(h,\sigma) = \Id + E_0(h, \sigma).$ Moreover,  one can cut-off $G_0(h,\sigma),$ and  $E_0(h,\sigma)$ make sure they are supported near $\diag_\hbar.$
\end{proof}

The next step is to  remove the error $E_0,$ and we do so in the following
\begin{lemma}\label{step2-const}  There exists an operator $G_1(h,\sigma)$ holomorphic in $\sigma\in \Omega_\hb,$ such that
 \begin{gather*}
G_{1}(h,\sigma)= G_{1,0}(h,\sigma)+e^{i\frac{\sigma}{h}r }U_1(h,\sigma),
\ G_{1,0}(h,\sigma)\in\Psi_{0,\hb}^{-\infty}(X),
\ U_1(h,\sigma)\in \Psi_{0,\hb}^{-\infty,\infty,-\frac{n}{2}-1,\infty}(X),
\end{gather*}
where $r$ is defined above, and such that the pull back of kernel $\beta_\hb^* K_{G_{1,0}}$ is supported near $\diag_{\hb},$ while $K_{\beta_\hb^*U_1}$  is supported near the semiclassical front face $\mcs$,  but away from $\diag_\hb,$ and
 \begin{gather}
  \begin{gathered}
 P(h,\sigma,D)G_{1}(h,\sigma)-E_0(h,\sigma)=E_1(h,\sigma), \;\\
E_1(h,\sigma)=E'_1(h,\sigma)+ e^{i \soh r} F_1(h,\sigma), \text{ with } \\
E'_1(h,\sigma)\in\rho_\mcs^\infty\Psi_{0,\hb}^{-\infty}(X),\ \ F_1(h,\sigma)
 \in  \rho_\mcs^\infty \Psi_{0,\hb}^{-\infty,\infty,-\frac{n}{2},\infty} (X)
 \end{gathered}\label{remove-e0}
  \end{gather}
 Moreover,  $\beta_{\hb}^* K_{E_1}$  is supported near $\diag_\hb,$ and hence away from $\mcl$ and $\mcr,$  $\beta_\hb^*K_{F_1}$  vanishes to infinite order at the semiclassical front face $\mcs$ and is of the form $\rho_\mca^{-\novt} C^\infty$ near the semiclassical face $\mca.$
\end{lemma}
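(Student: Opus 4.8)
The plan is to solve away $E_0$ order by order at the semiclassical front face $\mcs$, using the outgoing inverse of the normal operator of $P(h,\sigma,D)$ there, and organizing the correction into a piece supported near $\diag_\hb$ together with an oscillatory piece carrying the phase $e^{i\soh r}$ that accounts for the radiation behaviour near the semiclassical face $\mca$. First I would identify the normal operator $N_\mcs(P)$ of $P(h,\sigma,D)$ at $\mcs$. Working in the projective coordinates $(h,R,x',y')$ with $R=r/h$ from \eqref{cords} and using \eqref{lap-pc-2}, together with the facts that in geodesic normal coordinates the radial term is $A(r,\theta)=\tfrac{n}{r}+\tilde A(r,\theta)$ with $\tilde A$ smooth and that $Q(0,\theta,D_\theta)$ is the round--sphere Laplacian --- which holds because the family of metrics $g^0$ in \eqref{pbmetric} is smooth across $\mcf$ and restricts on $\diag_0$ to the Euclidean metric in polar form --- one finds $N_\mcs(P)=-\partial_R^2-\tfrac{n}{R}\partial_R+\tfrac{1}{R^2}\Delta_{\msn}-\sigma^2=\Delta_{\mrno}-\sigma^2$, which is independent of the base point $(x',y')$, with $\sigma$ a holomorphic parameter in $\Omega_\hb$. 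Its outgoing inverse $\mcg(\sigma)$, the analytic continuation from $\im\sigma>0$ of the free Helmholtz resolvent $(\Delta_{\mrno}-\sigma^2)^{-1}$, is holomorphic for $\sigma\in\Omega_\hb$; its Schwartz kernel is conormal to the diagonal of $\mrno$ and, away from the diagonal, is $C^\infty$ with the outgoing expansion $e^{i\sigma\varrho}\varrho^{-\novt}\sum_{j\ge0}c_j(\theta)\varrho^{-j}$ as $\varrho=|z-z'|\to\infty$.

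Next I would remove the leading part of $E_0$. Since $E_0\in\Psi_{0,\hb}^{-\infty}(X)$ is supported near $\diag_\hb$, its restriction $\beta_\hb^*E_0|_{\mcs}$ has $C^\infty$ Schwartz kernel, compactly supported in $R$, so $g^{(0)}:=\mcg(\sigma)(\beta_\hb^*E_0|_{\mcs})$ is $C^\infty$ everywhere, depends smoothly on $(x',y')$ up to $\mcf$, and near $R=\infty$ (the corner $\mcs\cap\mca$) has the outgoing structure $e^{i\sigma R}R^{-\novt}\sum_{j\ge0}a_j(\theta,x',y')R^{-j}$. Picking $\chi\in C^\infty_c([0,\infty))$ equal to $1$ near $R=0$, I set $\beta_\hb^*K_{G_{1,0}}|_{\mcs}=\chi(R)g^{(0)}$ and $\beta_\hb^*K_{U_1}|_{\mcs}=(1-\chi(R))e^{-i\sigma R}g^{(0)}$; extended smoothly off $\mcs$, the first lies in $\Psi_{0,\hb}^{-\infty}(X)$ with kernel supported near $\diag_\hb$, and the second --- described near $\mca$ in the coordinates $(r,H,x',y')$ with $H=h/r$, where $e^{-i\sigma R}=e^{-i\sigma/H}$ and the remaining factor is conormal at $\mca=\{H=0\}$ --- lies in $\Psi_{0,\hb}^{-\infty,\infty,-\novt-1,\infty}(X)$ with kernel supported near $\mcs$ and away from $\diag_\hb$. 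Since $N_\mcs(P)g^{(0)}=\beta_\hb^*E_0|_{\mcs}$, the operator $G_1:=G_{1,0}+e^{i\soh r}U_1$ built this way satisfies that $P(h,\sigma,D)G_1-E_0$ vanishes to first order at $\mcs$.

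Then I would iterate. Write $\beta_\hb^*P(h,\sigma,D)=N_\mcs(P)+h\,P^{(1)}+h^2P^{(2)}+\cdots$ near $\mcs$ and, conjugating the oscillatory part as in \eqref{exp-lap1}--\eqref{lap-pc-2}, note that $P(h,\sigma,D)(e^{i\soh r}u)=e^{i\soh r}\mcl u$, where $\mcl$ reduces at large $R$ to the radial transport operator $-2i\sigma\big(\partial_R+\tfrac{n}{2R}\big)$, inverted with integrating factor $R^{\novt}$. At order $h^k$ one solves $N_\mcs(P)g^{(k)}=(\text{smooth, outgoing source built from }g^{(0)},\dots,g^{(k-1)})$ with the outgoing condition: near $R=0$ this is an elliptic Helmholtz solve, near $R=\infty$ it is the above transport ODE, so each $g^{(k)}$ again has the smooth-near-$R=0$, outgoing-near-$R=\infty$ structure, smooth up to $\mcf$. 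Summing $\sum_k h^kg^{(k)}$ asymptotically at $\mcs$ by Borel's lemma inside $\Psi_{0,\hb}^{-\infty}(X)$ and $\Psi_{0,\hb}^{-\infty,\infty,-\novt-1,\infty}(X)$, splitting the sum with the cutoff $\chi$ into $G_{1,0}$ and $e^{i\soh r}U_1$ (so that $P(h,\sigma,D)G_1$ is computed directly from the Borel sum and no commutators enter), extending off a collar of $\mcs$ by a cutoff, and finally splitting the remaining error $E_1=P(h,\sigma,D)G_1-E_0$ with an $R$-cutoff into a piece near $\diag_\hb$ and a piece near $\mca$, yields the stated decomposition: $E_1'\in\rho_\mcs^\infty\Psi_{0,\hb}^{-\infty}(X)$ supported near $\diag_\hb$, and $e^{i\soh r}F_1$ with $F_1\in\rho_\mcs^\infty\Psi_{0,\hb}^{-\infty,\infty,-\novt,\infty}(X)$ whose kernel is $\rho_\mca^{-\novt}C^\infty$ near $\mca$ --- the power at $\mca$ improving from $-\novt-1$ to $-\novt$ because $\mcl$ is of first order in $R$ and one full formal order has been removed. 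All pieces are supported near $\diag_\hb$ or near $\mcs$, hence away from $\mcl$ and $\mcr$, and holomorphy in $\sigma\in\Omega_\hb$ is preserved throughout since $\mcg(\sigma)$ and the transport solution operator are holomorphic there.

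The main obstacle will be keeping careful track of the mixed regularity --- conormal to $\diag_\hb$ and smooth near $R=0$, smooth up to $\mcf$, and outgoing (that is, $e^{i\soh r}$ times a symbol conormal at $\mca$) near $R=\infty$ --- not only for $\mcg(\sigma)$ but for every $g^{(k)}$ produced by the iteration, and checking that the iteration closes inside $\Psi_{0,\hb}^{-\infty}(X)$ and $\Psi_{0,\hb}^{-\infty,\infty,-\novt-1,\infty}(X)$. Since the base point $(x',y')$ enters only as a smooth parameter, including at $x'=0$, and the variable asymptotic curvature plays no role away from the right and left faces, this is technical but follows the corresponding step in \cite{MSV}.
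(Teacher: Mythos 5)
Your proposal is correct and follows essentially the same route the paper takes: the paper does not redo this step but defers to Section 5 of \cite{MSV}, remarking that the key ingredient is that the normal operator at the semiclassical front face is a flat (Euclidean) Helmholtz operator with $(x',y')$ entering only as smooth parameters, which is exactly the structure you identify and exploit. Your reconstruction --- outgoing inverse of $\Delta_{\mrno}-\sigma^2$ at $\mcs$, radial transport with integrating factor $R^{\novt}$ near $\mca$, iteration in powers of $h$, Borel summation, and cutoff splitting into $G_{1,0}$ and $e^{i\soh r}U_1$ with the stated orders at $\mca$ --- matches that argument.
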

 In other words, the error term  $E_1$ is such that the kernel of
$E'_1$ vanishes to infinite order at all boundary faces, while
the kernel of $F_1$ lifts to a $\CI$ function which is supported near $\mcs$ (and in particular vanishes to infinite order at the right and left faces),  vanishes to infinite order at the semiclassical front face, and its singularity and $\mca$ is of the form $\rho_{\mca}^{-\frac{n}{2}}\CI.$  See Fig.\ref{suppG1}.  The proof of this Lemma is quite involved,  was carried out in section 5 of \cite{MSV}, and will not be redone here. We will just say that  it makes no difference to this construction whether $\ka$ is constant or not.  The reason is that its main ingredient is the structure of the normal operator at the semiclassical front face, but in this case,  on the fiber over a point $(x',y'),$ the normal operator is the Laplacian associated with the Euclidean metric $\frac{1}{\ka(y')} dX^2+ H^{ij}(x',y',dY),$ and the $(x',y')$ play the role of parameters and do not enter in the proof of asymptotic behavior of the operators described in the lemma.
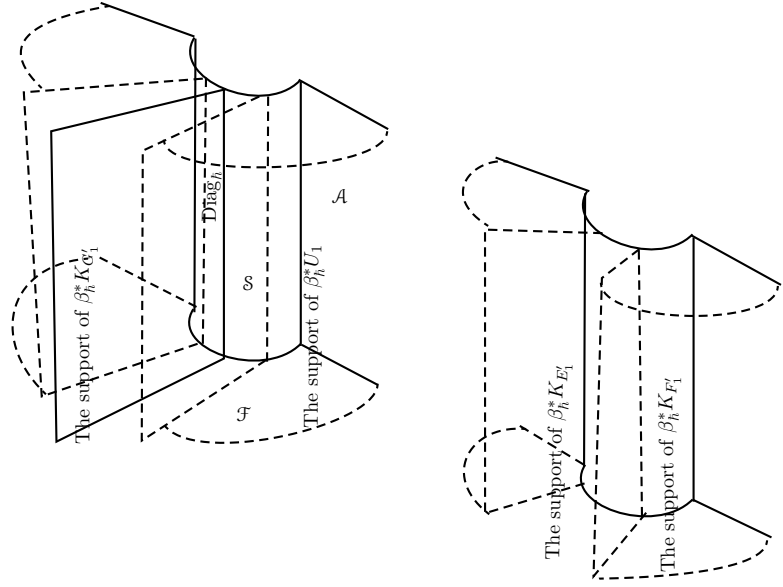
\begin{figure}
\psscalebox{.7 .7} 
{
\begin{pspicture}(0,-5.491438)(14.62459,5.491438)
\psline[linecolor=black, linewidth=0.04](1.6949197,5.472617)(3.4749198,4.8326173)
\psline[linecolor=black, linewidth=0.04](5.45492,4.012617)(7.1549196,3.0726173)
\psbezier[linecolor=black, linewidth=0.04](3.4749198,4.8126173)(2.9949198,4.052617)(4.7549195,3.272617)(5.47492,3.9926171)
\psline[linecolor=black, linewidth=0.04](5.49492,3.9726171)(5.49492,-1.0473828)
\psline[linecolor=black, linewidth=0.04](3.4949198,4.7926173)(3.4749198,-0.40738282)
\psbezier[linecolor=black, linewidth=0.04](3.4549198,-0.3473828)(2.9749198,-1.1073828)(4.7749195,-1.7273828)(5.49492,-1.0073829)
\psline[linecolor=black, linewidth=0.04](5.5149198,-1.0273829)(6.95492,-1.7873828)
\psline[linecolor=black, linewidth=0.04](4.0349197,3.8126173)(4.0349197,-1.2873828)(0.85491973,-2.8673828)(0.75491977,3.032617)(4.0749197,3.8326173)
\rput{-271.09122}(5.2116,-2.6865795){\rput[bl](3.9749198,1.3126172){$\diag_\hb$}}
\psline[linecolor=black, linewidth=0.04, linestyle=dashed, dash=0.17638889cm 0.10583334cm](3.6349196,4.032617)(0.23491974,3.772617)(0.57491976,-2.0073829)(3.5349197,-1.0073829)
\psline[linecolor=black, linewidth=0.04, linestyle=dashed, dash=0.17638889cm 0.10583334cm](4.705993,3.6926172)(2.494086,2.662922)(2.4749198,-2.8473828)(4.83492,-1.3179091)
\rput{-1.180323}(0.0,0.09053392){\rput[bl](4.39492,0.012617188){$\mcs$}}
\rput[bl](6.0949197,1.6726172){$\mca$}
\rput[bl](4.2749195,-2.467383){$\mcf$}
\psline[linecolor=black, linewidth=0.04, linestyle=dashed, dash=0.17638889cm 0.10583334cm](1.6749197,0.6326172)(3.5149198,-0.30738282)
\psline[linecolor=black, linewidth=0.04, linestyle=dashed, dash=0.17638889cm 0.10583334cm](4.85492,-1.3473828)(4.87492,3.6926172)
\psline[linecolor=black, linewidth=0.04, linestyle=dashed, dash=0.17638889cm 0.10583334cm](3.6949198,4.052617)(3.6349196,-1.0073829)
\rput{89.8194}(3.3088424,-8.534055){\rput[bl](5.93492,-2.6073828){The support of $\beta_\hb^* U_1$}}
\rput{-269.76743}(-1.2457212,-4.574091){\rput[bl](1.6549197,-2.9073827){The support of $\beta_\hb^* K_{G_1'}$}}
\psline[linecolor=black, linewidth=0.04](9.19492,2.532617)(10.974919,1.8926172)
\psbezier[linecolor=black, linewidth=0.04](10.934919,1.8926172)(10.45492,1.1326172)(12.21492,0.35261717)(12.934919,1.0726172)
\psline[linecolor=black, linewidth=0.04](12.91492,1.0526172)(14.61492,0.11261719)
\psline[linecolor=black, linewidth=0.04](12.95492,1.0126172)(12.95492,-4.007383)
\psline[linecolor=black, linewidth=0.04](10.894919,1.8526171)(10.87492,-3.3473828)
\psbezier[linecolor=black, linewidth=0.04](10.894919,-3.3073828)(10.41492,-4.067383)(12.21492,-4.6873827)(12.934919,-3.967383)
\psline[linecolor=black, linewidth=0.04](12.934919,-3.967383)(14.434919,-4.6873827)
\psline[linecolor=black, linewidth=0.04, linestyle=dashed, dash=0.17638889cm 0.10583334cm](11.925993,0.7926172)(11.214087,0.24292201)(11.05492,-5.427383)(12.05492,-4.2179093)
\psline[linecolor=black, linewidth=0.04, linestyle=dashed, dash=0.17638889cm 0.10583334cm](11.225993,1.0953741)(8.994086,1.162922)(8.995002,-4.2273827)(10.884301,-3.6610246)
\psline[linecolor=black, linewidth=0.04, linestyle=dashed, dash=0.17638889cm 0.10583334cm](11.91492,0.77261716)(11.974919,-4.2873826)
\rput{88.7591}(5.2197275,-15.588783){\rput[bl](10.57492,-5.1273828){The support of $\beta_\hb^* K_{E_1'}$}}
\rput{-269.1561}(7.6162734,-18.199677){\rput[bl](12.7749195,-5.347383){The support of $\beta_\hb^* K_{F_1'}$}}
\psbezier[linecolor=black, linewidth=0.04, linestyle=dashed, dash=0.17638889cm 0.10583334cm](2.8949196,2.8726172)(2.8949196,2.0726173)(7.1349196,2.2326171)(7.1349196,3.032617)
\psbezier[linecolor=black, linewidth=0.04, linestyle=dashed, dash=0.17638889cm 0.10583334cm](2.9149199,-2.5673828)(3.5349197,-3.3473828)(6.95492,-2.5673828)(6.95492,-1.7673829)
\psbezier[linecolor=black, linewidth=0.04, linestyle=dashed, dash=0.17638889cm 0.10583334cm](1.8549198,5.412617)(0.85491973,5.2326174)(-0.5050803,4.8726172)(0.57491976,3.8526173)
\psbezier[linecolor=black, linewidth=0.04, linestyle=dashed, dash=0.17638889cm 0.10583334cm](1.4549197,0.6126172)(-0.10508026,0.4926172)(-0.42508027,-0.9473828)(0.65491974,-1.9673828)
\psbezier[linecolor=black, linewidth=0.04, linestyle=dashed, dash=0.17638889cm 0.10583334cm](11.19492,0.15261719)(11.19492,-0.6473828)(14.55492,-0.7073828)(14.55492,0.092617184)
\psbezier[linecolor=black, linewidth=0.04, linestyle=dashed, dash=0.17638889cm 0.10583334cm](11.159719,-5.471801)(11.181339,-5.4626493)(14.33174,-5.4738135)(14.35012,-4.7429647)
\psbezier[linecolor=black, linewidth=0.04, linestyle=dashed, dash=0.17638889cm 0.10583334cm](9.434919,2.4526172)(8.79492,2.552617)(8.07492,1.8326172)(9.05492,1.2326171)
\psline[linecolor=black, linewidth=0.04, linestyle=dashed, dash=0.17638889cm 0.10583334cm](9.65492,-2.5673828)(10.87492,-3.3073828)
\psbezier[linecolor=black, linewidth=0.04, linestyle=dashed, dash=0.17638889cm 0.10583334cm](9.7749195,-2.6273828)(8.697025,-2.5273829)(8.124393,-3.5273829)(9.07492,-4.2473826)
\end{pspicture}
}
\caption{The supports of $\beta_\hb^* G_1,$ $\beta_\hb^* K_{U_1},$ $\beta_h^*K_{E_1'}$ and $\beta_\hb^* K_{F_1}.$}
\label{suppG1}
\end{figure}

The lift of the kernel of  $E_1'\in \rho_S^\infty\Psi_{0,\hb}^{-\infty}(X)$ is supported near $\diag_\hb,$  so  it will be ignored from now on, and it will be part of the final error term in the construction of the parametrix.  On the other hand, the term lift of the kernel of $e^{-i \soh r} F_1(h,\sigma)$   does not vanish at the semiclassical face $\mca$ and therefore cannot be ignored.  So we need to find an operator $G_2(\sigma,\hb)$ such that
\begin{gather*}
P(h,\sigma,D) G_2(h,\sigma)-e^{i \soh r} F_1(h,\sigma)= E_3(h,\sigma),
\end{gather*}
where $E_3(h,\sigma)$ vanishes to infinite order at the semiclassical front face $\mca.$    Since $\beta_\hb^*K_{F_1}$ vanishes to infinite order at the semiclassical face, it follows that $\beta_0^*K_{F_1}$ vanishes to infinite order at $\diag_0\times [0,1),$ and we shall work on $\xo\times [0,1) $ rather than on $\Xh.$

As suggested above in \eqref{defn-conj} it is natural to try to construct an operator $G_2(h,\sigma)$  such that
\begin{gather*}
\beta_0^*G_2(h,\sigma)= e^{-i\soh \tgamma} U_2(h,\sigma), \;\ \tgamma= \mu_R \log \rho_R+\mu_L \log \rho_L,
\end{gather*}
and so  we define
\begin{gather}
\begin{gathered}
P_L(h,\sigma,D)=\beta_0^*( h^2(\Delta_{g(z)}-\knsq)-\sigma^2), \;\
P_R(h,\sigma,D)= \beta_0^*( h^2(\Delta_{g(z')}-\knsq)-\sigma^2),
\end{gathered}\label{defpRL}
\end{gather}
where $R$ and $L$ indicate the lift of $\Delta_g$ from the respective factor of $X\times X,$ and 
\begin{gather}
\begin{gathered}
e^{i\soh \gamma}P_{\bullet}(h,\sigma,D) e^{-i\soh \gamma}=  P_{\bullet,\gamma}(h,\sigma, D), \; \bullet=L,R,\\
 e^{i\soh \tgamma}P_{\bullet}(h,\sigma,D) e^{-i\soh \tgamma}=  P_{\bullet,\tgamma}(h,\sigma, D), 
\end{gathered}\label{def-plga}
\end{gather}
Therefore, 
\begin{gather*}
\beta_0^*(P(h,\sigma,D) G_2(h,\sigma)-e^{i \soh r} F_1(h,\sigma))=P_L(h,\sigma,D) e^{-i\soh \tgamma} U_2(h,\sigma) -e^{i \soh \beta_0^*r} \beta_0^*K_{F_1(h,\sigma)}= \\
e^{-i\soh \tgamma}( P_{L,\tgamma}(h,\sigma,D) U_2(h,\sigma) - e^{i \soh(\beta_0^* r+\tgamma)} \beta_0^*K_{F_1(h,\sigma)}, \\
\end{gather*}

But if we choose $\rho_R$ and $\rho_L$ such that  with $\rho_R=\rho_L=1$ on the support of $\beta_0^*K_{F_1},$ $\diag_0,$  then $\tgamma=0$ on the support of 
$\beta_0^*K_{F_1},$ $\diag_0,$ and so we should find $U_2(h,\sigma)$ such that
\begin{gather*}
 P_{L,\tgamma}(h,\sigma,D) U_2(h,\sigma) - e^{i \soh \beta_0^*r} \beta_0^*K_{F_1(h,\sigma)}= h^\infty e^{-i\soh \tgamma} \mce(h,\sigma), \;\ \mce\in \mck_{ph}^{\novt,\novt}(\xo\times [0,1)).
 \end{gather*}
  
  Near $\diag_0,$ $e^{i\soh \beta_0^*r} F_1$ is a semiclassical Lagrangian distribution with respect to the manifold obtained by flowing-out the subset of the conormal bundle of the diagonal where $\{\beta_0^*p_L=\beta_0^*p_R=0\},$ ($\beta_0^*p_\bullet$ is the principal symbol of $\beta_0^*P_\bullet(h,\sigma,D),$ $\bullet=R,L,$)   along the integral curves of $H_{\beta_0^*p_\bullet}.$    We will analyze this Lagrangian submanifold in detail before proceeding with the construction of the parametrix.


\section{The underlying Lagrangian submanifolds }\label{Geo-Pre}

In this section we will discuss  the underlying Lagrangian submanifolds that will be used in the construction of the parametrix, and we assume that $(\intx,g)$ is a non-trapping CCM.  

The Riemannian metric $g$ in the interior $\intx$ induces an isomorphism between the tangent and cotangent bundles of $\intx:$
\begin{gather}
\begin{gathered}
\mathcal{G}: T_z\intx\longrightarrow T_z^* \intx \\
v\longmapsto g(z)(v, \cdot),
\end{gathered}\label{isomor}
\end{gather}
which in turn induces a dual metric on $T^* \intx$ given by
\begin{gather}
g(z)^*(\xi,\eta)= g(z)( \mcg^{-1} \xi, \mcg^{-1}\eta). \label{dual-met}
\end{gather}
We shall denote
\begin{gather*}
 |\zeta|_{g^*(z)}^2=g^*(z)(\zeta,\zeta).
 \end{gather*}

In local coordinates we have
\begin{gather*}
g(z)(v,w)=\sum_{i,j} g_{ij}(z) v_i w_j \text{ and }  g(z)^*(\xi,\eta)=\sum_{i,j} g^{ij} (z)\xi_i \eta_j,  \\
\text{ where  the matrices } (g_{ij})^{-1}=(g^{ij}).
\end{gather*}

The cotangent bundle  $T^*\intx$ equipped with the canonical $2$-form  $\omega$ is a symplectic manifold. In local coordinates $(z, \zeta)$ in $T^* \intx,$ 
$\omega = \sum_{j = 1}^{n+1} d\zeta_j\wedge dz_j.$ If  $f\in C^\infty(T^*\intx; \mathbb{R}),$ its Hamilton vector field $H_f$  is defined to be the vector field that satisfies  $\omega(\cdot, H_f) = df,$ and in local coordinates, 
\begin{equation*}
H_f = \frac{\p f}{\p\zeta}\cdot\frac{\p}{\p z} - \frac{\p f}{\p z}\cdot\frac{\p}{\p \zeta}.
\end{equation*} 

We also recall, from for example section 2.7 of \cite{AM}, that if
\begin{gather}
p(z,\zeta)=\ha(|\zeta|_{g^*(z)}^2-1), \label{energy}
\end{gather}
the  integral curves of of the Hamilton vector field $H_p$ are called bicharacteristics, and the projection of the bicharacteristics contained $\mcn=\{p=0\}$ to $\intx$ are  geodesics of the metric $g.$   In other words, if $(z',\zeta')\in T^*\intx$ and $p(z',\zeta')=0,$  and if $\gamma(r)$ is a curve such that
\begin{gather*}
\frac{d}{dr} \gamma(r)=H_p(\gamma(r)), \\
\gamma(0)=(z',\zeta'),
\end{gather*}
then,  with $\mcg$ given by \eqref{isomor},
\begin{gather}
\begin{gathered}
\gamma(r)= \left(\alpha(r), \mcg( \frac{d}{dr} \alpha(r)) \right), \text{ where } \\
\alpha(r)= \exp_{z'}(rv), \;\  v\in T_{z'} \intx, \;\ |v|_g^2=1, \;\ \mcg(v)= \zeta',
\end{gathered}\label{geod-1}
\end{gather}
 $\exp_{z'}(r \bullet)$ denotes the exponential map on $T_{z'} \intx.$     
  We can identify $T^*(\intx \times \intx)= T^* \intx \times T^*\intx,$ and  according to this we shall  use  $(z,\zeta,z',\zeta')$ to denote a point in $T^*(\intx \times \intx),$  $(z,\zeta)$ will denote a point on the left factor and   $(z',\zeta')$ will denote a point on the right factor.   We shall denote
  \begin{gather}
 \La_0=\{(z,\zeta,z',\zeta'):  \; z=z', \; \zeta= -\zeta'\, \; p(z,\zeta)=p(z',\zeta')=0\}. \label{cosphere}
 \end{gather}

  We shall study the manifold obtained by the flow-out of $\La_0$  by $H_p.$ In other words, 
  \begin{gather}
  \begin{gathered}
  \La=\{(z,\zeta,z',-\zeta') \in T^*(\intx\times \intx)\setminus 0: (z,\zeta)\in \mcn, (z',-\zeta')\in \mcn \\  \text{ lie on the same integral curve of } H_p  \}, 
  \end{gathered}\label{defla0}
    \end{gather}
   and one can also write
  \begin{gather}
  \La= \{(z,\zeta,z',-\zeta')\in \mcn\times \mcn: (z,\zeta)= \exp(t H_p)(z',-\zeta') \text{ for some } t \in \mr\}, \label{newdefla}
  \end{gather}
  where $\exp(t H_p)(z',-\zeta')$ denotes the map that takes $(z',-\zeta')$ to the point obtained by traveling a time $t$ along the integral curve of $H_p$ through the point $(z',-\zeta').$  
   
      The non-trapping assumption guarantees that $\intx$ is pseudo-convex with respect to $p,$ see Definition 26.1.10 of \cite{Hormander}, and  according  to Theorem 26.1.13 of \cite{Hormander}, $\La$ is a $C^\infty$ Lagrangian submanifold which is closed in $T^*(\intx\times \intx)\setminus 0$  equipped with the canonical form $\omega=\pi_L^*\omega_L+\pi_R^*\omega_R,$ where $\omega_\bullet $ is the canonical form on the $\bullet$-factor, $\bullet=R,L,$ $\pi_\bullet: T^*X \times T^*X \mapsto T^* X$ is the projection on the $\bullet$-factor  In canonical coordinates $\omega =d\zeta\wedge dz+ d\zeta'\wedge d z'.$  Notice that $\La$ is not conic because we are taking $|\zeta|_{g^*(z)}=|\zeta'|_{g^*(z')}=1.$ 
      
  We also observe that $\La \setminus \La_0=\La_R\cup \La_L$ where 
\begin{gather*}
\La_L=\{(z,\zeta,z',\zeta')\in \La: \; (z,\zeta) \text{ lies after } (z',-\zeta') \text{ on the bicharacteristic of } H_p\}, \\
\La_R=\{(z,\zeta,z',\zeta')\in \La: \; (z,\zeta) \text{ lies before } (z',-\zeta') \text{ on the bicharacteristic of } H_p\}.
\end{gather*}
The non-trapping assumption implies that $\La_L\cap \La_R=\emptyset,$  as otherwise we would have a closed bicharacteristic.

  Another way of interpreting $\La_L$ and $\La_R$ is to define $p_R,p_L\in C^\infty(T^*(\intx\times \intx))$ as the function $p$ on the right and left factors, i.e.  $p_R=p(z',\zeta')$ and $p_L=p(z,\zeta),$  and then think of $\La_R$ as the flow-out of $\Sigma$ under $H_{p_R}$ and of $\La_L$ as the flow-out of  $\Sigma$ under $H_{p_L}$ for positive times:
  \begin{gather}
  \La\setminus \Sigma=\bigcup_{t_1>0,t_2>0} \exp(t_1 H_{p_R}) \circ \exp(t_2 H_{p_L}) \La_0. \label{joint-flow-out}
  \end{gather}
  
   The map $\beta_0$ defined in \eqref{zero-blow-up} induces a map  $\beta_0:T^*(\xo) \longmapsto T^*(\intx \times \intx),$ and we want to understand the behavior of $\beta_0^* \La$ up to $\p T^*(\xo).$  Even though  $\xo$ is not a $C^\infty$ manifold,  the product structure  \eqref{prod}  valid in a tubular neighborhood of $\p X$  can be lifted to $\xo$ and it gives a way of doubling $\xo$ across its boundary and extending the metric 
  $\beta_0^*(x^2g),$ where $x$ is the boundary defining function in \eqref{prod}, and we denote this extension  by $\widetilde{\xo}.$  So we may think of $\xo$ and a submanifold with corners of a $C^\infty$ manifold $\widetilde{\xo}.$
   
    It follows from \eqref{joint-flow-out} that $\beta_0^*\La$ is given by the joint flow-out of the  $\beta_0^*\La$ by  $H_{\beta_0^*p_R}$ and $H_{\beta_0^*p_L}$  and our goal is to understand its behavior up to $\p T^*(\xo).$  

  Recall, see for example \cite{Martinez,Zworski-Sc},  that if $P(z,h,D)$  is a semiclassical differential operator on a manifold $M,$ which in local coordinates is given by $P(z,h,D)=\sum_{|\alpha|\leq m} a_\alpha(h,z) (h D_z)^\alpha,$ with $D=\frac{1}{i} \p,$  $a_{\alpha}\in C^\infty([0,1] \times M),$ one defines its semiclassical principal symbol as
  \begin{gather*}
  \sigma^{sc}(P(z,h,D))(z,\zeta)= \sum_{|\alpha| \leq m} a_\alpha(0,z) \zeta^\alpha,
  \end{gather*}
  and this is invariantly defined as a function on $T^*M.$   
  
  If $\vphi\in C^\infty(M),$  and $h\in (0,1),$  we shall encounter operators obtained from $P(h,z,D)$   by conjugation of the type
   \begin{gather*}
 P_\vphi(h,z,D)=  e^{i\frac{\vphi}{h}}  P(z,h,D) e^{-i\frac{\vphi}{h}}= \sum_{|\alpha|\leq m} a_\alpha(h,z) (h D_z-d\vphi)^\alpha,
   \end{gather*}
   and hence these remain semiclassical differential operators whose semiclassical principal symbols are given by
   \begin{gather}
 p_\vphi(h,z,\zeta)= \sum_{|\alpha|\leq m} a_\alpha(0,z)(\zeta - d\vphi)^\alpha=\sigma^{sc}(P(z,h,D))(z,\zeta-d\vphi), \label{scps0}
   \end{gather}
  where for $(z,\zeta) \in T^*M,$ $(z, \zeta-d \vphi)$ denotes the shift along the fiber direction by $-d\vphi.$    Notice that the transformation $(z,\zeta)\mapsto (z, \zeta-d\vphi)$ preserves the symplectic form of $T^*M.$  
    
  We also obtain semiclassical operators by conjugating standard differential operators 
  $P(z,D)=\sum_{|\alpha|\leq m} a_{\alpha}(z) D^\alpha:$     
  \begin{gather*}
 e^{i\frac{\vphi}{h}} h^m P(z,D) e^{-i\frac{\vphi}{h}}=h^m P(z, D- \frac{1}{h} d\vphi)=\\  \sum_{|\alpha|=m} a_\alpha(z)(hD - d\vphi)^\alpha + 
 \sum_{|\alpha|<m} h^{m-|\alpha|} a_\alpha(z)(hD - d\vphi)^\alpha.
 \end{gather*} 
 But in this case, the semiclassical principal symbol  of the resulting operator is equal to
 \begin{gather}
  \begin{gathered}
 \sigma^{sc} ( e^{i\frac{\vphi}{h}} h^m P(z,D) e^{-i\frac{\vphi}{h}}) = \sum_{|\alpha|=m} a_\alpha(z)(\zeta - d\vphi)^\alpha=\sigma_m(P(z,D))(z,\zeta-d\vphi),
 \end{gathered}\label{scps}
  \end{gather}
  where $\sigma_m(P(z,D))$ is the principal symbol of $P(z,D).$

 In the present case, we will work with the operators $P_L(h,\sigma,D)$ and  $P_R(h,\sigma,D)$ defined in \eqref{defpRL},  with $\sigma=1+h \sigma',$  and $\sigma'\in (-c,c)\times i (-C,C).$   If  $\gamma$ is as in \eqref{def-gamma},   $P_{\bullet,\gamma}(h,\sigma,D)$ was defined in  \eqref{def-plga}, and
  so,  if $p_\bullet(m,\nu)$ denotes the semiclassical principal symbol of $P_\bullet(h,\sigma,D),$ then according to \eqref{scps} the semiclassical principal symbol of $\ha P_{\bullet,\gamma}(h,\sigma,D)$ is given by
  \begin{gather}
p_{\bullet,\gamma}(m,\nu)=  \ha \sigma^{sc}(P_{\bullet,\gamma}(h,\sigma,D))= \ha p_\bullet(m,\nu- d\gamma),\label{scps-1}
  \end{gather}
where we  used that $\sigma=1+h\sigma',$ $\sigma'\in (-c,c)\times i(-C,C).$  This corresponds to a change in the fiber variables, and we denote
\begin{gather}
\begin{gathered}
S_\ga:T^*(\xo) \longrightarrow T^*(\xo) \\
(m,\nu) \longmapsto (m,\nu- d\gamma).
\end{gathered}\label{shift-map}
\end{gather}
This map is $C^\infty$ in the interior of $T^*(\xo),$ and preserves the symplectic structure, and observe that $p_{\bullet,\ga}= p_{\bullet}\circ S_\ga.$
Observe that if $\tgamma$ is as in \eqref{def-tgamma},  and
\begin{gather*}
P_{\bullet,\tgamma}(h,\sigma,D)=e^{i\soh \tgamma} P_\bullet(h,\sigma,D)e^{-i\soh \tgamma},
\end{gather*}
then
\begin{gather}
P_{\bullet,\tgamma}(h,\sigma,D)-P_{\bullet,\gamma}(h,\sigma,D)= h^2 Q(h,\sigma,D), \label{def-pga2}
\end{gather}
and hence if $p_{\bullet,\tgamma}$ is the principal symbol of $P_{\bullet,\tgamma},$
\begin{gather}
p_{\bullet,\ga}=p_{\bullet,\tgamma}. \label{pgatga}
\end{gather}

We can now state the main result of this section:
 \begin{theorem}\label{soj}
Let $(\intx, g)$ be a non-trapping CCM,  let $T^*(\xo)$ denote the cotangent bundle of the manifold $\xo.$  Let $\rho_L, \rho_R$ be  boundary defining functions of $L$ and $R$ respectively, let $\ka_R$ and $\ka_L$ and $\gamma$ be defined as above. Let $p_{R,\gamma}(m,\nu)$  and 
$p_{L,\gamma}(m,\nu)$ be defined by \eqref{scps-1}, and let  $H_{p_{\bullet,\gamma}},$ $\bullet=R,L,$ be the  corresponding Hamilton vector fields   with respect to the canonical 2-form of $T^*(\xo).$  Let $\wtla_0=\beta_0^* \La_0,$ and let $\La^*$ denote the Lagrangian submanifold obtained by the joint flow-out of   $\wtla_0$ under $H_{p_{R,\gamma}}$ and $H_{p_{L,\gamma}},$ in other words
\begin{gather*}
\La^*= \bigcup_{t_1, t_2\geq 0}\exp(t_1 H_{p_{R,\gamma}}) \circ \exp(t_2 H_{p_{L,\gamma}} )\wtla_0.
\end{gather*}
  Then $\La^*$ is a $C^\infty$ Lagrangian submanifold in the interior of  $T^*(\xo).$  If $\ka$ is constant,  then $\La^*$ extends to a $C^\infty$ compact submanifold with corners of  $T^*(\xo).$  Moreover $\La^*\cap T_{\{\rho_\bullet=0\}}^*(\xo),$ $\bullet=L,R,$ is a $C^\infty$ Lagrangian submanifold of $T^*\{\rho_\bullet=0\}$ and $\La^*\cap T_{\{\rho_R=\rho_L=0\}}^*(\xo)$ is a $C^\infty$ Lagrangian submanifold of $T^*\{\rho_L=\rho_R=0\}.$  
    If $\ka(y)$ is not constant,   $\La^*$ extends to a compact submanifold with corners of  $T^*(\xo),$  the extension is  $C^\infty$ up to the front face, but has polyhomogeneous singularities at $T_{\{\rho_R=0\}}^*(\xo)$ and at $T_{\{\rho_L=0\}}^*(\xo).$  However, 
$\La^*\cap T_{\{\rho_\bullet=0\}}^*(\xo),$ $\bullet=L,R,$ is a $C^\infty$ Lagrangian submanifold of $T^*\{\rho_\bullet=0\}$ and
$\La^*\cap T_{\{\rho_R=\rho_L=0\}}^*(\xo)$ is a $C^\infty$ Lagrangian submanifold of $T^*\{\rho_L=\rho_R=0\}.$  

Moreover, if $(x_0,\xi_0)=(x_{0,3}, \ldots, x_{0,2n+2}, \xi_{0,3}, \ldots, \xi_{0,2n+2})$ are local symplectic  coordinates in $T^*\{\rho_R=\rho_L=0\},$ valid near  $q\in T^* \{\rho_L=\rho_R=0\}\cap \La^*$  such that  $L\cap R \cap \ff=\{ x_{0,3}=0\},$
then there exist symplectic local coordinates $(x,\xi)$ in $T^*(\xo)$ valid near $q$ in which $L=\{x_1=0\},$ $R=\{x_2=0\},$  $\ff=\{x_3=0\},$ and  such that on $\La^*,$ and for  $3\leq m \leq 2n+2,$
\begin{gather}
\begin{gathered}
x_m= x_m(x_1, x_2, x_0,\xi_0) \sim x_{0,m}+  \sum_{j_1, j_2=1}^\infty\sum_{k_1=0}^{j_1} \sum_{k_2=0}^{j_2}
 x_1^{j_1}(\log x_1)^{k_1} x_2^{j_2}(\log x_2)^{k_2} X_{m,j_1,j_2,k_1, k_2}(x_0,\xi_0), \\
\xi_1= \xi_1(x_1,x_2, x_0,\xi_0)\sim   \sum_{j_1, j_2=1}^\infty\sum_{k_1=0}^{j_1+1} \sum_{k_2=0}^{j_2+1}
 x_1^{j_1}(\log x_1)^{k_1} x_2^{j_2}(\log x_2)^{k_2}\Xi_{1,j_1,j_2,k_1,k_2}(x_0,\xi_0), \\
 \xi_2= \xi_2(x_1,x_2, x_0,\xi_0)\sim   \sum_{j_1, j_2=1}^\infty\sum_{k_1=0}^{j_1+1} \sum_{k_2=0}^{j_2+1}
 x_1^{j_1}(\log x_1)^{k_1} x_2^{j_2}(\log x_2)^{k_2}\Xi_{2,j_1,j_2,k_1,k_2}(x_0,\xi_0), \\
\xi_m=\xi_m(x_1, x_2, x_0, \xi_0) \sim \xi_{0,m}+  \sum_{j_1, j_2=1}^\infty\sum_{k_1=0}^{j_1+1} \sum_{k_2=0}^{j_2+1}
 x_1^{j_1}(\log x_1)^{k_1} x_2^{j_2}(\log x_2)^{k_2}\Xi_{m,j_1,j_2,k_1,k_2}(x_0,\xi_0),
\end{gathered}\label{param-A0}
\end{gather}
where the coefficients $X_{\star,j_1,j_2,k_1,k_2}$ and $\Xi_{\star, j_1,j_2,k_1,k_2},$ $\star=1, 2, \ldots, 2n+2,$ are $C^\infty$ functions.
Similar expansions are valid near the right and left faces, away from the corner.
\end{theorem}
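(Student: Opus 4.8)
The plan is to reduce the assertion to a description of the bicharacteristic flow of $p$ near $\p X$, and then to a resonant-forcing analysis of the Hamilton system written in $0$-cotangent coordinates. \emph{Reduction.} Since $S_\gamma$ in \eqref{shift-map} is a symplectomorphism on the interior of $T^*(\xo)$ and $p_{\bullet,\gamma}=p_\bullet\circ S_\gamma$, one has $\exp(tH_{p_{\bullet,\gamma}})=S_\gamma^{-1}\circ\exp(tH_{p_\bullet})\circ S_\gamma$; moreover $\rho_L\equiv\rho_R\equiv 1$ near $\diag_0$, so $S_\gamma$ and its differential are the identity on $\wtla_0$. Hence on the interior $\La^*=S_\gamma^{-1}(\beta_0^*\La)$, where $\La$ is the flow-out of $\La_0$ by $H_p$. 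By non-trapping, $\intx$ is pseudo-convex for $p$, so $\La$ is a closed $C^\infty$ Lagrangian in $T^*(\intx\times\intx)\setminus 0$ (Theorem 26.1.13 of \cite{Hormander}), giving the interior claim. Near $\ff$ and away from $L\cup R$, $\beta_0$ is a diffeomorphism, $\gamma\equiv 0$, and $\beta_0^*(x^2g)$ extends $C^\infty$ across $\ff$ by the doubling associated to \eqref{prod}, so $\La^*$ is $C^\infty$ up to $\ff$ there. Finally $p_L$ and $p_R$ Poisson-commute and, after conjugation, $p_{L,\gamma}$ involves only the left factor and $p_{R,\gamma}$ only the right, so the two flows commute and decouple; it therefore suffices to analyze the closure of $\beta_0^*\La$ near $L$ with the right factor in the interior (the case of $R$ being symmetric, the corner $L\cap R\cap\ff$ following by iterating the two decoupled analyses), and then to compose with the singular shift $S_\gamma^{-1}$.

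\emph{Flow near the boundary.} In the coordinates \eqref{prod} write $p$ in the $0$-cotangent variables $\tau=x\,\xi_x$, $\lambda=x\,\eta$, getting $p=\ha(\ka^2(y)\tau^2+H^{ij}(x,y)\lambda_i\lambda_j-1)$, which is $C^\infty$ up to $x=0$. Computing $H_p$ in these variables shows that, inside $\{p=0\}$, the set $\Gamma=\{x=0,\ \lambda=0,\ \tau=-1/\ka(y)\}$ consists of fixed points, with linearization along $\Gamma$ having the eigenvalue $-\ka(y_0)$ with multiplicity $n+1$ (the incoming $x$- and $\lambda$-directions) and $0$ with multiplicity $n$ (the directions tangent to $\Gamma$, i.e. the boundary point the geodesic tends to); thus $\Gamma$ is a normally attracting invariant manifold and outgoing geodesics lie in its stable set. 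By non-trapping every geodesic reaches $\p X$, so $\beta_0^*\La$ near $L$ is the graph, over the data $(z',\zeta')\in\mcn$ on the right factor together with the transverse parameter $\rho_L\sim e^{-\ka(y_0)t}$, of this stable-set geometry. The source of logarithms is already visible in $\dot\lambda_j=\ka^2\tau\,\lambda_j+x\,\dot\eta_j$: the forcing $x\,\dot\eta_j=-x\,\ka\,\p_{y_j}\ka\,\tau^2+O(x|\lambda|^2)$ decays at exactly the rate $e^{-\ka(y_0)t}$ of the homogeneous solution, producing terms $\propto t\,e^{-\ka(y_0)t}\sim\rho_L\log\rho_L$; when $\ka$ is constant this term is absent, the expansions are $C^\infty$, and one recovers the asymptotically hyperbolic case treated in \cite{Wang}.

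\emph{The expansion.} I would solve the Hamilton equations near $\Gamma$ by an iteration in powers of $\rho_L\sim e^{-\ka(y_0)t}$; at each stage the nonlinear terms and the resonant forcing produce on $\beta_0^*\La$ only terms $\rho_L^{\,j}(\log\rho_L)^{k}$ with $k$ growing at most linearly in $j$, and the normal-hyperbolicity contraction estimates make the formal series a genuine asymptotic expansion, uniformly up to $\ff$. Applying $S_\gamma^{-1}$ shifts the momenta by $d\gamma_L=\tfrac1\ka\tfrac{d\rho_L}{\rho_L}-\tfrac{\p_{y}\ka}{\ka^2}(\log\rho_L)\,dy$; the leading singular part ($\tau\to-1/\ka$ and the leading $\log\rho_L$ in $\eta$) cancels exactly, leaving momenta that are polyhomogeneous and bounded up to $L$, with one more power of $\log\rho_L$ than the base coordinates (that extra power being contributed by the $\log\rho_L$ in $d\gamma_L$). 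Choosing symplectic coordinates $(x,\xi)$ on $T^*(\xo)$ adapted to $L=\{x_1=0\}$, $R=\{x_2=0\}$, $\ff=\{x_3=0\}$ so that $\La^*\cap T^*_{\{\rho_L=\rho_R=0\}}(\xo)$ is parametrized by $(x_0,\xi_0)$ puts the result in the form \eqref{param-A0}; near the corner one performs the $\rho_R$-expansion first and then the $\rho_L$-expansion, the commutation and decoupling of the two flows guaranteeing that the $\rho_R$-data is carried along unchanged and that the resulting products of expansions remain polyhomogeneous (no $\log\rho_L$ is ever multiplied by $\log\rho_R$). The parametrization \eqref{param-A0} exhibits $\La^*$ as a submanifold with corners of $T^*(\xo)$, transverse to $L$ and $R$ and $C^\infty$ up to $\ff$; it is Lagrangian because it is so in the interior and its closure is a manifold, and at $x_1=0$ (resp. $x_2=0$, resp. $x_1=x_2=0$) all the $\log$'s drop out, so $\La^*\cap T^*_{\{\rho_\bullet=0\}}(\xo)$ reduces symplectically to a $C^\infty$ Lagrangian submanifold of $T^*\{\rho_\bullet=0\}$.

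\emph{Main obstacle.} The technical heart is the third step: proving that the formal solution of the Hamilton system is a true polyhomogeneous asymptotic expansion with the sharp logarithmic bounds ($k_1\le j_1$ for the base coordinates, $k_1\le j_1+1$ for the momenta, and likewise in $\rho_R$), uniformly up to $\ff$ and into the corner. This combines the stable-manifold and normal-hyperbolicity estimates with an induction on the order that tracks how the resonant forcing and the $\log\rho_L$ in $S_\gamma^{-1}$ feed logarithms into successive coefficients; once this is in hand, the fact that $\La^*$ is a genuine submanifold with corners, Lagrangian up to and along the boundary faces, is read off from the parametrization.
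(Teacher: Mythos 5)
Your interior reduction (using the shift $S_\gamma$ of \eqref{shift-map} and non-trapping/pseudo-convexity to identify $\La^*$ with $S_\gamma^{-1}(\beta_0^*\La)$ and get smoothness away from the boundary) agrees with the paper. But the route you take at the boundary leaves the technical heart unproved, and you say so yourself: the step in which the formal series solution of the Hamilton system near the critical set $\Gamma=\{x=0,\lambda=0,\tau=-1/\ka(y)\}$ is upgraded to a genuine polyhomogeneous expansion with the sharp bounds $k_1\le j_1$ for the base variables and $k_1\le j_1+1$ for the fiber variables, uniformly up to $\ff$ and into the corner, is exactly where the work lies. Normally attracting invariant manifold theory gives exponential convergence and smooth dependence on data, but it does not by itself produce the log-polyhomogeneous structure of \eqref{param-A0}; moreover, since $\ka(y)$ is variable the normal rate $-\ka(y_0)$ varies along $\Gamma$, so the all-orders resonance bookkeeping (one extra logarithm per order in the momenta, none beyond $j$ in the base) needs a genuine argument. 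In the paper this is done by first conjugating and rescaling, i.e.\ passing to $\wp_\bullet=\rho_\bullet^{-1}p_{\bullet,\gamma}$, proving in projective coordinates on $T^*(\xo)$ (including \eqref{eqc2} at the codimension-three corner) the precise structure \eqref{fields} of $H_{\wp_\bullet}$, and then invoking a self-contained ODE lemma (Lemma \ref{asym-expL}: contraction mapping plus induction on the Picard iterates for systems with $(\log x)$-type coefficients) with $x_1$ as the flow parameter. Your proposal needs an analogue of that lemma; without it the expansion \eqref{param-A0} is only asserted.

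Two further points would fail or need repair as written. First, the corner and the front face: ``the corner follows by iterating the two decoupled analyses'' hides precisely where the blow-up matters; the single-factor analysis on $T^*X$ in $0$-cotangent variables does not by itself give uniformity up to $\ff$ at $L\cap R\cap\ff$, and your parenthetical claim that no $\log\rho_L$ is ever multiplied by $\log\rho_R$ is unjustified (and unnecessary: \eqref{param-A0} allows mixed products, which do occur once the second flow's coefficients are composed with data already carrying $\log x_2$ terms). Also, $\beta_0$ is not a diffeomorphism near $\ff$; smoothness up to $\ff$ comes instead from the smoothness of the lifted metric \eqref{pbmetric}. Second, the boundary statements: that the closure of $\La^*$ is a Lagrangian submanifold with corners, and that $\La^*\cap T^*_{\{\rho_\bullet=0\}}(\xo)$ is a $C^\infty$ Lagrangian of $T^*\{\rho_\bullet=0\}$, do not follow from ``the closure is a manifold and the logs drop out''; one must show the limiting conormal components vanish at the faces. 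The paper does this by rectifying the commuting fields $H_{\wp_R},H_{\wp_L}$ with a symplectomorphism $\Psi$ satisfying $\Psi^*H_{\wp_\bullet}=\p_{x_\bullet}$, so that $\Psi^*\wp_R=\xi_1$, $\Psi^*\wp_L=\xi_2$ vanish on the extended manifold, which simultaneously produces the extension across $L$ and $R$ and the boundary Lagrangian claims; your argument needs either this or an equivalent mechanism (e.g.\ carrying out quantitatively the cancellation $\tau+1/\ka\to 0$ after the shift and showing it holds to first order in $\rho_\bullet$).
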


The main point in \eqref{param-A0} is that the  $x_m$ variables have  polyhomogeneous  expansions in $(x_1,x_2),$ according to \eqref{as-ph-exp}, but the expansions of the $\xi_m$ variables are only a little worse, and the power of the $\log x $ term can be at most one order higher than the power of $x.$  The  proof of Theorem \ref{soj}  will be done at the end of this section after we prove a sequence of lemmas.   One can see  that the result of Theorem \ref{soj} is independent of the extension of $\ka$ or the choice of $\rho_R$ or $\rho_L.$   If  $\rho_L^*, \rho_R^*$ are  boundary defining functions of the left and right faces, then $\rho_L = \rho_L^* e^{f_L}$ and $\rho_R =  \rho_R^* e^{f_R}$ for some $f_L, f_R \in C^\infty(\xo).$ Therefore  
$\gamma^*- \gamma= \frac{1}{\ka_L} f_L+\frac{1}{\ka_R } f_R,$ and the map $(m, \nu )\mapsto(m, \nu+d(\ga-\ga^*) )$ is a global symplectomorphism of $T^*(\xo),$ and so it does not change the structure of the manifold $\La^*.$  Similarly, if $\frac{1}{\ka_\bullet}-\frac{1}{\tilde\ka_\bullet}=\rho_\bullet f_\bullet,$  with $f_\bullet \in C^\infty,$ $\bullet=R,L,$  then $\gamma-\tilde \gamma= \rho_R \log\rho_R f_R+ \rho_L\log \rho_L f_L,$ and this will only introduce polyhomogeneous terms which  one can check  will not affect the proof.

The main point  in the proof of Theorem \ref{soj} is that $\diag_0$ does not intersect $R$ or $L$ and intersects $\ff$ transversally, see Fig.\ref{fig1}.   We will  show that  if $\ka(y)$ is constant,  and if $\wp_\bullet= \frac{1}{\rho_\bullet}p_{\bullet,\gamma},$ $\bullet=R,L,$  the vector fields  $H_{\wp_\bullet}$ are  $C^\infty$  in the interior of $T^*(\xo)$ and  up to $\ff,$ and are tangent to $\ff.$ Moreover  if $\ka$ is constant, $H_{\wp_\bullet}$ is  $C^\infty$ up to $\p T^*(\xo)$ and  transversal to  $\{\rho_\bullet=0\},$ and so $\La^*$ extends up to 
$\p T^*(\xo).$  When $\ka$ is not constant $H_{\wp_\bullet}$ has logarithmic singularities at $\{\rho_\bullet=0\},$ but its integral curves  are well defined up to $\{\rho_\bullet=0\},$ and hence the manifold $\La^*$ extends up to  $\p T^*(\xo),$ but with polyhomogeneous singularities.    In the case of AHM, this was observed in \cite{ChenHa,Wang}, and in \cite{MSV} in the particular case where $(X,g)$ is a perturbation of the hyperbolic space.

The first lemma describes the behavior of the  vector fields $H_{p_{\bullet,\gamma}},$ $\bullet=R,L$ defined in Theorem \ref{soj} up to the boundary.
  \begin{lemma} \label{vfields}  Let  $p_{\bullet,\gamma}$ be defined in Theorem \ref{soj},  and let  $\wp_\bullet=\frac{1}{\rho_\bullet} p_{\bullet,\gamma},$  $\bullet=R,L.$    If $\ka$ is constant, then
 $\wp_\bullet$ is $C^\infty$ up to $\p T^*(\xo),$  is transversal  to $\{\rho_\bullet=0\}$ and is tangent to the other two faces.  If $\ka$ is not constant
$\wp_\bullet$  has polyhomogeneous singularities at $\{\rho_\bullet=0\},$ but it is smooth up to the other two faces and tangent to both.  If  $x=(x_1, \ldots, x_{n+1})$ are local coordinates in  $\xo$  in which
$ L=\{x_1=0\},$ $R=\{x_2=0\},$  and $\ff= \{x_3=0\},$   and if $\xi=(\xi_1, \ldots, \xi_{2n+2})$ denotes the dual variable to $x,$ then $H_{\wp_L}$  and $H_{\wp_R}$ satisfy
\begin{gather}
\begin{gathered}
H_{\wp_L}=  A_{1}(x,\xi) \p_{x_1}+ A_{2}(x,\xi)  x_2 \p_{x_2} +  A_{3}(x,\xi) x_3 \p_{x_3} +  \sum_{k=4}^{2n+2} A_{k} (x,\xi)\p_{x_k} + \\
   \sum_{k=1}^{2n+2} \wta_{k} (x,\xi)\p_{\xi_k},\\
\text{ where } \\
A_1(x,\xi)= -\ka_L + G_1(x) x_1^2\log x_1+ x_1 \sum_{k=1}^{2n+2} B_{1k}(x) \xi_k,  \\
A_{j}(x,\xi)=  F_j(x) + G_j(x) x_1 \log x_1+  x_1 \sum_{k=1}^{2n+2} B_{jk}(x) \xi_k, \; 2\leq j \leq 2n+2, \\
\wta_j(x,\xi)=  \wtf_j(x) + E_j(x) \log x_1 + D_j(x) (\log x_1)^2+ \sum_{k=1}^{2n+2} \wtb_{jk}(x) \xi_j + \\
\sum_{k=1}^{2n+2} (\log x_1) C_{jk}(x) \xi_j+ \sum_{k,l=1}^{2n+2} F_{jkl}(x) \xi_j \xi_k,
\end{gathered}\label{fields}
\end{gather}
where  $F_j,$ $\wtf_j,$ $G_j,$ $B_{jk},$  $\wtb_{jk},$ $C_{jk},$ $D_j,$ $E_j$ and  $F_{jkl},$  and are $C^\infty$ functions. The formula for the vector field $H_{\wp_R}$ is obtained from this one by switching  $x_1$ and $x_2,$  $\xi_1$ and $\xi_2.$  When $\ka$ is constant  all the coefficients of $\log x_1$ and 
$(\log x_1)^2$  are equal to zero.
\end{lemma}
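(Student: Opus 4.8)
The plan is to reduce the statement to a direct computation in adapted local coordinates on $T^*(\xo)$. Recall from \eqref{scps-1}, \eqref{shift-map} and \eqref{pgatga} that $\wp_\bullet=\rho_\bullet^{-1}\,p_\bullet\circ S_\ga$, where $p_\bullet$ is the semiclassical principal symbol of $P_\bullet(h,\sigma,D)$ and $S_\ga$ is the fibre translation $\nu\mapsto\nu-d\ga$; by \eqref{formlap} the term $\knsq$ carries a factor $h^2$, so $p_\bullet=\ha\bigl(|\nu|^2_{g_\bullet^*}-1\bigr)$ with $g_\bullet^*$ the dual of the $0$-metric lifted from the $\bullet$-factor. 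I would work near the corner $L\cap R\cap\ff$, the interior of a face or a codimension-two corner being strictly simpler. By the discussion preceding the lemma we may choose $\rho_L,\rho_R$, the extension of $\ka$ off $\p X$, and the coordinates on $\xo$ conveniently; using the product form \eqref{prod} on each factor and the radial projective coordinates $x_1=\rho_L=x/\rho_\ff$, $x_2=\rho_R=x'/\rho_\ff$, $x_3=\rho_\ff=|y-y'|$, together with $\hat v=(y-y')/|y-y'|$ and $y'$, one has $\ka_L=\ka(y'+x_3\hat v)$ and $\ka_R=\ka(y')$, both independent of $x_1$ and $x_2$. Consequently $\ga=\ka_L^{-1}\log x_1+\ka_R^{-1}\log x_2$ has $\p_{x_1}\ga=\ka_L^{-1}/x_1$ and $\p_{x_2}\ga=\ka_R^{-1}/x_2$ \emph{with no logarithmic term}, while $\p_{x_k}\ga$ for the remaining coordinates is a smooth multiple of $\log x_1$ (from $d\ka_L^{-1}$) plus, for the $y'$-coordinates only, a smooth multiple of $\log x_2$ (from $d\ka_R^{-1}$); all these vanish when $\ka$ is constant.

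Next I would record the structure of $p_\bullet$ in these coordinates. Writing (for $\bullet=L$) $\nu_x,\nu_y$ for the covariables on the left copy dual to $x$ and to $\p X$, the two $0$-covariables are $x\nu_x=x_1\xi_1$ (exactly) and $x\nu_y=x_1\cdot(\text{smooth and affine in the }\xi\text{'s})$, and $|\nu|^2_{g_L^*}=\ka_L^2(x\nu_x)^2+|x\nu_y|^2_{H^{-1}}$ depends on the fibre coordinates only through these: not through $\xi_2$ except in the combination $x_2\xi_2$, not through $\xi_3$ except in $x_3\xi_3$, and not at all through the $y'$-covariables. Applying $S_\ga$: the $\ka_L^{-1}/x_1$ component shifts $x\nu_x=x_1\xi_1$ by exactly $-\ka_L^{-1}$; the $\ka_R^{-1}/x_2$ component shifts $\xi_2$, but this is absorbed by the accompanying $x_2$, and the $\log x_2$ shift of the $y'$-covariables is invisible to $g_L^*$; the $\log x_1$ shifts produce in $x\nu_y$ a correction of size $x_1x_3\log x_1$ times a smooth vector proportional to $d\ka_L^{-1}$. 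Hence
\[ |\nu-d\ga|^2_{g_L^*}-1=-2\ka_L x_1\xi_1+x_1^2\bigl[\,q(x,\xi)+x_3(\log x_1)\,\ell_1(x,\xi)+x_3^2(\log x_1)^2\,\ell_0(x)\,\bigr], \]
the ``$+1$'' having cancelled the ``$-1$''; here $q$ is quadratic in the $\xi$'s, $\ell_1$ affine, $\ell_0$ independent of them, all with smooth coefficients, and the $\xi_2$- and $\xi_3$-dependence of $q,\ell_1$ comes only through $x_2\xi_2$, resp.\ $x_3\xi_3$. Dividing by $\rho_L=x_1$,
\[ \wp_L=-\ka_L\,\xi_1+x_1\,q+x_1(\log x_1)\,\ell_1+x_1(\log x_1)^2\,\ell_0, \]
which is $C^\infty$ up to all faces when $\ka$ is constant (then $\ell_1=\ell_0=0$) and polyhomogeneous at $L$ with logarithmic powers $\le2$ in general; it is smooth up to $R$ and $\ff$ and, being $\rho_L$ times a smooth function away from the leading term, tangent to both.

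It then remains to form $H_{\wp_L}=\sum_k(\p_{\xi_k}\wp_L)\p_{x_k}-\sum_k(\p_{x_k}\wp_L)\p_{\xi_k}$ and read off \eqref{fields}. The $\p_{x_1}$-coefficient is $\p_{\xi_1}\wp_L=-\ka_L+x_1(\cdots)$, nonvanishing at $x_1=0$: transversality to $L$, with leading term $-\ka_L$ of $A_1$. Since the $\xi_2$- and $\xi_3$-dependence of $\wp_L$ carries a factor $x_2$, resp.\ $x_3$, the $\p_{x_2}$- and $\p_{x_3}$-coefficients are $x_2(\cdots)$, resp.\ $x_3(\cdots)$: tangency to $R$ and $\ff$, accounting for the factors $x_2,x_3$ in \eqref{fields} (this also reflects that $H_{p_L}$, the geodesic flow on the left factor with the right point frozen, preserves $\ff$, as in \cite{MSV,Wang}). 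For the fibre components $-\p_{x_k}\wp_L$: the term $-\p_{x_k}(-\ka_L\xi_1)$ contributes $\wtf_j+\sum_l\wtb_{jl}\xi_l$; differentiating $x_1q$, with the derivative on $x_1$, contributes a quadratic expression, giving $\sum_{l,m}F_{jlm}\xi_l\xi_m$; differentiating $x_1(\log x_1)\ell_1$ contributes $(\log x_1)$ times an affine expression, giving $E_j\log x_1+\sum_l C_{jl}(\log x_1)\xi_l$; and differentiating $x_1(\log x_1)^2\ell_0$ contributes $(\log x_1)^2$ times a smooth function, giving $D_j(\log x_1)^2$. Collecting these yields \eqref{fields}; the formula for $H_{\wp_R}$ follows by the symmetry $L\leftrightarrow R$, $x_1\leftrightarrow x_2$, $\xi_1\leftrightarrow\xi_2$, and all coefficients of $\log x_1$ and $(\log x_1)^2$ vanish when $\ka$ is constant since then $d\ga$ is smooth. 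The one genuine difficulty, which is the heart of the computation, is the bookkeeping through the projective change of variables on $\xo$: one must check that the $0$-covariables $x\nu_x,x\nu_y$ are smooth up to $\ff$ so that no negative power of $\rho_\ff$ survives in $\wp_L$; that $\wp_L$ acquires no $\log\rho_R$, because $\ka_R$ depends only on the right base point, so $d(\ka_R^{-1}\log\rho_R)$ contributes no logarithm in the covariable directions paired nontrivially by $g_L^*$; and that the logarithmic powers do not accumulate past $(\log\rho_L)^2$, since a single logarithm enters through $d\ka_L^{-1}$, $p_L$ is quadratic, and the single $x$-derivative needed for the fibre part of $H_{\wp_L}$ does not raise the power. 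The analysis near the other faces is identical with fewer defining functions, and independence of the choices made is the symplectic change-of-variables argument given before the lemma.
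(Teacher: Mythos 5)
Your proposal is correct and follows essentially the same route as the paper: pass to the shifted symbol $p_{\bullet}\circ S_\ga$, work in adapted projective/polar coordinates near the faces (the codimension-three corner being the essential case), use the product form \eqref{prod} to see that after the shift the constant term cancels and $\wp_L=-\ka_L\xi_1+x_1(\text{quadratic})+x_1\log x_1(\text{affine})+x_1(\log x_1)^2(\text{smooth})$ with $\xi_2,\xi_3$ entering only through $x_2\xi_2,x_3\xi_3$, and then read off \eqref{fields} by differentiation. The only differences are cosmetic (polar variables $|y-y'|,\hat v$ in place of the paper's $y_1-y_1'$-based projective coordinates, and treating only the corner region explicitly), and the one point worth making explicit is that $\xi_1$ enters the quadratic and logarithmic parts only through $x_1\xi_1$, which is what upgrades the $\log$-term in $A_1$ to the stated $x_1^2\log x_1$ form.
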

 \begin{proof}    First, observe that if $\tilde x$ are coordinates in which $L=\{\tilde{x}_1=0\},$ $R=\{\tilde x_2=0\}$ and $\ff=\{\tilde x_3=0\},$ then $x_j=X_j(\tilde x) \tilde{x}_j,$ $j=1,2,3,$ where $X_j(\tilde x)>0,$ The dual variables $\tilde \xi_j$ would be linear combinations of $\xi_j$ with $C^\infty$ coefficients depending on $x$ only. Therefore the vector fields $H_{\wp_\bullet}$ would have the same form in the new coordinates.
 
   We choose a boundary defining function $x$ such that \eqref{prod} holds.   In this case $\Delta_g$ is given by \eqref{formlap} and  therefore
 \begin{gather}
 \begin{gathered}
h^2(\Delta_{g(z)}-\knsq)-1 = \left(\ka(y)^2 (x hD_x)^2+ x^2 H^{jk}(x,y) hD_{y_j}hD_{y_k}-1\right)- \\ ih \ka(y)^2 (n+ xF(x,y)) x hD_x + i h \sum_j B_{k} x^2 h D_{y_k} - h^2 \knsq.
 \end{gathered}\label{semiclass-Lap}
 \end{gather}
 We  will find $P_{L} (h,m,D-d\gamma)=P_{L,\gamma}(h,m,D),$  compute  its semiclassical principal symbol and its Hamiltonian. It is only necessary to work in local coordinates valid near $\p(\xo)$ and we divide it in four regions and work in projective coordinates valid in each region:  
\begin{enumerate}[{A.}]
\item  Near $L$ and away from $R\cup \ff,$ or near $R$ and away from $L \cup \ff.$ 
\item  Near $L \cap \ff$ and away from $R$, or near $R\cap \ff$ and away from $L.$ 
\item  Near $L\cap R$ but away from $\ff.$ 
\item  Near $L\cap R \cap \ff.$
\end{enumerate}

  First we analyze region A, near $ L $ but away from $R$ and $\ff.$  The case near $ R$ but away from $ L$ and $ \ff$ is identical.   Since we are away from $R,$ we have $\rho_R>\del,$ for some $\del>0,$  and hence $\log \rho_R$ is $C^\infty.$ In this region we may take $x_1=x$ as a defining function of $L,$  we set $\gamma= \frac{1}{\kappa(y)}\log x_1$.   We shall denote the other coordinates $y=(x_2,\ldots, x_{2n+2})$ and the respective dual variables by $\eta.$    Even though this does not match the notation of the statement of the lemma,  it would be more confusing if we renamed the $y$ variables. 
  
   Also observe that the map $(x,\xi) \longmapsto (x, \xi-d(\frac{1}{\ka_R}\log\rho_R))$ is $C^\infty$  in the region where $\rho_R>\del,$  it does not affect the form of the vector fields in \eqref{fields}, hence  the statements about $\wp_L$  in the lemma are true in this region whether we take $\gamma=\frac{1}{\ka(y)} \log x_1 +\frac{1}{\ka_R} \log \rho_R.$  In the case near $R$ but away from $L$ and $\ff$ one sets 
  $x_2=x'$ and $\gamma=\frac{1}{\ka(y')} \log x_2.$

We see from \eqref{semiclass-Lap} that 
  \begin{gather*}
  \ha  p_{L,\gamma}(x_1,y,\xi_1,\eta)=\ha(p_{L}(x_1,y, \xi_1-\p_{x_1}\gamma, \eta-\p_y\gamma)-1)= \\ \ha\ka(y)^2(x_1\xi_1- \frac{1}{\ka(y)} )^2 + \ha x_1^2 H^{jk}(x_1,y)( \eta_j -  a_j  \log x_1)( \eta_k - a_k \log x_1)-\ha, \\
   \text{ where }  a_j=\p_{y_j} \ka(y)^{-1},
  \end{gather*}
   and so
  \begin{gather*}
\wp_L=\frac{1}{x_1}  p_{L,\gamma}=  -\ka(y)\xi_1+\ha \ka(y)^2x_1\xi_1^2+ \ha x_1 H^{jk}(x_1,y)(\eta_j-a_j\log x_1)(\eta_k - a_k\log x_1).
  \end{gather*}
Of course there are no $\log x_1$ terms when $\ka$ is constant,   and it follows from a direct computation that $H_{\wp_L}$ is of the desired form.

Next we work in region B near $L\cap \ff,$ but away from $ R$.  The case near $R\cap \ff$ but away from $ L$ is very similar.   In this case, $\rho_R=x'/R>\del,$ and so it is more convenient to use projective coordinates
\begin{equation}\label{eqc1}
x_1 = \frac{x}{x'},\ \ x_3= x', \;\   Y = \frac{y - y'}{x'}, \text{ and }  y'. 
\end{equation}

In this case, $X$ is a boundary defining function for $L$ and $x_3$ is a boundary defining function for $\ff,$ and it suffices to take $\gamma=a(y'+x_3Y) \log X,$ where 
$a=\frac{1}{\ka}.$ Therefore,   if $\xi_j$ and $\eta_j$ denote the dual variables to $x_j$ and $Y_j,$
\begin{gather*}
p_{L,\gamma}(x_1,Y,x_3,y',\xi,\eta)=p_{L}(x_1,Y,x_3,y',\xi_1- \p_{x_1}\ga, \eta-\p_Y\ga)= \\ \ha \ka^2(y'+x_3Y)(x_1\xi_1-\frac{1}{\ka(y'+x_3Y)})^2 + \\ \ha x_1^2 H^{jk}(x_3X,y'+x_3Y)( \eta_j- a_j(y'+x_3Y) \log x_1) ( \eta_k- a_k(y'+x_3Y) \log x_1)-\ha,
\end{gather*}
where $a_j=\p_{y_j} a,$ and so we conclude that
\begin{gather*}
\wp_L= -\ka(y'+x_3Y) \xi_1 + \ha \ka^2(y'+x_3Y) x_1\xi_1^2 + \\  \ha x_1  H^{jk}(x_3x_1,y'+x_3Y)( \eta_j-a_j \log x_1)(\eta_k-a_k \log x_1),
\end{gather*}
and hence \eqref{fields} follows from a direct coputation. Again, when $\ka$ is  constant,  $a_j=0,$ and  there are no $\log x_1$  terms, so  $H_{\wp_L}$ is $C^\infty$ and  $H_{\wp_L}$ is transversal to $ L.$

In region C, near $L\cap R$ and away from $ \ff,$  $x=x_1$ and $ x'=x_2$ are boundary defining functions for $ L$ and  $ R$ respectively. In this case, as discussed above, we may define
\beq
\gamma = a(y)\log x_1 +a(y') \log x_2, \text{ where } a(y)=\frac{1}{\ka(y)}.
\eeq
Since we are working with $P_{L,\ga},$ the operator does not have derivatives in $D_{x_2}$ or $D_{y'},$ and the computations are exactly the same as in region A.
Since $H_{\wp_L}$ does not have a term in $\p_{x_3},$ it is tangent to $\{x_3=0\}= R.$

Finally, we analyze  region D near  the co-dimension $3$ corner $ (L\cap \ff\cap R).$   We work in projective coordinates, and as  in \cite{MSV},  set 
$\rho_{\ff}=y_1 - y_1' \geq 0$ and define projective coordinates
\begin{equation}\label{eqc2}
x_3 = y_1 - y_1', \ \  x_1 = \frac{x}{y_1 - y_1'}, \ \  x_2 = \frac{x'}{y_1 - y_1'},\ \ y'  \text{ and } Y_j = \frac{y_j - y_j'}{y_1 - y_1'}, \ \ j = 2, 3,\cdots n.
\end{equation}

Here $x_3=\rho_\ff,$ $x_1=\rho_L$ and $x_2=\rho_R,$  are boundary defining functions for $\ff$  $ L$ and $R$ faces respectively. 
Then 
\begin{gather}
\begin{gathered}
x\p_x= x_1\p_{x_1}, \;\ x\p_{y_j}= x_1 \p_{Y_j}, \;\ j\geq 2, \\
x\p_{y_1}= x_1(x_3\p_{x_3}-x_1\p_{x_1}-x_2\p_{x_2}- Y_j\p_{Y_j}),
\end{gathered}\label{comp-cor}
\end{gather}
where a repeated index indicate sum over that index.  In these coordinates,
\begin{gather*}
\gamma= \frac{1}{\ka_R} \log x_2- \frac{1}{\ka_L} \log x_1 \;\ \ka_R=\ka(y'), \; \ka_L=\ka( x_3+y_1',y_2'+ x_3 Y_2, \ldots, y_{n}'+x_3 Y_n)
\end{gather*}
 If  $(\xi_1, \xi_2,\xi_3,\eta_j, \eta_j')$ denote the dual variables to 
$(x_1,x_2,x_3,Y,y'),$  then substituting $\xi_j$ by $\xi_j -\p_{x_j}\ga,$ $\eta_j$ by $\eta_j-\p_{y_j}\ga,$ we find that
\begin{gather*}
 p_{L,\ga}=  \ha \ka_L^2(x_1\xi_1- \frac{1}{\ka_L})^2+\ha x_1^2 H^{jk}(x_1x_3, y'+ x_3 Y)\mcu_j \mcu_k-\ha,\\
 \text{ where }
 \mcu_1=x_3\xi_3-x_1\xi_1-x_2\xi_2-Y_j\eta_j +a_L+a_R+a_1x_3\log x_1, \;\  a_\bullet=\frac{1}{\ka_\bullet},\\
a_1=\p_{y_1'} a_L, \; a_j=\p_{Y_j} a_L,\;  \mcu_j= Y_j\eta_j - a_j \log x_1, \, 2\leq j \leq n,
 \end{gather*}
and therefore,
\begin{gather}
\wp_L= -\ka_L \xi_1+ \ha \ka_L^2x_1\xi_1^2+\ha x_1 H^{jk}(x_1x_3, y'+ x_3 Y)\mcu_j \mcu_k. \label{formula-pl}
\end{gather}

Now we have to verify that  $H_{\wp_L}$ matches \eqref{fields}.      Let us consider the coefficients $A_j$ first.  They are obtained by differentiating $\wp_L$ in $\xi_j$ or $\eta_j.$ But, $\wp_L$ is a polynomial of degree two in the dual variables $(\xi,\eta)$ with $C^\infty$ coefficients depending on $(x,Y,y').$  So the $A_j$ should be polynomials of degree one in $(\xi,\eta)$ with $C^\infty$ coefficients depending on the base variables.   Notice that the second order terms are of the form $x_1 \xi_j \xi_k$ or $x_1 \xi_j \eta_k,$ so the terms of degree one in $\xi$ are of the form $x_1B_{jk}\xi_k.$  As for the log terms, they only appear in $x_1 \mcu_j\mcu_k$ terms,  but
$\p_{\xi_m}(x_1\mcu_j \mcu_k)= x_1\mcu_j \p_{\xi_m} \mcu_k+  x_1\mcu_k\p_{\xi_m} \mcu_j,$ and this shows the general form of $A_j.$    Now the special form of $A_1$ comes from differentiating  the first two terms in \eqref{formula-pl} with respect to $\xi_1.$  Similarly,  only the terms $x_1 \mcu_1 \mcu_j,$ $1\leq j \leq n,$ contain $\xi_2$ and $\xi_3,$ but in fact these show up as $x_2\xi_2$ and $x_3\xi_3,$ so when we differentiate this product all the terms have a factor $x_1x_2$ in the case of $A_2(x,\xi)$ and $x_1x_3$ in the case of $A_3(x,\xi).$

For the $\wta_j(x,\xi)$ terms,  when we differentiate $\wp_L$ in $(x,Y,y')$ get a polynomial fo degree at most two in $(\xi,\eta).$ This describes the general form of $\wta_j(x,\xi).$  Perhaps the only issue is the appearance of the log terms, and nothing worse. When  $\p_{x_1}$ hits the log term in $\mcu_j,$ the $x_1$ term in front of $H^{jk}$ cancels the term in $\frac{1}{x_1}.$   The $\log x_1$ terms come from either
$\p_{x_j}(x_1H^{jk}\mcu_j \mcu_k)$ or $\p_{Y_j} (x_1 H^{jk} \mcu_j \mcu_k),$  and are as in \eqref{fields}. This  concludes the proof of the Lemma.
  \end{proof}

Next we need to prove that integral curves of $H_{\wp_\bullet}$  extend up to $\{\rho_\bullet=0\}$ and have the polyhomogeneous expansions stated in \eqref{param-A0}. We want to use $x_1$ as the parameter along the integral curves of $H_{\wp_\bullet}.$ According to \eqref{fields}, the coefficient of $\p_{x_1}$ of $H_{\wp_L}$ is equal to
\begin{gather*}
A_1(x,\xi)=-\ka_L(x) + G_1(x) x_1^2\log x_1+ \sum_{k} B_{m,k}(x) x_1 \xi_k,
\end{gather*}
and since  $\ka(y)\geq \ka_0>0,$ it follows that near any point  $(x_0,\xi_0)$ with $x_{01}=0,$ we have the following asymptotic expansion
\begin{gather*}
A_1(x,\xi)^{-1} \sim  -\frac{1}{\ka_L}+ \sum_{k, \alpha} (x_1\log x_1)^k (x_1\xi)^\alpha H_{k,\alpha}(x), \text{ where  } \\ H_{k,\alpha} \in \CI, \; 
\alpha=(\alpha_1, \ldots, \alpha_{2n+2}) \in \mn^{2n+2}, \; (x_1\xi)^\alpha= (x_1\xi_1)^{\alpha_1}\ldots(x_1 \xi_{2n+2})^{\alpha_{2n+2}}.
\end{gather*}
So, if we divide $H_{\wp_L}$ by $A_1(x,\xi)$ we obtain a vector field which has the following asymptotic expansion  near $(x_0,\xi_0):$
\begin{gather*}
A_1(x,\xi)^{-1} H_{\wp_L}\sim  
\p_{x_1}+ \sum_{j=2}^{2n+2} A_j(x,\xi) B_{j}(x, x_1\log x_1, x_1\xi) \p_{x_j} + \sum_{j=1}^{2n+2} \wta_j(x,\xi) \wtb_{j}(x, x_1\log x_1, x_1\xi) \p_{\xi_j},\\
\text{ where }  A_j(x,\xi) \text{ and } \wta_j(x,\xi) \text{ are as in \eqref{fields} and }\\
B_j(x, x_1\log x_1, x_1 \xi) \sim \sum_{k,\alpha} (x_1\log x_1)^\alpha (x_1\xi)^\alpha B_{j,k,\alpha}(x), \;\  B_{j,k,\alpha} \in \CI, \\
\wtb_j(x, x_1\log x_1, x_1 \xi) \sim \sum_{k,\alpha} (x_1\log x_1)^\alpha (x_1\xi)^\alpha \wtb_{j,k,\alpha}(x), \;\  \wtb_{j,k,\alpha} \in \CI.
\end{gather*}
Now we need the following result about polyhomogeneous odes:
\begin{lemma} \label{asym-expL} Let $U, V \subset \mr^n$ be open subsets with $\overline{U}\subset V,$ and let $F_m(x,y),$ $1\leq m \leq n$ be such that for  there exist $C>0,$ $M>0$ and $N\in \mn$ be such that 
\begin{gather}
\begin{gathered}
\left|F_m(x,y)\right| \leq  C(-\log x)^N ,\;\  x \in (0,1), \;\ y \in V, \\
\left| \nabla_y F_m(x,y)\right| \leq M  (-\log x)^N \;\ x \in (0,1), \;\ y \in V. 
\end{gathered} \label{Lip}
\end{gather}
Then for any $p=(p_1,\ldots,p_n)\in U$ there exists $\eps>0$ such that  for $1\leq m\leq n,$ the initial value problem
\begin{gather}
\begin{gathered}
\frac{dy_m}{dx}= F_m(x,y),  \;\ 1\leq m \leq n, \\
y_m(0)=p_m
\end{gathered}\label{IVP}
\end{gather}
 has a unique solution $y(x)=(y_1(x),\ldots, y_n(x)),$  with $y(x)\in V$ for $x\in [0,\eps).$  Moreover,  if  for
 $x\in (0,1),$ $y\in V,$  one has $y=(y',y''),$ $y'=(y_1,\ldots, y_k),$ $y''=(y_{k+1}, \ldots, y_n),$ and the functions $F_m(x,y)$ have asymptotic expansions of the type
\begin{gather}
\begin{gathered}
F_m(x,y)\sim \sum_{j=0}^\infty (x \log x)^j B_{m,j}(x,y',xy''), \; B_{m,j}\in \CI  \text{ if } 1\leq m \leq k, \text{ and } \\
F_m(x,y)\sim  \sum_{j=0}^\infty (x \log x)^j( B_{m,j}(x,y', xy'')+  (\log x)  C_{m,j}(x,y',xy'') + (\log x)^2 D_{m,j}(x,y', xy'')) +  \\
 \sum_{j=0}^\infty (x \log x)^j(\sum_{r=k+1}^n y_r E_{r,m,j}(x,y',xy'')+ (\log x ) y_r \widetilde{E}_{r,m,j}(x,y',xy'')+
  \sum_{r,s=k+1}^n y_r y_s F_{r,s,m,j}(x,y',xy'')), \\
\text{ if } k+1\leq m \leq n, \; B_{m,j}, C_{m,j}, D_{m,j}, E_{r,m,j}, F_{r,s,m,j} \in \CI,
\end{gathered}\label{expfm}
\end{gather}
then $y_m(x),$ $1\leq m \leq n,$  have the following polyhomogeneous expansions at $\{x=0\}:$
\begin{gather}
\begin{gathered}
y_m(x)-p_m \sim \sum_{j=1}^\infty \sum_{k=0}^{j} x^j(\log x)^k  Y_{j,k,m}(p), \;\ Y_{j,k,m}\in C^\infty(U), \;\  1\leq m \leq k, \\ 
y_m(x)-p_m \sim \sum_{j=1}^\infty \sum_{k=0}^{j+1} x^j(\log x)^k  Y_{j,k,m}(p), \;\ Y_{j,k,m}\in C^\infty(U), \;\  k+1\leq m \leq n.
\end{gathered}\label{phexp}
\end{gather}
in the sense that for any $J\in \mn$ and $\mu>0,$ 
\begin{gather}
\begin{gathered}
\left|y_m(x)-p_m-\sum_{j=1}^J \sum_{k=0}^{j+\bullet} x^{j}(\log x)^k  Y_{j,k,m}(p)\right|\leq C(J,\eps) x^{J-\mu},\\
\bullet=0 \text{ if } m \leq k, \text{ and } \bullet=1 \text{ otherwise}.
\end{gathered}\label{defexp}
\end{gather}
\end{lemma}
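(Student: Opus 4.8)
The plan is to split the statement into the existence/uniqueness part and the asymptotic part, and for the asymptotic part to first manufacture a formal polyhomogeneous solution and then estimate the remainder with a Gronwall argument. For existence and uniqueness, I would rewrite \eqref{IVP} as the integral equation $y_m(x)=p_m+\int_0^x F_m(t,y(t))\,dt$. The key observation is that \eqref{Lip} gives $|F_m|,|\nabla_yF_m|\le C(-\log t)^N$ and that $t\mapsto(-\log t)^N$ is integrable near $0$, so $\omega(\eps):=\int_0^\eps(-\log t)^N\,dt\to0$ as $\eps\downarrow0$. On the closed ball of continuous maps $y\colon[0,\eps]\to\mr^n$ with $\sup|y-p|\le r_0$, where $r_0=\operatorname{dist}(p,\mr^n\setminus V)>0$, the map $y\mapsto p+\int_0^\cdot F(t,y(t))\,dt$ is then a well-defined self-map once $C\omega(\eps)\le r_0$ and a contraction in the sup-norm once $M\omega(\eps)<1$; this yields a unique continuous solution on $[0,\eps)$, automatically taking values in $V$, which is $C^1$ on $(0,\eps)$ by the equation and satisfies $y(0)=p$. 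Uniqueness among $C^1$ solutions follows from the same bounds by a Gronwall argument with the integrable kernel $(-\log t)^N$.

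For the expansion, under the hypotheses \eqref{expfm} I would build formal series $\widehat y_m=p_m+\sum_{j\ge1}\sum_{k\ge0}x^j(\log x)^k Y_{j,k,m}(p)$, $Y_{j,k,m}\in\CI(U)$, solving \eqref{IVP} to infinite order. Substituting the ansatz, Taylor-expanding the smooth coefficients $B,C,D,E,F$ and collecting powers, one matches coefficients of $x^{j-1}(\log x)^k$; since $\frac{d}{dx}(x^j(\log x)^k)=jx^{j-1}(\log x)^k+kx^{j-1}(\log x)^{k-1}$, the left side contributes $jY_{j,k,m}+(k+1)Y_{j,k+1,m}$ while the right side depends only on the $Y_{j',k',\cdot}$ with $j'<j$ — this is exactly where the factor $x$ in front of the $y''$ arguments of \eqref{expfm} is used, since it shifts those contributions down one power of $x$. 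As $j\ge1$, these relations can be solved recursively (at each order $j$, descending in $k$), determining the $Y_{j,k,m}$ uniquely and $C^\infty$ in $p$. To keep track of which $Y_{j,k,m}$ vanish I would introduce the \emph{type}: a series $\sum x^j(\log x)^k a_{j,k}$ has type $(\ell)$ if $a_{j,k}=0$ whenever $k>j+\ell$. Then multiplication by $x$ improves type by $1$ (pushing powers up by $1$), multiplication by $\log x$ worsens type by $1$, products add types, composition of a $C^\infty$ function with arguments of type $(0)$ yields type $(0)$, and $\int_0^x$ improves type by $1$. Since $x\log x$ has type $(0)$ and $xy''$ has type $(0)$ as soon as $y''$ has type $(1)$, an induction on $j$ shows that the order-$(j-1)$ part of $F_m$ has type $(0)$ for the $y'$-components and type $(2)$ for the $y''$-components (the free $(\log x)^2$ terms and the quadratic $y_ry_s$ terms are what force type $(2)$); integrating, $\widehat y_m-p_m$ has type $(0)$ for the $y'$-components and type $(1)$ for the $y''$-components, which is precisely the shape asserted in \eqref{phexp}.

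Finally, to see that the genuine solution is asymptotic to $\widehat y$, fix $J$, let $y^{(J)}$ be the truncation of $\widehat y$ at $j\le J$, and set $g_m(x)=F_m(x,y^{(J)}(x))-\frac{d}{dx}y^{(J)}_m(x)$; since $\widehat y$ solves the equation formally, $g_m$ has an expansion starting at power $\ge J$, so $|g_m(x)|\le C_{J,\mu}x^{J-\mu}$ for every $\mu>0$. The difference $w=y-y^{(J)}$ vanishes at $x=0$ and satisfies $w_m(x)=\int_0^x\bigl(g_m(t)+F_m(t,y^{(J)}+w)-F_m(t,y^{(J)})\bigr)\,dt$, where by the mean value theorem and \eqref{Lip} the last two terms are bounded by $M(-\log t)^N|w(t)|$. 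Hence $|w(x)|\le C_{J,\mu}x^{J-\mu}+M\int_0^x(-\log t)^N|w(t)|\,dt$, and Gronwall's inequality with the integrable nonnegative kernel $M(-\log t)^N$ and the nondecreasing forcing term $C_{J,\mu}x^{J-\mu}$ gives $|w(x)|\le C'_{J,\mu}x^{J-\mu}$, which is \eqref{defexp}. I expect the main obstacle to be the middle step: verifying that the precise algebraic structure of \eqref{expfm} — above all the $x$ multiplying the $y''$ variables and the restriction on the free powers of $\log x$ — propagates through the nonlinear recursion to give exactly type $(0)$ on the first block and type $(1)$ on the second, with no further logarithmic loss; the existence theory and the Gronwall remainder estimate are routine once one notices that $(-\log t)^N$ is integrable near $t=0$.
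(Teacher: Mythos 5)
Your argument is correct, and the existence/uniqueness part coincides with the paper's (the same integral-equation contraction, driven by the integrability of $(-\log t)^N$ near $t=0$ via the elementary bound $\int_0^x(-\log t)^N\,dt\le N!\,x(-\log x)^N$). Where you genuinely diverge is the derivation of the expansion \eqref{phexp}: the paper never writes down a formal series solution. Instead it works with the Picard iterates $T^L(p)$ themselves, proving by induction on $L$ (using Taylor expansion of the coefficients in \eqref{expfm}, the product rule \eqref{power-of} for log-polyhomogeneous series, and the explicit integration formula \eqref{int-log}) that each iterate has a polyhomogeneous expansion with exactly the log bounds $k\le j$ resp.\ $k\le j+1$, and then transfers this to the true solution through the quantitative convergence estimate $|y(x)-T^L(p)(x)|\le C_{L,\mu}\,x^{L-\mu}$, which comes from iterating the contraction bound $(MN!\,x(-\log x)^N)^L$. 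You instead solve the formal recursion directly (matching coefficients of $x^{j-1}(\log x)^k$, with the triangular system $jY_{j,k,m}+(k+1)Y_{j,k+1,m}=\text{known}$ solved by descending in $k$, and a ``type'' bookkeeping that correctly reproduces the $k\le j$ versus $k\le j+1$ dichotomy from the structure of \eqref{expfm}), and then control $y-y^{(J)}$ by a Gronwall inequality with the integrable kernel $M(-\log t)^N$. Both routes are sound; the paper's buys the expansion without having to verify solvability of the formal recursion (the log bookkeeping is absorbed into \eqref{int-log} and the product/Taylor manipulations applied to the iterates), while yours cleanly separates the algebraic step from the analytic remainder estimate, determines the coefficients $Y_{j,k,m}$ uniquely and manifestly $C^\infty$ in $p$, and in fact gives a marginally better count ($k\le j-1$ after integration) for the first block of components. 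Two cosmetic points if you write it up: take the ball radius to be half the distance from $\overline{U}$ to $\partial V$ so that \eqref{Lip} applies on the segment used in the mean value theorem, and note that the forcing term $C_{J,\mu}x^{J-\mu}$ is nondecreasing so the standard Gronwall lemma with integrable kernel applies verbatim.
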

\begin{proof}    We will use a contraction argument in an appropriately defined space of functions to prove the existence and uniqueness of the solution. We will then show that the asymptotic expansion is valid for the unique solution.  Let $\del>0$ be small enough so that  
\begin{gather*}
Q(p,\del)=\{y\in V: |y_j-p_j| \leq \del, \;\ 1\leq j \leq n\} \subset V, \;\ \forall p \in U,
\end{gather*}
and for $\eps>0$ let 
\begin{gather*}
\mcc=C\left([0,\eps]; Q(p,\del)\right)=\{ \phi: [0,\eps] \longrightarrow Q(p,\del) \text{ continuous } \} \\
\text{ equipped with the norm }  ||\phi||=\sup_{x\in [0,\eps]} |\phi(x)|
 \end{gather*}
For $\phi\in \mcc$ we define the map $T(\phi)=(T_1(\phi),\ldots, T_n(\phi)),$ where 
\begin{gather*}
T_m(\phi(x))= p_m+\int_0^x  F_m(t, \phi(t))\; dt, \;\ 1\leq m \leq n.
\end{gather*}
Then in view of   the second inequality in \eqref{Lip}, given two functions $\phi,\psi \in \mcc,$
\begin{gather*}
\left|T_m(\phi)(x)-T_m(\psi)(x)\right| \leq   M ||\phi-\psi|| \int_0^x  (-\log t)^N \; dt
\end{gather*}
But since
\begin{gather*}
\int_0^x (-\log t)^N\; dt= x N! \sum_{r=0}^N \frac{1}{r!} (-\log x)^{r}, 
\end{gather*}
we conclude that  for $x<1,$
\begin{gather}
\int_0^x (-\log t)^N\; dt \leq N! x(-\log x)^N, \label{inteta}
\end{gather}
and therefore
\begin{gather*}
\left|T_m(\phi)(x)-T_m(\psi)(x)\right| \leq M N! x(-\log x)^N ||\phi-\psi||.
\end{gather*}
The function $x(-\log x)^N$ is increasing in the interval $(0,e^{-N}),$ and so if $x < \eps$  and $\eps<e^{-N},$
\begin{gather*}
||T_m(\phi)(x)-T_m(\psi)(x)||\leq M N !  \eps(-\log \eps)^N ||\phi-\psi||
\end{gather*}
and we pick $\eps>0$ such that   $\eps<e^{-N}$ and 
\begin{gather}
M N! \sqrt{n}  \eps(-\log \eps)^N<1. \label{cheps}
\end{gather} 
 With this value of $\eps,$ we need to find $\del$ which guarantees that
$T: \mcc \longmapsto \mcc.$  Now we use the first inequality in  \eqref{Lip} to deduce that if $Q(p,\del)\subset V,$ and $\phi\in \mcc,$
\begin{gather*}
\left|T_m(\phi(x))-p_m\right| \leq C  \int_0^x (-\log t)^N \; dt.
\end{gather*}
Again because of \eqref{inteta} we  conclude that 
\begin{gather*}
|T_m(\phi)-p_m| \leq C N! \eps (-\log \eps)^N.
\end{gather*}
We pick 
\begin{gather}
\begin{gathered}
\del=C N! \sqrt{n} \eps (-\log \eps)^N, \text{ with } \eps \text{ small enough }  \\
\text{ such that } Q(p,\del)\subset V, \;\ \forall p \in U.\
\end{gathered}\label{chdel}
\end{gather}
  Therefore with  $\eps$ and $\del$ such that \eqref{cheps} and \eqref{chdel} hold, we have shown that  $T:\mcc \longmapsto \mcc,$ and 
  $T$ is a contraction.  Since $\mcc$ is a complete metric space, $T$  has a unique fixed point $y(x)=(y_1(x),\ldots,y_n(x))$  which satisfies
\begin{gather*}
y_m(x)=p_m +\int_0^x  F_m(t, y(t))\; dt, \;\ 1\leq m \leq n.
\end{gather*}
Therefore, $y$ is differentiable in $x>0$ and satisfies \eqref{IVP}. We still need to prove that $y_m(x)$ satisfies \eqref{phexp}.

Let $\phi, \psi \in \mcc,$ and consider the sequence $T^J\phi$ and $T^J\psi,$ $J\in \mn.$ From the definition 
\begin{gather*}
 T_m^J\phi(x)-T_m^J\psi(x)=  \int_0^x  \left(F_m(t, T^{J-1} \phi(t))- F_m(t, T^{J-1} \psi(t))\right)  \; dt, \;\ 1\leq m \leq n,
\end{gather*}
and so from \eqref{Lip},
\begin{gather*}
\left|T_m^J\phi(x)-T_m^J\psi(x)\right| \leq M \int_0^x (-\log t)^N |T^{J-1}\phi(t)-T^{J-1}\psi(t)|\; dt.
\end{gather*}
Iterating this formula we obtain
\begin{gather*}
\left| T_m^J\phi(x)-T_m^J\psi(x)\right| \leq \\ M^J \int_0^x(-\log t)^N \int_0^{t} (-\log t_1)^N  \ldots \int_0^{t_{J-1}} (-\log t_J)^N
|\phi(t_J)-\psi(t_J)| \; dt_J \; dt_{j-1} dt_{j-2}\ldots dt_{1} dt \leq \\
 \left(M \int_0^x (-\log t)^N \; dt \right)^J ||\phi-\psi||.
 \end{gather*}
 We then deduce from \eqref{inteta} that
 \begin{gather*}
 \left| T_m^J\phi(x)-T_m^J\psi(x)\right|  \leq  ( M N! x (-\log x)^N)^J ||\phi-\psi||.
 \end{gather*}

Now, for $J$ fixed and $r\in \mn,$ one can write
\begin{gather*}
T^{J+r+1}(\phi)- T^J(\phi)= \sum_{\alpha=0}^r (T^{J+\alpha}(T\phi)-T^{J+\alpha}(\phi)),
\end{gather*}
and we  conclude that, with our choice of $\eps,$
\begin{gather}
\begin{gathered}
\left|T^{J+r+1}(\phi)(x)- T^J(\phi)(x)\right| \leq  \sum_{\alpha=0}^r |T^{J+\alpha}(T\phi)-T^{J+\alpha}(\phi)| \leq \\
\sum_{\alpha=0}^r [M N! x(-\log x)^N]^{J+\alpha}\; ||T\phi-\phi|| \leq \\
  ||T\phi-\phi|| (M N! x(-\log x)^N)^{J} \sum_{\alpha=0}^\infty( MN!\eps (-\log\eps)^N)^\alpha=\\
   ||T\phi-\phi|| (M N! x(-\log x)^N)^{J} \frac{1}{1-MN!\eps (-\log\eps)^N}.
\end{gathered} \label{auxasy}
\end{gather}

Since $T$ is a contraction, if one picks any $\phi\in \mcc,$ the sequence  defined by $\phi_k= T(\phi_{k-1}),$ and $\phi_0=\phi$ converges to the solution $y$ uniformly.  So we pick $\phi=p,$  and by taking the limit as $r\rightarrow \infty$ in  \eqref{auxasy} we deduce that for any $L\in \mn$ and $\mu>0,$ there exists $C_{L,\mu}>0$ such that
\begin{gather*}
\left| y(x)- T^L(p)(x)\right| \leq C(L,\mu) x^{L-\mu},
\end{gather*}
and so we only need to show that,  fixed $L,$ then in the sense of \eqref{defexp}, 
\begin{gather}
\begin{gathered}
T_m^L(p)-p_m \sim \sum_{j=1}^\infty \sum_{k=0}^{j} x^j(\log x)^k F_{L,j,k}(p) \;\ F_{L,j,k}(p) \in C^\infty(U), \;\ m \leq k \text{ and }\\
T_m^L(p)-p_m \sim \sum_{j=1}^\infty \sum_{k=0}^{j+1} x^j(\log x)^k F_{L,j,k}(p) \;\ F_{L,j,k}(p) \in C^\infty(U), \;\  k+1\leq m \leq n.
\end{gathered}\label{iterG}
\end{gather}
We prove this by induction in $L$ and begin with $L=1.$   From the definition of $T,$
\begin{gather*}
T_{m}(p)(x) - p_m= \int_0^x  F_m(t, p)\; dt, \;\ 1\leq m \leq n.
\end{gather*}

But  since the coefficients of the expansions in \eqref{expfm} are $\CI$ functions of the type $B(x,y',xp''),$ then   
$B(x,p',xp'') \sim \sum_{j=0}^\infty B_j(p) x^j,$  $B_j(p)=\frac{1}{j!} \p_x^j B(x,p', xp'')|_{x=0},$
\begin{gather*}
\begin{gathered}
F_{m} (t,p) \sim \sum_{j,l=0}^\infty  t^l  (t \log t)^j B_{m,j,l}(p), \;\ 1\leq m \leq k, \\
F_m(t, p) \sim \sum_{j,l=0}^\infty t^j (t\log t)^l(\wtf_{m,j,l}(p) +( \log t) \wtf_{1,m,j,l}(p) + (\log t)^2  \wtf_{2,m,j,l}(p),) \;\ k+1\leq m \leq n.
\end{gathered}
\end{gather*}
But since for $j,k \in \mn_0,$ 
\begin{gather}
\int_0^x  t^j(\log t)^k dt= \frac{x^{j+1}}{j+1} \sum_{r=0}^k \frac{(-1)^r}{(j+1)^r}\frac{k!}{(k-r)!} (\log x)^{k-r}, \label{int-log}
\end{gather}
and so \eqref{iterG} is satisfied for $L=1.$

Suppose that \eqref{iterG}  is correct for $L.$ So we write
\begin{gather}
T_m^{L+1}(p)(x)-p_m=  \int_0^x \left(F_m(t, T^L(p)(t))- F_m(t,p)\right) \; dt. \label{diff-Int}
\end{gather}
Let us consider the case $1\leq m \leq k$ first. In view of  \eqref{expfm},  
\begin{gather*}
F_m(t,T^L(p)(t))- F_m(t,p) \sim \sum_{j=0}^\infty (t\log t)^j( B_{m,j}(t,  T_L(p)'(t), tT^L(p)''(t))-B_{m,j}(t,p', tp''),
\end{gather*}
But since $B_{m,j}\in C^\infty,$ its Taylor series expansion gives that
\begin{gather*}
B_{m,j}(t,  T_L(p)'(t), tT^L(p)''(t))-B_{m,j}(t,p', tp'')\sim  \\ \sum_{l,j,\beta_1,\beta_2} d_{m,j,l,\beta_1,\beta_2} t^{l}(T_m^L(p)'(t)-p_m')^{\beta_1}(tT_m^L(p)''(t)-tp_m')^{\beta_2}.
\end{gather*}
But since \eqref{iterG} holds for $L,$
\begin{gather*}
T_m^L(p)'(t)-p_m'\sim \sum_{j=0}^\infty\sum_{r=0}^j t^j(\log t)^rF_{L,m,j,r}(p), \;\ 1 \leq m \leq k, \\
tT_m^L(p)''(t)-tp_m''\sim \sum_{j=0}^\infty\sum_{r=0}^j t^j(\log t)^r F_{L,m,j,r}(p), \;\ k+1\leq m \leq n.
\end{gather*}
and 
\begin{gather}
(\sum_{j=0}^\infty \sum_{l=0}^j t^j (\log t)^l A_{j,l})^N \sim  \sum_{j=0}^\infty \sum_{l=0}^j t^j (\log t)^l A_{j,l,N}.  \label{power-of}
\end{gather}
So we conclude that
\begin{gather}
T_m^{L+1}(p)(x)-p_m \sim  \sum_{j=0}^\infty\sum_{r=0}^j \int_0^x  t^j (\log t)^r \wtf_{m,j,r}(p)  \; dt, \;\ \wtf_{m,j,r} \in \CI,
\end{gather}
and the result follows from \eqref{int-log}.

The  case of $k+1\leq m \leq n$ is similar.  Again, we start from  \eqref{diff-Int}, and observe that the terms in 
$(x\log x)^j((\log x)  C_{m,j}(x,y',xy'') + (\log x)^2 D_{m,j}(x,y', xy''))$ and $(x\log x)^j (\log x) y_r \widetilde{E}_{r,m,j}(x,y',xy'')$ can be handled exactly as above, leading to an expasnion of the form \eqref{iterG} because of the additional power of $\log x.$  We will analyze terms like  $\sum_{j} (x\log x)^jy_r y_s \wtf(x,y', xy''),$ $\wtf\in \CI,$ and we have to   consider the integral
\begin{gather}
\int_0^x (t\log t)^j \left( T_r(p)(t) T_s(p)(t) \wtf(t,T^L(p)'(t), tT^L(p)''(t))- p_r p_s \wtf(t, p', tp'')\right) \; dt \label{diff-Int1}
\end{gather}
We write
\begin{gather*}
T_r(p)(t) T_s(p)(t) \wtf(t,T^L(p)'(t), tT^L(p)''(t))- p_r p_s \wtf(t, p', tp'')= \\ T_r(p)(t) T_s(p)(t)( \wtf(t,T^L(p)'(t), tT^L(p)''(t))- \wtf(t, p', tp''))+ \\
(T_r(p)(t)-p_r) T_s(p)(t)\wtf(t, p', tp'')+ (T_s(p)(t)-p_s)p_r\wtf(t, p', tp''),
\end{gather*}
and the same argument used above plus \eqref{iterG} gives that
\begin{gather*}
T_r(p)(t) T_s(p)(t) \wtf(t,T^L(p)'(t), tT^L(p)''(t))- p_r p_s \wtf(t, p', tp'') \sim \sum_{j=0}^\infty \sum_{l=0}^{j+2} t^j(\log t)^l Y_{j,l,L}(p),
\end{gather*}
 As before,  we substitue this into \eqref{diff-Int1} and use \eqref{int-log} to conclude that \eqref{iterG} holds for $L+1.$
\end{proof}

Now we  conclude the proof of Theorem \ref{soj}.

\begin{proof}   Let $\La$ be the Lagrangian manifold defined by \eqref{defla0},  then by definition, 
$\beta_0^*\La$ is obtained by the joint flow-out of 
\begin{gather*}
\wtla_0=\beta_0^*(\La)=\beta_0^*\left( \{ (z,\zeta, z',\zeta'): z=z', \; \zeta=-\zeta', \; |\zeta|_{g^*(z)}=1\}\right)
\end{gather*}
under $H_{p_{R}}$ and $H_{p_{L}}.$
In other words
 \begin{gather*}
\beta_0^*\La = \bigcup_{t_1, t_2 \geq 0} \exp(t_1 H_{p_{L}} )\circ\exp (t_2 H_{p_{ R}}) \La_0.
 \end{gather*}

On the other hand, also by definition,  with $p_{\bullet,\gamma}$ defined in Theorem \ref{soj},
 \begin{gather*}
\La^* = \bigcup_{t_1, t_2 \geq 0} \exp(t_1 H_{p_{L,\gamma}} )\circ\exp (t_2 H_{p_{R,\gamma}}) \wtla_0.
 \end{gather*}

But since  in the interior of $T^*(\xo),$ the map  $S_\ga$ defined in \eqref{shift-map} preserves the symplectic structure in the interior, and since $p_{\bullet,\ga}=S_\ga^*p_{\bullet}= p_\bullet\circ S_\ga,$ it  follows  that $\beta_0^* \La= S_\ga(\La^*),$ or in other words,
 \begin{gather*}
 (m,\nu)\in \La^* \Leftrightarrow  (m,\nu-d\gamma)\in \beta_0^*\La,
 \end{gather*}
  and so we conclude that
\begin{gather}
\begin{gathered}
\La^*-d\gamma=\beta_0^*\La  \text{ in the interior of } T^*(\xo). 
\end{gathered} \label{shift}
\end{gather}
But, we know that due to the non-trapping assumption $\La$ is $C^\infty$ in $T^*(\intx \times \intx),$ and since in the interior of $\xo,$  $\beta_0$ is a diffeomorphism and $\gamma$ is $C^\infty,$  it follows that $\La^*$ is a $C^\infty$ Lagrangian submanifold in the interior of $T^*(\xo).$

 In the interior of $ \xo,$ $p_{\bullet,\gamma}$  vanishes on $\La^*,$  $\bullet=L,R,$ and  since $\wp_\bullet=\frac{1}{\rho_\bullet} p_{\bullet,\gamma},$ it follows that the integral curves of $H_{\wp_{\bullet}}$ and the integral curves of $H_{p_{\bullet,\ga}}$ coincide 
 on $\La^*.$  Therefore, in the interior of $ \xo$ and up to the front face,  $\La^*$ is the union of integral curves of $H_{\wp_{L}}$ and $H_{\wp_R}$ emanating from $\La_0.$    The vector fields $H_{p_R}$ and $H_{p_L}$ commute,  hence their Poisson bracket $\{p_R,p_L\}=H_{p_R}p_L=0,$ since $S_\ga$ is preserves the symplectic structure, and $\{p_{R,\ga},p_{L,\ga}\}=0$ and hence $[H_{p_{R,\ga}},H_{p_{L,\ga}}]=0.$  On the other hand, 
$\wp_\bullet=\rho_\bullet^{-1}p_{\bullet,\ga},$ and hence
\begin{gather*}
H_{\wp_\bullet}= \rho_\bullet^{-1} H_{p_{\bullet,\ga}}+ p_{\bullet,\ga} H_{\rho_\bullet^{-1}},
\end{gather*}
and 
\begin{gather*}
H_{\wp_\bullet}= \rho_\bullet^{-1} H_{p_{\bullet,\ga}} \text{ on  the set } \{p_{\bullet,\ga}=0\}.
\end{gather*}
So we conclude that
\begin{gather*}
\{ \wp_L, \wp_R\}=0  \text{ on the set  } \{ p_{R,\ga}=p_{L,\ga}=0\},
\end{gather*}
and 
\begin{gather}
[H_{\wp_R}, H_{\wp_L}]=0  \text{ on  } \{\wp_R=\wp_L=0\}. \label{comm-Hamil}
\end{gather}

  Let $x=(x_1,x_2,x_3, x')$ in $\mr^{2n+2}$  be local coordinates valid near  $\ff\cap L \cap R$ such that
\begin{gather}
\ff=\{x_3=0\}, \;  L=\{x_1=0\}  \text{ and } R=\{x_2=0\}. \label{localco}
\end{gather}
and that the symplectic form $\omega^0= d\xi \wedge d x.$   We know that $\La^*$ is a Lagrangian submanifold of $T^*\{x_1>0, \ x_2>0, \ x_3> 0\}$  up to the front face $\ff=\{x_3=0\}$ and  which intersects $\ff$ transversally.  There are vector fields $H_{\wp_R}$ and $H_{\wp_L}$ tangent to $\La^*$ that are $C^\infty$ up to 
 $\{x_3=0\},$   commute on $\La^*$  in view of \eqref{comm-Hamil}, and  \eqref{fields} holds for $H_{\wp_\bullet}.$   Moreover,
 $H_{\wp_R}$ is tangent to $\ff$ and $L,$
while $H_{\wp_L}$ is tangent to $\ff$ and $R.$ 

Let
\begin{gather}
\mcf=T_{\{x_1=x_2=0\}}^*(\mr_s \times \{x: x_1\geq 0, x_2\geq 0, x_3\geq 0\}), \label{defmcf}
\end{gather}
and let  $p=(0,0,x_3,\xi_3, x', \xi')),$ denote a point on  $\mcf \cap \La^*.$
So, for $\eps$ small enough we  define
\begin{gather*}
\Psi_0: [0,\eps)\times [0,\eps) \times \mcf \longrightarrow U_0 \subset T^*( \{ x_1\geq 0, x_2\geq 0, x_3\geq 0\})\\
\Psi_0(t_1,t_2,p) = \exp(-t_1 H_{\wp_R})\circ  \exp(-t_2 H_{\wp_L})(p),
\end{gather*}
and
\begin{gather*}
\Psi_1: [0,\eps)\times [0,\eps) \times \mcf \longrightarrow U_1 \subset T^*( \{ x_1\geq 0, x_2\geq 0, x_3\geq 0\})\\
\Psi_1(t_1,t_2,p) = \exp(-t_1 \p_{x_1})\circ  \exp(-t_2 \p_{x_2})(p).
\end{gather*}
Since the vector fields $H_{\wp_R},$ $H_{\wp_L}$ commute and  $\p_{x_1}$ and $\p_{x_2}$ commute, both maps are well defined and moreover
\begin{gather*}
\Psi_0^* H_{\wp_R}=-\p_{t_1}, \;\  \Psi_0^* H_{\wp_L}=-\p_{t_2},  \\
\Psi_1^* \p_{x_1}=-\p_{t_1}, \;\ \Psi_1^* \p_{x_2}=-\p_{t_2}. 
\end{gather*}
Hence,
\begin{gather}
\begin{gathered}
\Psi=\Psi_0\circ \Psi_1^{-1}: U_1 \longrightarrow U_0, \\
\Psi^* H_{\wp_R}=\p_{x_1},  \;\  \Psi^* H_{\wp_L}=\p_{x_2} \text{ and } \Psi|_{\mcf\setminus 0}= \id.
\end{gathered}\label{diffeo-ext}
\end{gather}
Moreover,  if $\omega^0$  is the symplectic form in the interior of  $T^*(\xo),$ in coordinates \eqref{localco} valid near $\mcf,$
\begin{gather*}
\Psi^*\omega^0= \omega^0.
\end{gather*}

Now $\Upsilon=\Psi^{-1}(\La^*)$ is a $C^\infty$ Lagrangian in  the interior $\{ x_1> 0, \; x_2>0, x_3\geq 0\}$  and
both $\p_{x_1}$ and $\p_{x_2}$ are tangent to $\Upsilon.$  But this implies that  for any point $p \in \Upsilon,$ the integral curves of
 $\p_{x_j},$ $j=1,2,$ starting at a point $p\in \Upsilon$ are contained in $\Upsilon.$  Therefore, for any $p=(x_1,\xi_1, x_2,\xi_2, x_3,\xi_3,x', \xi')\in \Upsilon,$ with 
 $x_1$ and $x_2$ small enough, we have
$$\{x_1-t_1,\xi_1, x_2-t_2,\xi_2, x_3,\xi_3, x',\xi'\} \subset \Upsilon.$$
  By taking $t_1$ and $t_2$ large enough, this gives an extension $\overline{\Upsilon}$ of $\Upsilon$ to $\{x_1=0\}\cup \{x_2=0\}.$ Now $\Psi(\overline{\Upsilon})$ is the desired Lagrangian extension of $\La^*.$   Of course the same method works in the regions away from the codimension three corner.  

This argument can be used to show that $\La^* \cap T_{\{\rho_\bullet=0\}}^*( \xo)$  is a $\CI$ Lagrangian submanifold of $T^*\{\rho_\bullet=0\}.$   To see that, observe that we
have constructed  local symplectic coordinates
$(x,\xi)$ such that $R=\{x_1=0\}$ and $L=\{x_2=0\}$ and  $\Psi^*H_{\wp_R}=H_{\xi_1}=\p_{x_1}$ and $\Psi^*H_{\wp_L}=H_{\xi_2}=\p_{x_2}.$ Therefore $\Psi^*\wp_R= \xi_1+C_1$ and $\Psi^*\wp_L=\xi_2+C_2.$   But since $\wp_R(p)=\wp_L(p)=0=\xi_1(p)=\xi_2(p)=0,$  it follows that $C_1=C_2=0.$   So $\xi_1=\xi_2=0$ on $\Upsilon.$ But  $\Upsilon$ is foliated by submanifolds
\begin{gather*}
\Upsilon_{a}= \Upsilon \cap \{x_1=a\}, \;\  \Upsilon^{a}= \Upsilon \cap \{x_2=a\}, \;\ 
\end{gather*}
which are Lagrangian submanifolds of $T^*\{x_j=a\},$ $j=1,2$ because $\xi_j=0$ on $\Upsilon.$  In particular this shows that $\Upsilon_0= \La^* \cap \{\rho_R=0\}\subset T^*\{\rho_R=0\}$ and $\La^*\cap \{\rho_L=0\}\subset T^*\{\rho_L=0\}$  are Lagrangian submanifolds. The same argument shows that and 
$\La^*\cap \{\rho_R=\rho_L=0\}\subset T^*\{\rho_R=\rho_L=0\}$ is a Lagrangian submanifold. 

Once we know that $\La^*$  has a polyhomogeneous extension to the right and left faces, the precise asymptotic expansions given in \eqref{param-A0} follow directly from \eqref{fields} and  the asymptotic expansion \eqref{phexp} in Lemma \ref{asym-expL}.   Moreover,  since $\La_0$ is a $C^\infty$ compact submanifold,  and the vector fields $H_{\wp_\bullet}$ are non-degenerate at $\{\rho_\bullet=0\},$ $\bullet=R,L,$  the extension of $\La^*$ up to $\p T^*(\xo)$ is a compact submanifold of $T^*(\xo).$
 This concludes the proof of Theorem \ref{soj}.
\epf


 As an application of Theorem \ref{soj},  we study the asymptotics of the distance function $r(z,z')$ between $z,z'\in \intx$ as $z,z'\rightarrow \p X,$ in the case where $(\intx,g)$ is a  geodesically convex CCM. In this case there are no conjugate points  along any geodesic in $\intx$ and  $r(z,z')$ is equal to the length of the unique geodesic joining the two points. Moreover, $r(z,z')$  is smooth on $(\intx \times \intx)\backslash\diag$.  This is the case when $(\intx,g)$ is a Cartan-Hadamard manifold, i.e.
 when $\intx$ has non-positive sectional curvature, see \cite{GHL}.   The following is a consequence of Theorem \ref{soj}
\begin{theorem}\label{main}
Let $(\intx, g)$ be a geodesically convex CCM, and let $\rho_L$ and $\rho_R$ be boundary defining functions of  $L$ and $R$  respectively.  For $z,z' \in \intx$,  the lift of the distance function $r(z, z')$  to $\xo$ satisfies 
\begin{gather}
\begin{gathered}
\beta_0^*r = -\gamma+ \mcr, \;\  \gamma=\frac{1}{\ka_L} \log \rho_L+\frac{1}{\ka_R} \log \rho_R
\end{gathered}\label{eqfr}
\end{gather}
$\mcr$ is $C^\infty$ up to $\p(\xo)$ if $\ka$ is constant, and  in general  $d_m\mcr$ has polyhomogeneous expanion at $\{\rho_R=0\}\cup \{\rho_L=0\}$ in the sense of  \eqref{as-ph-exp}.
\end{theorem}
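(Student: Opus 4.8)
The plan is to recognize $\beta_0^{*}r+\gamma$ as a generating function of the Lagrangian $\La^{*}$ of Theorem~\ref{soj} and to transfer the regularity of $\La^{*}$ to $\mcr$ by integration.

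First I would record the elementary geometry of the distance function in the convex case: since there are no conjugate points, $r\in C^{\infty}\big((\intx\times\intx)\setminus\diag\big)$, it solves the eikonal equations $|d_{z}r|_{g^{*}(z)}=|d_{z'}r|_{g^{*}(z')}=1$, and, with the sign conventions of Section~\ref{Geo-Pre} (the co-velocity $d_{z}r$ at the left point is the initial direction of the geodesic running from $z'$ to $z$, so the left point lies after the right point on the bicharacteristic of $H_{p}$), the graph $\{(z,d_{z}r(z,z'),z',d_{z'}r(z,z'))\}$ is precisely the branch $\La_{L}$ of the flow-out $\La$ defined in \eqref{defla0}. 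Pulling back by $\beta_{0}$, the function $\beta_{0}^{*}r$, which is $C^{\infty}$ in the interior of $\xo$ away from $\diag_0$, is then a generating function of $\beta_{0}^{*}\La_{L}\subset\beta_{0}^{*}\La$ there; that is, $\beta_{0}^{*}\La_{L}=\{(m,d(\beta_{0}^{*}r)(m))\}$.

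Next I would invoke the identity \eqref{shift} established in the proof of Theorem~\ref{soj}, namely $\La^{*}-d\gamma=\beta_{0}^{*}\La$ in the interior of $T^{*}(\xo)$. Shifting the fibre variables by $d\gamma$ and setting $\mcr=\beta_{0}^{*}r+\gamma$, this shows that the branch of $\La^{*}$ corresponding to $\La_{L}$ is the graph of $d\mcr$, so $d\mcr(m)$ is the fibre component of the point of $\La^{*}$ over $m$. By Theorem~\ref{soj} this Lagrangian extends to a compact submanifold with corners of $T^{*}(\xo)$, which is $C^{\infty}$ up to $\p(\xo)$ when $\ka$ is constant and carries the polyhomogeneous structure \eqref{param-A0} at $\{\rho_{L}=0\}$ and $\{\rho_{R}=0\}$ otherwise, while being $C^{\infty}$ up to $\ff$; moreover its intersection with each boundary hypersurface is again a smooth Lagrangian submanifold of the corresponding cotangent bundle. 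Since $(\intx,g)$ is geodesically convex no caustics occur, so the extended $\La^{*}$ stays a graph up to $\p(\xo)$ away from $\diag_0$, and $d\mcr$ extends with exactly the regularity of $\La^{*}$: its components are $C^{\infty}$ up to $\p(\xo)$ when $\ka$ is constant, and in the variable curvature case they are $C^{\infty}$ up to $\ff$ and have at $\{\rho_{L}=0\}\cup\{\rho_{R}=0\}$ the polyhomogeneous expansions read off from the $\xi$-rows of \eqref{param-A0}.

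Finally, $\mcr$ is recovered by integrating the closed $1$-form $d\mcr$, the constants of integration being fixed by the restrictions $\mcr|_{\{\rho_{L}=0\}}$ and $\mcr|_{\{\rho_{R}=0\}}$, which are smooth generating functions of the smooth Lagrangians $\La^{*}\cap T^{*}_{\{\rho_{\bullet}=0\}}(\xo)$. When $\ka$ is constant this yields $\mcr\in C^{\infty}$ up to $\p(\xo)$ (away from $\diag_0$, where $\rho_{L}=\rho_{R}=1$ and $\mcr=\beta_{0}^{*}r$ inherits the usual conormal singularity of the distance function). In the variable curvature case one integrates the polyhomogeneous expansion using identities of the type $\int_{0}^{x_{1}}s^{j}(\log s)^{k}\,ds=x_{1}^{j+1}p_{j,k}(\log x_{1})$ with $\deg p_{j,k}\le k$, and its analogue in $x_{2}$, which — together with the calibration between the $x_{m}$- and $\xi_{m}$-rows of \eqref{param-A0} and the smoothness of $\La^{*}$ along each face — gives that $d_{m}\mcr$ has a polyhomogeneous expansion at $\{\rho_{L}=0\}\cup\{\rho_{R}=0\}$ in the sense of \eqref{as-ph-exp}. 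This establishes \eqref{eqfr}. The main obstacle is precisely this last bookkeeping: checking that the integration preserves the order relation between logarithmic and polynomial powers demanded by \eqref{as-ph-exp}, rather than merely producing a generic polyhomogeneous expansion; the fact that in \eqref{param-A0} the $\xi$-rows are only one logarithmic order worse than the $x_{m}$-rows is what makes this work.
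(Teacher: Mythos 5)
Your reduction of the theorem to Theorem \ref{soj} is exactly the paper's: geodesic convexity makes $\La$ the graph of $d(\beta_0^*r)$, the shift identity \eqref{shift} turns this into $\La^*=\{(m,d_m\mcr)\}$ with $\mcr=\gamma+\beta_0^*r$, and the expansions \eqref{param-A0} (together with the graph property persisting to the boundary) give $\p_{x_1}\mcr,\ \p_{x_2}\mcr$ expansions of the type \eqref{diff-eqR}, i.e.\ with logarithmic exponents bounded by $j+1$ rather than $j$. Up to this point you are on the paper's track.

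The gap is precisely at the step you yourself flag as the main obstacle, and the mechanism you propose does not close it. Integrating $\p_{x_1}\mcr$ from $0$ in $x_1$ and using $\int_0^{x_1}s^{j}(\log s)^{k}\,ds=x_1^{j+1}p_{j,k}(\log x_1)$ with $\deg p_{j,k}\le k\le j+1$ improves the balance between the power of $x_1$ and of $\log x_1$, but it leaves the $(\log x_2)^{k_2}$ factors, with $k_2\le j_2+1$, completely untouched; so a single integration (and likewise its ``analogue in $x_2$'', taken separately) only produces a generic polyhomogeneous expansion, not one in the restricted sense of \eqref{as-ph-exp}, and invoking ``the calibration between the $x_m$- and $\xi_m$-rows'' is an assertion, not an argument. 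What the paper actually does is compute $\mcr(x_1,x_2,x')-\mcr(0,0,x')$ in two different ways: once by integrating $\p_{x_2}\mcr$ in $x_2$ at fixed $x_1$ and adding the face asymptotics \eqref{restr-asym} of $\mcr(x_1,0,x')$, and once with the roles of $x_1$ and $x_2$ exchanged. The first expansion has mixed terms only with $j_2\ge2$, $k_2\le j_2$ (but a priori $j_1=1$ or $k_1=j_1+1$), the second only with $j_1\ge2$, $k_1\le j_1$ (but a priori $j_2=1$ or $k_2=j_2+1$); matching the two forces all coefficients with $j_r=1$ or $k_r=j_r+1$ to vanish, which is \eqref{exp-mcr} and hence the statement of the theorem. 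Without this two-sided comparison (or some substitute that propagates the $k\le j$ structure of the face restrictions into the normal variable), your argument only yields the weaker bound $k_r\le j_r+1$, so as written the proof is incomplete at its decisive step.
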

\begin{proof}    Since $(\intx,g)$ is geodesically convex,  $\Lambda\setminus \La_0$ is the graph of the differential of the distance function. In other words,
\begin{gather}\label{graph}
\Lambda \setminus \La_0=\{(z,\zeta,z',\zeta'):  \zeta'= d_{z'}r(z,z'), \; \zeta= d_{z} r(z,z'), \text{ provided } (z,\zeta)\not= (z',-\zeta')\}.
\end{gather}

It then follows that $\beta_0^*\La= \{(m,d_m \beta_0^*r)\},$  but according to \eqref{shift},   $\beta_0^* \La+ d\gamma= \La^*,$ and so
 \begin{gather*}
 \La^*= \{(m, d_m \mcr) \}, \;\ \mcr= \gamma+\beta_0^*r 
 \end{gather*}
 
 But  in view of  \eqref{param-A0} and  in coordinates $(x_1,x_2, x'),$ near $\{x_1=x_2=0\},$ where $x_1=\rho_L,$ $x_2=\rho_R,$
 \begin{gather}
 \p_{x_r} \mcr \sim \sum_{j_1,j_2=1}^\infty \sum_{k_1=0}^{j_1+1}\sum_{k_2=0}^{j_2+1} x_1^{j_1} (\log x_1)^{k_1} x_2^{j_2} (\log x_2)^{k_2}  \mcr_{r,j_1,k_1,j_2,k_2}(x_3,x'), \; r=1,2. \label{diff-eqR}
 \end{gather}
 But we know that $\La^*$ extends to $\{\rho_R=0\}\cap \{\rho_L=0\},$ and is $C^\infty$ there, so $\mcr(0,0,x')$ is $C^\infty.$
 Integrating the equation for $\p_{x_1} \mcr$  restricted to $\{x_2=0\},$  and  integratiingfor $\p_{x_2} \mcr$ restricted to $\{x_1=0\},$ we find that
 \begin{gather}
 \begin{gathered}
 \mcr(x_1,0,x')-\mcr(0,0,x')\sim \sum_{j_1=2}^\infty \sum_{k_1=0}^{j_1} x_1^{j_1} (\log x_1)^{k_1}   \mce_{1, j_1,k_1}(x'), \\
 \mcr(0,x_2,x')-\mcr(0,0,x')\sim \sum_{j_2=2}^\infty \sum_{k_2=0}^{j_2} x_2^{j_2} (\log x_2)^{k_2}   \mce_{2, j_1,k_1}(x'),
 \end{gathered}\label{restr-asym}
 \end{gather}
 Now integrating the equations in \eqref{diff-eqR} for $\p_{x_1} \mcr$ and $\p_{x_2}\mcr,$ we obtain
 \begin{gather*}
 \mcr(x_1,x_2,x')-\mcr(x_1,0,x')\sim \sum_{j_1=1,j_2=2}^\infty \sum_{k_1=0}^{j_1+1}\sum_{k_2=0}^{j_2} x_1^{j_1} (\log x_1)^{k_1} x_2^{j_2} (\log x_2)^{k_2} \mcv_{j_1,k_1,j_2,k_2}(x'), \\
 \mcr(x_1,x_2,x')-\mcr(0,x_2,x')\sim \sum_{j_1=2,j_2=1}^\infty \sum_{k_1=0}^{j_1}\sum_{k_2=0}^{j_2+1} x_1^{j_1} (\log x_1)^{k_1} x_2^{j_2} (\log x_2)^{k_2} \mcw_{j_1,k_1,j_2,k_2}(x'),
 \end{gather*}
and using \eqref{restr-asym} we obtain
\begin{gather*}
\mcr(x_1,x_2,x')-\mcr(0,0,x') \sim \sum_{j_1=2}^\infty \sum_{k_1=0}^{j_1} x_1^{j_1} (\log x_1)^{k_1}   \mce_{1, j_1,k_1}(x')+\\
\sum_{j_1=1,j_2=2}^\infty \sum_{k_1=0}^{j_1+1}\sum_{k_2=0}^{j_2} x_1^{j_1} (\log x_1)^{k_1} x_2^{j_2} (\log x_2)^{k_2} \mcv_{j_1,k_1,j_2,k_2}(x'),\\
\text{ and } \\
 \mcr(x_1,x_2,x')-\mcr(0,0,x')\sim \sum_{j_2=2}^\infty \sum_{k_2=0}^{j_2} x_2^{j_2} (\log x_2)^{k_2}   \mce_{2, j_1,k_1}(x')+\\
 \sum_{j_1=2,j_2=1}^\infty \sum_{k_1=0}^{j_1}\sum_{k_2=0}^{j_2+1} x_1^{j_1} (\log x_1)^{k_1} x_2^{j_2} (\log x_2)^{k_2} \mcw_{j_1,k_1,j_2,k_2}(x').
 \end{gather*}
 Therefore we conclude that the terms in $j_1=1$ and the terms with $k_1=j_1+1$ in the first equation are equal to zero, and similarly, the terms in 
$j_2=1$ and the ones with $k_2=j_2+1$ in the second equation are also equal to zero.  So we conclude that
\begin{gather}
\mcr(x_1,x_2,x')-\mcr(0,0,x')\sim  \sum_{j_1=2,j_2=2}^\infty \sum_{k_1=0}^{j_1}\sum_{k_2=0}^{j_2} x_1^{j_1} (\log x_1)^{k_1} x_2^{j_2} (\log x_2)^{k_2} \mcw_{j_1,k_1,j_2,k_2}(x'). \label{exp-mcr}
\end{gather}

and this proves the Theorem.
\end{proof}

Equation \eqref{eqfr} was the key ingredient in the construction of  a semiclassical parametrix for the resolvent of the Laplacian  carried out in \cite{MSV} when $\intx=\{z\in \mr^{n+1}: |z|<1\}$ is equipped with a metric 
\begin{gather}
 g_\eps=\frac{4 dz^2}{(1-|z|^2)^2} + \chi(\frac{1-|z|^2}{\eps}) H(z,dz), \label{msv-model}
\end{gather}
where $\chi(t)\in C_0^\infty(\mbr),$ with $\chi(t)=1$ if $|t|<1$ and  $\chi(t)=0$ if $|t|>2,$  $H$ is a $C^\infty$ symmetric 2-tensor.   Melrose, S\'a Barreto and Vasy in \cite{MSV} showed that there exists $\eps_0>0$ such that \eqref{eqfr} holds for $(\intx,g_\eps).$    Metrics of the type \eqref{msv-model} appear   in connection with the analysis of the asymptotic behavior of solutions of the wave equation on de Sitter-Schwarszchild space-time, \cite{MSV1}.


\section{The case of  gedodesically convex CCM}\label{SPGC}
In this section we  will construct the parametrix and prove Theorem \ref{resest} in the case when $(\intx, g)$ is a geodesically convex CCM.  As we have shown in Theorem \ref{main},  in this particular case the underlying  Lagrangian submanifold $\La^*$  is globally parametrized by a phase function $\mcr,$ and we  only need to introduce very simple semiclassical Lagrangian distributions to construct the parametrix.    We will carry out the proof of the general case in Section \ref{General-CCM}, after we discuss the semiclassical  Lagrangian distributions in Section \ref{PHLD}. 

We have done the first two steps in the construction of the parametrix: Combining Lemma \ref{step1-const} and Lemma \ref{step2-const} gives an operator 
$G_1'(h,\sigma)=G_0(h,\sigma)+G_{1}(h,\sigma)$  holomorphic in $\sigma\in \Omega_\hbar=(1-ch,1+ch) \times-i (-Ch, Ch),$ $c>0,$ $C>0,$ such that 
\begin{gather}
 \begin{gathered}
 P(h,\sigma)G_1'(h,\sigma)-\Id=E_1(h,\sigma), \;\\
E_1(h,\sigma)=E'_1(h,\sigma)+ e^{i \soh r} F_1(h,\sigma), \;\ E'_1\in\rho_\mcs^\infty\Psi_{0,\hb}^{-\infty}(X),\ \ F_1
 \in  \rho_\mcs^\infty \Psi_{0,\hb}^{-\infty,\infty,-\novt,\infty} (X)\\
 \text{ and  with } \beta_{\hb}^* K_{F_1}\text{ supported away from} \ \mcl,\ \mcr,
 \end{gathered}\label{redef-f1}
  \end{gather}
and $K_{E'_1}$  is supported near $\diag_\hb$ while  $\beta_\hb^* K_{F_1}$ supported away from $\diag_{\hb}.$  As mentioned before,  the error $E_1'$ is already good for our purposes, but we need to remove the error $e^{-i\soh r} F_1,$ since it does not vanish at the semiclassical face $\mca.$  Here it is very important that $\beta_\hb^*K_{F_1}$ is compactly supported in a neighborhood of $\mcs$ and vanishes to infinite order at $\diag_\hb,$ see Fig.\ref{suppG1}.


 The third step of the construction in this particular case is given by the following lemma.
\begin{lemma}\label{aface}  Let $(\intx,g)$ be a geodesically convex CCM, and let $r:\intx \times \intx  \longmapsto [0,\infty)$ denote the distance function between two points of 
$\intx,$  let $F_1(h,\sigma)$ be as in \eqref{redef-f1} and let $h_0>0$ satisfy \eqref{defh0}. Then there are  operators $G_2(h,\sigma)$ and  $E_2(h,\sigma) $ with  Schwartz kernels $K_{G_2}$ and $K_{E_2}$ such that
$\beta_0^*K_{G_2}= e^{i\soh \beta_0^*r} e^{ih\sigma \beta} U_2(h, \sigma) \beta_0^*|dg(z')|^\ha$ and 
$\beta_0^*K_{E_2}=  e^{i\soh \beta_0^*r} e^{ih\sigma \beta} F_2(h, \sigma) \beta_0^*|dg(z')|^\ha,$  with  $h\in (0,h_0),$ where 
$\beta$ is defined in \eqref{def-gamma},  and that are  holomorphic in $\sigma \in \Omega_\hbar,$  with
\begin{gather*}
 U_2(h, \sigma)\in h^{-\novt-1} \mck_{ph}^{\fnt,\fnt}(\xo\times [0,h_0)),  \text{ vanishing to infinite order at } \diag_0\times [0, h_0) \\
F_2(h, \sigma) \in  h^\infty \mck_{ph}^{\fnt, \fnt}(\xo\times [0,h_0)),
\end{gather*}
such that 
\beqq\label{mcaasym}
(h^2(\Delta_{g(z)}-\knsq)-\sigma^2)G_2(h,\sigma) - e^{i\soh r} F_1(h,\sigma) = E_2(h,\sigma).
\eeqq
\end{lemma}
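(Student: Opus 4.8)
The plan is to reduce \eqref{mcaasym} to a hierarchy of transport equations along the integral curves of the vector field $H_{\wp_L}$ of Lemma \ref{vfields}, to solve them one power of $h$ at a time, and then to sum the resulting symbols asymptotically inside the polyhomogeneous class $\mck_{ph}^{\fnt,\fnt}$. First I would pull \eqref{mcaasym} back by $\beta_0$ and strip off the phase. Since $(\intx,g)$ is geodesically convex, Theorem \ref{main} gives $\beta_0^*r=-\gamma+\mcr$ with $\mcr=\gamma+\beta_0^*r$ smooth away from $\diag_0$ and polyhomogeneous at $L$ and $R$, and by \eqref{graph}, away from $\diag_0$, the manifold $\beta_0^*\La$ is the graph of $d_m\beta_0^*r$. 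Choosing, as in \eqref{redef-f1}, $\rho_L=\rho_R=1$ in a neighbourhood of the support of $\beta_0^*K_{F_1}$ (where then $\gamma=\beta=0$) and looking for $\beta_0^*K_{G_2}=\Phi\,U_2(h,\sigma)\,\beta_0^*|dg(z')|^{\ha}$ with $\Phi=e^{i\soh\beta_0^*r}e^{ih\sigma\beta}$, the identity \eqref{mcaasym} becomes the problem of finding $U_2$, holomorphic in $\sigma\in\Omega_\hbar$ and vanishing to infinite order at $\diag_0\times[0,h_0)$, with
\begin{gather*}
\mcm(h,\sigma,D)\,U_2(h,\sigma)-\beta_0^*K_{F_1}(h,\sigma)\in h^\infty\mck_{ph}^{\fnt,\fnt}(\xo\times[0,h_0)),\\
\mcm\ \defeq\ \Phi^{-1}\beta_0^*\big(h^2(\Delta_{g(z)}-\knsq)-\sigma^2\big)\Phi.
\end{gather*}
Away from $\diag_0$ --- which is all that matters, since $\beta_0^*K_{F_1}$ vanishes to infinite order there --- $\beta_0^*r$ is smooth (polyhomogeneous at the faces), so $\mcm$ is a genuine semiclassical differential operator acting in the left variable with $z'$ as a parameter.

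Next I would analyse $\mcm$. By \eqref{scps0} its semiclassical principal symbol is $p_L(m,\nu+d\beta_0^*r)$, the factor $e^{ih\sigma\beta}$ contributing only at $O(h^2)$; since $p_L$ vanishes on $\beta_0^*\La$, this symbol vanishes at $\nu=0$, so $\mcm$ carries no zeroth order term in $hD$ and $\mcm=h\mcm_1+h^2\mcm_2+\cdots$ with $\mcm_1$ of first order. The principal part of $\mcm_1$ is the push-forward to $\xo$ of $H_{p_L}$ restricted to $\beta_0^*\La$, which on $\La^*=\beta_0^*\La+d\gamma$ equals $\rho_L$ times the push-forward of $H_{\wp_L}$. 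By Lemma \ref{vfields}, $H_{\wp_L}$ reaches $\{\rho_L=0\}$ transversally --- smoothly when $\ka$ is constant, and with the logarithmic polyhomogeneous coefficients of \eqref{fields} otherwise --- so, on $\La^*$, the equations $\mcm_1u_j=f_j$ are transport equations along the integral curves of $H_{\wp_L}$ with a regular singular point at $\{\rho_L=0\}$ whose indicial root is $\fnt$; this $\fnt$ is the real part $\novt$ of the indicial root \eqref{sc-ind-roots} of the original operator, the oscillatory part $\rho_L^{-i\soh\mu_L}$ having been absorbed into $\Phi$.

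Then I would solve the hierarchy. Writing $U_2\sim h^{-\fnt-1}\sum_{j\ge0}h^ju_j$ formally and matching powers of $h$, one gets for each $j$ an equation $\mcm_1u_j=f_j$, with $f_0$ the leading symbol of $h^{\fnt}\beta_0^*K_{F_1}$ and, for $j\ge1$, $f_j$ built from $u_0,\dots,u_{j-1}$, the operators $\mcm_2,\dots,\mcm_{j+1}$ and the lower symbols of $\beta_0^*K_{F_1}$. Since $\beta_0^*K_{F_1}$ --- hence every $f_j$ --- vanishes to infinite order at $\diag_0\times[0,h_0)$ and is supported away from $L$ and $R$, each such equation can be integrated with vanishing data along $\diag_0$, and the solution $u_j$ again vanishes to infinite order there, so no conormal singularity at $\diag_0$ arises. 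Factoring out the indicial weight $\rho_L^{\fnt}\rho_R^{\fnt}$, the equation for $w_j=\rho_L^{-\fnt}\rho_R^{-\fnt}u_j$ --- after also clearing the factor $\rho_L$, which is legitimate since $f_j$ vanishes near $L$ --- becomes an ODE in $\rho_L$ along $H_{\wp_L}$ of the polyhomogeneous type treated in Lemma \ref{asym-expL}; hence, exactly as in the proof of Theorem \ref{soj}, $w_j\in\mcp(\xo\times[0,h_0))$ and $u_j\in\mck_{ph}^{\fnt,\fnt}$ (the $\rho_R^{\fnt}$ weight being vacuous, as $u_j$ is supported away from $R$), and holomorphy in $\sigma$ is preserved at every step. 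Since $\mcp$, and hence $\mck_{ph}^{\fnt,\fnt}$, is closed under products and under asymptotic summation --- which is precisely why this restrictive class was introduced, cf.\ the discussion after \eqref{as-ph-exp} --- I can choose $U_2\in h^{-\fnt-1}\mck_{ph}^{\fnt,\fnt}(\xo\times[0,h_0))$, vanishing to infinite order at $\diag_0\times[0,h_0)$, with $U_2\sim h^{-\fnt-1}\sum_jh^ju_j$; then $\mcm U_2-\beta_0^*K_{F_1}=:F_2\in h^\infty\mck_{ph}^{\fnt,\fnt}$, and undoing the conjugation by $\Phi$ produces $G_2$ and $E_2$ with the kernels claimed, $E_2$ vanishing to infinite order at $\mca$.

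The step I expect to be the main obstacle is the regularity of the transport solutions at $\{\rho_L=0\}$: when $\ka$ is not constant $H_{\wp_L}$ is only logarithmically singular there, and one must show that integrating against it produces a polyhomogeneous expansion in which the power of $\log\rho_L$ outruns that of $\rho_L$ by at most one (and similarly at $R$), so that $w_j$ stays inside the small class $\mcp$. This is exactly what Lemma \ref{asym-expL} delivers, provided one checks that the coefficients \eqref{fields} satisfy its hypotheses; the remaining work --- tracking the $h$-powers ($h^{-\fnt-1}$), the indicial weights $\rho_L^{\fnt},\rho_R^{\fnt}$, and the half-density normalization --- is routine bookkeeping.
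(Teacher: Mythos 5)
Your construction follows the paper's own route almost step for step: the same phase $e^{i\soh \beta_0^*r}e^{ih\sigma\beta}=e^{i\soh(\mcr-\tgamma)}$, the same choice $\rho_L=\rho_R=1$ near $\supp \beta_0^*K_{F_1}$, the eikonal identity (via \eqref{graph}) to reduce the zeroth order term of the conjugated operator to $O(h^2)$, a hierarchy of transport equations along $H_{\wp_L}$ solved with data vanishing at $\diag_0$, polyhomogeneity at $L$ extracted from the polyhomogeneous structure of the integral curves of Theorem \ref{soj} (the paper integrates directly along those curves using \eqref{int-log} rather than re-invoking Lemma \ref{asym-expL}, but that is cosmetic), and finally a Borel sum in $h$ and in the $x^j(\log x)^k$ scale. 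Factoring the weight $\rho_L^{\fnt}$ out of the solutions instead of conjugating the operator by $x^{\novt}$, as the paper does with $Q=x^{-\novt}P\,x^{\novt}$, is the same computation in a different order, and your identification of the indicial root $\fnt$ (the oscillatory part being absorbed in the phase) is correct.

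The genuine gap is the parenthetical claim that the $\rho_R^{\fnt}$ weight is vacuous because $u_j$ is supported away from $R$. That is false: each $u_j$ is supported in the closure of the forward $H_{\wp_L}$-flow-out of $\supp \beta_0^*K_{F_1}$, and since $F_1$ is supported in a full neighborhood of $\diag_0$ (including $\diag_0\cap\ff$), this flow-out contains flow lines over right base points arbitrarily close to $\p X$; its closure therefore accumulates on the interior of the right face $R$, not merely on the corner $L\cap R\cap\ff$. Consequently you must actually establish that the $u_j$ are polyhomogeneous at $R$ and carry the weight $\rho_R^{\fnt}$ (the oscillatory factor $\rho_R^{-i\soh\mu_R}$ being already in the phase); this is exactly what membership in $\mck_{ph}^{\fnt,\fnt}(\xo\times[0,h_0))$ asserts, and it is indispensable later: the Schur-type bounds of Lemma \ref{schurs} require $\rho_R^{a+\novt}$ decay of the parametrix kernel, and the removal of the errors at $\ff$ and at $L$ is carried out inside this class. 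The paper addresses the point by symmetry --- the behavior at $R$ is obtained by repeating the analysis with the operator lifted from the right factor, $Q_R(h,\sigma,D)$, whose indicial root forces the $\rho_R^{\novt-i\soh\mu_R}$ behavior, equivalently by using that the expansions \eqref{param-A0} hold jointly in $(\rho_L,\rho_R)$ near the corner. Without some such argument your $U_2$ is only controlled at $L$, and the lemma as stated (and the resolvent estimate that rests on it) does not follow.
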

\begin{proof}  Since $\beta_\hb^*K_{F_1}$ vanishes to infinite order at $\mcs,$  it follows that
$\beta_0^*K_{F_1}\in h^{-\novt} C^\infty( \xo\times [0,1))$  is supported near  $\diag_0\times [0,1)$ and vanishes to infinite order at  
$\diag_0\times [0,1),$ so  it follows that $\beta_0^*K_{F_1}$ has an asymptotic expansion at $\{h =0\}$ of the form
\begin{gather*}
\beta_0^*K_{F_1}\sim h^{-\novt} \sum_{j=0}^\infty  h^j  F_{1,j}(\sigma',m), \; \sigma=1+h \sigma', \; m\in \xo
\end{gather*}
with  $F_{1,j} \in C^\infty( \xo )$  supported near $\diag_0$ but vanishing to infinite order at $\diag_0.$   We will  find $U_2(h,\sigma)$ such that 
\begin{gather}
\begin{gathered}
U_2(h,\sigma) \sim h^{-\novt-1} \sum_{j=0}^\infty h^j U_{2,j}(\sigma',m), \;  \sigma=1+h \sigma',  \\ U_{2,j} \in \mck_{ph}^{\novt,\novt}(\xo) \text{ vanishes to infinite order at } \diag_0, \text{ and } \\
\beta_0^*( P(h,\sigma,D) )e^{i\soh \beta_0^*r} e^{ih\sigma \beta}U_2(h,\sigma)- e^{i\soh \beta_0^*r} e^{ih\sigma \beta} \beta_0^* F_1(h,\sigma)=e^{i\soh \beta_0^*r} e^{ih \sigma \beta} F_2(\sigma,h), \\
\text{ with } F_2 (h,\sigma) \in h^\infty \mck_{ph}^{\novt,\novt}(\xo\times [0,h_0)).
\end{gathered} \label{w-exp}
\end{gather}
  We may choose $\rho_R$ and $\rho_L$ such that  $\rho_R=\rho_L=1$ on the support of $F_1.$   In this case $\gamma=\tgamma=0$ on the support of $F_1,$ and since $h^2\beta=\gamma-\tgamma,$  $e^{i\soh \beta_0^*r} F_1= e^{i\soh \beta_0^*r} e^{ih\sigma \beta} F_1.$  Perhaps one would have tried  an expansion of the form
  $\beta_0^*K_{G_2} =e^{-i\soh \beta_0^*r} U_2(h,\sigma),$ as in \cite{MSV}, however   if this were the case,  then in view of \eqref{defn-conj}  $U_2(h,\sigma)$  would need to have a polyhomogeneous expansion at the left and right faces involving variable powers of $\rho_\bullet,$  and thus it would not  be in the class $\mck_{ph}^{\novt,\novt}(\xo).$

Instead of having to deal with the factor $\rho_\bullet^{\novt}$ in the expansions, it is convenient to work with
\begin{gather}
Q(h,\sigma,D)= x^{-\novt}( h^2(\Delta_g-\knsq)- \sigma^2)x^{\novt}. \label{def-opq}
\end{gather}
Notice that  $Q(h,\sigma,D)-P(h,\sigma,D)= O(h),$ so they have the same semiclassical principal symbol.
Since $\gamma-\tgamma=h^2\beta,$ and $\beta_0^*r=\mcr-\gamma,$ it follows that $\soh \beta_0^*r+ h\sigma\beta=\soh(r+ h^2\beta)=\soh(\mcr-\tgamma).$ So we denote
\begin{gather} 
\begin{gathered} 
Q_L(h,\sigma, D)=\beta_0^*Q(h,\sigma,D), \text{ and } \\
Q_{L,\tgamma}(h,\sigma,D)=e^{i\soh \tgamma } Q_L(h,\sigma,D) e^{-i\soh \tgamma}.
\end{gathered} \label{qtgamma}
\end{gather}
If $U_{2,m}=x^{-\novt} V_{m}$ and  $F_{1,j}= x^{-\novt} G_j,$ then we  are reduced to  finding $V_j,$ such that
\begin{gather}
\begin{gathered}
Q_{L,\mcr, \tgamma}(h,\sigma,D) h^{-\novt-1} \sum_{j=0}^\infty h^j  V_j\sim h^{-\novt}  \sum_{j=0}^\infty h^j G_j, \\
\text{ where } Q_{L,\mcr,\tgamma}(h,\sigma,D)= e^{-i\soh \mcr} Q_{L,\tgamma} e^{i\soh \mcr}= e^{-i\soh (\mcr-\tgamma)} Q_{L} e^{i\soh (\mcr-\tgamma)}
\end{gathered} \label{new-asym}
\end{gather}

In view of \eqref{formlap},  then in local coordinates in $\xo$ it must be of the form
\begin{gather}
 Q_L(h,\sigma,D)= \sum_{jk} a_{jk}(m) hD_jhD_k + i \sum_{j} h b_j(m) hD_j+ \frac{h^2n^2}{4}(\ka_L^2-\ka_0^2)-\sigma^2, \label{opPL}
\end{gather}
and so
\begin{gather*}
Q_{L,\mcr,\tgamma}(h,\sigma,D)= \sum_{jk} a_{jk}(m)(hD_j +\sigma \p_j (\mcr-\tgamma)) (hD_k + \sigma \p_k (\mcr-\tgamma))+ \\
i \sum_{j} h b_j(m,h)(hD_j + \sigma \p_j (\mcr-\tgamma)) + C
\end{gather*}
and this can be written as
\begin{gather}
\begin{gathered}
Q_{L,\mcr,\tgamma}(h,\sigma,D)= \sum_{jk} a_{jk} hD_j hD_k +i \sum_j \tilde B_j hD_j + C, \text{ where } \\
\tilde B_j= -2i \sigma \sum_{k} a_{jk} \p_k (\mcr-\tgamma) +  hb_j, \\
C= \sigma^2\left(\sum_{jk} a_{jk}\p_j(\mcr-\tgamma)\p_k(\mcr-\tgamma)\right)-\sigma^2-i h \sigma \sum_{jk} a_{jk} \p_k \p_j (\mcr-\tgamma)+ \\ i h\sigma  \sum_j b_j \p_j (\mcr-\tgamma)+ \frac{h^2n^2}{4}(\ka_L^2-\ka_0^2).
\end{gathered}\label{form-ql}
\end{gather}

So we need to analyze the behavior of the  term $C$ in \eqref{form-ql} near $\{\rho_L=0\},$ more specifically, we want to analyze
\begin{gather}
C_0(h,\sigma,m)= \sigma^2\left(\sum_{jk} a_{jk}\p_j(\mcr-\tgamma)\p_k(\mcr-\tgamma)\right)-\sigma^2+\frac{h^2n^2}{4}(\ka_L^2-\ka_0^2). \label{def-c0}
\end{gather}
  Of course, by symmetry we also get  the behavior on the right face, we would just need to work with  the operator $Q_{R}(h,\sigma,D)$  on the right factor instead. 

We have that in coordinates \eqref{prod},
\begin{gather}
\begin{gathered}
Q(h,\sigma,D)= h^2\ka^2(y) (xD_x)^2+h^2x^2 \Delta_H+  i h^2x^2 \sum_{j} B_j D_{y_j} +\\  i h^2 x^2 FD_x + \frac{h^2n^2}{4}(\ka(y)^2-\ka_0^2)-\sigma^2. 
\end{gathered}\label{opq-loc}
\end{gather}
 We need to lift  the principal part of this operator to $\xo$ and compute $C_0(h,\sigma,m).$  We work in the regions near the co-dimension 3 corner, and we use coordinates  \eqref{eqc2}.  The computations for the other regions are  simpler.  According to \eqref{comp-cor}, we have
 \begin{gather}
 \begin{gathered}
 Q_L(h,\sigma,D)=\beta_0^* Q(h,\sigma,D)=- h^2\ka_L^2(x_1\p_{x_1})^2- h^2 x_1^2 \sum_{j,k} H^{jk} \mcu_j\mcu_k+ \\
 h^2 x_3 x_1^2F_1 \p_{x_1} + x_1^2x_2F_2 \p_{x_2} + x_1^2 x_3 F_3 \p_{x_3} + x_1^2 \sum_{j} B_j \p_{Y_j},
 \end{gathered} \label{opq-loc0}
 \end{gather}
 and therefore,  in view of \eqref{def-c0}, 
 \begin{gather}
  \begin{gathered}
 C_0(h,\sigma,m)= \sigma^2 \ka_L^2 {x_1}^2(\p_{x_1} (\mcr-\tgamma))^2 - \sigma^2+\frac{h^2n^2}{4}(\ka_L^2-\ka_0^2)+  \\ \sigma^2 {x_1}^2 H^{11}(\mcu_1 (\mcr-\tgamma))^2+  
2\sigma^2 {x_1}^2 \sum_{j=2}^n H^{1j}( \mcu_1(\mcr-\tgamma))(\p_{Y_j}(\mcr-\tgamma)) + \\  \sigma^2 {x_1}^2 \sum_{j,k=2}^n H^{jk} ( \p_{Y_j} (\mcr-\tgamma))( \p_{Y_k} (\mcr-\tgamma)), \\
 \text{ where }  \mcu_1(\mcr-\tgamma)= (x_3\p_{x_3}-{x_1}\p_{x_1}-x_2\p_{x_2}-\sum_{j=1}^n Y_j \p_{Y_j}) (\mcr-\tgamma).
 \end{gathered} \label{term1-eik}
  \end{gather}
 In these coordinates, $\rho_L={x_1}$ and $\rho_R=x_2$ and so $\tgamma=\mu_R\log x_2+ \mu_L \log {x_1},$ where $\mu_R=\mu(y')$ and 
 $\mu_L=\mu(y_1'+u, y'+uY),$ where $Z=(Y_2,\ldots Y_n).$  Since $\sigma^2\ka_L^2 \mu_L^2=\sigma^2-\frac{h^2n^2}{4}(\ka_L^2-\ka_0^2),$  the first two terms in \eqref{term1-eik} give
 \begin{gather*}
\sigma^2 \ka_L^2 {x_1}^2(\p_{x_1} (\mcr-\tgamma))^2-\sigma^2+\frac{h^2n^2}{4}(\ka_L^2-\ka_0^2)=
\sigma^2\ka_L^2 {x_1}^2( \p_{x_1} \mcr- {x_1}^{-1}\mu_L)^2-\sigma^2+\frac{h^2n^2}{4}(\ka_L^2-\ka_0^2)
=  \\ \sigma^2\ka_L^2( {x_1}^2(\p_{x_1} \mcr)^2- 2{x_1} \p_{x_1}\mcr \mu_L+\mu_L^2)-\sigma^2+\frac{h^2n^2}{4}(\ka_L^2-\ka_0^2)=
\sigma^2\ka_L^2( {x_1}^2(\p_{x_1} \mcr)^2- 2\mu_L {x_1} \p_{x_1}\mcr),
\end{gather*}
and hence if $x=(x_1,x_2,x_3),$
\begin{gather*}
C_0(h,\sigma,x,u,Y)= \sigma^2\ka_L^2( {x_1}^2(\p_{x_1} \mcr)^2- 2\mu_L {x_1} \p_{x_1}\mcr)+ 2\sigma^2 {x_1}^2\sum_{j=2}^n H^{1j}( \mcu_1(\mcr-\tgamma))(\p_{Y_j}(\mcr-\tgamma) +\\ \sigma^2 {x_1}^2\sum_{j,k=2}^n H^{jk} (\p_{Y_j} (\mcr-\tgamma))(\p_{Y_k} (\mcr-\tgamma)).
\end{gather*}
Recall that  for $\gamma=\frac{1}{\ka_R}\log x_2+\frac{1}{\ka_L}\log {x_1},$ we can use \eqref{def-gamma} and \eqref{real-anal} to write

\begin{gather*}
\begin{gathered}
C_0(h,\sigma,x,u,Y)=  \sigma^2\ka_L^2( {x_1}^2(\p_{x_1} \mcr)^2- 2\frac{1}{\ka_L} {x_1}\p_{x_1} \mcr -2h^2 \nu_L {x_1} \p_{x_1}\mcr)+ \\ 2\sigma^2 {x_1}^2\sum_{j=2}^n H^{1j}( \mcu_1(\mcr-\gamma+h^2 \beta))(\p_{Y_j}(\mcr-\gamma+h^2\beta)+ \\ \sigma^2 {x_1}^2 \sum_{j,k=2}^n H^{jk} (\p_{Y_j} (\mcr-\gamma+h^2 \beta))(\p_{Y_k} (\mcr-\gamma+h^2\beta)),
\end{gathered}
\end{gather*}
and hence
\begin{gather}
\begin{gathered}
C_0(h,\sigma,x,u,Y)= \sigma^2\ka_L^2( {x_1}^2(\p_{x_1} \mcr)^2- 2\frac{1}{\ka_L} {x_1}\p_{x_1} \mcr) +  2 \sigma^2 {x_1}^2\sum_{j=2}^n H^{1j}( \mcu_1(\mcr-\gamma))(\p_{Y_j}(\mcr-\gamma)+ \\ \sigma^2 {x_1}^2 \sum_{j,k=2}^n H^{jk} (\p_{Y_j} (\mcr-\gamma))(\p_{Y_k} (\mcr-\gamma))- 
2h^2 \sigma^2\ka_L^2 \nu_L {x_1} \p_{x_1}\mcr +  \\  2h^2  \sigma^2 {x_1}^2\sum_{j=1}^n H^{1j}( \mcu_1(\mcr-\gamma))(\p_{Y_j}\beta)+ 
2 h^2 \sigma^2  {x_1}^2\sum_{j=1}^n H^{1j}( \p_{Y_j}(\mcr-\gamma))(\mcu_1\beta)+  \\
2h^2 \sigma^2 {x_1}^2 \sum_{j,k=1}^n H^{jk} (\p_{Y_j} (\mcr-\gamma))(\p_{Y_k} \beta) +2 h^4 \sigma^2\omega^2 \sum_{j=1}^n \mcu_1 \beta \p_{Y_j}\beta+ 
2 h^4 \sigma^2\omega^2 \sum_{j,k=2}^n \p_{Y_j}\beta\p_{Y_k}\beta.
\end{gathered}\label{exp-beta}
\end{gather}

 Recall that  $p=|\zeta|_{g^*}^2-1=0$ on $\La$ and since away from $\diag,$ $\La=\{(z,d_z r, z', d_{z'}r)\},$  and $\beta_0^*r=\mcr-\gamma,$ 
  it follows that $\sum_{jk} a_{jk} \p_j (\mcr-\gamma)\p_k (\mcr-\gamma)=1,$ and so in coordinates  \eqref{eqc2}, this equation gives
  \begin{gather*}
  \begin{gathered}
 \ka_L^2 {x_1}^2(\p_{x_1}(\mcr-\gamma))^2-1+ {x_1}^2 H^{11}(\mcu_1 (\mcr-\gamma))^2+ \\
2 {x_1}^2\sum_{j=1}^n H^{1j}( \mcu_1(\mcr-\gamma))(\p_{Y_j}(\mcr-\gamma)) +x_1^2 \sum_{j,k=1}^n H^{jk} (\p_{Y_j} (\mcr-\gamma))(\p_{Y_k} (\mcr-\gamma))=0,
 \end{gathered}\label{exp-beta1}
  \end{gather*}
and since $\ka_L^2 {x_1}^2(\p_{x_1} \gamma)^2=1,$ it follows that 
\begin{gather}
\begin{gathered}
\ka_L^2({x_1}^2(\p_{x_1}\mcr)^2- 2{x_1}\frac{1}{\ka_L} \p_{x_1}  \mcr) + {x_1}^2 H^{1,1}(\mcu_1 (\mcr-\gamma))^2+ 2{x_1}^2\sum_{j=1}^n H^{1j}( \mcu_1(\mcr-\gamma))(\p_{Y_j}(\mcr-\gamma) + \\ {x_1}^2\sum_{j,k=1}^n H^{jk} (\p_{Y_j} (\mcr-\gamma))(\p_{Y_k} (\mcr-\gamma))=0.
\end{gathered}\label{beta2}
\end{gather}
 Therefore, substituting \eqref{beta2} in  \eqref{exp-beta}, and using that $\sigma=1+h\sigma',$  we deduce that 
\begin{gather*}
C_0(h,\sigma,x,u,Y)= h^2 \widetilde{C}_0(h,\sigma,x,u,Y), \text{ where } \\
\widetilde{C}_0(h,\sigma,x,u,Y)= - 2 \sigma^2\ka_L^2 \nu_L {x_1} \p_{x_1}\mcr +  2  \sigma^2 {x_1}^2\sum_{j=1}^n H^{1j}( \mcu_1(\mcr-\gamma))(\p_{Y_j}\beta)+ \\
2  \sigma^2  {x_1}^2\sum_{j=1}^n H^{1j}( \p_{Y_j}(\mcr-\gamma))(\mcu_1\beta)+  
2 \sigma^2 {x_1}^2 \sum_{j,k=1}^n H^{jk} (\p_{Y_j} (\mcr-\gamma))(\p_{Y_k} \beta) + \\ 2 h^2 \sigma^2x_1^2 \sum_{j=1}^n \mcu_1 \beta \p_{Y_j}\beta+ 
2 h^2 \sigma^2x_1^2 \sum_{j,k=2}^n \p_{Y_j}\beta\p_{Y_k}\beta.
\end{gather*}

Since $\mcr\in \mck_{ph}^{0,0}(\xo),$ and ${x_1}\p_{x_1} \beta, {x_1}\p_u \beta, {x_1}\p_{Y_j}\beta \in \mck_{ph}^{0,0}(\xo)$ and ${x_1}\p_{x_1}\tgamma, {x_1}\p_{u} \tgamma, {x_1}\p_{Y_j} \tgamma\in \mck_{ph}^{0,0}(\xo),$ and $\sigma=1+h\sigma',$  if follows that  from the definition of $\beta$ in \eqref{def-gamma} and \eqref{real-anal} that  
\begin{gather}
\begin{gathered}
 \wtc_0(h,\sigma,x,u,,Y)\sim \sum_{j=0}^\infty h^j C_{0,j} (\sigma',x,u,Y), \text{ such that  }  \\ C_{0,j}(\sigma',x,u,Y) \sim \sum_{l=1}^\infty \sum_{k=0}^l x_1^l(\log x_1)^k C_{0,j,k,l}(x_2,x_3,u,Y), 
 \end{gathered}  \label{exp-C}
  \end{gather}

Now we return to \eqref{form-ql}. We use \eqref{def-gamma}, \eqref{exp-C} and the fact  that $\sigma=1+h \sigma',$ $\sigma'\in (-c,c)\times i (-C,C),$ to conclude that
\begin{gather}
\begin{gathered}
Q_{L,\mcr,\tgamma}(h,\sigma,D) \sim  h^2   Q_{0,L} (h,\sigma',D) +  h ( W+  \vtheta), \text{ where } \\
Q_{0,L}(h,\sigma',D)=\sum_{jk} a_{jk} D_j D_k+ i\sum_j b_j D_j + \sigma'(W+\vtheta) +\\  h\sigma  \sum_{jk} a_{jk}( 2 \p_k\beta D_j- i\p_j\p_k\beta)
+ ih\sigma \sum_j b_j \p_j \beta + \wtc_0(h,\sigma), \\
W=  -2i\sum_j (\sum_k a_{jk} \p_k (\mcr-\gamma)) \p_j, \\ 
\vtheta= -i \sum_{jk}a_{jk} \p_j\p_k (\mcr-\gamma) + i\sum_j b_j \p_j (\mcr-\gamma).
\end{gathered}\label{transp-0}
\end{gather}

Using the formula of the coefficients of $Q_L(h,\sigma,D)$ given by \eqref{opq-loc0}, the polyhomogeneity of $\mcr$ given by \eqref{exp-mcr} and the definition of $\ga,$ we find that
\begin{gather}
\vtheta= x_1 \tilde\vtheta, \;\ \tilde\vtheta\sim \sum_{j=0}^\infty \sum_{k=0}^j x_1^j(\log x_1)^k \tilde\vtheta_{j,k}(x_2,x_3,Y). \label{def-tildevtheta}
\end{gather}

We deduce from \eqref{def-gamma}, \eqref{real-anal},  \eqref{opq-loc} and \eqref{exp-C} that
\begin{gather}
\begin{gathered}
Q_{0,L}(h,\sigma',D)= Q_L + \sigma'(W+\vtheta) + \mcw(h,\sigma'), \\
\text{ where }
Q_L=\sum_{jk} a_{jk} D_jD_k + \sum_{j} i b_j D_j, \\
\mcw(h,\sigma)\sim  \sum_{a=0}^\infty h^a \mcw_a (\sigma',m), \\
\mcw_a(\sigma', m') = \sum_{j} A_{a,j}(\sigma',m)\rho_L D_j + B_a(\sigma', m), \;\ A_{a,j}, B_a \in \mck_{ph}^{0,0} (\xo)
\end{gathered}\label{transp-00}
\end{gather}
Notice that the coefficient of $D_j$ in $\mcw_a(\sigma',m)$ comes from $a_{jk}\p_k\beta,$ which in turn come from the terms of the principal part of the lift of the operator $Q$ to $\xo,$ which is given in terms  of the lifts of the  vector fields  as \eqref{comp-cor}, and so the $a_{jk}$ vanish to order two at $\{\rho_L=0\},$ so the coefficient of $D_j$ is as above.

By replacing this expression into the expansion \eqref{new-asym}, we obtain the following set of transport equations valid in local coordinates
\begin{gather}
\begin{gathered}
(W+\vtheta) V_0= G_0, \; V_0=0 \text{ at } \diag_0, \\
(W+\vtheta) V_r + (Q_L+\sigma'(W+\vtheta)) V_{r-1} + \sum_{a+k=r-1} \mcw_a V_k= G_r, \\ V_r=0 \text{ at } \diag_0, \;\ r\geq 1.
\end{gathered}\label{transp-01}
\end{gather}

Iterating these equations we find that
\begin{gather}
\begin{gathered}
(W+\vtheta) V_r= G_r-\sum_{j=0}^{r-1} (-\sigma')^j [G_{r-j}-(Q_L V_{r-j-1}+ \sum_{a+k=r-1} \mcw_a V_k]), \;\ r\geq 1. 
\end{gathered}\label{iter-transp}
\end{gather}

We need to make invariant sense of $W+\vtheta$  in order to show that these equations  make sense independently of the choice of coordinates.   Let $Q(h,\sigma,D)$ be as in \eqref{def-opq} and let
\begin{gather*}
Q(h,1,D)= x^{-\novt}( h^2(\Delta_g-\knsq)- 1)x^{\novt}.
\end{gather*}

 According to \eqref{def-opq} and \eqref{opPL}
\begin{gather*}
Q_{L,\gamma}(h, D) = e^{\ioh \gamma} \beta_0^*Q(h,1,D)  e^{-\ioh \gamma}= \\ \sum_{jk} a_{jk}(m) (hD_j-  \p_j \gamma)(hD_k-  \p_k \gamma) + \sum_{j} h  b_j(m) (hD_j- \p_j \gamma)=\\
\sum_{jk} a_{jk}(m) hD_j hD_k + i \sum_{j} B_j hD_j +C, \text{ with  } \\
B_j= 2i \sum_{k} a_{jk} \p_k \ga  +  h b_j, \text{ and } \\
 C= -1+\sum_{jk} a_{jk} \p_j\gamma\p_k \gamma+ i h \sum_{jk} a_{jk} \p_k \p_j \ga- i  \sum_j B_j \p_j \ga +\frac{h^2 n^2}{4}(\ka_L^2-\ka_0^2).
\end{gather*}
 The full semiclassical symbol of $Q_{L,\gamma}$ is defined to be
\begin{gather}
\begin{gathered}
q_{L,\gamma}=\sigma^{sc}(Q_{L,\gamma})= \sum_{jk} a_{jk}( \nu_j \nu_k -2 \p_k \gamma \nu_j + \p_j\gamma\p_k\gamma) -1+ \\
ih( \sum_{jk} a_{jk} \p_j\p_k \gamma+ \sum_j b_j( \nu_j-\p_j\ga))+ \frac{h^2 n^2}{4}(\ka_L^2-\ka_0^2).
\end{gathered}\label{sc-psym}
\end{gather}
The semiclassical principal symbol of $Q_{L,\gamma}$  is equal to
\begin{gather*}
\sigma_0^{sc} (Q_{L,\gamma})= q_{L,\gamma}^0= \sum_{jk} a_{jk}( \nu_j \nu_k -2 \p_k \gamma \nu_j + \p_j\gamma\p_k\gamma) -1
\end{gather*}
and the semiclassical subprincipal symbol of $Q_{L,\gamma}$ is defined to be
\begin{gather*}
q^{s}= q_{L,\gamma} - q_{L,\gamma}^0-\frac{1}{2i} \sum_{j} \p_{m_j} \p_{\nu_j} q_{L,\gamma}^0.
\end{gather*}
These quantities are invariantly defined, see for example \cite{Martinez}. Hence  the semiclassical subprincipal symbol of  $Q_{L,\ga}$ is given by
\begin{gather*}
q_{L,\gamma}^{s}= i\sum_{jk} a_{jk} \p_j\p_k \gamma+i \sum_j b_j( \nu_j-\p_j\ga)+\frac{i}{2}\sum_{jk}( 2\p_j(a_{jk} )\nu_k -2\p_j(a_{jk}\p_k\ga)= \\
i \sum_j b_j (\nu_j-\p_j \ga)+i\sum_{jk} \p_j(a_{jk})(\nu_k-\p_k \ga),
\end{gather*}

We know that $\La^*\setminus \wtla_0=\{(m,d_m \mcr)\}$ so  $\mu_k=\p_k \mcr$ on $\La^*,$ and hence
\begin{gather}
q_{L,\gamma}^{s}|_{\La^*}=i \sum_j b_j \p_j(\mcr- \ga)+i  \sum_{jk} \p_j(a_{jk})\p_k(\mcr- \ga). \label{subprinc}
\end{gather}

We may think of a function $\psi(m)$ defined on $\xo$ as a function  on 
$\La^*.$ On the other hand, we know  that $q_{L,\gamma}=0$ on $\La^*,$ and so $H_{q_{L,\gamma}}$ is tangent to $\La^*.$ Moreover, we know from \eqref{sc-psym} that the Hamilton vector field of  
$q_{L,\gamma}$  acting on functions  of the base variable  satisfies 
\begin{gather}
H_{q_{L,\ga}} \psi(m)= \sum_{j} \p_{\nu_j} p_{L,\ga} \p_{j}\psi(m)=2\sum_{j,k} a_{jk}(\nu_k-\p_k\ga)\p_j \psi(m). \label{par-vf}
\end{gather}
but again  we may substitute $\nu_k=\p_k\mcr$ in \eqref{par-vf} and so 
\begin{gather}
H_{q_{L,\ga}} \psi(m)= \sum_{j} \p_{\nu_j} p_{L,\ga} \p_{j}\psi(m)=2\sum_{j,k} a_{jk}( \p_k\mcr-\p_k\ga)\p_j \psi(m)=-\frac{1}{i} W \psi(m). \label{HPW}
\end{gather}
However, \eqref{subprinc} is not equal to $\vtheta.$   Now it is important to use that that we are working with half-densities.  Recall, see for example Section 25.2 of \cite{Hormander}, that if  $\Sigma$ is a $C^\infty$ vector field and $\mcl_{\Sigma}$ denotes the Lie derivative along $\Sigma,$ then
\begin{gather}
\mcl_\Sigma (f |dm|^\ha)= (\Sigma f+ \ha \div(\Sigma) f ) |dm|^\ha. \label{Lied}
\end{gather}
In particular,
\begin{gather}
\mcl_{H_{q_{L,\ga}}}(f |dm|^\ha)=( H_{q_{L,\ga}}f + \left(\sum_{j,k} \p_j(a_{jk} \p_k(\mcr-\ga))\right) f )|dm|^\ha.
\end{gather}
Therefore, it follows from \eqref{subprinc}, \eqref{HPW} and \eqref{Lied} that
\begin{gather*}
(W+\vtheta) (f |dm|^\ha)= (-i \mcl_{H_{q_{L,\ga}}}+ i \sum_{j,k} \p_j(a_{jk} \p_k(\mcr-\ga)) +\vtheta) (f |dm|^\ha)= (\frac{1}{i} \mcl_{H_{q_{L,\ga}}}+ q_{L,\ga}^{s}) (f |dm|^\ha).
\end{gather*}

Therefore the first equation in \eqref{transp-0} becomes
\begin{gather}
 (\frac{1}{i} \mcl_{H_{q_{L,\ga}}}+ q_{L,\ga}^{s}) (V_0(\sigma',m) \omega^\ha)= G_0(\sigma',m) \omega^\ha, \;\ V_0=0 \text{ near} \diag_0, \label{eq-zero-symb}
 \end{gather}
where $\omega= \beta_0^* |dg(z')|.$  This equation can be solved globally and we need to understand the behavior of its solutions at the left face.  Of course, to do that we need to go back and work in local coordinates valid near the left face. Moreover, recall that  
$Q_{L,\gamma}$ and $P_{L,\gamma}$ defined in \eqref{scps-1}, have the same semiclassical principal part, so  $H_{q_{L,\gamma}}= H_{p_{L,\gamma}}.$  But
in  the proof of Theorem \ref{soj}, we defined $p_{L,\gamma}=2\rho_L \wp_{L}$ (there we worked with $\ha(h^2\Delta-\sigma^2)$) and  so
$H_{q_L}= 2\rho_L H_{\wp_L}+ 2\wp_LH_{\rho_L}.$ Since $\wp_L=0$ on $\La^*$ and $G_1$ is supported near $\diag_0,$  and  in view of 
\eqref{def-tildevtheta} we conclude that near the left face we have
\begin{gather}
( H_{\wp_{L}}+ \ha\tilde\vtheta ) V_0=0, \;\  V_0\in C^\infty(\{\rho_L>\del\}), \label{first-T}
\end{gather}
where $\tilde\vtheta$ satisfies the expansion in \eqref{def-tildevtheta}.    If $\zeta(s)$ denotes an integral curve of $H_{\wp_L},$ then the solution to \eqref{first-T}  with initial data at $x_1=\del$ satisfies
\begin{gather*}
\frac{d}{dx_1} \left[ \exp(-\int_{x_1}^\del \tilde\vtheta(\zeta(s)) \; ds) V_0(\zeta(t))\right]=0,
\end{gather*}
and so given a point $(x_1,Z)=(x_1, x_2,x_3,Y),$ let $\zeta(s)$ be the integral curve of $H_{\wp_L}$ joining $(z_1,Z)$ to a point on the surface 
$\{x_1=\del\},$  then
\begin{gather*}
V_0(x_1,Z) =V_0(\zeta(\del)) \exp(\int_{x_1}^\del \tilde\vtheta(\zeta(s)) \; ds),
\end{gather*}

Now we have to use the fact that $\tilde\vtheta$ is a function of the base variables only, and in view of \eqref{param-A0},
\begin{gather*}
\zeta_1(s)=s,\\
\zeta_r(s)\sim \zeta_{r,0}+ \sum_{j=1}^\infty\sum_{k=0}^j s^j(\log s)^k X_{r,j,k}(\zeta_0), \\
\end{gather*}
On the other hand, we deduce from \eqref{def-tildevtheta} that
\begin{gather*}
\tilde\vtheta(\zeta(s))\sim \sum_{j=0}^\infty\sum_{k=0}^j s^j (\log s)^k \tilde\vtheta_{j,k}( \zeta_2(s), \ldots, \zeta_{2n+2}(s)), \;  \tilde\vtheta_{j,k}\in C^\infty.
\end{gather*}
But in view of Taylor's formula
\begin{gather*}
 \tilde\vtheta_{j,k}( \zeta_2(s), \ldots, \zeta_{2n+2}(s))- \tilde\vtheta_{j,k} (\zeta_{2,0}, \ldots, \zeta_{2n+2,0})\sim
 \sum_{\alpha} C_\alpha ( \zeta(s)-\zeta_0)^\alpha \sim \sum_{j=0}^\infty\sum_{k=0}^j s^j (\log s)^k \mcw_{j,k}(\zeta_0),
\end{gather*}
and therefore
\begin{gather*}
\tilde\vtheta(\zeta(s)) \sim \sum_{j=0}^\infty\sum_{k=0}^j s^j(\log s)^k \mca_{j,k}(\zeta_0).
\end{gather*}
After we integrate this expression, use Taylor's expansion of the exponential function, we find that 
\begin{gather}
V_0(x_1,Z)\sim \sum_{j=0}^\infty \sum_{k=0}^j x_1^j(\log x_1)^k \mcv_{0,j,k}(Z). \label{res-v0}.\
\end{gather}

Now suppose that
\begin{gather}
V_l(x_1,Z)\sim \sum_{j=0}^\infty \sum_{k=0}^j x_1^j(\log x_1)^k \mcv_{l,j,k}(Z), \, l\leq r-1 \label{res-vl}.
\end{gather}
we want to show that it holds for $l=r.$    The transport equation for $V_r$ is given by  \eqref{iter-transp}, and if denote
\begin{gather*}
\mcg_r(x_1,Z)=G_r-\sum_{j=0}^{r-1} (-\sigma')^j [G_{r-j}-(Q_L V_{r-j-1}+ \sum_{a+k=r-1} \mcw_a V_k)], 
\end{gather*}
then it follows from \eqref{opq-loc0}, \eqref{transp-0}  and \eqref{res-vl} that
\begin{gather*}
\mcg_r(x_1,Z)  \sim \sum_{j=1}^\infty \sum_{k=0}^j x_1^j(\log x_1)^k \mcg_{r,j,k}(Z),
\end{gather*}
and therefore
\begin{gather}
\frac{1}{x_1} \mcg_r(x_1,Z)\sim \sum_{j=0}^\infty \sum_{k=0}^{j+1} x_1^j(\log x_1)^k \mcg_{r,j,k}(Z). \label{asym-Gr}
\end{gather}
If $\zeta(s)$ is as above, then the solution of \eqref{transp-01} is given by
\begin{gather*}
V_0(x_1,Z) = \exp(\int_{x_1}^\del \tilde\vtheta(\zeta(s)) \; ds)\int_{x_1}^\del \left[ \frac{1}{s} \mcg_r(\zeta(s)) \exp(-\int_{s}^\del \tilde\vtheta(\zeta(s_1)) \; ds_1)\right]\, ds.
\end{gather*}
Using the argument above,  we deduce from \eqref{asym-Gr} that
\begin{gather*}
 \frac{1}{s} \mcg_r(\zeta(s)) \exp(-\int_{s}^\del \tilde\vtheta(\zeta(s_1)) \; ds_1)\sim \sum_{j=0}^\infty \sum_{k=0}^{j+1} s^j(\log s)^k \mcz_{r,j,k}(\zeta_0).
 \end{gather*}
 Integrating this, using \eqref{int-log} and combining with the expansion of  $\exp(\int_{x_1}^\del \tilde\vtheta(\zeta(s)) \; ds)$ that we already discussed, we find that $V_r(x_1,Z)$ satisfies \eqref{res-vl}.
 
 So we have constructed a sequence $V_j(x_1,Z)$ satisfying \eqref{transp-01} such that
 \begin{gather*}
 V_j\sim \sum_{l=0}^\infty \sum_{k=0}^l  x^l(\log x)^k \mcv_{j,k,l}(Z),\; \mcv_{j,k,l}\in C^\infty.
 \end{gather*}
 
  Now we apply Borel's lemma in $h$ and $x^j(\log x)^k,$ $k\leq j,$  and we find  
 \begin{gather}
 V(h,x_1,Z) \sim  \sum_{j=0}^\infty \sum_{l=0}^\infty \sum_{k=0}^l h^j x_1^l (\log x_1)^k \mcv_{j,k,l}(Z),  \; \mcv_{j,k,l} \in C^\infty, \label{borel-summ}
 \end{gather}
which  satisfies \eqref{new-asym}. This ends the proof of the Lemma.
\end{proof}

 So far we have constructed an operator $\tG_2(h,\sigma)= G_0(h,\sigma)+ G_{1}(h,\sigma)+ G_2(h,\sigma),$ such that
\begin{gather}
P(h,\sigma,D) \tG_2(h,\sigma)- \id= E_2(h,\sigma), \;\ \beta_0^*K_{E_2(h,\sigma)}= e^{i\soh(\mcr- \tgamma)} F_2(h,\sigma), \;\ F_2\in h^\infty \mck_{ph}^{\novt,\novt}(\xo). \label{stetp-4}
\end{gather}

The fourth step is to remove the error at the front face $\mcf.$   Before we proceed we need the following
\begin{lemma}\label{phg-remainder} Let $\mcr\in \mck_{ph}^{0,0}(\xo)$   
and let $f(h,m)\in h^\infty \mck_{ph}^{0,0}(\xo).$  Then
\begin{gather}
f(h,m)e^{i\soh \mcr} \in h^\infty \mck_{ph}^{0,0}(\xo). \label{phg-remainder1}.
\end{gather}
\end{lemma}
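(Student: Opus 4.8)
The plan is to reduce the statement to the elementary algebraic properties of the class $\mcp(\xo)=\mck_{ph}^{0,0}(\xo)$ recalled in Section \ref{MMB-results} — above all its closure under products — combined with the fact that $f$ vanishes to infinite order at $h=0$. We may and do take $\mcr$ real valued, as is the case in all applications (this is needed: otherwise $e^{i\soh\mcr}$ need not be bounded as $h\downarrow 0$). Writing $\sigma=1+h\sigma'$ with $\sigma'$ in a fixed bounded subset of $\mbc$, split $e^{i\soh\mcr}=e^{i\mcr/h}\,e^{i\sigma'\mcr}$. The factor $e^{i\sigma'\mcr}$ is harmless: since $\mcr$ is bounded and $\mcp(\xo)$ is closed under products, it is closed under composition with entire functions of its elements — one writes $e^{g}=e^{g_0}\,e^{g-g_0}$ where $g_0$ is the value of $g$ on the corner $L\cap R$ (a $C^\infty$ function of the remaining variables, smooth up to $\ff$) and expands $e^{g-g_0}=\sum_{m\ge 0}(g-g_0)^m/m!$, observing that the $x_1^{j_1}x_2^{j_2}$-coefficient of this series only receives contributions from $m\le j_1+j_2$ because $g-g_0$ carries no $(0,0)$ term. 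Hence $e^{i\sigma'\mcr}\in\mck_{ph}^{0,0}(\xo)$, uniformly for $\sigma'$ in the given set. Since $h^\infty\mck_{ph}^{0,0}(\xo\times[0,h_0))$ is a module over $\mck_{ph}^{0,0}(\xo)$, it suffices to prove $f\,e^{i\mcr/h}\in h^\infty\mck_{ph}^{0,0}(\xo\times[0,h_0))$.

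Next I would establish the interior and front-face regularity together with the $h^\infty$ bounds. Because $\mcr$ is real, $|e^{i\mcr/h}|\equiv 1$, and by the higher-order chain rule (Fa\`a di Bruno), for any vector fields $V_1,\dots,V_k$ in $\mcv_b$ (or transversal to $\ff$, or in the interior) the derivative $V_1\cdots V_k\,e^{i\mcr/h}$ equals $e^{i\mcr/h}$ times a sum of terms $(i/h)^{|\pi|}\prod_{B\in\pi}(V_B\mcr)$ over partitions $\pi$ of $\{1,\dots,k\}$; since each $V_B\mcr\in\mck_{ph}^{0,0}(\xo)$ is bounded and $|\pi|\le k$, this is $O(h^{-k})$. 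Applying Leibniz, $V^{(k)}(f\,e^{i\mcr/h})=\sum_{j+l=k}(V^{(j)}f)(V^{(l)}e^{i\mcr/h})$; as these vector fields do not differentiate in $h$, each $V^{(j)}f$ again lies in $h^\infty\mck_{ph}^{0,0}$ and so is $O(h^N)$ for every $N$, while $V^{(l)}e^{i\mcr/h}=O(h^{-l})$, so every such derivative of $f\,e^{i\mcr/h}$ is $O(h^\infty)$. Thus $f\,e^{i\mcr/h}$ is $C^\infty$ in the interior and up to $\ff$, with all derivatives $O(h^\infty)$.

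Finally I would produce the polyhomogeneous expansions at $L$ and $R$ with the restricted structure $k_i\le j_i$ of \eqref{as-ph-exp}, uniformly down to $h=0$. Working in coordinates near $L\cap R$ as in \eqref{eqc2}, write $\mcr=\mcr_{00}(x'')+\widetilde{\mcr}$, where $\mcr_{00}=\mcr|_{\{x_1=x_2=0\}}$ is smooth up to $\ff$ and $\widetilde{\mcr}\in\mcp(\xo)$ carries no $(0,0)$ term, so $\widetilde{\mcr}$ vanishes on $L\cap R$ and $|\widetilde{\mcr}|\lesssim x_1^{1-\eps}+x_2^{1-\eps}$ near the corner. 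Then $e^{i\mcr/h}=e^{i\mcr_{00}/h}\,e^{i\widetilde{\mcr}/h}$; the first factor is independent of $x_1,x_2$, of modulus one, and $C^\infty$ up to $\ff$ with $\ff$- and $x''$-derivatives of size $O(h^{-|\alpha|})$, so it carries $h^\infty$-objects to $h^\infty$-objects. For $e^{i\widetilde{\mcr}/h}=\sum_{m\ge 0}(i/h)^m\widetilde{\mcr}^m/m!$, the $m$-th term is $(i/h)^m/m!$ times $\widetilde{\mcr}^m\in\mcp(\xo)$, whose $x_1^{j_1}x_2^{j_2}(\log x_1)^{k_1}(\log x_2)^{k_2}$-coefficient (with $k_i\le j_i$, since products preserve this structure) vanishes unless $j_1+j_2\ge m$; hence the formal expansion of $e^{i\widetilde{\mcr}/h}$ at $L\cap R$ has coefficients that are finite sums, smooth up to $\ff$, and of size $O(h^{-(j_1+j_2)})$, while the truncation error after $m>N_0$ is, by Taylor's formula for $e^{it}$ with $t=\widetilde{\mcr}/h\in\mbr$, at most $|\widetilde{\mcr}/h|^{N_0+1}/(N_0+1)!\lesssim h^{-(N_0+1)}(x_1^{1-\eps}+x_2^{1-\eps})^{N_0+1}$. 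Multiplying by $f$, whose expansion at $L\cap R$ has coefficients and cross-remainders that are $O(h^\infty)$, the $x_1^{j_1}x_2^{j_2}$-coefficient of $f\,e^{i\widetilde{\mcr}/h}$ is a finite convolution of an $O(h^{-(j_1+j_2)})$ factor with $O(h^\infty)$ factors, hence $O(h^\infty)$; and, choosing $N_0$ large depending on the prescribed orders $J_1,J_2$, the remainder is a sum of products each $O(h^\infty)$ times the required $x_1^{J_1-\eps}x_2^{J_2-\eps}$ (using that $f$ beats every negative power of $h$ that appears and that products of functions vanishing to the appropriate orders in $x_1$, respectively $x_2$, satisfy the product bound of \eqref{as-ph-exp}). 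The behavior near $L$ or $R$ away from the corner, and up to $\ff$, is identical but simpler. Combined with the first step, this gives $f\,e^{i\soh\mcr}\in h^\infty\mck_{ph}^{0,0}(\xo\times[0,h_0))$. The step demanding care, and the main obstacle, is exactly this last bookkeeping: tracking the $O(h^{-(j_1+j_2)})$ growth of the coefficients of $e^{i\widetilde{\mcr}/h}$ and verifying order by order that the infinite-order vanishing of $f$ in $h$ absorbs it — the crucial simplification being that, since $\widetilde{\mcr}$ has no constant term, only finitely many terms of the exponential series contribute to any prescribed order of the expansion at $L$ and $R$.
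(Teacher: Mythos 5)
Your proof is correct and follows essentially the same route as the paper's: subtract the corner value $\mcr(0,0,x')$, Taylor-expand the exponential of the vanishing part $\mcr-\mcr(0,0,x')$ using the remainder bound $\bigl|e^{it}-\sum_{j\le N}(it)^j/j!\bigr|\le C|t|^{N+1}$, and let the infinite-order vanishing of $f$ in $h$ absorb the resulting negative powers of $h$. Your additional remarks (real-valuedness of $\mcr$, splitting off the harmless factor $e^{i\sigma'\mcr}$, derivative bounds, and the observation that only finitely many terms of the exponential series contribute to any given order at $L$ and $R$) simply make explicit what the paper's terse argument leaves implicit.
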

\begin{proof}
Since $\mcr$ is polyhomogeneous with respect to $R$ and $L,$  then according to \eqref{as-ph-exp} in local coordinates $x=(x_1,x_2,x'),$ where $L=\{x_1=0\},$ $R=\{x_2=0\},$
\begin{gather*}
\mcr(x_1,x_2,x')-\mcr(0,0,x') \sim \sum_{j_1,j_2=1}^\infty\sum_{k_1=0}^{j_1}\sum_{k_2=0}^{j_2} x_1^{j_1} x_2^{j_2} (\log x_1)^{k_1}(\log x_2)^{k_2} \mcr_{j_1,j_1,k_1,k_2}(x'),
\end{gather*}
but then by Taylor's formula, for any $\del>0,$
\begin{gather*}
h^{N+1} \left|e^{i\soh\mcr(x_1,x_2,x')} -e^{i\soh \mcr(0,0,x')}-\sum_{j=0}^N \frac{1}{j!} \left(i\soh\left( \mcr(x_1,x_2,x')-\mcr(0,0,x')\right)\right)^j\right| \leq  \\
C(N) h^{N+1} |\soh\left( \mcr(x_1,x_2,x')-\mcr(0,0,x')\right) |^{N+1} \leq C(N,\del) |x|^{N-\del}
\end{gather*}
and therefore  $f(h,m) e^{i\soh \mcr} \in  h^\infty   \mck_{ph}^{0,0}(\xo\times [0,h_0)),$ which proves the Lemma.
\end{proof}

 According to Lemma \ref{aface}, the error $E_2(h,\sigma)$ given in \eqref{mcaasym} is such that
\begin{gather*}
\beta_0^* K_{E_2(h,\sigma)}= h^\infty  e^{i\soh (\mcr-\tgamma)} F_2(h,\sigma), \; F_2\in \mck_{ph}^{\novt,\novt}(\xo\times [0,h_0))
\end{gather*}
But Lemma \ref{phg-remainder} implies that in fact
\begin{gather*}
\beta_0^* K_{E_2(h,\sigma)}= h^\infty  e^{-i\soh \tgamma} \tilde F_2(h,\sigma), \; \tilde F_2\in \mck_{ph}^{\novt,\novt}(\xo\times [0,h_0))
\end{gather*}

\begin{lemma}\label{ff-error}   Let  $E_2(h,\sigma),$  $h\in (0,h_0),$   holomorphic in $\sigma\in \Omega_\hb,$ be such that 
$$\beta_0^*K_{E_2(h,\sigma)}= h^\infty e^{-i\soh \tgamma} F_2(h,\sigma), \; 
F_2 \in  \mck_{ph}^{\novt,\novt}(\xo\times [0,h_0)),$$
and here we may ignore half-densities.  Then there exists an operator $G_3(h,\sigma),$   such that 
\begin{gather*}
\beta_0^* K_{G_3(h, \sigma)}=e^{-i\soh \tgamma}U_3(h,\sigma), \;\ U_3(h,\sigma) \in h^\infty \mck_{ph}^{\fnt, \fnt}(\xo\times [0,h_0)), 
\end{gather*}
and such that 
\begin{gather}
\begin{gathered}
(h^2(\Delta_{g(z)}-\knsq)-\sigma^2)G_3(h,\sigma) - E_2(h,\sigma) = E_3(h,\sigma), \\
\beta_0^* K_{E_3(h,\sigma)}= e^{-i\soh\tgamma} F_3(h,\sigma), \\
F_3(h, \sigma) \in  \rho_\ff^\infty h^\infty \mck_{ph}^{\fnt, \fnt}(\xo\times [0,h_0)).
\end{gathered}  \label{mcaasym-1}
\end{gather}
\end{lemma}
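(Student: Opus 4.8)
The plan is to solve away the error $E_2(h,\sigma)$ order by order in powers of $\rho_\ff$, working on $\xo\times[0,h_0)$ with the conjugated operator. As in Lemma~\ref{aface}, set $Q(h,\sigma,D)=x^{-\novt}(h^2(\Delta_g-\knsq)-\sigma^2)x^{\novt}$, lift to the left factor to get $Q_L(h,\sigma,D)$, and conjugate by $e^{i\soh\tgamma}$; since $\tgamma$ differs from $\gamma$ only by $h^2\beta$ and the whole expression carries the oscillatory factor $e^{-i\soh\tgamma}$, this reduces \eqref{mcaasym-1} to finding $U_3$ with $Q_{L,\tgamma}(h,\sigma,D)U_3(h,\sigma)\equiv F_2(h,\sigma)$ modulo $\rho_\ff^\infty h^\infty\mck_{ph}^{0,0}$, after factoring out the $\rho_R^\novt\rho_L^\novt$ and absorbing it as in \eqref{def-opq}. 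Since $\beta_0^*K_{E_2}$ vanishes to infinite order at $\diag_0$ and at $\mcs$, we are genuinely working on $\xo\times[0,h_0)$ and away from the diagonal, so $Q_{L,\tgamma}$ is not elliptic; the key structural fact to exploit is that its normal operator at $\mcf$, computed in the projective coordinates \eqref{eqc2} (and the simpler analogues in regions A, B, C), is, up to the factor $\rho_\ff^2$, a second-order elliptic operator on the fibers of $\mcf$ — essentially the Laplacian of the metric $g^0$ of \eqref{pbmetric} shifted by the indicial conjugation — with the base variables $(x',y')$ and the powers of $\rho_R,\rho_L$ entering only as parameters. This is the observation already used in Lemma~\ref{step2-const}, transplanted to $\mcf$ instead of $\mcs$.

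First I would fix projective coordinates near each stratum of $\p\xo$ and write $Q_{L,\tgamma}$ with $\rho_\ff$ pulled out: $Q_{L,\tgamma}=\rho_\ff^{2}\,N(h,\sigma)+\rho_\ff^{3}(\cdots)$ where $N$ is the (parametrized) normal operator, which by the indicial-root discussion around \eqref{sc-ind-roots} is invertible on the appropriate weighted polyhomogeneous spaces once $h<h_0$ (the weights $\novt$ correspond exactly to the indicial roots being pushed off the critical strip). Then I would expand $F_2\sim\sum_{k\ge K_0}\rho_\ff^k F_{2,k}$ (with coefficients in $h^\infty\mck_{ph}^{0,0}$ of the restricted faces) — here $K_0=\infty$ would be trivial, so the content is in the finite-order terms — and solve the resulting triangular system $N\,U_{3,k}=F_{2,k}+(\text{lower order feedback from }U_{3,k'},\ k'<k)$ recursively, at each step inverting the normal operator and checking that the solution stays in the polyhomogeneous class $\mck_{ph}^{\novt,\novt}$ at $R$ and $L$. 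This last point is where the variable-curvature complication reappears: the coefficients of $N$ and of the feedback terms carry the $\log\rho_R,\log\rho_L$ from $\tgamma$, so one must verify that solving the fiber equation preserves the polyhomogeneous structure \eqref{as-ph-exp} — this follows because $\mck_{ph}^{\novt,\novt}$ is closed under the relevant operations (products, and inversion of the normal operator with parameters), exactly the closure properties emphasized after \eqref{def-variable-polyhomogeneiy}, together with Lemma~\ref{phg-remainder}. Finally, Borel-summing the series $\sum_k\rho_\ff^k U_{3,k}$ (and, if needed, re-summing in $h$) produces $U_3\in h^\infty\mck_{ph}^{\novt,\novt}(\xo\times[0,h_0))$; the remainder $F_3$ then vanishes to infinite order at $\mcf$ by construction, and to infinite order in $h$ since every $F_{2,k}$ did, giving \eqref{mcaasym-1}.

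The main obstacle I expect is not the algebra of the recursion but establishing cleanly that the normal operator $N(h,\sigma)$ at $\mcf$ is invertible on the weighted polyhomogeneous spaces \emph{uniformly} in the base point $(x',y')$ and in $h\in(0,h_0)$, $\sigma\in\Omega_\hb$, and that its inverse maps $\mck_{ph}^{\novt,\novt}$ to itself — i.e. that solving the indicial equation does not degrade the control $k_m\le j_m$ on the logarithmic powers. This requires unwinding that $N$ is, fiberwise, a constant-coefficient-in-$\rho_\ff$ operator of the form treated by Mazzeo–Melrose/Borthwick (the fiber is a model half-space with the metric $\tfrac1{\ka}dX^2+H^{ij}dY$, the $(x',y')$ as parameters), so that its resolvent at the indicial weights $\novt$ is governed by the same indicial roots \eqref{sc-ind-roots}; because $h<h_0$ keeps those roots away from $-\tfrac{i}{2}\mn_0$, the inverse exists and is polyhomogeneity-preserving. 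Once that is in hand, the rest is the standard order-by-order argument plus Borel summation, and the special structure of $\diag_0$ meeting $\mcf$ transversally while missing $R$ and $L$ guarantees there is no interaction between the $\mcf$-expansion and the behavior at $\mcl,\mcr$ beyond what the $\mck_{ph}$ bookkeeping already records.
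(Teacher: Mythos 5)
Your overall scheme --- Taylor-expand $F_2$ at $\{\rho_\ff=0\}$, solve a model problem on the fibers of the front face to remove the leading coefficient, iterate, and Borel-sum --- is exactly the route the paper takes (the paper works with $\beta_0^*P(h,\sigma,D)$ and the factor $e^{-i\soh\tgamma}$ directly, without your extra $x^{\novt}$ conjugation, but that difference is cosmetic, and the $h^\infty$ factor indeed rides along harmlessly). The gap is in the step you yourself flag as the main obstacle: the inversion of the normal operator. You describe $N(h,\sigma)$ as, ``up to a factor $\rho_\ff^2$'', a second-order \emph{elliptic} operator on the fibers, and propose to invert it by ellipticity together with the observation that for $h<h_0$ the indicial roots stay away from $-\frac{i}{2}\mn_0$. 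Neither point is correct as stated. In the $0$-calculus the normal operator at $\ff$ is simply the restriction of $\beta_0^*P(h,\sigma,D)$ to the front face (no factor $\rho_\ff^2$ is extracted), and since $\ka_L|_{\ff}=\ka(y')$ it equals $h^2(\Delta_{g_0}-\knsq)-\sigma^2$ with $g_0=\frac{dx^2}{\ka^2(y')x^2}+\frac{dy^2}{x^2}$: a semiclassical operator on a model \emph{hyperbolic} space, not the Euclidean half-space metric $\frac{1}{\ka}dX^2+H^{ij}dY$ you quote --- that is the model at the semiclassical front face $\mcs$ used in Lemma \ref{step2-const}, and you appear to have conflated the two faces. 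This normal operator is not semiclassically elliptic (its characteristic variety is precisely what generated the Lagrangian and transport analysis of the earlier steps) and it is a degenerate $0$-operator at the boundary of the fiber, so no elliptic parametrix argument inverts it; what is required is the exact model resolvent on each fiber, with control of its behavior at the left and right faces. The paper encapsulates this as the mapping property \eqref{mapping-prop}: $\mcn(h,\sigma,D)^{-1}$ preserves $e^{-i\soh \tgamma}\mck_{ph}^{\novt,\novt}(\ff\times[0,1))$ holomorphically in $\sigma\in\Omega_\hb$, imported from Proposition 4.2 of \cite{Borthwick}. Keeping the indicial roots off $-\frac{i}{2}\mn_0$ only governs the form of the expansions; by itself it gives neither existence of the inverse, nor holomorphy in $\sigma$, nor preservation of the polyhomogeneous class.

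Once \eqref{mapping-prop} is granted, the rest of your argument --- the triangular recursion $\mcn U_{3,j}=F_{2,j}+(\text{feedback from lower orders})$, the extension of each $U_{3,j}$ off the front face, and the Borel summation in $\rho_\ff$ and $h$ --- coincides with the paper's proof, and your remark that the transversality of $\diag_0$ to $\ff$ and its disjointness from $R\cup L$ prevents any interaction with the left/right-face expansions is consistent with how the paper organizes the bookkeeping. So the proposal is structurally sound but leaves the one genuinely substantive ingredient, the fiberwise model resolvent and its polyhomogeneity-preserving mapping property, unproved and justified by the wrong mechanism.
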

\begin{proof}    The $h^\infty$ factor was important in Lemma \ref{phg-remainder} to reduce the error $E_2$ given by \eqref{mcaasym} to the  form above, but we will not really need it here. Since multiplication by $h$ commutes with $P(h,\sigma,D)$ we will just ignore it in the proof. We use an argument which is basically the same used in \cite{Borthwick} and \cite{MM}.  As $F_2$ is smooth at $\ff$, we can write its Taylor series  at $\{\rho_\ff=0\}:$
\begin{gather*}
F_2 \sim \sum_{j=1}^\infty \rho_\ff^j F_{2, j}, \text{ where } F_{2, j} \in  \mck_{ph}^{\novt,\novt}(\ff\times [0,1)),
\end{gather*}
where $\mck_{ph}^{\novt,\novt}(\ff\times [0,1))$ is the space of functions defined on $\ff \times [0,1)$ which have polyhomogeneous expansions at the right and
 left face.  Our goal is to find 
\begin{gather*}
U_3 \sim \sum_{j = 1}^\infty \rho_\ff^j U_{3, j}, \;\  U_{3, j}\in \mck_{ph}^{\novt,\novt}(\ff\times [0,h_0)) \text{ such that } \\
\beta_0^*(P(h, \sigma,D)) e^{-i\soh \tgamma} U_3 - e^{-i\soh \tgamma} \beta_{0}^* K_{F_2} = e^{-i\soh \tgamma} F_3, \;\ 
 F_3 \in \rho_\mcs^\infty \rho_\ff^\infty \mck^{\novt,\novt}(\xo\times [0,h_0)).
\end{gather*}
 We recall that the normal operator  introduced in \cite{MM} and also used in \cite{Borthwick} is defined as 
\begin{gather*}
\mcn(D)= \beta_0^*P(h,\sigma,D)|_{\ff}, 
\end{gather*}
notice that for example in coordinates \eqref{eqc2}, $\ka_L= \ka(y'+uZ),$ and $\rho_\ff=u,$ so $\ka_L|_{\ff}= \ka(y'),$ but the variables $y'$ serve as parameters for the operator 
$\beta_0^*P(h,\sigma,D),$ So, as observed in \cite{Borthwick,MM}, it follows that, 
\begin{gather*}
\mcn(h,\sigma,D) =h^2(  \Delta_{g_0}-\knsq)- \sigma^2, \text{ where } \\ g_0 \text{  is the metric on the hyperbolic space } g_0= \frac{dx^2}{\ka^2(y') x^2} + \frac{dy^2}{x^2}.
\end{gather*}
So the first step is to solve
\begin{gather*}
\mcn(h,\sigma,D) \wtu_{3,0}= F_0, 
\end{gather*}
and here one needs to establish  the mapping propertie of $\mcn(h,\sigma,D)^{-1}$ given by
\begin{gather}
\begin{gathered}
\mcn(h,\sigma,D)^{-1}: e^{-i\soh \tgamma} \mck_{ph}^{\novt,\novt}(\ff \times [0,1)) \longmapsto  e^{-i\soh \tgamma} \mck_{ph}^{\novt,\novt}(\ff \times [0,1)), \\
\text{ where } \mu=\frac{1}{\ka(y')}\sqrt{1-\frac{h^2 n^2}{4\sigma^2}(\ka(y')^2-\ka_0^2)}, \text{ is constant on the fiber of } \ff \text{ over } (x',y'),
\end{gathered}\label{mapping-prop}
\end{gather}
holomorphically in $\sigma\in \Omega_\hb.$ This was done in Proposition 4.2 of \cite{Borthwick}.    Now extend $U_{3,0}$  to a function in $W_{3,0} \in \mck^{\novt,\novt}(\xo \times [0,h_0)),$ and so
\begin{gather*}
\beta_0^*(P(h, \sigma,D)) e^{-i\soh \tgamma} W_{3,0} - e^{-i\soh \tgamma} \beta_{0}^* K_{F_2} =\rho_{\ff}  e^{-i\soh \tgamma} \mce_2, \;\ \mce_2\in \mck^{\novt,\novt}(\xo\times [0,h_2)).
\end{gather*}
Next we want to find $U_{3,1}$ such that
\begin{gather*}
\mcn(h,\sigma,D) U_{3,1}= F_{2,1}+ \mce_2|_{\rho_\ff=0}.
\end{gather*}
Again, we use \eqref{mapping-prop} to guarantee that this can be solved, and the solution is in he right space.   By induction we construct  $U_{3,j},$ $j=0,1, \ldots,$ and by taking the Borel summation we find $G_3(h,\sigma)$ as desired.
\end{proof} 
The fifth and final step consists of removing the error at the left face.   Recall that so far, using Lemmas \ref{step1-const}, \ref{step2-const}, \ref{aface} and \ref{ff-error},  we have constructed an operator 
$\tG(h,\sigma)=G_0(h,\sigma)+ G_1(h,\sigma)+ G_2(h,\sigma)+ G_3(h,\sigma)$ such that
\begin{gather*}
P(h,\sigma,D)\tG(h,\sigma)-\Id=E_3(h,\sigma), \;\ \beta_0^*K_{E_3(h,\sigma)})= \rho_L^{\novt-i\soh\mu_L}\rho_R^{\novt -i\soh \mu_R} F, \\
F \in h^\infty\rho_{\ff}^\infty \mck_{ph}^{0,0}(\xo\times [0,h_0)),
\end{gather*}
Also recall that $G_0(h,\sigma)$ and $G_1(h,\sigma)$ are supported away from the left and right faces, and that 
\begin{gather*}
\beta_0^*(G_2(h,\sigma)+ G_3(h,\sigma)) =e^{-i\soh \tgamma} \rho_R^\novt\rho_L^\novt \mch= \rho_R^{\novt-i\soh \mu_R}  \rho_L^{\novt-i\soh \mu_L}\mch, \;\ \mch\in \mck_{ph}^{0,0}(\xo\times [0,h_0)),
\end{gather*} 
and here we are not concerned with the structure of $\mch.$   But the whole point about introducing
$\mu_R$ and $\mu_L$  is that  $\novt-i\soh \mu_L$ is an indicial root of $\beta_0^*P(h,\sigma,D),$ which one can verify using local projective 
coordinates valid near the left face. This implies that 
\begin{gather*}
\beta_0^*P(h,\sigma,D)   \rho_L^{\novt-i\soh \mu_L}  \rho_R^{\novt-i\soh \mu_R}\mch= \rho_L^{\novt-i\soh \mu_L+1}  \rho_R^{\novt-i\soh \mu_R}\widetilde{\mch},
\end{gather*} 
and so the error $E_3(h,\sigma)$ in fact satisfies
\begin{gather*}
\beta_0^*K_{E_3(h,\sigma)}= \rho_L^{\novt-i\soh\mu_L+1}\rho_R^{\novt -i\soh \mu_R} F, \;\
F \in h^\infty\rho_{\ff}^\infty \mck_{ph}^{0,0}(\xo\times [0,h_0)),
\end{gather*}
and since $x=\rho_L\rho_\ff,$ $x'=\rho_\ff \rho_R,$ this implies that the kernel of $E_3(h,\sigma)$ satisfies
\begin{gather*}
K_{E_3(h,\sigma)}= x^{\novt-i\soh \mu(y)+1} {x'}^{\novt-i\soh \mu(y')} Z(x,y,x',y'), \;\  Z \in h^\infty \mck^{0,0}(X\times X \times [0,h_0)),
\end{gather*}
$\mck_{ph}^{0,0}(X\times X \times [0,1))$  denotes the space of functions smooth  in $\intx \times \intx \times [0,\infty)$ with polyhomogeneous expansion at $\{x=0\}\cup\{x'=0\}.$  So we need to prove

\begin{lemma}\label{left-face}   Given $E_3(h,\sigma)= x^{\novt-i\soh \mu(y)+1} {x'}^{\novt-i\soh \mu(y')} Z(h,x,y,x',y'), \;\  Z \in h^\infty \mck^{0,0}(X\times X \times [0,h_0)),$  there exists
\begin{gather*}
G_4(x,y,x',y')\sim x^{\novt-i\soh \mu(y)+1} {x'}^{\novt-i\soh \mu(y')}( W_0(y,x',y')+ \sum_{j=0}^\infty \sum_{k=0}^{j} x^j (\log x)^k W_j(y,x',y'), \;
 \end{gather*}
with $W_j(y,x',y')$ polyhomogeneous in $x'$ and vanishing to infinite order at $h=0,$ such that
\begin{gather}
P(h,\sigma,D) G_4(h,\sigma)-E_3(h,\sigma)=E_4(h,\sigma) \in  h^\infty x^{\infty} {x'}^{\novt-i\mu(y')} \mck^{0,0}(X\times X \times [0,h_0)). \label{power-series}
\end{gather}
\end{lemma}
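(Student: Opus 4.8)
The plan is to remove the remaining error $E_3(h,\sigma)$ by solving, order by order in the Taylor expansion at the left face $L=\{x=0\}$, the indicial equation of $\beta_0^*P(h,\sigma,D)$. The starting point is that $\nu_L(h,\sigma)=\novt-i\soh\mu(y)$ is an indicial root of $P(h,\sigma,D)$, as recorded in \eqref{sc-ind-roots}: from \eqref{formlap} one has, for $v\in C^\infty$ depending on $x'$ as a parameter, $P(h,\sigma,D)(x^{\nu_L}v)=x^{\nu_L+1}V$, $V\in C^\infty$. First I would write $Z(h,x,y,x',y')$ as its Taylor series in $x$ at $\{x=0\}$, with coefficients polyhomogeneous in $x'$ and vanishing to infinite order at $h=0$; here one must track the $\log x$ powers, since $Z\in\mck^{0,0}$ allows none at $L$ but the iteration will generate them. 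So in fact the relevant ansatz is $E_3\sim x^{\nu_L+1}{x'}^{\nu_R}\sum_{j\ge 0}\sum_{k\le j+1} x^j(\log x)^k Z_{j,k}$, and one looks for
\begin{gather*}
G_4\sim x^{\nu_L+1}{x'}^{\nu_R}\Big(W_0(y,x',y')+\sum_{j\ge 0}\sum_{k\le j} x^j(\log x)^k W_{j,k}(y,x',y')\Big).
\end{gather*}

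The key computation is the action of $\beta_0^*P(h,\sigma,D)$ on a term $x^{\nu_L+1+j}(\log x)^k W$. Using \eqref{formlap}, the operator is $\ka_L^2(-(x\p_x)^2+nx\p_x)+(\text{terms with extra factors of }x\text{ or }x^2\text{ in front})-\knsq h^2-\sigma^2$, and acting on $x^{\nu_L+1+j}(\log x)^k$ the leading part produces
$I(\nu_L+1+j)\,x^{\nu_L+1+j}(\log x)^k$ plus lower-power-of-$\log$ terms, where $I(\alpha)=-h^2\ka_L^2(\alpha^2-n\alpha)-\knsq h^2-\sigma^2$ is the indicial polynomial, whose roots are $\nu_L$ and $n-\nu_L$. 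The crucial point is that $I(\nu_L+1+j)\neq 0$ for all $j\ge 0$: this holds because $\re(\nu_L+1+j)=\novt+1+j$ is bounded away from $\novt$ and from $n-\novt$ for $h$ small (this is exactly where the weight shift by $+1$ and the condition $a,b>-\im\la/\ka_0$ enter). Hence each coefficient $W_{j,k}$ is determined algebraically by dividing by $I(\nu_L+1+j)$, after moving the already-known lower-order contributions (from the $x\p_x$, $x^2\Delta_H$ terms hitting previous $W_{j',k'}$ with $j'<j$, and from the $(\log x)^{k'}$ with $k'>k$ produced at the same step) to the right-hand side. This gives a triangular recursion solvable term by term; the polyhomogeneity in $x'$ of each $W_{j,k}$ follows because $Z$, $\ka_L$, and the metric coefficients are all polyhomogeneous in $x'$ and the recursion only involves multiplication, differentiation in $(x',y',y)$, and division by the $x'$-smooth (indeed $C^\infty$ in $x'$ near $L$) quantity $I(\nu_L+1+j)$. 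The vanishing to infinite order in $h$ is preserved at every step since multiplication by $h$ commutes with $P$ and $Z\in h^\infty(\cdots)$.

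Having produced the formal series, I would invoke Borel's lemma — as already done at the end of the proof of Lemma \ref{aface} in \eqref{borel-summ} — to sum $\sum_{j,k} x^j(\log x)^k W_{j,k}$ (and, if $h$ is still being expanded, also in $h$) into an actual function $W(h,x,y,x',y')$, polyhomogeneous in $x$ at $L$ (with $\log$ powers bounded by the polynomial degree as required for the class $\mck^{0,0}$ once we factor out the $x^{\nu_L+1}$), polyhomogeneous in $x'$, and $O(h^\infty)$. Setting $G_4=x^{\nu_L+1}{x'}^{\nu_R}W$, by construction $P(h,\sigma,D)G_4-E_3$ has its Taylor series at $\{x=0\}$ vanishing identically, so $E_4:=P(h,\sigma,D)G_4-E_3$ satisfies $K_{E_4}\in h^\infty x^\infty{x'}^{\nu_R}\mck^{0,0}(X\times X\times[0,h_0))$, which is \eqref{power-series}. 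Holomorphy in $\sigma\in\Omega_\hbar$ is inherited because $\nu_L,\nu_R,I(\nu_L+1+j)^{-1}$ and all the coefficients in the recursion depend holomorphically on $\sigma$ (the condition \eqref{defh0} keeps $\mu$ holomorphic and $I(\nu_L+1+j)$ nonvanishing uniformly). The main obstacle is purely bookkeeping: verifying that the $\log x$ powers generated by $(x\p_x)^k$ hitting $x^{\nu_L+1+j}(\log x)^{k}$ stay within the claimed range $k\le j$ (for the $W_{j,k}$, one better than the incoming error because of the invertibility of the leading indicial factor), and checking uniformity in $x'$ and $h$ of all estimates; there is no analytic difficulty beyond the indicial invertibility, which is guaranteed by the weight hypotheses.
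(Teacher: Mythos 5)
Your proposal is correct and follows essentially the same route as the paper's proof: expand $E_3$ and $G_4$ at the left face in terms $x^{\novt-i\soh\mu(y)+1+j}(\log x)^k$, treat $(x',y')$ and $h$ as parameters, solve the resulting triangular recursion by inverting the indicial polynomial at the shifted exponents, and sum the formal series, so that $P(h,\sigma,D)G_4-E_3$ has vanishing Taylor series at $x=0$. One small refinement: the non-vanishing $I(\nu_L+1+j)\neq 0$ is most safely justified by noting that $\nu_L+1+j$ coincides with the other indicial root $\novt+i\soh\mu$ only if $2i\soh\mu=1+j$, which is impossible for $\sigma\in\Omega_\hbar$ and $h$ small because $\im(2i\soh\mu)\approx 2\mu/h$ is large, rather than by your real-part count alone, since $\re\nu_L=\novt+\mu\,\im\sigma/h$ differs from $\novt$ by an $O(1)$ amount.
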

\begin{proof}  Since $P(h,\sigma,D)$ does not depend on $(x',y')$  and $h$ commutes with $P(h,\sigma,D),$ we treat these as parameters and do not take them into account in the computations.  So we have
\begin{gather*}
E_3(h,\sigma) \sim x^{\alpha(y)} E_{3,0}(y) + x^{\alpha(y)} \sum_{j=1}^\infty \sum_{k=0}^{j} x^j (\log x)^k E_{3,j}(y), \;\ \alpha=\novt-i\soh \mu(y)+1,
\end{gather*}
and we want 
\begin{gather*}
G_4(h,\sigma) \sim x^{\alpha(y)} G_{4,0}(y) + x^{\alpha(y)} \sum_{j=1}^\infty \sum_{k=0}^{j} x^j (\log x)^k G_{4,j}(y).
\end{gather*}
such that \eqref{power-series} holds. We substitute these expressions in the left side of \eqref{power-series} and match the coefficients.  Recall that $P(h,\sigma,D)$ is given by
\begin{gather*}
P(h,\sigma,D)=\\ -h^2\ka^2(y)((x\p_x)^2-nx\p_x- x^2F(x,y) \p_x) - h^2 x^2 H^{jk}(x,y) \p_{y_j}\p_{y_k} + h^2 x^2 B_j(x,y) \p_{y_j} -\knsq-\sigma^2
\end{gather*}
So the first term must satisfy
\begin{gather*}
\left(-h^2\ka^2(y)( \alpha^2-n\alpha) -\knsq-\sigma^2\right) G_{4,0}(y)= E_{3,0}(y),
\end{gather*}
and the key is that  $\alpha$ is not an indicial root, so the coefficient on the left side is not equal to zero, and this equation can be solved.

Next  we need to show that if $\beta=\novt-i\soh\mu(y)+j,$ $j\not=0,$  and $k\in \mn_0,$ then for any $E(y)$ there exists $G(y)$ such that 
\begin{gather*}
P(h,\sigma,D) x^{\beta} (\log x)^{k}G(y)- x^{\beta}(\log x)^kE(y) = \text{ less singular terms}
\end{gather*}
and we end up with the same equation as above with $\beta$ in place of $\alpha,$ which can be solved because $\beta$ is not an indicial root. 
This proves the Lemma.
\end{proof}

 Then  $G(h, \sigma) = G_0(h,\sigma)+G_{0,1}(h,\sigma)+ G_2(h,\sigma)+ G_3(h,\sigma)+ G_4(h,\sigma)$ and  satisfies  
 \begin{gather*}
 P(h,\sigma,D) G(h,\sigma,D)- \id=E_4(h, \sigma) \text{ as in \eqref{power-series}}
 \end{gather*}
 is the desired parametrix.
 
 Steps 1, 2, 4 and 5 of the construction work for any metric, even non-trapping ones.   We have proved Step 3 for geodesically convex CCM,  and we will now extend it to arbitrary non-trapping CCM.  To do that we have to introduce the space of semiclassical Lagrangian distributions associated with $\La^*.$ This  class of distributions has been studied in the case of $C^\infty$ Lagrangians, see for example \cite{Alexandrova,GuiStern}, but in this case $\La^*$ has polyhomogeneous singularities at the right and left face. Fortunately, most results valid in the $C^\infty$ case easily extend to this one.

\section{Semiclassical Lagrangian Distributions}\label{PHLD}

In the proof of Lemma  \ref{aface} we encountered distributions of the type $e^{i\soh r} U(h,\sigma,m)|dg(z)|^\ha,$ where  $\beta_0^*U(h,\sigma,m)$ has an asymptotic expansion of the form
\begin{gather*}
\beta_0^*U(h,\sigma,m)\sim  h^{-\novt-1} e^{i h \sigma \beta} \sum_{j=0}^\infty  h^j U_j(\sigma',m), \; U_j(\sigma',m)\in \mck_{ph}^{0,0}(\xo), \\
\text{ and } h^2 \beta= \gamma-\tgamma, \;\ \sigma=1+h\sigma',
\end{gather*}
  and as in \cite{Alexandrova}, distributions of the form $e^{i\soh r} U,$ where $U$ is a semiclassical symbol,  are examples of semiclassical Lagrangian distributions in $\intx \times \intx.$ The main difficulty here is to understand the global behavior  of this distribution.  This case of Lemma \ref{aface} is rather special because if $\La$ is the manifold defined in \eqref{defla0}, and $(\intx,g)$ is geodesically convex, the projector $ \Pi:  \La \subset T^*(\intx \times \intx) \longmapsto \intx \times \intx$ is a diffeomorphism, but in general this is not true.   
  
  Lagrangian distributions  were introduced by H\"ormander \cite{Hormander-FIO}   following a long history of work by several people, see references in \cite{Hormander-FIO}. The  almost parallel semiclassical version of this concept has been studied by several people  including Alexandrova \cite{Alexandrova}, Chen and Hassell \cite{ChenHa} and  Guillemin and Sternberg \cite{GuiStern}.   Definition 5.2.1 of Duistermaat's book \cite{Duistermaat} is a definition of semiclassical Lagrangian distributions,  even though this is not said there.   One major difference is that the Lagrangian manifolds associated with semiclassical Lagrangian distributions are not necessarily conic.

The key property in this theory is that Lagrangian submanifolds can be locally parametrized by phase functions, and Lagrangian distributions are locally given by oscillatory integrals with phase given by the function that parametrizes the Lagrangian.  Let $M$ be a $C^\infty$ manifold of dimension $d,$ let $T^*M$ denote its cotangent bundle and let $\omega$ denote the canonical 2-form on $T^*M.$    Let  $\Omega$  be a local chart for $M$ (which we identify with an open subset of $\mr^d$) and let $U \subset \Omega \times \mr^N,$ $N\in \mn_0,$ be an open subset.    A function $\Phi(y,\theta) \in C^\infty(U; \mr)$
is a non-degenerate phase function if  
\begin{enumerate}[{ 1.}]  
\item  $|d_{y,\theta}\Phi(y,\theta)|\geq C (1+|\theta|)^\rho,$ for some $\rho>0$ and all  $(y,\theta)\in U,$ 
\item If $d_\theta \Phi(y,\theta)=0$ for some  $(y,\theta) \in U,$ then 
\begin{gather*}
d_{y,\theta}( \frac{\p \Phi(y,\theta)}{\p \theta_j}), \text{ are linearly independent for } j=1,2,\ldots N. 
\end{gather*}
\end{enumerate}
If these conditions are satisfied,
\begin{gather*}
C_\Phi=\{(y,\theta) \in U: \; d_\theta\Phi(y,\theta)=0\}
\end{gather*}
is a $C^\infty$ submanifold of $U$ of dimension $d$ and the map
\begin{gather*}
T_\Phi: C_\Phi \longrightarrow T^*\Omega \\
(y,\theta) \longmapsto (y, d_y \Phi(y,\theta))
\end{gather*}
is an immersion, and
\begin{gather*}
\La_\Phi=\{(y,d_y\Phi(y,\theta)): \;  (y,\theta)\in C_\Phi\}
\end{gather*}
is an immersed Lagrangian submanifold of $T^*\Omega.$ Moreover, any $d$-dimensional  $C^\infty$  submanifold $\La\subset T^*M$ is Lagrangian if and only if for every 
$(y_0,\eta_0)\in \Lambda$  there is a local chart $\Omega$  near $y_0$ such that 
$\Lambda\cap T^*\Omega =\Lambda_\Phi$ for some non-degenerate phase function $\Phi.$  Notice that if $N=0,$ then  $\La=\{(y,d_y\Phi(y))\},$ and therefore, if  $\Pi: T^* \Omega \longrightarrow \Omega$ is the canonical projector, then $\Pi|_\La: \La\longrightarrow \Omega$ is a diffeomorphism. The converse is also true, if  $\La\subset T^*M$ is Lagrangian 
 $\Pi|_\La: \La \longrightarrow M$ is a diffeomorphism  near $(y_0,\eta_0),$  then there exists a neighborhood 
 $\Omega \ni y_0 $ and  a function $\Phi \in C^\infty (\Omega)$ such that  $\La\cap T^*\Omega=\{(y,d_y\Phi)\}.$   In general this is not possible, and one needs the 
 $\theta$-variables   to parametrize $\La.$  In fact one can always choose a special phase function. We know, see from  for example Section 4.1 of \cite{Alexandrova}, that  if $\La\subset T^* M$ is a Lagrangian submanifold,  and $(y_0,\eta_0)\in \La,$  there exist local coordinates $y=(y',y''),$ $y'=(y_1,\ldots,y_k),$ in  a neighborhood of $y_0$   and  corresponding dual coordinates $(\eta',\eta'')$ and  $C^\infty$ maps
\begin{gather*}
S':  \mr_{y''}^{n-k}\times \mr^k_{\eta'} \longrightarrow \mr^k, \\
(y'',\eta') \longmapsto (S_1'(y'',\eta'), \dots, S_k'(y'',\eta'))\\
\text{ and }
S'': \mr_{y''}^{n-k}\times \mr^k_{\eta'} \longrightarrow \mr^{n-k}, \\
(y'',\eta') \longmapsto (S_{k}''(y'',\eta'), \dots, S_n''(y'',\eta')).
\end{gather*}
such that
\begin{gather}
\La=\{(y,\eta): \ y'=S'(y'',\eta'), \;\ \eta''= S''(y'',\eta')\}.  \label{par}
\end{gather}
In fact, since the canonical form $\omega=d\eta \wedge dy=0$ on $\La,$ it follows that there exists $\tilde{S}(y',\eta'')$ such that
\begin{gather*}
S_j'(y'',\eta')= \p_{\eta_j'} \tilde{S}(y'',\eta'), \;\ 1\leq j \leq k, \\
S_j''(y'',\eta')= -\p_{y_j''} \tilde{S}(y'',\eta'), \;\ k+1\leq j \leq n.
\end{gather*}
This implies that
\begin{gather}
\La=\{ (y, d_{y} \Phi(y,\eta)):  d_\eta \Phi(y,\eta)=0\} \text{ where }  \Phi(y,\eta)= \lan y',\eta'\ran-  \tS(y'',\eta'). \label{special-par}
\end{gather}

  In the case we have studied, $\xo$ is a $C^\infty$ manifold with corners, but the product structure  \eqref{prod}  valid in a tubular neighborhood of $\p X$ gives a way of doubling $X$ across its boundary and extending the metric $x^2g,$ where $x$ is the boundary defining function in \eqref{prod}.  Similarly, the lift of the product structure  \eqref{prod} from $X\times X$  to $\xo$ gives a way of doubling $\xo$ across its boundary  and of extending the lift of the metric from either factor of $X\times X$ as well. So we may think of $\xo$ as a submanifold with corners of a  $C^\infty$ manifold.    In the case of asymptotic constant curvature, the manifold $\La^*$ is a $C^\infty$ Lagrangian manifold  that can be smoothly extended across  the boundary $\p T^*(\xo)$ and therefore, for any $p\in\La^*$ including points on the boundary,  there exist neighborhood   $\Gamma$ of $p$ such that
$\La^*\cap \Gamma=\La_\Phi$ for a non-degenerate phase function $\Phi(m,\theta).$   In other words,
\begin{gather*}
\La^*\cap \Gamma =\{(m,d_m\Phi):  d_\theta \Phi( m,\theta)=0\}.
\end{gather*}

When $\ka$ is not constant, we need to show that the manifold $\La^*,$ which has polyhomogeneous singularities at the right and left faces of $T^*(\xo),$ can be parametrized by a phase function which has polyhomogeneous singularities at the right and left faces of $T^*(\xo),$ but it is $C^\infty$ in the $\theta$ variables.   Here we need all the properties of $\La^*$ established in Theorem \ref{soj}:
\begin{prop}\label{polyphase} Let $(\intx,g)$ be a non-trapping CCM and let $\La^*$ be the manifold defined in Theorem \ref{soj}.  If $\upsilon \in T_{L}^*(\xo)\cap \La^*$ or  $\upsilon \in T_{R}^*(\xo)\cap \La^*$  and 
$\Pi: T^*(\xo)\longrightarrow \xo$ is the canonical projector,  then there exists  an (relatively) open chart $\Omega\ni \Pi(\upsilon)$ and a phase function $\Phi(m,\theta)$ with $(m,\theta) \in U \subset  \Omega \times \mr^N,$ open and $N\in \mn_0,$  such that $\Phi(m,\theta)$ is $C^\infty$ in the interior of $U,$ is $C^\infty$ up to the front face $\ff,$  and  has polyhomogenous expansions at $L$ and $R$ in the sense that if $x=(x_1,x_2,y)$ where
$L=\{x_1=0\},$ $R=\{x_2=0\}$
\begin{gather}
\Phi(x_1,x_2,y,\theta)= \Phi(0,0,y,\theta)+ \sum_{j_1=2, j_2=2}^\infty \sum_{k_1=0}^{j_1}\sum_{k_2=0}^{j_2} x_1^{j_1} x_2^{j_2} (\log x_1)^{k_1} (\log x_2)^{k_2} \Phi_{j_1,j_2,k_1,k_2}(y,\theta), \label{reg-phase-fc}
\end{gather}
$\Phi(0,0,y,\theta), \, \Phi_{j_1,j_2,k_1,k_2}(y,\theta) \in C^\infty,$ and
\begin{gather*}
\La^*\cap T^*\mathring{\Omega}=\{(m, d_m\Phi): d_\theta \Phi=0\}, 
\end{gather*}
and up to the boundary of $\Omega.$
\end{prop}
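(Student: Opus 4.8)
The plan is to reduce Proposition~\ref{polyphase} to the standard local parametrization theory of Lagrangian submanifolds, applied not to $\La^*$ itself but to a smooth Lagrangian ``model'' from which $\La^*$ is obtained by a polyhomogeneous change of fiber coordinates. First I would recall from Theorem~\ref{soj} that near a point $q\in T^*\{\rho_L=\rho_R=0\}\cap\La^*$ there are local symplectic coordinates $(x,\xi)$ with $L=\{x_1=0\}$, $R=\{x_2=0\}$, $\ff=\{x_3=0\}$, and that $\La^*$ is described by the expansions \eqref{param-A0}: the base variables $x_3,\dots,x_{2n+2}$ restricted to $\La^*$ are polyhomogeneous in $(x_1,x_2)$ with $\log$-powers bounded by the corresponding power of $x$, while the fiber variables $\xi_1,\dots,\xi_{2n+2}$ on $\La^*$ are polyhomogeneous with $\log$-powers at most one higher. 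The key observation is that $\La^*\cap T^*\{\rho_\bullet=0\}$ and $\La^*\cap T^*\{\rho_L=\rho_R=0\}$ are genuine $C^\infty$ Lagrangian submanifolds of $T^*\{\rho_\bullet=0\}$, respectively $T^*\{\rho_L=\rho_R=0\}$ (again Theorem~\ref{soj}), so there the standard theory of \cite{Alexandrova}, Section~4.1, applies directly: there exist coordinates $y=(y',y'')$ on the corner and a generating function $\tS_0(y'',\eta')$ producing a phase $\Phi_0(y,\eta)=\langle y',\eta'\rangle-\tS_0(y'',\eta')$ as in \eqref{special-par} which parametrizes $\La^*\cap T^*\{\rho_L=\rho_R=0\}$.

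Next I would promote this boundary phase function to a phase function on a neighborhood of $\Pi(\upsilon)$ in $\xo$ by incorporating the polyhomogeneous corrections coming from \eqref{param-A0}. Concretely, I would use the same splitting $x=(x_1,x_2,x_0)$ of the base variables into the $L$-defining variable, the $R$-defining variable, and the transversal variables $x_0=(x_3,\dots,x_{2n+2})$, and keep the same $\theta$-variables $\eta'$ that were used at the corner. Since on $\La^*$ the fiber variables $\xi_1,\xi_2$ are polyhomogeneous and vanish at the corner (indeed one reads off from \eqref{param-A0} that $\xi_1,\xi_2=O(x_1\log x_1)+O(x_2\log x_2)$ in the relevant sense), while $\xi_0-\xi_{0,0}$ is polyhomogeneous in $(x_1,x_2)$, one can integrate the relation $\xi=d_x\Phi$ along $\La^*$: set $\Phi(x,\theta)=\Phi_0(x_0,\theta)+\int_0^{x_1}\xi_1\,dt+\int_0^{x_2}\xi_2\,dt+\big(\text{corrections in }x_0\big)$, where the integrands are the functions of \eqref{param-A0} expressed back in terms of $(x,\theta)$ via $d_\theta\Phi=0$. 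The fact that $x_1\xi_1$ and $x_2\xi_2$ integrate against $\log$-powers one order higher, combined with the integration formulae \eqref{int-log} from Lemma~\ref{asym-expL}, shows that the $\log$-power gets pulled down by one and the resulting $\Phi$ has the polyhomogeneous structure \eqref{reg-phase-fc}; the expansions start at $j_1,j_2=2$ because $x_1\xi_1$ already carries a factor $x_1$ past a term of the form $x_1\log x_1$. The $C^\infty$-ness up to $\ff$ is immediate because all coefficients $X_{\star,\dots}$, $\Xi_{\star,\dots}$ in \eqref{param-A0} are $C^\infty$ and the $x_0$-variable $x_3=\rho_\ff$ enters smoothly.

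Having produced such a $\Phi$, the remaining work is to check it is a non-degenerate phase function in the sense of the two conditions preceding the Proposition and that $\La_\Phi=\La^*$ near $\upsilon$. Non-degeneracy away from the boundary is inherited from the smooth theory, since $\Phi$ is a smooth deformation of $\Phi_0$ in the interior and the non-degeneracy conditions are open; at the boundary one uses that $d_{y,\theta}(\p_{\theta_j}\Phi)$ only involves $\p_{\theta}\tS_0$ plus terms polyhomogeneous in $(x_1,x_2)$ vanishing at the corner, so the linear independence at $\{x_1=x_2=0\}$ persists on a full neighborhood by continuity of the relevant minors, and the lower bound $|d_{y,\theta}\Phi|\ge C(1+|\theta|)^\rho$ follows since $N$ is fixed and $\theta$ ranges over a bounded set in the applications, or one works on a conic neighborhood where the $\langle y',\eta'\rangle$ term dominates. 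Finally the identity $\La^*\cap T^*\mathring\Omega=\{(m,d_m\Phi):d_\theta\Phi=0\}$ is precisely the construction: $d_\theta\Phi=0$ recovers the corner parametrization in the $x_0$-variables and then the integrated corrections give back exactly the $(x_m,\xi_m)$ of \eqref{param-A0}. I expect the main obstacle to be bookkeeping: verifying carefully that integrating the fiber variables $\xi_1,\xi_2$ (which are polyhomogeneous with $\log$-order $j+1$) produces a $\Phi$ with $\log$-order bounded by $j$ as required in \eqref{reg-phase-fc}, and that this structure is preserved under the implicit-function step that solves $d_\theta\Phi=0$ — this is where Lemma~\ref{asym-expL}, together with the closure of the class $\mcp(\xo)$ under products and the integration identity \eqref{int-log}, does the essential work. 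The cases $\upsilon\in T_L^*(\xo)\cap\La^*$ away from $R$, or $\upsilon\in T_R^*(\xo)\cap\La^*$ away from $L$, are handled by the same argument with only one of $x_1,x_2$ present, and are in fact simpler since then only a single-variable polyhomogeneous expansion occurs.
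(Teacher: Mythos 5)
Your overall strategy is the one the paper uses: start from the smoothness of $\La^*\cap T^*\{\rho_L=\rho_R=0\}$, parametrize that corner Lagrangian in the standard mixed form \eqref{special-par}, and then use the polyhomogeneous expansions \eqref{param-A0} together with \eqref{int-log} to build a phase with the structure \eqref{reg-phase-fc}. But the execution has a genuine gap at the central step. Your formula $\Phi(x,\theta)=\Phi_0(x_0,\theta)+\int_0^{x_1}\xi_1\,dt+\int_0^{x_2}\xi_2\,dt+\cdots$, with the integrands ``expressed back in terms of $(m,\theta)$ via $d_\theta\Phi=0$,'' is circular: $\Phi$ is needed to define the critical set that is supposed to convert the functions of \eqref{param-A0} (which are functions of the flow parameters $(x_1,x_2,x_0,\xi_0)$) into functions of $(m,\theta)$. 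What is missing is the device that makes this conversion legitimate and non-circular: the paper extends the corner coordinates $(y,\eta)$ to coordinates $(\ty,\teta)$ on a neighborhood in $T^*(\xo)$ that are \emph{constant along the integral curves of} $H_{\wp_L}$ and $H_{\wp_R}$. In those coordinates \eqref{param-A0} says precisely that on $\La^*$ the quantities $\ty'$, $\teta''$, $\xi_1$, $\xi_2$ are polyhomogeneous functions of the mixed variables $(x_1,x_2,\ty'',\teta')$; since $\La^*$ is Lagrangian for $\omega=d\xi_1\wedge dx_1+dx_2\wedge d\xi_2+d\teta\wedge d\ty$, the restricted canonical one-form is closed and a generating function $\tS(x_1,x_2,\ty'',\teta')$ exists with $\xi_1=-\p_{x_1}\tS$, $\xi_2=-\p_{x_2}\tS$, $\ty'=\p_{\teta'}\tS$, $\teta''=-\p_{\ty''}\tS$; the phase is then $\tPhi=\lan\ty',\teta'\ran-\tS$. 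Your ``integrate $\xi_1$ and $\xi_2$'' recipe is morally the $x_1,x_2$-part of $\tS$, but without the flow-adapted coordinates you have neither well-defined integrands in the mixed variables nor the integrability (path-independence/compatibility of the $x_1$-, $x_2$- and $\theta$-derivatives) that the Lagrangian condition supplies automatically in the paper's setup.

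A second, smaller gap is the index bookkeeping for \eqref{reg-phase-fc}. Your justification ``the expansions start at $j_1,j_2=2$ because $x_1\xi_1$ already carries a factor $x_1$'' only controls the variable you integrate in: integrating the $\xi_1$-expansion in $x_1$ raises $j_1$ to at least $2$ and pulls the $\log x_1$-power down to at most $j_1$ via \eqref{int-log}, but it does nothing to the $x_2$-indices, which a priori still allow $j_2=1$ and $k_2=j_2+1$. The paper gets the symmetric statement by the same two-sided argument used for \eqref{exp-mcr} in Theorem \ref{main}: integrate first in $x_1$ restricted to $\{x_2=0\}$ and first in $x_2$ restricted to $\{x_1=0\}$, compare the two resulting expansions of the same function, and conclude that the offending terms ($j_\bullet=1$ and $k_\bullet=j_\bullet+1$) must vanish. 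You flag this bookkeeping as the main obstacle, which is fair, but the one-line reason you give does not close it; the comparison argument (or an equivalent use of the smoothness of $\La^*\cap T^*\{\rho_L=\rho_R=0\}$ as the common boundary value) is needed.
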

\begin{proof}  Let  us assume that $\upsilon$ lies in a codimension 2 corner and we will carry out the proof uniformly up to the front face.  The proofs in all the other cases follow the same argument.  Let $(x_1,x_2,y)$ $y \in \mr^{2n}$ be local coordinates near $\Pi(\upsilon)$ such that 
\begin{gather*}
 R=\{x_1=0\} \text{ and } L=\{x_2=0\}.
\end{gather*}
  We shall assume that $x_1(\upsilon)=x_2(\upsilon)=0,$ so the projection of $\upsilon$ to the base, that we shall denote by $\Pi(\upsilon),$ lies in the intersection of the right and left faces.   We know from   Lemma \ref{asym-expL},   that
\begin{gather*}
\La^* \cap\{ x_1=x_2=0\}=\La_\p^*
\end{gather*} 
is a  $C^\infty$ Lagrangian in $T^*\{x_1=x_2=0\}$ which can be thought as a submanifold of $T^* \mr^{2n+2}$ given by 
\begin{gather}
T^*\{x_1=x_2=0\} =\{ (x_1,x_2,y, \xi_1,\xi_2,\eta): x_1=x_2=\xi_1=\xi_2=0\}.  \label{tx1x2}
\end{gather}
  In view of Lemma \ref{vfields}, in these coordinates, $H_{\wp_\bullet},$ $\bullet=R,L$ is given by \eqref{fields}.
 Then pick coordinates $(y,\eta)$ valid  in a neighborhood $\upsilon \in U_\p\subset T^*\mr^{2n}$  such that \eqref{special-par} holds for $\La_\p^*.$  We can  extend $(y,\eta)$ to  coordinates $(\ty,\teta)$  valid in an open set $U\subset T^*(\xo)$  which are constant along the integral curves of $H_{\wp_L}$ and $H_{\wp_R}$  respectively, starting from $U\cap\{x_1=x_2=0\}=U_\p.$ It follows from  \eqref{param-A0} that
\begin{gather*}
\ty= y + F(x_1, x_2,y,\eta), \;\  F(0,0,y,\eta)=0, \\
\teta=\eta+ G(x_1,x_2,y,\eta), \;\ G(0,0,y,\eta)=0,\\
\xi_1= W_1(x_1,x_2, y,\eta), \;\  W_1(0,0,y,\eta)= 0 \\
\xi_2= W_2(x_1,x_2, y,\eta), \;\  W_2(0,0,y,\eta)= 0, 
\end{gather*}
where $F,$ $G,$ $W_1$ and $W_2$ have polyhomogeneous expansions at $R$ and $L.$   However, in view of \eqref{special-par}, we in fact have that on $\La^*,$
\begin{gather*}
\begin{gathered}
\ty= y + \tF(x_1, x_2,y'',\eta'), \;\ \tF(0,0,y'',\eta')=0, \\
\teta=\eta+ \tG(x_1,x_2,y'',\eta'), \;\ \tG(0,0,y'',\eta')=0, \\
\xi_1= \tW_1(x_1,x_2, y'',\eta'), \;\  \tW_1(0,0,y'',\eta')=0 \\
\xi_2= \tW_2(x_1,x_2, y'',\eta'), \;\  \tW_2(0,0,y'',\eta')=0,
\end{gathered}\label{eqs-par-lambda}
\end{gather*}
and this this implies that on $\La^*,$
\begin{gather}
\begin{gathered}
\ty'= \tH_1(x_1, x_2,\ty'',\teta'), \;\  \teta''= \tH_2(x_1,x_2,\ty'',\teta'), \\
\xi_1=X_1(x_1,x_2, \ty'', \teta'), \;\ \xi_2=X_2(x_1,x_2, \ty'', \teta'),
\end{gathered}\label{para-Lag}
\end{gather}
 It also follows from the definition of the coordinates 
$(\ty,\teta)$ that $\La^*$ is a Lagrangian submanifold with respect to 
$\omega= d\xi_1 \wedge d x_1+ dx_2 \wedge d\xi_2+ d \teta \wedge d \ty.$  It then follows from \eqref{para-Lag} that there exist $\tS(x_1,x_2,\ty'',\teta')$ such that
\begin{gather*}
\ty'=\p_{\teta'} \tS, \;\ \eta''=-\p_{\ty}\tS, \\
\xi_1= -\p_{x_1}\tS, \;\ \xi_2=-\p_{x_2} \tS.
\end{gather*}
Therefore, setting $\theta=\teta',$ in the interior,  $\La^*= \La_\tPhi,$ where $\tPhi(x_1,x_2,y,\teta')= \lan \ty',\teta'\ran- \tS(x_1,x_2,\ty'',\teta').$   We are left to prove \eqref{reg-phase-fc}, but this follows from \eqref{param-A0}, and exactly the same  argument used in the proof of \eqref{exp-mcr}, as in Theorem \ref{main}.  
\end{proof}

The following is a consequence of Proposition \ref{polyphase} and \eqref{shift}
\begin{corollary}\label{phase-shift} Let $(\intx,g)$ be a non-trapping CCM, let $\La\subset T^*(\intx\times \intx)$ be the Lagrangian manifold defined in \eqref{defla0} and let $\La^*$ be the manifold defined in Theorem \ref{soj}.  If $\Phi(m,\theta)$ is a polyhomogeneous phase function that locally parametrizes  $\La^*$ in the interior of $T^*(\xo),$ then 
$\Psi(m,\theta)= \Phi(m,\theta)-\gamma$ locally parametrizes $\beta_0^*\La$ in the interior of $T^*(\xo).$
\end{corollary}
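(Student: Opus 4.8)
The plan is to verify directly that passing from $\Phi$ to $\Psi=\Phi-\gamma$ realizes, at the level of generating functions, exactly the fibrewise shift $S_\gamma$ of \eqref{shift-map}, and then to invoke the identity \eqref{shift}. The whole argument is local and takes place in the interior of $T^*(\xo)$, where $\gamma$ is smooth.

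First I would record that $\gamma=\frac{1}{\ka_L}\log\rho_L+\frac{1}{\ka_R}\log\rho_R$ is a function of the base point $m\in\xo$ alone, and is $C^\infty$ in the interior of $\xo$ (there it is a sum of $C^\infty$ functions times logarithms of positive functions). Hence, on the chart $\Omega$ and the open set $U\subset\Omega\times\mr^N$ where $\Phi$ is defined, $\Psi=\Phi-\gamma$ is again $C^\infty$. Since $\gamma$ carries no $\theta$-dependence, $d_\theta\Psi=d_\theta\Phi$, so $C_\Psi:=\{d_\theta\Psi=0\}=C_\Phi$, and for each $j$ the differentials $d_{m,\theta}(\p_{\theta_j}\Psi)=d_{m,\theta}(\p_{\theta_j}\Phi)$ coincide; therefore non-degeneracy condition 2 for $\Phi$ gives the same condition for $\Psi$, while the growth condition 1 is immediate on the relevant bounded region. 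Thus $\Psi$ is a non-degenerate phase function on the same chart.

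Next, on $C_\Psi=C_\Phi$ one has $d_m\Psi=d_m\Phi-d\gamma$, so
\[
\La_\Psi=\{(m,d_m\Psi(m,\theta)):\, d_\theta\Psi=0\}=\{(m,d_m\Phi(m,\theta)-d\gamma):\, d_\theta\Phi=0\}=S_\gamma(\La_\Phi),
\]
with $S_\gamma(m,\nu)=(m,\nu-d\gamma)$ as in \eqref{shift-map}. Since $\Phi$ parametrizes $\La^*$, we have $\La_\Phi=\La^*$ over the interior of $\Omega$, hence $\La_\Psi=S_\gamma(\La^*)$; and by \eqref{shift}, $S_\gamma(\La^*)=\La^*-d\gamma=\beta_0^*\La$ in the interior of $T^*(\xo)$. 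This is precisely the assertion of the corollary.

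There is no genuine obstacle here: the corollary is a formal consequence of \eqref{shift} together with the observation that $\gamma$ depends only on $m$, so that subtracting it from a generating function implements exactly the fibre-shift $S_\gamma$ without disturbing the axioms of a non-degenerate phase function. The only point requiring a line of verification — that smoothness and non-degeneracy of $\Phi$ survive the modification — follows at once because $\gamma$ is $C^\infty$ in the interior of $\xo$ and independent of the fibre variables $\theta$.
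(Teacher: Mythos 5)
Your proposal is correct and follows essentially the same route as the paper, which states the corollary as an immediate consequence of Proposition \ref{polyphase} and the shift identity \eqref{shift}; your verification that subtracting the base function $\gamma$ leaves $C_\Phi$ and the non-degeneracy conditions unchanged while shifting $d_m\Phi$ by $-d\gamma$, so that $\La_\Psi=S_\gamma(\La^*)=\beta_0^*\La$, is exactly the intended argument made explicit.
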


We need to introduce the following concept:
\begin{definition}\label{admissible}   Let $(\intx,g)$ be a non-trapping CCM, and let $\La^*\subset T^*(\xo)$ be the Lagrangian submanifold defined in Theorem \ref{soj}.  We say that $\{\mco_j, \; j\in \mn\}$ is an admissible cover of $\La^*$ if  $\mco_j$ is a local chart of $\xo,$ $\{\mco_j, \; j\in \mn\}$ cover $\La^*$  and  there exist phase functions 
$\Phi_j\in C^\infty(\intmco_j \times \mr^{N_j})$  (and here we identify $\mco_j$ with a subset of $\mr^{4n+4}$)  such that
\begin{gather*}
\La^* \cap  T^* \mco_j= \La_{\Phi_j}, \; \Phi_j \in C^\infty(\mco_j \times \mr^{N_j}) \text{ if } \La^* \text{ is } C^\infty,
\end{gather*}
and if $\La^*$ is polyhomogeneous at $R$ and $L,$
\begin{gather*}
\La^* \cap  T^* \mco_j= \La_{\Phi_j}, \; \Phi_j \in C^\infty(\overline{\mco}_j \times \mr^{N_j}) \text{ if } \overline{\mco}_j \cap (R \cup L)= \emptyset, \\
\La^* \cap T^* \intmco_j= \La_{\Phi_j}, \; \Phi_j \in C^\infty(\intmco_j \times \mr^{N_j}) \ \text{ has an expansion \eqref{reg-phase-fc} near } R\cup L,  \\ \text{ if } \overline{\mco}_j \cap (R \cup L)\not= \emptyset.
\end{gather*}
\end{definition}
Notice that expansion \eqref{reg-phase-fc} guarantees that the definition of non-degeneracy can be applied to such phases.

Before we define Lagrangian distributions,  first we need to introduce the class of symbols we will work with
\begin{definition}\label{symbols}    Let $\mco\subset  [0,\infty)_{x_1}\times [0,\infty)_{x_2}\times \mr^n_{x'}$ be a relatively open subset.   We define the space $S( \mco \times (0,h_0)\times \mr^N),$ $N\in \mn_0,$  to be the space of  functions 
$a: (0,h_0) \times \mco \times \mr^N \rightarrow \mc,$ $N\in \mn_0,$  such that 
\begin{enumerate}[{i.}]
\item  If $\overline{\mco} \cap (\{x_1=0\}\cup \{x_2=0\})=\emptyset,$ there exist $a_j\in C^\infty(\mco\times \mr^N)$ with 
\begin{gather}
\sup_{(x,\theta) } | \p_x^\alpha \p_\theta^\beta a_j(x,\theta)| = C_{j,\alpha,\beta} <\infty,  \label{symb-1A}
\end{gather}
and such that for any $J\in \mn,$
\begin{gather}
\sup_{(x,\theta)} |\p_x^\alpha\p_\theta^\beta \left(a(x,h,\theta)-\sum_{j=0}^J h^j a_{j}(x,\theta)\right)| \leq C_{J,\alpha,\beta} h^{J} \label{symb-2A}
\end{gather}
\item If $\overline \mco \cap (\{x_1=0\}\cup\{x_2=0\})\not=\emptyset,$ there exist $C^\infty$ functions $a_\bullet(x',\theta),$ $\bullet \in \mn^5,$ such that 
\begin{gather}
\sup_{(x',\theta)\in \mco\times \mr^N} | \p_{x'}^\alpha \p_\theta^\beta a_\bullet(x',\theta)| = C_{\bullet,\alpha,\beta} <\infty, \label{symb-12}
\end{gather}
and for any $J,L\in \mn,$ and $\del>0$ there exists $C(J,\del)>0$ such that for 
\begin{gather}
\begin{gathered}
\mce_{J,L}(h,x,\theta)= a(x,h,\theta)-\sum_{j_1, j_2=0}^J\sum_{k_1=0}^{j_1}\sum_{k_2=0}^{j_2}
\sum_{l=0}^L x_1^{j_1}(\log x_1)^{k_1} x_2^{j_2}(\log x_2)^{k_2} h^l a_\bullet(x',\theta),\\ 
\bullet=(j_1,j_2,k_1,k_2,l), \; \sup_{(x,\theta)\in \mco\times \mr^N} |\p_x^\alpha\p_\theta^\beta \mce_{J,L}(h,x,\theta)| \leq C_{J,\del} h^{J}|(x_1, x_2)|^{J-\del} \label{symb-2}
\end{gathered}
\end{gather}
\end{enumerate}
\end{definition}
It is a consequence of Borel's lemma, see Theorem 2.1.6 of \cite{Hormander}, that given a sequence $a_\bullet(x',\theta)$ satifsying \eqref{symb-12} one can find a function 
$a(x,h,\theta)$ such that \eqref{symb-2} holds.

Now we define the semiclassical  Lagrangian distributions with respect to $\La^*:$
\begin{definition}\label{lox-lag-d}  Let  $\La^* \subset T^*(\xo)$ be as above, and let  $\ \Omega^\ha$ denote the half-density bundle over $\xo.$  
We say that $A$ is a polyhomogenous Lagrangian  distribution of order $k$  with respect to $\La^*,$ and denote  $A\in I_{ph}^k(\xo, \La^*, \Omega^\ha)$ if there exists an admissible cover
$(\mco_j,\Phi_j)$   of $\La^*,$  and symbols $a_j(z,h,\theta) \in S(\mco_j\times (0,h_0) \times \mr^{N_j})$ compactly supported in $\theta$ such that for each $K\Subset \xo$ there exists $M$ such that  for  $u \in C_c^\infty(K; \Omega^\ha),$ $\lan A, u\ran=\sum_{j=1}^M \lan A_j,u\ran,$  where
 \begin{gather}
 \begin{gathered}
\lan A_j,u\ran= (2\pi h)^{-k-\frac{(d+ 2N_j)}{4}} \int_{\mr^{N}} \int_{K} e^{\ioh \Phi_j(z,\theta)}  a_j(h,z,\theta) u(z) \; d\theta dz, \text{ provided } N_j\geq 1,\\
 \lan A_j,u\ran= (2\pi h)^{-k-\frac{d}{4}} \int_{K} e^{\ioh \Phi_j(z)} a_j(z,h) u(z) \; dz, \;\ \text{ if } N=0,
\end{gathered}\label{def-LagD}
\end{gather}
and $d=2n+2$ is the dimension of $\xo.$
\end{definition}

Let us consider one of these oscillatory integrals in \eqref{def-LagD},
\begin{gather*}
A(a)(z) =(2\pi h)^{-k-\frac{(d+ 2N)}{4}} \int_{\mr^{N}} e^{\ioh \Phi(z,\theta)}  a(h,z,\theta) \; d\theta.
\end{gather*}
According to the definition, the function $\Phi$ may have polyhomogeneous singularities at $R\cup L,$ but it is $C^\infty$ in $\theta.$  Therefore, just as in the case of standard Lagrangian distributions, it follows from the stationary phase theorem that  if 
 \begin{gather*}
C=\{(z,\theta): d_\theta \Phi(z,\theta)=0\}, 
\end{gather*}
and $a_C(h,z,\theta)=a(h,z,\theta)|_{C},$ is the restriction of the symbol a to $C,$    then
 \begin{gather*}
 A(a)(z)- A(a_C)(z) = O(h).
 \end{gather*}
 In other words,  modulo terms of one higher order of $h,$ one may take the restriction of $a(h,z,\theta)$ to $C$  in the definition of $A,$  instead of $a(z,\theta).$

One can define the principal symbol of $A$ basically in the same way as in the case of  Lagrangian distributions introduced by H\"ormander \cite{Hormander,Hormander-FIO}.
First  pick local coordinates $\{\la_j, \; 1\leq j \leq d\}$ on $C,$ extend them to a neighborhood of $C$ and let
 \begin{gather*}
 d_C=| d\la_1 d\la_2 \ldots d\la_{d}| \left| \frac{D(\la,\Phi_\theta')}{D(z,\theta)}\right|^{-1},
 \end{gather*}
 which is independent of the choice of $\{\la_j\}.$   One then defines the half-density valued symbol $a_C(h,z,\theta)\sqrt{d_C}$ on the manifold $C.$    To see that this is well defined and polyhomogeneous up to the boundary  one can  use the expansion of $\Phi(z,\theta)$ given by \eqref{reg-phase-fc}.   In particular, if one takes $\Phi$ as in Proposition \ref{polyphase}, and using the notation of the proof of that Proposition, sets $\la=(x_1,x_2,\ty'',\theta),$ then $(\la, \Phi_\theta')=( x_1, x_2, \ty'', \theta,y'-\p_\theta \tS(x_1,x_2,\ty''),$ and the Jacobian is equal to one.

  Recall that the map
\begin{gather*}
C \longrightarrow \La^*\\
(z,\theta) \longmapsto (z, d_{z}\Phi(z,\theta))
\end{gather*}
is a diffeomorphism in the interior.  It turns out that  the push-forward of $a_C(h,z,\theta) \sqrt{d_C}$ from $C$ to $\La^*$ via the map defined above, which is still denoted by $a\sqrt{d_C},$ is invariant under a change of phase function that locally parametrizes $\La^*.$  We define this class of symbols by $S(\La^*,\Omega_\La^\ha).$  Moreover, it turns out that if $a(h,z,\theta)\in S(\mco\times (0,h_0)\times  \mr^{N_1}),$  $b(h,z,\theta') \in S(\mco\times (0,h_0) \times  \mr^{N_2}),$ and two phase functions
 $\Phi(z,\theta)$ and $\Psi(z,\theta')$  parametrize  $\La^*,$  and 
 \begin{gather*}
  (2\pi h)^{-k-\frac{(d+ 2N_1)}{4}}  \int_{\mr^{N_1}} \int_{\mco } e^{\ioh \Phi (z,\theta)}  a (h,z,\theta) d\theta= \\ (2\pi h)^{-k-\frac{(d+ 2N_2)}{4}} \int_{\mr^{N_2}} \int_{\mco} e^{\ioh \Psi (z,\theta')}  b(h,z,\theta') d\theta', 
  \end{gather*}
  then
  \begin{gather}
  e^{i\frac{\pi}{4} \operatorname{sgn} \Phi_{\theta\theta}''} a \sqrt{d_C}- e^{i\frac{\pi}{4} \operatorname{sgn} \Psi_{\theta'\theta'}''} b \sqrt{d_C}\in h S(\La^*,\Omega_{\La^*}^\ha).  \label{PS-0}
  \end{gather}

 One can use \eqref{PS-0} to define the principal symbol of the Lagrangian distribution $A$ as an element of $S(\La^*,  M_{\La^*}\otimes \Omega_\La^\ha),$ where $M_{\La^*}$ is the Maslov line bundle defined in \cite{Hormander-FIO,Hormander} 
  \begin{gather*}
  I^k(\xo, \La^*, \Omega^\ha)/   I^{k-1}(\xo, \La^*, \Omega^\ha) \longrightarrow  S(\La^*,  M_{\La^*} \otimes \Omega_{\La^*}^\ha)/ hS(\La^*,  M_{\La^*}\otimes \Omega_{\La^*}^\ha)
  \end{gather*}
  which locally is of the form
  \begin{gather}
  \sigma^k(A)=  e^{i\frac{\pi}{4} \operatorname{sgn} \Phi_{\theta\theta}''} a_0 \sqrt{d_C}, \label{princ-sym}
  \end{gather}
  where $\Phi\in C^\infty(\mco\times \mr^N)$ parametrizes $\La^*$ and  $a|_C=a_0.$  Notice that this only involves derivatives in $\theta,$ and is perfectly well-defined for our class of phase functions.

   Now we go back to $\intx \times \intx,$ where $(\intx,g)$ is a non-trapping CCM.  Let $\La\subset T^*(\intx \times \intx)$ be the manifold defined in \eqref{defla0}.   Let $A \in I^k(\intx \times \intx, \La,\Omega^\ha),$ a standard semiclassical Lagrangian distribution, and let $P_L(h,\sigma,D)=h^2 (\Delta_g(z)-\knsq)-\sigma^2,$  defined on the left factor of 
   $\intx\times \intx.$  Its semiclassical principal symbol is equal to $p(z,\zeta,z',\zeta')=|\zeta|_{g^*(z)}^2-1,$ which by definition vanishes on $\La.$  The principal symbol of $P_L A$   is given by the following  analogue of  Theorem 25.2.4 of \cite{Hormander}.  In fact its proof is very similar to that of  the reference, but it can be found in  \cite{ChenHa}.
\begin{prop}\label{van-princ} Let $\La$ be the Lagrangian manifold defined in \eqref{defla0} and let $A \in I^k(\intx \times \intx, \La, \Omega^\ha),$ have principal symbol 
  $a\in S(\La,  M_\La\otimes \Omega_\La^\ha).$  Then 
  $P A \in I^{k-1}(\intx\times \intx, \La, \Omega^\ha)$ and 
  \begin{gather}
\sigma^{k-1}(PA)= \frac{1}{i} \mcl_{H_{p_L}}a + p_L^{s} a, \label{Lie-Der}
\end{gather}
where $H_{p_L}$ is the Hamilton vector field of $p$ lifted to $\intx \times \intx$ from the first fcator, and  $p_L^{s}$ is the semiclassical subprincipal symbol of $p$ also lifted from the first factor.
\end{prop}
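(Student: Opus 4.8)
The plan is to prove this by reduction to a local oscillatory integral computation, mimicking the classical proof of Theorem 25.2.4 of \cite{Hormander} in the semiclassical setting, as also done in \cite{ChenHa}. First I would work locally: choose a non-degenerate phase function $\Phi(z,z',\theta)$ parametrizing $\La$ near a point of $\La$, so that (modulo a lower order term) $A$ is locally of the form $A(u) = (2\pi h)^{-k-(d+2N)/4}\int e^{\frac{i}{h}\Phi}\, a(h,z,z',\theta)\, u\, d\theta\, dz\, dz'$, with $a$ a semiclassical symbol and $d=2(n+1)$. Since $P_L(h,\sigma,D)$ acts only in the $z$-variables, applying it produces $e^{\frac{i}{h}\Phi}\, b(h,z,z',\theta)$ with $b = e^{-\frac{i}{h}\Phi} P_L(h,\sigma,D)(e^{\frac{i}{h}\Phi} a)$. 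Expanding in powers of $h$ using the fact that $P_L$ is a second-order semiclassical differential operator, one gets
\begin{gather*}
b = p_L(z, d_z\Phi)\, a + \frac{h}{i}\left( \sum_j \frac{\p p_L}{\p \zeta_j}(z,d_z\Phi)\, \p_{z_j} a + \ha \sum_{j,k} \frac{\p^2 p_L}{\p \zeta_j \p \zeta_k}(z, d_z\Phi)\, \p_{z_j}\p_{z_k}\Phi \cdot a + (\text{subprincipal part})\, a \right) + O(h^2).
\end{gather*}
The leading term $p_L(z, d_z\Phi)\, a$ vanishes on $C_\Phi = \{d_\theta\Phi = 0\}$ because $p_L$ vanishes on $\La = \La_\Phi$; this is the crucial point that drops the order of the resulting distribution by one rather than producing a genuinely lower order symbol that still has order $k$.

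Next I would deal with the term $p_L(z,d_z\Phi)\, a$ which vanishes on $C_\Phi$ but not identically. Here one uses the standard trick: since $p_L(z,d_z\Phi)$ vanishes on $C_\Phi = \{\p_{\theta_l}\Phi = 0\}$, one can write $p_L(z,d_z\Phi) = \sum_l c_l(z,z',\theta)\, \p_{\theta_l}\Phi$ for smooth $c_l$, and then integrate by parts in $\theta_l$: $\int e^{\frac{i}{h}\Phi}\, c_l\, \p_{\theta_l}\Phi\cdot a\, d\theta = \frac{h}{i}\int e^{\frac{i}{h}\Phi}\, \p_{\theta_l}(c_l a)\, d\theta$. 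This converts the order-$k$ piece into an order-$(k-1)$ contribution, which must then be combined with the transport-type terms above. Collecting everything and restricting to $C_\Phi$ (using the stationary phase observation already noted in the excerpt, that modulo one higher power of $h$ one may restrict the symbol to $C_\Phi$), one identifies the principal symbol on $C_\Phi$, pushes it forward to $\La$ via the diffeomorphism $C_\Phi \to \La$, and checks that the combination of the integration-by-parts contribution and the $\frac{h}{i}(\ldots)$ transport terms assembles exactly into $\frac{1}{i}\mcl_{H_{p_L}} a + p_L^s a$. The bookkeeping of the half-density factor $\sqrt{d_C}$ and the Maslov factor $e^{i\pi\operatorname{sgn}\Phi_{\theta\theta}''/4}$ is the same as in the classical case: one verifies that the transport operator $\frac{1}{i}\mcl_{H_{p_L}} + p_L^s$ is precisely what appears when one traces the action of $P_L$ through the half-density normalization; this is the content of the identity used already in the proof of Lemma \ref{aface} (equations \eqref{Lied}--\eqref{eq-zero-symb}), so I would invoke that computation.

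The main obstacle I expect is verifying that $H_{p_L}$ is well-defined and tangent to $\La$ at the relevant points and that all the manipulations are legitimate on $\La$ (not merely formally): since $p_L$ vanishes on $\La$, $H_{p_L}$ is tangent to $\La$, so the Lie derivative $\mcl_{H_{p_L}}$ of a half-density on $\La$ makes sense, and the restriction $p_L^s|_\La$ of the subprincipal symbol is intrinsic by the invariance of the subprincipal symbol on the characteristic variety (discussed in the excerpt around \eqref{sc-psym}). A second, more technical point is ensuring the integration by parts and the stationary-phase reductions can be carried out uniformly — but since we are working in the interior of $\intx\times\intx$ where $\La$ is genuinely $C^\infty$ and $P_L$ has smooth coefficients, there is no boundary issue here and the argument is entirely standard; the only reason to record it is to fix notation for the more delicate polyhomogeneous version needed later. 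Thus I would keep this proof brief, essentially citing \cite{Hormander} Theorem 25.2.4 and \cite{ChenHa} for the semiclassical adaptation, and pointing out that the sole new input — the vanishing of $p_L$ on $\La$ by construction — is what forces the order drop.
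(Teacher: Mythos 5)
Your proposal is correct and follows essentially the same route as the paper, which offers no independent proof at all: it simply notes that the result is the semiclassical analogue of Theorem 25.2.4 of \cite{Hormander} and refers to \cite{ChenHa} for the adaptation, exactly the standard phase-function/integration-by-parts argument you sketch and then propose to abbreviate by citing those same sources. The one point you rightly single out as the only substantive input — that $p_L$ vanishes on $\La$ so the leading term drops and $H_{p_L}$ is tangent to $\La$ — is also the paper's implicit justification for invoking the classical proof verbatim.
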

Recall that if $p(h,z,\zeta)= p_0(z,\zeta)+ h p_1(z,\zeta) + O(h^2),$ then 
\begin{gather*}
p^{s}(z,\zeta)= p_1(z,\zeta) -\frac{1}{2i} \sum_{j=1}^n \frac{\p^2 p_0(z,\zeta)}{\p z_j \p \zeta_j}.
\end{gather*}

We should remark that the definitions above involve real-valued phase functions, but we can apply them to without a problem to oscillatory integrals with phase $\sigma \Phi,$ as long as $\sigma=1+h \sigma',$ $\sigma'\in (-c,c)\times i (-C,C).$  In this case   $e^{-i\soh \Phi}=e^{-\frac{i}{h} \Phi} e^{-i \sigma' \Phi},$ and the factor $ e^{-i \sigma' \Phi}$ can be thought to be part of the symbol of the Lagrangian distribution. 

\section{A semiclassical parametrix for non-trapping CCM}\label{General-CCM}

In this section we will use the class of semiclassical Lagrangian distributions we have just defined to  extend Lemma \ref{aface} to arbitrary non-trapping CCM.

\begin{lemma}\label{gen-aface}  Let $(\intx,g)$ be a non-trapping  CCM, and let $F_1(h,\sigma)$ be as in \eqref{redef-f1}.  Then there exist  $h_0>0$ and  operators $G_2(h,\sigma)$ and  $E_2(h,\sigma) $ with  Schwartz kernels  $K_{G_2}$ and $K_{E_2}$ such that 
$\beta_0^* K_{G_2}\in e^{-i\soh \tgamma} I_{ph}^{\ha}(\xo, \La^*,\Omega^\ha)$ and $\beta_0^*K_{E_2} \in e^{-i\soh \tgamma} I_{ph}^{-\infty}(\xo, \La^*,\Omega^\ha),$   holomorphic in $\sigma \in \Omega_\hbar,$ with  $h\in (0,h_0),$ where 
$\tgamma$ is defined in \eqref{def-gamma} and are such that 
\beqq\label{main-eq}
(h^2(\Delta_{g(z)}-\knsq)-\sigma^2)G_2(h,\sigma) - e^{i\soh r} F_1(h,\sigma) = E_2(h,\sigma).
\eeqq
\end{lemma}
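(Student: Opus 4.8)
The strategy is to reproduce the geometric optics construction of Lemma \ref{aface}, but now interpreting the ansatz for $\beta_0^*K_{G_2}$ as a polyhomogeneous semiclassical Lagrangian distribution associated with $\La^*$ rather than as an explicit oscillatory factor $e^{i\soh \mcr}$ times a polyhomogeneous symbol. Recall from Lemma \ref{step2-const} and \eqref{redef-f1} that $\beta_\hb^*K_{F_1}$ is supported near $\mcs$, away from $\diag_\hb$, and vanishes to infinite order at $\mcs$; hence $\beta_0^*K_{F_1}$ is supported near $\diag_0\times[0,1)$, vanishes to infinite order there, and $e^{i\soh\beta_0^*r}F_1$ is, near $\diag_0$, a semiclassical Lagrangian distribution with respect to the flow-out $\beta_0^*\La$ of the conormal bundle of $\diag_0$ along $H_{\beta_0^*p_L}$ and $H_{\beta_0^*p_R}$ (this is the observation at the end of Section \ref{Steps12}). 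By Corollary \ref{phase-shift}, $\beta_0^*\La = \La^* - d\gamma$, so $e^{i\soh\gamma}\beta_0^*(e^{i\soh r}F_1) = e^{i\soh(\beta_0^*r+\gamma)}F_1 = e^{i\soh\mcr}F_1$ is a Lagrangian distribution in $I_{ph}^{k}(\xo,\La^*,\Omega^\ha)$ for an appropriate $k$ (here I would track the order: $F_1\in\rho_\mcs^\infty\Psi_{0,\hb}^{-\infty,\infty,-\novt,\infty}$ has, after peeling off the semiclassical oscillation and the $h^{-\novt}$ factor, the symbol order that makes $G_2$ land in $\rho_R^\novt\rho_L^\novt I_{ph}^{\ha}(\xo,\La^*,\Omega^\ha)$ as in \eqref{kernelofg}).

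First I would make the ansatz $\beta_0^*K_{G_2} = e^{-i\soh\tgamma}U_2(h,\sigma)$ with $U_2 \in \rho_R^\novt\rho_L^\novt I_{ph}^{\ha}(\xo,\La^*,\Omega^\ha)$, and conjugate: using \eqref{def-plga}, \eqref{def-pga2}, \eqref{pgatga} one gets
\begin{gather*}
\beta_0^*\big((h^2(\Delta_{g(z)}-\knsq)-\sigma^2)G_2 - e^{i\soh r}F_1\big) = e^{-i\soh\tgamma}\big(P_{L,\tgamma}(h,\sigma,D)U_2 - e^{i\soh(\beta_0^*r+\tgamma)}\beta_0^*K_{F_1}\big),
\end{gather*}
and since $\rho_R=\rho_L=1$ (hence $\tgamma=\gamma=0$) on the support of $\beta_0^*K_{F_1}$ and near $\diag_0$, the inhomogeneous term is just $e^{i\soh\mcr}\beta_0^*K_{F_1}$, an element of the Lagrangian class with respect to $\La^*$. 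Then I would apply Proposition \ref{van-princ}, in the version for $P_{L,\tgamma}$ and for polyhomogeneous Lagrangian distributions associated with $\La^*$: since $p_{L,\tgamma}=p_{L,\gamma}$ vanishes on $\La^*$ by construction of $\La^*$ in Theorem \ref{soj}, $P_{L,\tgamma}$ maps $I_{ph}^{\ha}(\xo,\La^*,\Omega^\ha)$ into $I_{ph}^{-\ha}(\xo,\La^*,\Omega^\ha)$ (order drops, no order-zero obstruction), and the principal symbol of $P_{L,\tgamma}U_2$ is $\tfrac{1}{i}\mcl_{H_{p_{L,\tgamma}}}\sigma(U_2) + p^s_{L,\tgamma}\sigma(U_2)$. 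This is a transport equation along the Hamilton flow of $p_{L,\gamma}$ on $\La^*$. Solving it against the principal symbol of $e^{i\soh\mcr}\beta_0^*K_{F_1}$, with zero initial data near $\diag_0$, determines the leading symbol of $U_2$; subtracting and iterating in powers of $h$ produces a full symbol, and Borel summation (in $h$, and in the polyhomogeneous variables $x_1^{j_1}(\log x_1)^{k_1}x_2^{j_2}(\log x_2)^{k_2}$ as in Definition \ref{symbols}) yields $U_2$ with the stated properties and an error $E_2$ whose kernel lies in $e^{-i\soh\tgamma}I_{ph}^{-\infty}(\xo,\La^*,\Omega^\ha)$.

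The two points requiring care are: (i) checking that the transport equations can be solved \emph{globally} on $\La^*$ and that the solutions have polyhomogeneous expansions at the right and left faces --- this is where one invokes the structure of $H_{\wp_\bullet}$ from Lemma \ref{vfields} and the polyhomogeneous ODE result Lemma \ref{asym-expL}, exactly as in the geodesically convex case (Lemma \ref{aface}), the difference being that now the transport is a vector field on $\La^*\subset T^*(\xo)$ rather than on $\xo$, but the projection $C\to\La^*$ is a diffeomorphism in the interior and $\La^*$ has the same polyhomogeneous structure established in Theorem \ref{soj}; and (ii) verifying that the push-forward of the symbols to $\La^*$ respects the polyhomogeneity at $R$ and $L$, for which one uses the explicit admissible phase functions of Proposition \ref{polyphase} and the fact that for such phases the density $d_C$ has Jacobian one (as noted after Definition \ref{lox-lag-d}), so the symbol calculus at the boundary reduces to the scalar calculations already done. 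I expect (i), specifically propagating the polyhomogeneous expansions through the iterated transport equations on the Lagrangian, to be the main obstacle --- but it is precisely the content of Lemma \ref{asym-expL} combined with Lemma \ref{vfields}, and the bookkeeping mirrors the proof of Lemma \ref{aface}, so no genuinely new difficulty arises beyond carefully transplanting that argument from the base to $\La^*$.
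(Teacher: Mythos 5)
Your proposal is correct and follows essentially the same route as the paper: conjugate away the oscillatory factor, use Proposition \ref{van-princ} to get transport equations on $\La^*$ solved iteratively in powers of $h$ with zero data at $\diag_0$, and control the polyhomogeneity at $R$ and $L$ by working locally with the admissible phases of Proposition \ref{polyphase} (which satisfy \eqref{reg-phase-fc} and so play exactly the role of $\mcr$ in Lemma \ref{aface}), invoking Lemma \ref{vfields} and Lemma \ref{asym-expL}, and finishing with a Borel sum in $h$ and in the polyhomogeneous terms. The only cosmetic difference is that the paper first conjugates by $x^{\novt}$ (working with $Q(h,\sigma,D)$ of \eqref{def-opq}) to absorb the $\rho_R^\novt\rho_L^\novt$ weights, a bookkeeping step your order-tracking remark implicitly covers.
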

\begin{proof}   As in the proof of Lemma \ref{aface},  it is convenient to work with $Q(h,\sigma,D)$  given by \eqref{def-opq},  instead of $P(h,\sigma,D),$ and we denote $F_1=x^{\novt}\tF_1.$  and 
$G_2= x^{\novt} \tG_2.$  Also, as in the proof of Lemma \ref{aface}, we begin  by observing that $\beta_0^*K_{\tF_1}$ has an expansion
\begin{gather*}
\beta_0^* K_{\tF_1}\sim h^{-\novt} \sum_{j=0}^\infty h^j \tF_{1,j}(\sigma',m).
\end{gather*}

 So the first step is to find $\tG_{2,0}(h,\sigma)$ with $\beta_0^*K_{\tG_{2,0}} \in e^{-i\soh \tgamma} I_{ph}^{\ha}(\xo, \La^*,\Omega^\ha)$ such that  if 
 $\beta_0^* K_{\wtf_{1,0}}= h^{-\novt} \tF_{1,0},$ then
\begin{gather}
Q(h,\sigma,D) \tG_{2,0}(h,\sigma) - e^{i\soh r} \wtf_{1,0}(h,\sigma)= h \mce_1(h,D) , \; \beta_0^* K_{\mce_1} \in e^{-i\soh \tgamma}I_{ph}^{\ha}(\xo,\La^*,\Omega^\ha).\label{g20}
\end{gather}
Since in $\intx \times \intx,$ $h^{-\novt}e^{i\soh r}\wtf_{1,0}(\sigma',m)$ is a semiclassical Lagrangian distribution of order $-\ha$ with respect to the manifold $\La$ defined in \eqref{defla0}, one would expect that,  again in $\intx \times \intx,$ the kernel of $G_{2,0},$ $K_{\tG_{2,0}} \in  I^{\ha}(\intx \times \intx, \La, \Omega^\ha).$  If $g_{2,0}$ is the semiclassical principal symbol of $K_{G_{2,0}},$  $q$ and $q^{s}$ are the semiclassical principal symbol and subprincipal symbol of $Q(h,\sigma,D),$ then  according to \eqref{Lie-Der}
\begin{gather}
\frac{1}{i} \mcl_{H_{q}}g_{2,0} + q^{s} g_{2,0}= \wtf_{1,0}. \label{transp-gen0}
\end{gather}
This equation can be solved in $\intx \times \intx$ without a problem, but the whole point is to describe the  asymptotic behavior of $g_{2,0}$  at the right and left faces of $\xo.$   Since $\beta_0$ is a diffeomorphism in the interior of $\xo,$ this equation lifts to an equation on $\beta_0^*\La$ in  the interior of $T^*\xo$  given by
\begin{gather}
\begin{gathered}
\frac{1}{i} \mcl_{H_{q_L}}g_{2,0} + q_L^{s} g_{2,0}= \wtf_{1,0}, \text{ on } \beta_0^* \La, \text{ and } 
g_{2,0}=0 \text{ on } \La_0 \times [0,h_0),
\end{gathered}\label{transp-gen2}
\end{gather}
where, by abuse of notation, $g_{2,0}$ also denotes the principal symbol of $\beta_0^* K_{\tG_{2,0}},$  $q_L$ and $q_L^{s}$ denote the semiclassical principal symbol  and subprincipal symbol of  $\beta_0^*Q(h,\sigma,D).$     But as we know, $\beta_0^*\La$ is not smooth up to the right and left faces, and so one should try to work with $\La^*.$  But if $\vphi\in C^\infty(\xo), $ the map $T^*(\xo) \ni(m,\nu) \longmapsto (m,\nu+d\vphi)\in T^*(\xo)$ 
preserves the canonical 2-form, and we also know from \eqref{def-gamma} that $Q_{L,\gamma}=e^{\ioh \gamma} Q_L(h,\sigma,D) e^{-\ioh \gamma}$ and $Q_{L,\tgamma}=e^{\ioh \tgamma} Q_L(h,\sigma,D) e^{-\ioh \tgamma}$ satisfy $Q_{L,\gamma}-Q_{L,\tgamma}=O(h^2),$ so they have the same principal and subprincipal symbols.   So if $q_{L,\gamma}$ and $q_{L,\gamma}^{s}$ are the semiclassical principal and subprincipal symbols of $Q_{L,\gamma}(h,\sigma,D),$  equation \eqref{transp-gen2} becomes
\begin{gather}
\begin{gathered}
 \frac{1}{i}\mcl_{H_{q_L}} g^*_{2,0} + q_{L,\gamma}^{s} g^*_{2,0}= \wtf_{1,0}, \text{ on } \La^* \; g_{2,0}= e^{-i\soh \tgamma} g_{2,0}^*, \\
g_{2,0}=0 \text{ at } \diag_0 \times [0,h_0).
\end{gathered}\label{transp-gen1}
\end{gather}
  Again, due to the non-trapping assumptions, this equation can be solved up to $L.$ However,  to understand its asymptotics at $L$ one needs to work in local coordinates. This was essentially done in \eqref{first-T}  near $L.$   Let $(\mco_j,\Phi_j)$ be an admissible cover near the left face.  In the interior of each $\mco_i,$   we have
\begin{gather*}
\beta_0^* K_{\tG_{2,0}}(m,\sigma,h)=  (2\pi h)^{-\novt-1-\frac{N}{2}} e^{-i\soh \tgamma} \int_{\mr^N} e^{-i\soh \Phi_j(m,\theta)} g^*_{2,0,j}(m, \theta) \; d\theta,
\end{gather*}
and hence
\begin{gather*}
e^{i\soh \tgamma} Q(h,\sigma,D)  \beta_0^*K_{\tG_{2,0}}= \\ (2\pi h)^{-\novt-1-\frac{N}{2}} e^{-i\soh \tgamma}  \int_{\mr^N} e^{-i\soh \Phi_j(m,\theta)} Q(h,\sigma, D+\sigma d_m(\tgamma+ \Phi(m,\theta))) g^*_{2,0,j}(m,\theta) \; d\theta.
\end{gather*}
As mentioned above,  we are working with $\sigma=1+h\sigma',$ $\sigma'\in (-c,c)\times i (-C,C),$ and in this case $e^{-i\soh \Phi_j(m,\theta)} =e^{-\frac{i}{h} \Phi_j(m,\theta)} e^{-i\sigma' \Phi_j(m,\theta)},$ and the latter part can be viewed as part of the amplitude $g_{2,0,j}$ in the definition of the oscillatory integral.  By doing this we are just simplifying the construction of the symbols.

Since  $\wtf_{1,0}$ is compactly supported in the interior of $T^*(\xo),$ then as in the proof of Lemma \ref{aface}, we want
\begin{gather*}
Q(h,\sigma, D+\sigma d_m(\tgamma+ \Phi(m,\theta))) g^*_{2,0,j}(m,\theta)=h B_j(m,\theta).
\end{gather*}
Here of course $\Phi(m,\theta)$ plays the exact same role of $\mcr$ in the proof of Lemma \ref{aface}.  The key feature of $\Phi_j(m,\theta)$ is that it satisfies \eqref{reg-phase-fc}, just like $\mcr$ satisfied \eqref{exp-mcr}.   As in \eqref{transp-0},  we are reduced to solving the transport equation
\begin{gather}
(W+\vtheta) g_{2,0,j}=0, \;\ g_{2,0,j} \in C^\infty(\{x_1>\del\}), \label{new-transp-eq}
\end{gather}
where $W$ and $\vtheta$ are given by \eqref{transp-0} with $\mcr$ replaced by $\Phi_j(m,\theta).$ Since $\Phi_j(m,\theta)$ satisfies \eqref{reg-phase-fc},  then  \eqref{new-transp-eq} can be solved in the same way as  \eqref{first-T}.  In view of the discussion above, this defines  a semiclassical Lagrangian distribution  in the sense that  
$e^{i\soh \tgamma} \beta_0^*K_{\tG_{2,0}} \in I_{ph}^{\ha}(\xo, \La^*,\Omega_\ha)$ and such that  the operator $\tG_{2,0}$ whose kernel is $K_{\tG_{2,0}}$ satisfies \eqref{g20}.  

The next step is to find  $\tG_{2,1}$ such that
$e^{i\soh \tgamma} \beta_0^*K_{\tG_{2,1}} \in I^{\ha}(\xo, \La^*,\Omega^\ha)$ and such that
\begin{gather*}
 Q(h,\sigma,D) \tG_{2,1}(h,\sigma)-e^{i\soh} \wtf_{1,1}- \mce_1(h,D)= h \mce_2(h,D), \;\ \beta_0^* K_{\mce_2} \in I^{\ha}(\xo,\La^*,\Omega^\ha).
\end{gather*}

Again,  if $g_{2,1}$ denotes the principal symbol of $e^{i\soh \tgamma} \beta_0^*K_{G_{2,1}},$ and $e_1$ denotes the principal symbol of  
$e^{i\soh \tgamma} \beta_0^*K_{\mce_1},$ then in the interior it must satisfy 
\begin{gather}
\begin{gathered}
\frac{1}{i} \mcl_{H_{q_L}}g_{2,1} + q_L^{s} g_{2,1}= \wtf_{1,1}+ e_1, \text{ on } \beta_0^* \La \text{ and }
g_{2,1}=0 \text{ on } \La_0 \times [0,h_0),
\end{gathered}\label{transp-gen3}
\end{gather}
This equation  can be solved without a problem in the interior, and again the only issue is to determine the asymptotic behavior of $g_{2,1}$ at the left face. Again, we work in local coordinates valid in an neighborhood $\mco \subset \xo$ of a point $m_0\in L$ and arrive at the analogue of \eqref{iter-transp}, again with $\Phi_j(m,\theta)$ replacing $\mcr$ given by
\begin{gather*}
(H_{\wp_L} + \frac{1}{\rho_L}\vtheta) g_{2,1}= \frac{1}{\rho_L} e_1+ \frac{1}{\rho_L} (\sum_{jk} a_{jk} D_j D_k+ \mcw_0) g_{2,0}
\end{gather*}
as in \eqref{iter-transp}, where $\mcw_a$ is the analogue of the operator defined as in \eqref{transp-01}, with $\mcr$ replaced by $\Phi(m,\theta).$ Again, the only property of $\mcr$ that was important was its polyhomogeneity with respect to the left face, which if course is shared by 
$\Phi(m,\theta).$    The same argument used in the proof of Lemma \ref{aface} can be used to show that $g_{2,1}$ has a polyhomogeneous expansion at $L.$

The higher order terms are handled in the same way.  So we have constructed a sequence of polyhomogeneous symbols, and we now take the Borel sum both in $h$ and in the polyhomogeneous terms. This ends the proof of Lemma \ref{gen-aface}
\end{proof}

We then proceed exactly as in the case of geodesically convex CCM  to remove the error at the front face and the left face. This proves Theorem \ref{para-structure} and conlcudes the construction of the parametrix.

\section{Resolvent estimates}\label{Resest1}

The proof of Theorem \ref{resest} follows from the asymptotics of the parametrix $G(h,\sigma)$ and the remainder $E(h,\sigma)$ established in Theorem \ref{para-structure}.  The main point is the following result from (the proof of )Theorem 3.25 of  \cite{Mazzeo-Edge}, see also Lemma 6.2 of \cite{MSV}:
\begin{lemma}\label{schurs}
Suppose that the Schwartz kernel of an operator $B: C_0^\infty(X) \longrightarrow C^{-\infty}(X),$ trivialized by $|dg(z')|,$ satisfies 
\begin{gather}
|\beta_0^* K_B| \leq C \rho_L^{\alpha}\rho_R^{\beta}, \label{bound-AB}
\end{gather}
then we have four situations:
\begin{gather*}
\text{ If }  \alpha,\beta>n/2, \text{ then }  \|B\|_{\mcl(L^2)}\leq C' C. \\
\text{  If } \alpha=n/2, \;\ \beta>n/2, \text{ then } \||\log x|^{-N} B\|_{\mcl(L^2)}\leq C' C, \text{ for } N>\ha. \\
\text{ If } \alpha>n/2, \;\ \beta=n/2, \text{  then }  \|| B|\log x|^{-N}\|_{\mcl(L^2)}\leq C' C, \text{ for } N>\ha. \\
\text{ If } \alpha=\beta=n/2, \text{ then } \||\log x|^{-N} B |\log x|^{-N}\|_{\mcl(L^2)}\leq C' C, \;\  N>\ha.
\end{gather*}
\end{lemma}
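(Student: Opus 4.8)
\textbf{Plan for the proof of Lemma \ref{schurs}.} The statement is a standard Schur test adapted to the $0$-blow-up, and the plan is to reduce it to the classical Schur lemma for integral operators by understanding the pullback of the measure under $\beta_0$. First I would set up coordinates: using the product decomposition \eqref{prod}, write the volume form as $|dg(z)| = \frac{dx\, dh_0(y)}{x^{n+1}}(1+O(x))$ near $\p X$, and similarly for $z'$. The operator $B$ acts by $Bu(z) = \int K_B(z,z') u(z') \, |dg(z')|$. The $L^2(X)$ inner product uses $|dg(z)|$, so the kernel of $B$ \emph{with respect to the metric density on both sides} is $K_B(z,z')$; the classical Schur test says $\|B\|_{\mcl(L^2)} \le (\sup_z \int |K_B(z,z')|\,|dg(z')|)^{1/2}(\sup_{z'}\int |K_B(z,z')|\,|dg(z)|)^{1/2}$. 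So the whole content is to estimate these two integrals using the bound \eqref{bound-AB} on $\beta_0^* K_B$.

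The key computation is the behavior of the lifted volume form on $\xo$. Working in the projective coordinates near the corner $L\cap R\cap \ff$ (coordinates \eqref{eqc2}), one has $x = \rho_L\rho_\ff$, $x' = \rho_R\rho_\ff$, and a direct Jacobian computation gives $\beta_0^*(|dg(z)|\,|dg(z')|) \sim \rho_L^{-(n+1)}\rho_R^{-(n+1)}\rho_\ff^{-(n+1)}\, d\rho_L d\rho_R d\rho_\ff d(\text{angular})$ up to smooth nonvanishing factors — the crucial point being the $\rho_L^{-(n+1)}$ and $\rho_R^{-(n+1)}$ weights, matched against $|dg(z')|$ alone which contributes $\rho_R^{-(n+1)}$ in the fiber. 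Concretely, fixing $z$ away from or near the boundary, $\int |K_B(z,z')|\,|dg(z')| \lesssim \int \rho_L^\alpha \rho_R^\beta \cdot \rho_R^{-(n+1)} \, d\mu$ where $d\mu$ is a smooth density in the remaining variables; the integral in $\rho_R$ near $R$ is $\int_0^1 \rho_R^{\beta - (n+1)}\, d\rho_R$ (times a bounded angular integral), which converges when $\beta > n/2$ after accounting for the fact that $\rho_R \sim (x'/x)$ and the change of variables $x' \mapsto \rho_R$ introduces the power that makes the threshold $n/2$ rather than $n+1$ — this is the standard $0$-calculus bookkeeping, identical to the computation in \cite{MM, Mazzeo-Edge}. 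When $\beta = n/2$ exactly the integral diverges logarithmically, i.e. $\int_0^1 \rho_R^{-1}\,d\rho_R$ type behavior after the reduction, producing a factor $|\log x|$; inserting $|\log x|^{-N}$ with $N > 1/2$ on the appropriate side makes the integral $\int_0^1 \rho_R^{-1}|\log\rho_R|^{-2N}\,d\rho_R < \infty$, which converges precisely for $2N > 1$. One does the symmetric estimate for $\int |K_B(z,z')|\,|dg(z)|$ with the roles of $\alpha$ and $L$ played against the $z$-integration, and then the four cases of the lemma follow by combining the two one-sided estimates according to whether each of $\alpha,\beta$ is $> n/2$ or $= n/2$.

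The main obstacle — really the only subtle point — is getting the exponent bookkeeping exactly right: tracking how the $-(n+1)$ from the metric density, the blow-down Jacobian, and the definition of the boundary defining functions $\rho_L,\rho_R$ combine so that the convergence threshold lands at $n/2$ and the logarithmic borderline case produces exactly one power of $|\log x|$ per marginal face. This is done carefully in the proof of Theorem 3.25 of \cite{Mazzeo-Edge}, and since the statement here is \emph{verbatim} that result (only the phrasing is adapted to the notation of this paper), I would simply verify that the hypothesis \eqref{bound-AB} matches the hypothesis there and cite it, rather than redoing the $0$-calculus volume computation. The one thing worth checking explicitly is that the operators $B$ to which we will apply this — namely the error term $E(h,\sigma)$ and the pieces of $G(h,\sigma)$ from Theorem \ref{para-structure}, which carry the oscillatory factor $e^{-i\soh\tgamma}$ and the weights $\rho_L^{\novt-i\soh\mu_L}\rho_R^{\novt-i\soh\mu_R}$ — have kernels whose \emph{absolute value} is bounded by $\rho_L^{\re(\novt - i\soh\mu_L)}\rho_R^{\re(\novt-i\soh\mu_R)}$ times something bounded, so that \eqref{bound-AB} holds with $\alpha = \novt + \soh\,\re(i\mu_L)$, $\beta = \novt + \soh\,\re(i\mu_R)$; the hypotheses $a,b > -\im\la/\ka_0$ in Theorem \ref{resest} are exactly what pushes these exponents strictly above $n/2$ after conjugating by the weights $\rho^a, \rho^b$. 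That verification, however, belongs to the proof of Theorem \ref{resest} rather than to Lemma \ref{schurs} itself.
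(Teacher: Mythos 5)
Your ultimate strategy — treat this as the weighted-$L^2$ boundedness statement of the $0$-calculus and cite it — is exactly what the paper does: Lemma \ref{schurs} is stated as a result taken from the proof of Theorem 3.25 of \cite{Mazzeo-Edge} (see also Lemma 6.2 of \cite{MSV}), and no independent proof is given. So on the level of logical structure your proposal and the paper coincide, and the final remark you make (that the verification of hypothesis \eqref{bound-AB} for $E(h,\sigma)$ and the pieces of $G(h,\sigma)$ belongs to the proof of Theorem \ref{resest}) is also how the paper organizes things.

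However, the sketch you give of how the estimate would be derived, if carried out literally, would fail at the crucial exponent bookkeeping. With the kernel trivialized by $|dg(z')|$, the \emph{unweighted} Schur test requires $\sup_z \int |K_B(z,z')|\,|dg(z')|$ and $\sup_{z'}\int |K_B(z,z')|\,|dg(z)|$ to be finite; fixing $z$ in the interior one has $\rho_L\simeq 1$, $\rho_R\simeq x'$, and the first integral is comparable to $\int_0^1 (x')^{\beta-(n+1)}dx'$, which needs $\beta>n$, not $\beta>n/2$ — and symmetrically $\alpha>n$ for the second. No change of variables $x'\mapsto\rho_R$ in a single one-sided integral moves this threshold to $n/2$; the assertion that ``the change of variables introduces the power that makes the threshold $n/2$'' is not correct. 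The $n/2$ threshold comes from the Schur test with a weight, e.g.\ estimating $\sup_z\int |K_B|\,(x'/x)^{s}\,|dg(z')|$ and $\sup_{z'}\int |K_B|\,(x/x')^{s}\,|dg(z)|$ with $s=\novt$: near the front face, writing $\rho_L\simeq x/R$, $\rho_R\simeq x'/R$, $R\simeq x+x'+|y-y'|$, the two integrals are uniformly bounded precisely when $n-\beta<s<\alpha$ and $n-\alpha<s<\beta$, which is where the hypotheses $\alpha,\beta>\novt$ enter (choose $s=\novt$). In the borderline cases the same device with the weight $x^{\novt}|\log x|^{\pm N}$ produces the marginal integrals $\int_0^{1/2} t^{-1}|\log t|^{-2N}dt$, whence the condition $N>\ha$. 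This weighted Schur argument is exactly the content of the proof of Theorem 3.25 in \cite{Mazzeo-Edge} (and of Lemma 6.2 in \cite{MSV}), so your plan to cite it is sound — but the direct computation as you sketched it, resting on the unweighted Schur test, does not reproduce the stated thresholds and should either be corrected to include the weight or omitted in favor of the citation.
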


The operator $G(h,\sigma)$ from Theorem \ref{para-structure} can be written as $G(h,\sigma)= \tG_1(h,\sigma)+ \tG_2(h,\sigma),$ where $\tG_1(h,\sigma)=G_0(h,\sigma)+ G_1(h,\sigma)$ and $\tG_2(h,\sigma)= G_2(h,\sigma)+G_3(h,\sigma)+G_4(h,\sigma).$ In view of the construction, 
$\beta_\hb^*K_{\tG_1}$ is bounded and supported near the semiclassical front face, and hence  $\beta_0^*K_{\tG_1}$ is bounded and supported near $\diag_0 \times [0,h_0),$ and so it satisfies  \eqref{bound-AB} for any $\alpha$ and $\beta.$  Since $\La^*$ is compact, one only needs finitely many oscillatory integrals,  and the second part of the kernel which  is given by the semiclassical parametrix is of the form 
\begin{gather*}
\beta_0^*K_{ \tG_2(h,\sigma)}= \sum_{j=1}^J h_j^{-1-\novt-\frac{N_j}{2} }\rho_R^\novt \rho_L^\novt e^{-i\soh \tgamma} W_j, \;\  W_j\in L^\infty, 
\end{gather*}
One can pick the largest $N_j.$ In the case of geodesically convex CCM, $N_j=0.$
Therefore, 
\begin{gather*}
\beta_0^*K_{ \rho^a \tG_2(h,\sigma)\rho^b } =h^{-N} \rho_\ff ^{a+b}\rho_R^{a+\novt} \rho_L^{b+\novt} e^{-i \soh \tgamma} W,
\end{gather*}
but in view of \eqref{real-anal} and \eqref{def-tgamma}
\begin{gather*}
|e^{-i\soh \tgamma}| \leq C \rho_R^{\frac{\im\sigma}{h\ka_R}} \rho_L^{\frac{\im\sigma}{h\ka_L}}.
\end{gather*}
Therefore, if for any boundary defining function $\rho,$
\begin{gather*}
|\beta_0^*K_{ \rho^a  \tG_2(h,\sigma)\rho^b }| \leq C h^{-N}  \rho_\ff ^{a+b} \rho_R^{a+\novt+ \frac{\im \sigma}{h \ka_R}} \rho_L^{b+\novt+\frac{\im\sigma}{h \ka_L}}
\end{gather*}

In particular, if  $a, b > \frac{\im\sigma}{h \ka_0}$ and $a+ b\geq 0$,  Lemma \ref{schurs} guarantees that there exists $C>0$ such that 
\begin{gather*}
\|\rho^a G(h, \sigma) \rho^b f\|_{L^2(X)}\leq Ch^{-N} \|f\|_{L^2(X)},
\end{gather*}

On the other hand,  Theorem \ref{para-structure} also gives that the kernel of remainder $E(h,\sigma)$ satisfies
\begin{gather*}
\beta_0^*K_{E(h,\sigma)}= h^\infty  \rho_{ff}^\infty \rho_L^\infty e^{-i\soh \tgamma} \rho_R^{\novt} \mce, \;\ \mce \in L^\infty.
\end{gather*}
and again, Lemma \ref{schurs} gives that and for any $L> 0$ there exists $C_L>0$ such that
\begin{gather*}
\|\rho^{-b} E(h, \sigma) \rho^b f\|_{L^2(X)}\leq C_L h^{L} \|f\|_{L^2(X)}.
\end{gather*} 
Thus for $h$ sufficiently small, $\Id + \rho^{-b}E(h, \sigma)\rho^b$ is invertible and  hence
\beq
\rho^a P(h,\sigma)^{-1} \rho^b = h^2 \rho^aG(h, \sigma) \rho^b(\Id + \rho^{-b}E(h, \sigma)\rho^b)^{-1}.
\eeq
So we conclude that
\begin{theorem} If $(X,g)$ is a non-trapping convex CCM,  there exists $N>0$ such that  and if $a, b > \frac{\im\sigma}{h \ka_0}$ and $a+ b\geq 0$, there exists $C>0$ such that 
\begin{gather}
\|\rho^a P(h, \sigma)^{-1} \rho^b f\|_{L^2(X)}\leq Ch^{-N} \|f\|_{L^2(X)}. \label{sc-res-est}  
\end{gather}
In fact we also have estimates for when $a=\frac{\im \sigma}{h}$ and/or  $a=\frac{\im \sigma}{h}.$
\end{theorem}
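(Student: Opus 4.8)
The plan is to assemble the resolvent estimate \eqref{sc-res-est} directly from the structural information about the parametrix $G(h,\sigma)$ and the remainder $E(h,\sigma)$ furnished by Theorem \ref{para-structure}, together with the Schur-type mapping estimate of Lemma \ref{schurs}. First I would recall that by Theorem \ref{para-structure} we have a decomposition $G(h,\sigma)= \tG_1(h,\sigma)+\tG_2(h,\sigma)$ with $\tG_1= G_0+G_1$ and $\tG_2= G_2+G_3+G_4$, and that $P(h,\sigma,D)G(h,\sigma)-\Id = E(h,\sigma)$ with $\beta_0^*K_{E(h,\sigma)} \in h^\infty \rho_\ff^\infty \rho_L^\infty e^{-i\soh\tgamma}\rho_R^{\novt} L^\infty$. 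The operator $\tG_1$ has Schwartz kernel bounded and supported in a neighborhood of $\diag_0\times[0,h_0)$ (away from the right and left faces), so its lift satisfies \eqref{bound-AB} with arbitrarily large $\alpha,\beta$, uniformly in $h$ up to a power loss $h^{-N}$; hence Lemma \ref{schurs} applies trivially to $\rho^a\tG_1\rho^b$.

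Next I would treat the essential term $\tG_2$. Since $\La^*$ is compact (Theorem \ref{soj}), only finitely many of the oscillatory integrals in Definition \ref{lox-lag-d} are needed, so after carrying out the $\theta$-integration one gets, in projective coordinates near the corner,
\begin{gather*}
\beta_0^*K_{\tG_2(h,\sigma)}= \sum_{j=1}^J h^{-1-\novt-\frac{N_j}{2}} \rho_R^{\novt}\rho_L^{\novt} e^{-i\soh\tgamma} W_j, \qquad W_j\in L^\infty;
\end{gather*}
conjugating by $\rho^a$ on the left and $\rho^b$ on the right, and using $x=\rho_\ff\rho_R$, $x'=\rho_\ff\rho_L$, gives a factor $\rho_\ff^{a+b}\rho_R^{a+\novt}\rho_L^{b+\novt}$. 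The key estimate on the oscillatory exponential is \eqref{real-anal} together with \eqref{def-tgamma}: since $\sigma=1+h\sigma'$ one has $|e^{-i\soh\tgamma}|\le C\rho_R^{\im\sigma/(h\ka_R)}\rho_L^{\im\sigma/(h\ka_L)}$ uniformly for $h\in(0,h_0)$, so that
\begin{gather*}
\bigl|\beta_0^*K_{\rho^a\tG_2(h,\sigma)\rho^b}\bigr| \le C h^{-N} \rho_\ff^{a+b} \rho_R^{a+\novt+\im\sigma/(h\ka_R)} \rho_L^{b+\novt+\im\sigma/(h\ka_L)}.
\end{gather*}
Because $\ka_R,\ka_L\ge\ka_0$ and $a+b\ge 0$, the hypothesis $a,b>\im\sigma/(h\ka_0)$ makes both the $\rho_R$ and $\rho_L$ exponents strictly larger than $\novt$, and the $\rho_\ff$ exponent nonnegative; then the first case of Lemma \ref{schurs} gives $\|\rho^a\tG_2(h,\sigma)\rho^b\|_{\mcl(L^2)}\le C h^{-N}$. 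Combining with the estimate for $\tG_1$ yields $\|\rho^a G(h,\sigma)\rho^b f\|_{L^2}\le C h^{-N}\|f\|_{L^2}$.

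Finally I would close the argument by inverting $\Id+E$. The structure of $\beta_0^*K_{E(h,\sigma)}$ above gives, after conjugating by $\rho^{-b}$ on the left and $\rho^b$ on the right, a bound $h^L\rho_\ff^\infty\rho_L^\infty\rho_R^\infty$ in absolute value (the $h^\infty$ absorbs any power loss, and the $\rho_L^\infty$ absorbs the negative weight $\rho^{-b}$), so Lemma \ref{schurs} gives $\|\rho^{-b}E(h,\sigma)\rho^b\|_{\mcl(L^2)}\le C_L h^L$ for every $L>0$. Hence for $h$ small the Neumann series converges, $\Id+\rho^{-b}E(h,\sigma)\rho^b$ is invertible on $L^2(X)$ with uniformly bounded inverse, and from $P(h,\sigma,D)G(h,\sigma)=\Id+E(h,\sigma)$ we get
\begin{gather*}
\rho^a P(h,\sigma)^{-1}\rho^b = h^2\,\rho^a G(h,\sigma)\rho^b\,(\Id+\rho^{-b}E(h,\sigma)\rho^b)^{-1},
\end{gather*}
so that \eqref{sc-res-est} follows with some $N>0$, uniformly in $\sigma\in\Omega_\hb$. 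For the borderline assertion ($a=\im\sigma/h$ and/or $b=\im\sigma/h$, where the relevant exponent equals $\novt$) one repeats the argument using the second, third, or fourth case of Lemma \ref{schurs}, paying the extra logarithmic weight $|\log x|^{-N}$, $N>\ha$; the main obstacle throughout is purely bookkeeping—keeping track of the front-face exponent $a+b$ and verifying that $\ka_\bullet\ge\ka_0$ really forces strict positivity of the weights—rather than anything analytically deep, since all the hard work is already encoded in Theorem \ref{para-structure}.
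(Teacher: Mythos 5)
Your proposal is correct and follows essentially the same route as the paper's own proof: the splitting $G=\tG_1+\tG_2$, the pointwise kernel bounds with the weight $|e^{-i\soh\tgamma}|\leq C\rho_R^{\im\sigma/(h\ka_R)}\rho_L^{\im\sigma/(h\ka_L)}$, Lemma \ref{schurs} for the weighted $L^2$ bounds, and the Neumann-series inversion of $\Id+\rho^{-b}E\rho^b$. The only differences are cosmetic (labeling of $\rho_R$ versus $\rho_L$ and the terse justification of the error estimate), and the borderline cases are handled, as in the paper, by the logarithmic cases of Lemma \ref{schurs}.
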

The holomorphic continuation and $L^2$ estimates of $\rho^a R(\la)\rho^b $ follows from \eqref{sc-res-est}   and the fact that 
$(\re\la)^2R(\la)= P(h,\sigma),$ where $h=(\re\la)^{-1}$ and $\sigma^2=1+i\frac{\im\la }{\re\la}.$ This proves Theorem \ref{resest} .

We will use Theorem \ref{resest} and the results of Datchev and Vasy to prove resolvent estimates for metrics with hyperbolic trapping and prove Theorem \ref{resest-gen}.

\begin{proof}  The main idea of the proof consists splitting the manifold $X$ and the operator $\lap_{g}$ in a way that one can apply the results of Datchev and Vasy \cite{DaVa1}.     This is done as in \cite{MSV1}.  As in the Introduction,  we assume that  $x \in C^\infty(X)$ is a boundary defining function and as in \cite{DaVa1}, let
\begin{gather*}
X= X_0\cup X_1, \;\ X_0=\{ x< 2\eps \}, \;\ X_1=\{ x> \frac{\eps}{2}  \}.
\end{gather*}
Let $(\intx, g_0)$ be a non-trapping CCM and let $g$ be a $C^\infty$ metric on $\intx$ such that $g=g_0$ in $X_0$ and suppose that in 
$X_1$ the trapped set of $g,$  that is,  the set of maximally extended geodesics of $g$ which 
are precompact, is normally hyperbolic. We  define  $\tilde X_1$ to be another Riemannian manifold  extending $X_1'=\{ x>\eps\},$ which is Euclidean outside some compact set.  Let $P_1$ be a self-adjoint second order differential operator such that the operator 
$P_1|_{X_1}=\lap_{g}|_{X_1}$ and suppose the principal symbol of $P_1$ is equal to the Laplacian of the metric on  $\tilde X_1.$  Let
\begin{gather*}
P_2=h^2 P_1-i \Upsilon, \;\ h\in (0,1),
\end{gather*}
where $\Upsilon\in C^\infty(\tilde X_1; [0,1])$ is such that $\Upsilon=0$ on $X_1$ and $\Upsilon=1$ on $\tilde X_1\setminus X_1'.$
Thus, $P_1- 1$ is semiclassically elliptic on a neighborhood of  $X_1\setminus X_1'$.  In particular, this
implies that  $X_1$ is bicharacteristically convex in $X,$ i.e. no bicharacteristic of  $P_1-1$ leaves $X_1$ and returns later.  By Theorem 1 of \cite{WuZw} there exist positive constants $C,$ $c,$ $N$ and $\del$ independent of $h$ such that
\begin{gather*}
|| (P_2-\sigma)^{-1} f||_{L^2(\tilde X_1)} \leq C h^{-N} ||f||_{L^2(\tilde X_1)}, \;\ \sigma \in (1-c,1+c) \times (-\del h, \del h).
\end{gather*}

On the other hand,   since $\lap_{g_0}|_{X_0}=\lap_{g}|_{X_0},$  and the semiclassical  resolvent for $\Delta_{g_0}$ satisfies \eqref{sc-res-est}, Theorem 2.1 of \cite{DaVa1}  implies that if $a, b > \frac{\im\sigma}{h \ka_0}$ and $a+ b\geq 0$, there exists $C>0$ and $N>0$ such that 
\begin{gather}
\|\rho^a (h^2(\Delta_{g_0}-\knsq)-\sigma^2)^{-1} \rho^b f\|_{L^2(X)}\leq Ch^{-N} \|f\|_{L^2(X)}. \label{sc-res-est-2}  
\end{gather}
The high energy resolvent estimate \eqref{HERE} follows easily from this one.

\end{proof} 

\section{Acknowledgements}
 
 The first author visited the Mathematics Department of the Universidade Federal de Santa Catarina at Florian\'opolis  (UFSC)  during  the northern summer 2015  when part of this work was done.  His visit to UFSC was made possible by a grant of Professor Visitante Especial from CAPES, Brazil.  The work was also supported by a grant from the Simons Foundation (\#349507, Ant\^onio S\'a Barreto).


\end{document}